\theoremstyle{plain}
\newtheorem{pretheo}{Theorem}[section]
\newtheorem{preassu}[pretheo]{Assumption}
\newtheorem{precoro}[pretheo]{Corollary}
\newtheorem{predefi}[pretheo]{Definition}
\newtheorem{preexam}[pretheo]{Example}
\newtheorem{prelemm}[pretheo]{Lemma}
\newtheorem{preprop}[pretheo]{Proposition}
\newtheorem{prerema}[pretheo]{Remark}
\newenvironment{theo}{\begin{pretheo}}{\end{pretheo}}
\newenvironment{coro}{\begin{precoro}}{\end{precoro}}
\newenvironment{lemm}{\begin{prelemm}}{\end{prelemm}}
\newenvironment{prop}{\begin{preprop}}{\end{preprop}}
\newenvironment{rema}{\begin{prerema}\rm}{\end{prerema}}
\DeclareMathOperator{\di}{div}
\DeclareMathOperator{\Di}{Div}
\newcommand{\pa}{\partial}
\newcommand{\pr}{\prime}
\newcommand{\wh}[1]{\widehat{#1}}
\newcommand{\wit}[1]{\widetilde{#1}}
\newcommand{\BC}{\mathbf{C}}
\newcommand{\BN}{\mathbf{N}}
\newcommand{\BR}{\mathbf{R}}
\newcommand{\CE}{\mathcal{E}}
\newcommand{\CF}{\mathcal{F}}
\newcommand{\CL}{\mathcal{L}}
\newcommand{\CM}{\mathcal{M}}
\newcommand{\CP}{\mathcal{P}}
\newcommand{\CS}{\mathcal{S}}
\newcommand{\CV}{\mathcal{V}}
\newcommand{\CX}{\mathcal{X}}
\newcommand{\CY}{\mathcal{Y}}
\newcommand{\bdry}{\mathbf{R}_{0}^{N}}
\newcommand{\lhs}{\mathbf{R}_{-}^{N}}
\newcommand{\uhs}{\mathbf{R}_{+}^{N}}
\newcommand{\tws}{\mathbf{R}^{N-1}}
\newcommand{\ws}{\mathbf{R}^{N}}
\newcommand{\al}{\alpha}
\newcommand{\ga}{\gamma}
\newcommand{\de}{\delta}
\newcommand{\ep}{\varepsilon}
\newcommand{\te}{\theta}
\newcommand{\ka}{\kappa}
\newcommand{\la}{\lambda}
\newcommand{\si}{\sigma}
\newcommand{\ph}{\varphi}
\newcommand{\Ga}{\Gamma}
\newcommand{\De}{\Delta}
\newcommand{\Te}{\Theta}
\newcommand{\La}{\Lambda}
\newcommand{\Si}{\Sigma}
\newcommand{\Om}{\Omega}
\newcommand{\gra}{c_{g}}
\newcommand{\sur}{c_{\sigma}}
\newcommand{\wt}{\tilde{t}}
\begin{document}

\title%[Stokes equations with surface tension and gravity in the half space]
	{On decay properties of solutions to the Stokes equations with surface tension and gravity in the half space}
%%% \subtitle{Sub-Title of Your Article}%%% optional

%\author{Hirokazu \textsc{Saito}}% Author Name (\sc should NOT be used here)
%\address{
%		Department of Pure and Applied Mathematics, Graduate School of Fundamental Science and Engineering,
%		Waseda University, 3-4-1 Ookubo, Shinjuku-ku, Tokyo, 169-8555, Japan
%	}
%\email{s-hirokazu0704@toki.waseda.jp}

%\author{Yoshihiro \textsc{Shibata}}% Author Name (\sc should NOT be used here)
%\address{
%		Department of Mathematics and Research Institute of Science and Engineering, Waseda University,
%		3-4-1 Ookubo, Shinjuku-ku, Tokyo, 169-8555, Japan
%	}
%\email{yshibata@waseda.jp}

%\subjclass[2010]{Primary 35Q35; Secondary 76D07}% Subject code(s)

%\keywords{Stokes equations, half-space problem, surface tension, gravity, decay properties}% Key word(s)

%\recdate{}{}{}% Date of reception
%\revdate{}{}{}%  Date of revision

\author
{
Hirokazu Saito
\footnote{%Research Fellow of Japan Society of the Promotion of Science and
Partially supported by JSPS Japanese-Germann Graduate externship at Waseda University.} \\
\small{Department of Pure and Applied Mathematics, Graduate School of Fundamental Science and Engineering,} \\
\small{Waseda University, Okubo 3-4-1, Shinjuku-ku, Tokyo 169-8555, Japan} \\
\small{E-mail: hsaito@aoni.waseda.jp} \\ \\
Yoshihiro Shibata
\footnote{Partially supported by %JST CREST and
JSPS Grant-in-aid for Scientific Research (S) $\#$ 24224004
and JSPS Japanese-Germann Graduate externship at Waseda University.} \\
\small{Department of Mathematics and Research Institute of 
Science and Engineering, Waseda University,} \\
\small{Okubo 3-4-1, Shinjuku-ku, Tokyo, 169-8555, Japan} \\
\small{E-mail: yshibata@waseda.jp}
}
\date{}

\maketitle

\begin{abstract}
In this paper, we proved decay properties of solutions to the Stokes equations
with surface tension and gravity in the half space
$\uhs=\{(x^\pr,x_N)\mid x^\pr\in\tws,\enskip x_N>0\}$ $(N\geq 2)$.
In order to prove the decay properties,
we first show that the zero points $\la_\pm$ of Lopatinskii determinant
for some resolvent problem associated with the Stokes equations
have the asymptotics: $\la_\pm=\pm i\gra^{1/2}|\xi^\pr|^{1/2} -2|\xi^\pr|^2+O(|\xi^\pr|^{5/2})$
as $|\xi^\pr|\to0$, where $\gra>0$ is the gravitational acceleration
and $\xi^\pr\in\tws$ is the tangential variable in the Fourier space.
We next shift the integral path in the representation formula
of the Stokes semi-group to the complex left half-plane by Cauchy's integral theorem,
and then it is decomposed into closed curves enclosing $\la_\pm$ and the remainder part.
We finally see, by the residue theorem, that
the low frequency part of the solution to the Stokes equations behaves like
the convolution of the $(N-1)$-dimensional heat kernel
and $\CF_{\xi^\pr}^{-1}[e^{\pm i\gra^{1/2}|\xi^\pr|^{1/2}t}](x^\pr)$ formally,
where $\CF_{\xi^\pr}^{-1}$ is the inverse Fourier transform with respect to $\xi^\pr$. 
However, main task in our approach is to show that
the remainder part in the above decomposition decay faster than the residue part.
\end{abstract}

\renewcommand{\thefootnote}{\fnsymbol{footnote}}

%\footnote[0]
%	{
%		The first author was partially supported by JSPS Japanese-German Graduate Externship at Waseda University.
%		The second author was partially supported by JST CREST,
%		JSPS Grant-in-aid for Scientific Research (S) $\#$ 24224004,
%		and JSPS Japanese-Germann Graduate Externship at Waseda University.
%	}

%\renewcommand{\thefootnote}{\arabic{footnote})}
\numberwithin{equation}{section}

%%%%%%%%%%%%%%%%%%%%%%%%%%%%%%%%%%%%%%%%%%%%%%%%%%%%%%%%%%%%%%%%%%%%%
\section{Introduction and main results}\label{sec:intro}
Let $\uhs$ and $\bdry$ $(N\geq 2)$ be the half space and its boundary, that is,
\begin{equation*}
\uhs=\{(x^\pr,x_N)\mid x^\pr\in\tws,\enskip x_N>0\},\quad
\bdry=\{(x^\pr,x_N)\mid x^\pr\in\tws,\enskip x_N=0\}.
\end{equation*}
In this paper, we consider the following Stokes equations with the surface tension
and gravity in the half space $\uhs$:
\begin{equation}\label{SP}
\left\{\begin{aligned}
\pa_t U-\Di S(U,\Theta)&=0,&\di U&=0&&\text{in $\uhs$, $t>0$},\\
\pa_t H+U_N&=0&& &&\text{on $\bdry$, $t>0$},\\
S(U,\Te)\nu+(\gra-\sur\De^\pr)H\nu&=0&& &&\text{on $\bdry$, $t>0$},\\
U|_{t=0}=f\quad&\text{in $\uhs$,}&H|_{t=0}&=d&&\text{on $\tws$.}
\end{aligned}\right.
\end{equation}

Here the unknowns $U=(U_1(x,t),\dots,U_N(x,t))^T\footnote[3]{$M^T$ describes the transposed $M$.}$
and $\Theta=\Theta(x,t)$ are the velocity field
and the pressure at $(x,t)\in\uhs\times(0,\infty)$, respectively,
and also $H=H(x^\pr,t)$ is  the height function at $(x^\pr,t)\in\tws\times(0,\infty)$.
The operators $\di$ and $\De^\pr$ are defined by
\begin{equation*}
\di U=\sum_{j=1}^N D_j U_j,\quad
\De^\pr H=\sum_{j=1}^{N-1}D_j^2 H\quad(D_j=\frac{\pa}{\pa x_j})
\end{equation*}
for any $N$-component vector function $U$ and scalar function $H$.
$S(U,\Te)=-\Te I+D(U)$ is the stress tensor,
where $I$ is the $N\times N$ identity matrix and $D(U)$ is the doubled 
strain tensor whose $(i,j)$ component is $D_{ij}(U)=D_i U_j+D_j U_i$.
Moreover, $\Di S(U,\Te)$ is the $N$-component vector function with the $i\text{th}$ component:
\begin{equation*}
\sum_{j=1}^N D_j(D_j U_i+D_i U_j-\delta_{ij}\Te)=\De U_i+D_i \di U-D_{i}\Te.
\end{equation*}
Let $\nu=(0,\dots,0,-1)^{T}$ be the unit outer normal to
$\bdry$, and then 
%the $i$ th component of $S(U,\Te)\nu$ is given by
%
%
\begin{equation*}
i^{\rm th}\text{ component of }S(U,\Te)\nu=
\left\{\begin{aligned}
&-(D_N U_i+D_i U_N)&&(i=1,\dots,N-1), \\
&-2D_N U_N+\Te&&(i=N).
\end{aligned}\right.
\end{equation*}
The parameters $\gra>0$ and $\sur>0$ describe the gravitational acceleration
and the surface tension coefficient, respectively,
and the functions $f=(f_1(x),\dots,f_N(x))^T$ and $d=d(x^\pr)$ are given initial data.

The equations \eqref{SP} arise in the study of a free boundary problem
for the incompressible Navier-Stokes equations.
The free boundary problem is mathematically to find
a $N$-component vector function $u=(u_1(x,t),\dots,u_N(x,t))^{T}$,
a scalar function $\te=\te(x,t)$, and a free boundary
$\Ga(t)=\{(x^\pr,x_N)\mid x^\pr\in\tws,\ x_N=h(x^\pr,t)\}$
satisfying the following Navier-Stokes equations:
\begin{equation}\label{NS}
\left\{\begin{aligned}
\rho(\pa_t u+u\cdot\nabla u)-\Di S(u,\te)&=-\rho\gra\nabla x_{N},
&&\di u=0&&\text{in $\Om(t)$, $t>0$},\\
\pa_t h+u^\pr\cdot\nabla^\pr h-u_N&=0&& &&\text{on $\Ga(t)$, $t>0$},\\
S(u,\te)\nu_t&=\sur\ka\nu_{t}&& &&\text{on $\Ga(t)$, $t>0$},\\
u|_{t=0}&=u_0&& &&\text{in $\Om(0)$}, \\
h|_{t=0}&=h_0 && && \text{on $\tws$}.
\end{aligned}\right.
\end{equation}

Here $\Om(t)=\{(x^{\prime},x_{N})\mid x^{\pr}\in\tws,\ x_{N}<h(x^{\pr},t)\}$,
and $\Om(0)$ is a given initial domain;
$\rho$ is a positive constant describing the density of the fluid;
$\ka=\ka(x,t)$ is the mean curvature of $\Ga(t)$,
and $\nu_t$ is the unit outer normal to $\Ga(t)$;
$u\cdot\nabla u=\sum_{j=1}^N u_j D_j u$,
and $u^\pr\cdot\nabla^\pr h=\sum_{j=1}^{N-1}u_j D_j h$.

A problem is called the finite depth one if the equations (\ref{NS}) is considered in 
$\Om(t)=\{(x^\pr,x_N)\mid x^\pr\in\tws,\ -b<x_N<h(x^\pr,t)\}$ for some constant $b>0$ 
with  Dirichlet boundary condition on the lower boundary: 
$\Ga_b=\{(x^\pr,x_N)\mid x^\pr\in\tws,\ x_N=-b\}$.
There are several results for the finite depth problem.
In fact, Beale \cite{Beale} proved the local well-posedness in  the case of $\sur=0$ and $\gra>0$, 
and also \cite{Beale2} proved the global well-posedness for small initial data when $\sur>0$ and $\gra>0$.
Beale and Nishida \cite{B-N} proved decay properties of the solution obtained in \cite{Beale2},
but the paper is just survey.
We can find the detailed proof in Hataya \cite{Hataya}.
Tani and Tanaka \cite{T-T} also treated both case of $\sur=0$ and $\sur>0$ under the condition $\gra>0$.
Along with these results, we refer to Allain \cite{Allain}, Hataya and Kawashima \cite{H-K},
and Bae \cite{Bae}.
Note that they treated the problem in the $L_2\text{-}L_2$ framework, that is,
their classes of solutions are contained in the space-time $L_2$ space,
and their methods are based on the Hilbert space structure. 
Thus, their methods do not work in  general Banach spaces. 
From this viewpoint, we need completely different techniques since our aim is to treat 
(\ref{NS}) in the  $L_p\text{-}L_q$ framework.

The study of free boundary problems with surface tension and gravity
in the $L_p$-$L_q$ maximal regularity class were started by Shibata and Shimizu \cite{S-S:Rep}.
We especially note that Abels \cite{Abels} proved the local well-posedness
of the finite depth problem with $p=q>N$, $\sur=0$, and $\gra>0$.
In the case of the $L_p\text{-}L_q$ framework,
Shibata \cite{S1} proved the local well-posedness of 
free boundary problems
for the Navier-Stokes equations with $\sur=\gra=0$ 
in general unbounded domains
containing the finite depth problem,
where $p$ and $q$ are exponents satisfying the conditions: 
$1<p,q<\infty$ and $2/p+N/q<1$.

%We can find some calculations concerning spectrum for the resolvent problem
%corresponding to the linearized equations of (\ref{NS}) in Beale and Nishida \cite{B-N},
%which is the start of the study of (\ref{NS}) as long as we know.
Concerning \eqref{NS}, under some smallness condition of initial data,
Pr\"{u}ss and Simonett \cite{P-S1} showed the local well-posedness of
the two-phase problem containing \eqref{NS} with $\sur>0$ and $\gra=0$,
and also \cite{P-S2} and \cite{P-S3} proved the local well-posedness of 
the case where $\gra>0$ and $\sur>0$.
Recently, there are two papers due to
Shibata and Shimizu \cite{S-S:half,S-S:model},
which treat the linearized problem of (\ref{NS}) and some resolvent problem.
But all the papers 
do not have any results about decay properties of solutions
for the linearized problem of (\ref{NS}).
In the present paper, we show decay properties of solutions to (\ref{SP}) 
as the first step to prove the global well-posedness of (\ref{NS}).

Now we shall state our main results. 
For this purpose, we introduce some symbols and function spaces.
For any domain $\Om$ in $\ws$, positive integer $m$, and $1\leq q\leq\infty$, 
$L_{q}(\Om)$ and $W_{q}^{m}(\Om)$ denote the usual Lebesgue and Sobolev spaces 
with $\|\cdot\|_{L_{q}(\Om)}$ and $\|\cdot\|_{W_{q}^{m}(\Om)}$, respectively,
and we set $W_{q}^{0}(\Om)=L_{q}(\Om)$.
Let $\BN$ be the set of all natural numbers and $\BN_0=\BN\cup\{0\}$, 
and let $\BC$ be the set of all complex numbers.
For differentiations, we use the symbols $D_x^\al$ and $D_{\xi^\pr}^{\beta^\pr}$ defined by 
\begin{align*}
D_{x}^{\alpha}f(x_{1},\dots,x_{N})
&=\frac{\pa^{|\al|}}{\pa x_1^{\al_{1}}\dots\pa x_N^{\al_N}}f(x_1,\dots,x_N)
=D_1^{\al_1}\dots D_N^{\al_N}f(x_1,\dots,x_N), \\
D_{\xi^\pr}^{\beta^\pr}g(\xi_1,\dots,\xi_{N-1})
&=\frac{\pa^{|\beta^\pr|}}{\pa\xi_1^{\beta_1}\dots \pa\xi_{N-1}^{\beta^{N-1}}}g(\xi_{1},\dots,\xi_{N-1})
=D_{1}^{\beta_1}\dots D_{N-1}^{\beta_{N-1}}g(\xi_1,\dots,\xi_{N-1}),
\end{align*}
where $\al=(\al_1,\dots,\al_N)\in\BN_0^N$ and $\beta^\pr=(\beta_1,\dots,\beta_{N-1})\in\BN_0^{N-1}$.
In addition, for any vector functions $u(x)=(u_1(x),\dots,u_N(x))^T$, 
$D_x^\al u(x)$ is given by $D_x^\al u(x)=(D_x^\al u_1(x),\dots,D_x^\al u_N(x))^T$,
and also 
\begin{equation*}
\nabla u=\{D_i u_j\mid i,j=1,\dots,N\},\quad
\nabla^2 u=\{D_i D_j u_k\mid i,j,k=1,\dots,N\}.
\end{equation*}
%
%
%$\nabla^{\prime}$ is the usual gradient in $\mathbf{R}^{N-1}$.
%On the other hands, for scalar functions $\te$, we use the following symbols:
%
%
Let $X$ and $Y$ be Banach spaces with $\|\cdot\|_X$ and $\|\cdot\|_Y$, respectively,
and then $\CL(X,Y)$ denotes the set of all bounded linear operators
from $X$ to $Y$, and set $\CL(X)=\CL(X,X)$.
For $m\in \BN_{0}$ and an interval $I$ in $\BR$,
$C^{m}(I,X)$ is the set of all $X$-valued $C^m$-functions defined on $I$.
Let $X^m$ be the $m$-product space of $X$ with $m\in\BN$, 
while we use the symbol $\|\cdot\|_{X}$ to denote its norm for short, that is,
\begin{equation*}
\begin{aligned}
&\|u\|_X=\sum_{j=1}^m\|u_j\|_X&&\text{for $u=(u_1,\dots,u_m)\in X^m$}.
\end{aligned}
\end{equation*}
For $1<q<\infty$, non-integer $s>0$, and $m\in\BN$,
$W_q^s(\BR^m)$ denotes the Slobodeckii spaces defined by 
\begin{align*}
W_q^s(\BR^m)&=\{u\in W_q^{[s]}(\BR^m)\mid\|u\|_{W_q^s(\BR^m)}<\infty\}, \\
\|u\|_{W_q^s(\BR^m)}&=\|u\|_{W_q^{[s]}(\BR^m)}+\sum_{|\al|=[s]}\Big(\int_{\BR^m}\int_{\BR^m}
\frac{|D_x^\al u(x)-D_y^\al u(y)|^{q}}{|x-y|^{m+(s-[s])q}}\,dxdy\Big)^{1/q},
\end{align*}
where $[s]$ is the largest integer lower than $s$.
For any vector function $u=(u_1,\dots,u_N)^T$ and 
$v=(v_1,\dots,v_N)^T$ defined on $\uhs$, we set  
\begin{equation*}
(u,v)_{\uhs}=\int_{\uhs}u(x)\cdot v(x)\,dx
=\sum_{j=1}^{N}\int_{\uhs}u_{j}(x)v_{j}(x)\,dx.
\end{equation*}
The letter $C$ denotes a generic constant and 
$C(a,b,c,\dots)$ a generic constant depending on the quantities $a,b,c,\dots$.
The value of $C$ and $C(a,b,c,\dots)$ may change from line to line.

%
%
%$C_{0}^{\infty}(\Omega)$ denotes the set of all $C^{\infty}$ functions defined on $\Omega$ 
%whose supports are compact and contained in $\Omega$, and
Let $\wh{W}_{q}^{1}(\uhs)$ be the homogeneous spaces of order $1$ defined by 
$\wh{W}_{q}^{1}(\uhs)=\{\te\in L_{q,{\rm loc}}(\uhs)\mid\nabla\te\in L_{q}(\uhs)^N\}$.
In addition, we set 
$\wh{W}_{q,0}^1(\uhs)=\{\te\in\wh{W}_q^1(\uhs)\mid\te|_{\bdry}=0\}$ and
$W_{q,0}^{1}(\uhs)=\{\te\in W_q^1(\uhs)\mid\te|_{\bdry}=0\}$.
As was seen in \cite[Theorem A.3]{Shibata1}, 
$W_{q,0}^1(\uhs)$ is dense in $\wh{W}_{q,0}^1(\uhs)$ with the gradient norm $\|\nabla\cdot\|_{L_q(\uhs)}$.
Then the second solenoidal space $J_{q}(\uhs)$ is defined by
\begin{equation*}
J_{q}(\uhs)=\{f\in L_q(\uhs)^N\mid(f,\nabla\ph)_{\uhs}=0
\enskip\text{for any $\ph\in\wh{W}_{q^\pr,0}^1(\uhs)$}\},
\end{equation*}
where $1/q+1/q^\prime=1$. For simplicity, we set
\begin{align}\label{X_q}
&X_q=J_q(\uhs)\times W_q^{2-1/q}(\tws),\quad X_q^0=L_q(\uhs)\times L_q(\tws), \notag \\
&X_q^i=L_q(\uhs)\times W_q^{i-1/q}(\tws)\quad(i=1,2),	
\end{align}
and let $\CE H$ be the harmonic extension of $H$, that is, 
\begin{equation}\label{ext:H}
	\left\{\begin{aligned}
		\De\CE H&=0 && \text{in $\uhs$}, \\
		\CE H&=H && \text{on $\bdry$}.
	\end{aligned}\right.
\end{equation}
The main results of this paper then is stated as follows:
\begin{theo}\label{theo:main}
Let $1<p<\infty$, $c_g>0$, and $c_\si>0$.
\begin{enumerate}[$(1)$]
\item
For every $t>0$ there exists operators
\begin{equation*}
S(t)\in\CL(X_p^2,W_p^{2}(\uhs)^{N}),
\enskip\Pi(t)\in\CL(X_p^2,\widehat{W}_{p}^{1}(\uhs)),
\enskip T(t)\in\CL(X_p^2,{W}_{p}^{3-1/p}(\tws))
\end{equation*}
such that for $F=(f,d)\in X_p$
\begin{align*}
S(\cdot)F&\in C^1((0,\infty),J_p(\uhs))\cap C^0((0,\infty),W_p^2(\uhs)^N), \\
\Pi(\cdot)F&\in C^0((0,\infty),\widehat{W}_p^1(\uhs)), \\
T(\cdot)F&\in C^1((0,\infty),W_p^{2-1/p}(\tws))\cap C^0((0,\infty),W_p^{3-1/p}(\tws)),
\end{align*}
and that $(U,\Te,H)=(S(t)F,\Pi(t)F,T(t)F)$ solves uniquely $(\ref{SP})$ with
\begin{equation*}
\lim_{t\to 0}\|(U(t),H(t))-(f,d)\|_{X_p}=0.
\end{equation*}
\item
Let $1\leq r\leq 2\leq q\leq\infty$ and $F=(f,d)\in X_r^0\cap X_p^2$.
The operators, obtained in $(1)$, then are decomposed into
\begin{align}\label{dfn:ops}
&S(t)F=S_0(t)F+S_\infty(t)F+R(t)f,\notag \\
&\Pi(t)F=\Pi_0(t)F+\Pi_\infty(t)F+P(t)f, \notag\\
&T(t)F=T_0(t)F+T_\infty(t)F,
\end{align}
which satisfy the estimates as follows:
For $k=1,2$, $\ell=0,1,2$, and $t\geq 1$
\begin{align}\label{140917_1}
\|(S_0(t)F,\pa_t\CE(T_0(t)F))\|_{L_q(\uhs)}&\leq C(t+1)^{-m(q,r)}\|F\|_{X_r^0}\quad\text{if $(q,r)\neq(2,2)$}, \notag \\
\|\nabla^k  S_0(t)F\|_{L_q(\uhs)}&\leq C(t+1)^{-n(q,r)-k/8}\|F\|_{X_r^0}, \notag \\
\|(\pa_t S_0(t)F,\nabla\Pi_0(t)F)\|_{L_q(\uhs)}&\leq C(t+1)^{-m(q,r)-1/4}\|F\|_{X_r^0}, \notag \\
\|\nabla^k\pa_t\CE(T_0(t)F)\|_{L_q(\uhs)}&\leq C(t+1)^{-m(q,r)-k/2}\|F\|_{X_r^0}, \notag \\
\|\nabla^{1+\ell}\CE(T_0(t)F)\|_{L_q(\uhs)}&\leq C(t+1)^{-m(q,r)-1/4-\ell/2}\|F\|_{X_r^0}
%\|\nabla^{1+l}\CE(T_0(t)F)\|_{L_q(\uhs)}&\leq C(t+1)^{-m(q,r)-1/4-\ell/2}\|F\|_{X_r^0}, \\
%\|\nabla^2\pa_t\CE(T_0(t)F)\|_{L_q(\uhs)}&\leq C(t+1)^{-m(q,r)-1}\|F\|_{X_r^0}.
\end{align}
with some positive constant $C$, where we have set
\begin{align*}
m(q,r)&=\frac{N-1}{2}\left(\frac{1}{r}-\frac{1}{q}\right)
+\frac{1}{2}\left(\frac{1}{2}-\frac{1}{q}\right), \\
n(q,r)&=\frac{N-1}{2}\left(\frac{1}{r}-\frac{1}{q}\right)
+\min\left\{\frac{1}{2}\left(\frac{1}{r}-\frac{1}{q}\right),\frac{1}{8}\left(2-\frac{1}{q}\right)\right\}.
\end{align*}
In addition, there exist positive constants $\de$ and $C$ such that for $t\geq 1$
\begin{multline}\label{140917_2}
\|(\pa_t S_\infty(t)F,\nabla\Pi_\infty(t)F)\|_{L_p(\uhs)}\\
+\|(S_\infty(t)F,\pa_t\CE(T_\infty(t)F)),\nabla \CE(T_\infty(t)F)))\|_{W_p^2(\uhs)}
\leq Ce^{-\de t}\|F\|_{X_p^2}.
\end{multline}
Finally, for $t\geq 1$ and $\ell=0,1,2$,
\begin{align}\label{140917_3}
&\|\nabla^\ell R(t)f\|_{L_p(\uhs)}\leq C(t+1)^{-\ell/2}\|f\|_{L_p(\uhs)}, \notag \\
&\|(\pa_t R(t)f,\nabla P(t)f)\|_{L_p(\uhs)}\leq C(t+1)^{-1}\|f\|_{L_p(\uhs)}.
\end{align}
\end{enumerate}
\end{theo}
%
%
%\begin{rema}
%Theorem \ref{theo:main} (1) is proved by \cite[Theorem 1.1]{S-S:half} in the same way as in \cite{Shibata1},
%so that we may omit the proof.
%\end{rema}
%
%
This paper consist of five sections.
In the next section, we introduce some symbols and lemmas,
and also consider some resolvent problem associated with (\ref{SP}) with $\gra=\sur=0$.
%In Section 3, we show Theorem \ref{thm:main2} by the theory of analytic semi-groups.
In Section 3, we construct the operators $S(t),\Pi(t)$, and $T(t)$, and also give the decompositions \eqref{dfn:ops}.
%we give the decompositions (\ref{dfn:ops}) of $S(t)F$, $\Pi(t)F$, and $T(t)F$.
Finally, Theorem \ref{theo:main} (2) is proved in Section 4 and Section 5.

%%%%%%%%%%%%%%%%%%%%%%%%%%%%%%%%%%%%%%%%%%%%%%%%%%%%%%%%%%%%%%%%%%%%%%

%%%%%%%%%%%%%%%%%%%%%%%%%%%%%%%%%%%%%%%%%%%%%%%%%%%%%%%%%%%%%%%%%%%%%%
\section{Preliminaries}\label{sec:pre}
We first give some symbols used throughout this paper. Set
\begin{equation*}
\Si_\ep=\{\la\in\BC\mid|\arg{\la}|<\pi-\ep,\ \la\neq0\},\quad
\Si_{\ep,\la_0}=\{\la\in\Si_\ep\mid|\la|\geq\la_{0}\}
\end{equation*}
for any $0<\ep<\pi/2$ and $\la_0>0$.
We then define
%To describe the exact solution formulas of several equations 
%we use the the following symbols:  
%
%
\begin{align}\label{0921_1}
&A=|\xi^\pr|,\quad B=\sqrt{\la+|\xi^\pr|^2}\quad
({\rm Re}B\geq0),\quad \CM(a)=\frac{e^{-Ba}-e^{-Aa}}{B-A}, \notag \\
&D(A,B)=B^{3}+AB^{2}+3A^{2}B-A^{3}, \notag \\
&L(A,B)=(B-A)D(A,B)+A(\gra+\sur A^{2})
\end{align}
for $\xi^\pr=(\xi_{1},\dots,\xi_{N-1})\in\tws$,
$\la\in\Si_\ep$, and $a>0$. Especially, we have, for $\ell=1,2$, 
\begin{align}\label{deriv:M}
&\frac{\pa^\ell}{\pa a^{\ell}}\CM(a)
=(-1)^{\ell}\left((B+A)^{\ell-1}e^{-Ba}+A^\ell\CM(a)\right), \notag \\
%D_{N}^\ell\CM(x_N)=(-1)^\ell\Bigl(\frac{B^\ell-A^\ell}{B-A}e^{-Bx_N}
%+ A^\ell \CM(x_N)\Bigr),
&\CM(a)=-a\int_0^1 e^{-(B\te+A(1-\te))a}\,d\te.
\end{align}
The following lemma was proved 
in \cite[Lemma 5.2, Lemma 5.3, Lemma 7.2]{S-S:model}.
\begin{lemm}\label{lem:symbol}
Let $0<\ep<\pi/2$, $s\in\BR$, $a>0$, and $\al^\pr\in\BN_{0}^{N-1}$.
\begin{enumerate}[$(1)$]
\item
There holds the estimate
\begin{equation*}
b_\ep(|\la|^{\frac{1}{2}}+A)\leq {\rm Re}B\leq |B|\leq (|\la|^{\frac{1}{2}}+A)
\end{equation*}
for any $(\xi^\pr,\la)\in\tws\times\Si_\ep$ with $b_\ep=(1/\sqrt{2})\{\sin(\ep/2)\}^{3/2}$.
\item
There exist a positive constant $C=C(\ep,s,\al^\pr)$ such that
for any $(\xi^\pr,\la)\in(\tws\setminus\{0\})\times\Si_\ep$
\begin{align*}
&|D_{\xi^\pr}^{\al^\pr}A^{s}|\leq CA^{s-|\al^{\pr}|},
\quad |D_{\xi^{\prime}}^{\alpha^{\prime}} e^{-Aa}|\leq CA^{-|\alpha^{\prime}|}e^{-(A/2)a},
\quad|D_{\xi^{\prime}}^{\alpha^{\prime}}B^{s}|\leq C(|\lambda|^{\frac{1}{2}}+A)^{s-|\alpha^{\prime}|}, \\
&|D_{\xi^{\prime}}^{\alpha^{\prime}}e^{-Ba}|\leq C(|\lambda|+A)^{-|\alpha^{\prime}|}e^{-(b_{\varepsilon}/8)(|\lambda|^{1/2}+A)a},
\quad|D_{\xi^{\prime}}^{\alpha^{\prime}}D(A,B)^{s}|\leq C(|\lambda|^{\frac{1}{2}}+A)^{3s}A^{-|\alpha^{\prime}|} \\
&|D_{\xi^{\prime}}^{\alpha^{\prime}}\CM(a)|\leq CA^{-1-|\alpha^{\prime}|}e^{-(b_{\varepsilon}/8)Aa},
\quad|D_{\xi^{\prime}}^{\alpha^{\prime}}\CM(a)|\leq C|\lambda|^{-\frac{1}{2}}A^{-|\alpha^{\prime}|}e^{-(b_{\varepsilon}/8)Aa}.
\end{align*}
\item
There exist positive constants $\lambda_{0}=\lambda_{0}(\varepsilon)\geq1$
and $C=C(\varepsilon,\lambda_0,\alpha^\prime)$ such that
for any $(\xi^{\prime},\lambda)\in(\mathbf{R}^{N-1}\setminus\{0\})\times\Sigma_{\varepsilon,\lambda_{0}}$
\begin{equation*}
|D_{\xi^{\prime}}^{\alpha^{\prime}}L(A,B)^{-1}|\leq 
C\{|\lambda|(|\lambda|^{\frac{1}{2}}+A)^{2}+A(\gra+\sur A^{2})\}^{-1}A^{-|\alpha^{\prime}|}.
\end{equation*}
\end{enumerate}
\end{lemm}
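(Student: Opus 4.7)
The three parts are proved in order, with increasing difficulty; throughout I exploit elementary complex analysis of $B=\sqrt{\la+A^2}$ on the sector $\Si_\ep$ together with the identity $B^2-A^2=\la$.

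For (1), since $\Si_\ep$ is star-shaped about the positive real axis and $A^2\geq 0$, the sum $\la+A^2$ again lies in $\Si_\ep$, whence $|\arg B|\leq (\pi-\ep)/2$ and $\mathrm{Re}\,B\geq|B|\sin(\ep/2)$. The upper bound $|B|\leq|\la|^{1/2}+A$ is immediate from $|B|^2\leq|\la|+A^2$. For the matching lower bound $|B|\geq c_\ep(|\la|^{1/2}+A)$ I would split into the regimes $A^2\leq c|\la|$ and $A^2\geq c|\la|$ and, in each, verify the elementary inequality $|\la+A^2|\geq c_\ep^2(|\la|+A^2)$ for $\la\in\Si_\ep$ by inspecting the real and imaginary parts. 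Combining the angular and absolute estimates produces the stated constant $b_\ep=(1/\sqrt 2)\{\sin(\ep/2)\}^{3/2}$.

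For (2), the estimates go by induction on $|\al^\pr|$ via the chain rule and the identities $D_{\xi_k'}A=\xi_k'/A$ and $D_{\xi_k'}B=\xi_k'/B$; together with $|\xi_k'|\leq A$ and part (1), they yield the homogeneity bounds on $A^s$ and $B^s$. Differentiating $e^{-Aa}$ or $e^{-Ba}$ produces polynomial factors in $a$ times negative powers of $A$ or $|\la|^{1/2}+A$; the powers of $a$ are absorbed via $t^ke^{-t/2}\leq C_k$ applied to $t=Aa$ or $t=\mathrm{Re}(B)a$, which is exactly what forces the slightly weakened exponents $A/2$ and $b_\ep/8$. The bound on $D(A,B)^s$ follows from the factorization $D(A,B)=B(B^2+3A^2)+A\la$ (using $B^2-A^2=\la$) combined with part (1), which gives $|D(A,B)|\sim(|\la|^{1/2}+A)^3$. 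For the first bound on $\CM(a)$, differentiate the integral representation in \eqref{deriv:M} and absorb $a$ using $ae^{-cAa/2}\leq CA^{-1}$; for the second, split into the regimes $A\leq|\la|^{1/2}$ (use the quotient form together with $|B-A|=|\la|/|B+A|\gtrsim|\la|^{1/2}$) and $A\geq|\la|^{1/2}$ (in which the first bound already implies the second since $A^{-1}\leq|\la|^{-1/2}$).

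The main obstacle is (3). Using $B-A=\la/(B+A)$ and $D(A,B)=B(B^2+3A^2)+A\la$, I rewrite
\begin{equation*}
L(A,B)=\frac{\la B(B^2+3A^2)}{B+A}+\frac{A\la^2}{B+A}+A(\gra+\sur A^2).
\end{equation*}
By part (1), $|B+A|\sim|\la|^{1/2}+A$ and $|B^2+3A^2|=|\la+4A^2|\sim(|\la|^{1/2}+A)^2$, so the first term has modulus of order $|\la|(|\la|^{1/2}+A)^2$. The plan is to show that for $\la_0=\la_0(\ep)$ sufficiently large and $(\xi^\pr,\la)\in(\tws\setminus\{0\})\times\Si_{\ep,\la_0}$, this leading term dominates, i.e.\ the ratio $L(A,B)/[|\la|(|\la|^{1/2}+A)^2+A(\gra+\sur A^2)]$ stays bounded away from zero. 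The delicate regime is $A\sim|\la|^{1/2}$, where the resolvent and surface-gravity contributions live on the same scale; I would rule out cancellation by inspecting the argument of $\la B(B^2+3A^2)/(B+A)$, which lies in a definite subcone of $\BC\setminus\{0\}$ determined by $\arg\la$ and is therefore bounded away from the positive real axis that contains $A(\gra+\sur A^2)$, and by using that $A\la^2/(B+A)$ is of lower order once $|\la|\geq\la_0$ is large. The derivative bounds $|D_{\xi^\pr}^{\al^\pr}L(A,B)^{-1}|$ then follow by Leibniz together with the homogeneity estimates of part (2) applied to each factor $A^s$, $B^s$, $(B+A)^{-1}$, $\la$.
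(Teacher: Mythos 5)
The paper does not prove Lemma \ref{lem:symbol} itself; it cites it from \cite[Lemmas 5.2, 5.3, 7.2]{S-S:model}, so there is no internal proof to compare against. Your sketches of (1) and (2) are essentially the standard arguments and are correct in outline; the one spot you pass over too quickly in (2) is the \emph{lower} bound $|D(A,B)|\geq c(|\la|^{1/2}+A)^3$, which (being the $s=-1$ case of the stated estimate) does not follow from part~(1) and the factorization alone -- you still have to rule out cancellation between $B(B^2+3A^2)$ and $A\la$, e.g.\ by observing that $\mathrm{Im}\,\la$, $\mathrm{Im}\,B$, $\mathrm{Im}(B+A)$ all share a sign so the two summands of $D=(B+A)\la+4A^2B$ lie in the same half-plane, and then treating real $\la>0$ separately.

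For part (3) there is a genuine gap. With $Q=\la B(B^2+3A^2)/(B+A)$, the total argument
$\arg Q=\arg\la+\arg B+\arg(\la+4A^2)-\arg(B+A)$
is \emph{not} confined to a fixed proper subcone: as $A^2/|\la|$ runs from $0$ to $\infty$ with $\arg\la\to\pi-\ep$, $\arg Q$ sweeps (mod $2\pi$) through at least an interval of length $\pi$, passing through the negative real axis when $A\gg|\la|^{1/2}$ (there $\arg B,\arg(\la+4A^2),\arg(B+A)\to0$ and $\arg Q\to\arg\la$). You also have the axis backwards: since $A(\gra+\sur A^2)$ sits on the positive real ray, the dangerous direction for $Q$ is the \emph{negative} real axis, not the positive one. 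So the claimed angular separation fails precisely in the regime $A\gg|\la|^{1/2}$, $|\la|\sim\sur A$, where $|Q|$ and $A(\gra+\sur A^2)$ have comparable magnitude and nearly opposite signs. What saves the estimate there is the quantitative exclusion of the negative real axis: writing $\la=|\la|e^{i\te}$ with $|\te|\leq\pi-\ep$ one gets $|\mathrm{Im}\,Q|\gtrsim(\sin\ep)|\la|(|\la|^{1/2}+A)^2$ in that regime, while $A(\gra+\sur A^2)$ is real, so $|L|\geq|\mathrm{Im}\,L|=|\mathrm{Im}\,Q|$ gives the bound with a constant degenerating like $\sin\ep$ (consistent with $C=C(\ep,\la_0,\al^\pr)$). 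In the complementary regimes the size comparison you indicate does suffice. So the strategy -- show the $(B-A)D(A,B)$ contribution controls $L$ modulo $\la\in\Si_\ep$ and $|\la|\geq\la_0$ -- is the right one, but the proof must separate real and imaginary parts rather than rely on an angular containment that does not hold.
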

Let $f(x)$ and $g(\xi)$ be functions defined on $\ws$, 
and then the Fourier transform of $f(x)$ and the inverse Fourier 
transform of $g(\xi)$ are defined by
\begin{equation*}
\CF[f](\xi)=\int_{\mathbf{R}^{N}}e^{-ix\cdot\xi}f(x)\,dx,\quad
\CF_{\xi}^{-1}[g](x)=\frac{1}{(2\pi)^{N}}\int_{\mathbf{R}^{N}}e^{ix\cdot\xi}g(\xi)\,d\xi.
\end{equation*}
We also define the partial Fourier transform of $f(x)$ 
and the inverse partial Fourier transform of $g(\xi)$ with 
respect to tangential variables $x' = (x_1, \ldots, x_{N-1})$ and
its dual variable $\xi' = (\xi_1, \ldots, \xi_{N-1})$ by 
\begin{align*}
&\widehat{f}(\xi^{\prime},x_{N})	=\int_{\tws}e^{-ix^{\prime}\cdot\xi^{\prime}}f(x^{\prime},x_{N})\,dx^{\prime}, \\
&\CF_{\xi^{\prime}}^{-1}[g](x^{\prime},\xi_N)=\frac{1}{(2\pi)^{N-1}}
\int_{\tws}e^{ix^{\prime}\cdot\xi^{\prime}}g(\xi^{\prime},\xi_{N})\,d\xi^{\prime}.
\end{align*}

Next we consider the following resolvent problem:
\begin{equation}\label{eq:1}
\left\{
\begin{aligned}
\lambda w-{\rm Div}S(w,p)&=f,
&&{\rm div}w=0
&&\text{in $\uhs$}, \\
S(w,p)\nu&=0
&&
&&\text{on $\bdry$}.
\end{aligned}
\right.
\end{equation}
\begin{lemm}\label{lem:1}
Let $0<\varepsilon<\pi/2$, $1<q<\infty$, $\lambda\in\Sigma_{\varepsilon}$,
and $f\in L_{q}(\uhs)^{N}$.
Then the equations $(\ref{eq:1})$ admits a unique solution
%$w = w(x, \lambda)$ and $p = p(x, \lambda)$ with 
$(w,p)\in W_{q}^{2}(\mathbf{R}_{+}^{N})^{N} 
\times\widehat{W}_{q}^{1}(\mathbf{R}_{+}^{N})$
possessing the estimate:
\begin{equation*}%\label{0923_2}
\|(\lambda w,\lambda^{1/2}\nabla w,\nabla^{2}w,\nabla p)\|_{L_{q}(\mathbf{R}_{+}^{N})}
\leq C\|f\|_{L_{q}(\mathbf{R}_{+}^{N})}
\end{equation*}
with some positive constant $C=C(\ep,q,N)$.
In addition, $\widehat{w}_N(\xi', 0, \lambda)$ is given by 
\begin{align}
\widehat{w}_{N}(\xi^{\prime},0, \lambda)
=&\sum_{k=1}^{N-1}\frac{i\xi_{k}(B-A)}{D(A,B)}
\int_{0}^{\infty}e^{-By_{N}}\widehat{f}_{k}(\xi^{\prime},y_{N})\,dy_{N} \notag \\
&+\frac{A(B+A)}{D(A,B)}\int_{0}^{\infty}
e^{-By_{N}}\widehat{f}_{N}(\xi^{\prime},y_{N})\,dy_{N} \notag \\ 
&-\sum_{k=1}^{N-1}\frac{i\xi_{k}(B^{2}+A^{2})}{D(A,B)}
\int_{0}^{\infty}\mathcal{M}(y_{N})\widehat{f}_{k}(\xi^{\prime},y_{N})\,dy_{N} \notag \\
&-\frac{A(B^{2}+A^{2})}{D(A,B)}\int_{0}^{\infty}
\mathcal{M}(y_{N})\widehat{f}_{N}(\xi^{\prime},y_{N})\,dy_{N} 
\label{w:traceB} \displaybreak[0] \\
=&\sum_{k=1}^{N-1}\frac{i\xi_{k}(B-A)}{D(A,B)}\int_{0}^{\infty} 
e^{-Ay_{N}}\widehat{f}_{k}(\xi^{\prime},y_{N})\,dy_{N} \notag \\ 
&+\frac{A(B+A)}{D(A,B)}\int_{0}^{\infty}e^{-Ay_{N}}
\widehat{f}_{N}(\xi^{\prime},y_{N})\,dy_{N} \notag \\
&
-\sum_{k=1}^{N-1}\frac{2i\xi_{k}AB}{D(A,B)}
\int_{0}^{\infty}\mathcal{M}(y_{N})
\widehat{f}_{k}(\xi^{\prime},y_{N})\,dy_{N} \notag \\
&-\frac{2A^{3}}{D(A,B)}\int_{0}^{\infty}
\mathcal{M}(y_{N})\widehat{f}_{N}(\xi^{\prime},y_{N})\,dy_{N}. \label{w:traceA}
\end{align}
\end{lemm}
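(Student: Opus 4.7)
The plan is to apply the partial Fourier transform with respect to $x^\pr$ to reduce \eqref{eq:1} to a system of ordinary differential equations in $x_N$, solve that system explicitly, read off the trace formula for $\wh{w}_N(\xi^\pr,0,\la)$, and then derive the $L_q$-estimate by a Fourier multiplier argument based on Lemma \ref{lem:symbol}.

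\textbf{Step 1: Reduction to an ODE system.} Since $\di w=0$, we have $\Di S(w,p)=\De w-\nabla p$, so \eqref{eq:1} becomes the standard Stokes resolvent system. Taking the partial Fourier transform in $x^\pr$, the momentum equation for the pressure gives $(-\pa_N^2+A^2)\wh{p}=i\xi^\pr\cdot\wh{f}^{\,\pr}+\pa_N\wh{f}_N$, while each component of $\wh{w}$ satisfies $(\la+A^2-\pa_N^2)\wh{w}_j$ plus a coupling term with $\wh{p}$. The boundary conditions become $\pa_N\wh{w}_j+i\xi_j\wh{w}_N=0$ ($j=1,\dots,N-1$) and $-2\pa_N\wh{w}_N+\wh{p}=0$ at $x_N=0$, together with the solenoidal relation $\sum_{k=1}^{N-1}i\xi_k\wh{w}_k+\pa_N\wh{w}_N=0$.

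\textbf{Step 2: Explicit solution and appearance of the Lopatinskii determinant.} The decaying homogeneous solutions in $x_N$ are $e^{-Ax_N}$ for $\wh{p}$ and $e^{-Bx_N}$ for $\wh{w}_j$, while particular solutions are built using $e^{-By_N}$ and the kernel $\CM(y_N)$ from \eqref{0921_1}. I would write the ansatz $\wh{p}=P(\xi^\pr,\la)e^{-Ax_N}+\wh{p}_{\rm part}$ and $\wh{w}_j=W_j(\xi^\pr,\la)e^{-Bx_N}+\wh{w}_{j,{\rm part}}$, and insert into the three boundary conditions and the divergence condition. This yields a linear algebraic system whose determinant is precisely $D(A,B)=B^3+AB^2+3A^2B-A^3$ from \eqref{0921_1}. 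Solving the system and taking the trace at $x_N=0$ produces the representation \eqref{w:traceB}. The second form \eqref{w:traceA} follows from \eqref{w:traceB} by the identity $e^{-By_N}=e^{-Ay_N}+(B-A)\CM(y_N)$ together with the elementary calculation $(B-A)^2-(B^2+A^2)=-2AB$, which collapses the $e^{-By_N}$ contributions and the $\CM$-contributions into the new coefficients displayed.

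\textbf{Step 3: Resolvent estimate.} To prove $\|(\la w,\la^{1/2}\nabla w,\nabla^2 w,\nabla p)\|_{L_q(\uhs)}\le C\|f\|_{L_q(\uhs)}$, I would express each component of $w$ and $p$ as a sum of operators of the schematic form $f\mapsto\CF_{\xi^\pr}^{-1}[m(\xi^\pr,\la)\,\wh{E}(x_N,y_N,\xi^\pr,\la)\,\wh{f}(\xi^\pr,y_N)]$, where $\wh{E}$ is a combination of $e^{-Bx_N},e^{-Ax_N},\CM(x_N)$ evaluated with $x_N$ and/or $y_N$, and $m$ is a multiplier rationally built from $A$, $B$, $D(A,B)$. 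The key tool is Lemma \ref{lem:symbol}(2), which provides the Mikhlin-type bounds $|D_{\xi^\pr}^{\al^\pr}m(\xi^\pr,\la)|\le C|\xi^\pr|^{-|\al^\pr|}$ with uniform constants in $\la\in\Si_\ep$ after pulling out the appropriate powers of $|\la|$; together with the exponential decay in $x_N+y_N$ of the kernels, this enables a Volevich-trick/Fubini-type estimate reducing the bound to the standard Fourier multiplier theorem on $L_q(\tws)$. Uniqueness follows from this resolvent estimate applied to the difference of two solutions with $f=0$.

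\textbf{Main obstacle.} The algebra leading to \eqref{w:traceB}–\eqref{w:traceA} is lengthy but routine, so the real technical burden is verifying that the multipliers built from $A$, $B$, $D(A,B)^{-1}$, and the various factors $(B\pm A)$ together with the kernels $e^{-Bx_N}$, $e^{-Ax_N}$, $\CM(x_N)$ satisfy the hypotheses of the Fourier multiplier theorem uniformly in $\la\in\Si_\ep$, so that one recovers the correct resolvent powers $|\la|$, $|\la|^{1/2}$ on the left-hand side. This bookkeeping, handled via Lemma \ref{lem:symbol} and the integral representations in \eqref{deriv:M}, is where the analysis concentrates.
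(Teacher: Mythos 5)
Your proposal is correct, but it takes a genuinely different route from the paper for the part that the paper actually proves. The paper does not re\-derive the resolvent estimate or uniqueness at all; it cites Shibata--Shimizu \cite[Theorem 4.1]{S-S:Neumann} for those, and only proves the trace formulas \eqref{w:traceB}--\eqref{w:traceA}. For that, rather than writing a direct ODE ansatz after the partial Fourier transform as you do, the paper decomposes $w=w^1+w^2,\ p=p^1+p^2$: first it extends $f$ to $\ws$ via the mixed odd/even extension $Ef=(f_1^o,\dots,f_{N-1}^o,f_N^e)^T$, solves the whole-space resolvent problem explicitly by the full Fourier transform (formulas \eqref{sol:whole}), and evaluates the trace using the one-dimensional residue identities of Lemma~\ref{lem:residue} to get \eqref{w1:trace}; this extension is chosen so that $D_Nw_N^1|_{x_N=0}=0$ and $p^1|_{x_N=0}=0$, so that the corrector $(w^2,p^2)$ solves a homogeneous half-space problem with only the tangential stress data $h_j=D_jw^1_N+D_Nw^1_j$, whose explicit boundary-layer solution \eqref{0918_6} already carries $D(A,B)$ as denominator. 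Adding the traces gives \eqref{w:traceB}, and \eqref{w:traceA} follows from $e^{-By_N}=e^{-Ay_N}+(B-A)\CM(y_N)$ exactly as you say (you cite the identity $(B-A)^2-(B^2+A^2)=-2AB$ for the tangential terms; the normal term uses $(B+A)(B-A)-(B^2+A^2)=-2A^2$). Your single-ansatz approach is a legitimate alternative: it avoids the extension trick and residue computation, but it requires setting up and solving a larger algebraic system all at once (the unknown amplitudes for $e^{-Ax_N}$ and $e^{-Bx_N}$ plus the particular solution built from $\CM$) whose determinant is precisely $D(A,B)$, so the bookkeeping is comparable. The whole-space/corrector split has the advantage that the volume-potential part and the Lopatinskii inversion are cleanly separated, and it immediately reuses the explicit corrector formulas from \cite{S-S:model}; it also makes the citation to \cite{S-S:Neumann} for the $L_q$-estimate natural, whereas your Step 3 would have to be carried out in full (it is a correct sketch of the Volevich/Mikhlin argument, using Lemma~\ref{lem:symbol}(2), but the paper does not present it).
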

\begin{proof}
The lemma was proved by Shibata and Shimizu \cite[Theorem 4.1]{S-S:Neumann}
except for \eqref{w:traceB} and \eqref{w:traceA}, so that 
we prove \eqref{w:traceB} and \eqref{w:traceA} here. 

Given functions $g(x)$ defined on $\uhs$, 
we set their even extensions $g^e(x)$ and odd extensions $g^o(x)$ as 
\begin{equation}\label{evenodd}
\begin{aligned}
&g^{e}(x)=
\left\{\begin{aligned}
&g(x^{\prime},x_{N})&&\text{in $\uhs$}, \\
&g(x^{\prime},-x_{N})&&\text{in $\lhs$},
\end{aligned}\right.
&&g^{o}(x)=
\left\{\begin{aligned}
&g(x^{\prime},x_{N})&&\text{in $\uhs$}, \\
&-g(x^{\prime},-x_{N})
&&\text{in $\lhs$},
\end{aligned}\right.
\end{aligned}
\end{equation}
where $\lhs=\{(x^\pr,x_N)\mid x^\pr\in\tws,x_N<0\}$.
In addition, given the right member $f = (f_1, \ldots, f_N)^T$ of \eqref{eq:1},
we set $Ef=(f_{1}^{o},\dots,f_{N-1}^{o},f_{N}^{e})^T$. 
Let $(w^{1},p^{1})$ be the solution to the following 
resolvent problem: % in $\ws$:
\begin{equation*}
\lambda w^{1}-{\rm Div}S(w^{1},p^{1})=Ef,\quad
{\rm div}w^{1}=0\quad
\text{in $\ws$}.
\end{equation*}
We then have the following solution formulas (cf. \cite[Section 3]{S-S:model}):
\begin{align}
w_{j}^{1}(x,\lambda)=&\CF_{\xi}^{-1}\left[\frac{\widehat{(Ef)}_{j}(\xi)}{\lambda+|\xi|^{2}}\right](x)  \notag \\
&-\sum_{k=1}^{N}\CF_{\xi}^{-1}\left[\frac{\xi_{j}\xi_{k}}{|\xi|^{2}(\lambda+|\xi|^{2})}\widehat{(Ef)}_{k}(\xi)\right](x)
\quad(j=1,\dots,N), \notag \\
p^{1}(x,\lambda)=&-\CF_{\xi}^{-1}\left[\frac{i\xi}{|\xi|^{2}}\cdot\widehat{Ef}(\xi)\right](x). \label{sol:whole}
\end{align}
As was seen in \cite[Section 4]{S-S:Neumann}, we have,
by the definition of the extension $E$, 
\begin{equation}\label{0401_1}
D_{N}w_{N}^{1}(x^{\prime},0,\lambda)=0,\quad p^{1}(x^{\prime},0,\lambda)=0.
\end{equation}

Next we give the exact formulas of $\widehat{w}_{N}^{1}(\xi^{\prime},0,\lambda)$
and $\widehat{D_{N}w_{j}^{1}}(\xi^{\prime},0,\lambda)$ for $j=1,\dots,N-1$.
To this end, we use the following lemma which is proved by the residue theorem.
\begin{lemm}\label{lem:residue}
Let $a\in\mathbf{R}\setminus\{0\}$, and let $\xi=(\xi_{1},\dots,\xi_{N})\in\mathbf{R}^{N}$.
Then
\begin{align*}
&\frac{1}{2\pi}\int_{-\infty}^{\infty}
\frac{e^{ia\xi_{N}}}{|\xi|^{2}}\,d\xi_{N}=\frac{e^{-A|a|}}{2A},
\quad\frac{1}{2\pi}\int_{-\infty}^{\infty}
\frac{i\xi_{N}e^{ia\xi_{N}}}{|\xi|^{2}}\,d\xi_{N}
=-{\rm sign}(a)\frac{e^{-A|a|}}{2}, \\
&\frac{1}{2\pi}\int_{-\infty}^{\infty}\frac{e^{ia\xi_{N}}}{\lambda+|\xi|^{2}}\,d\xi_{N}=\frac{e^{-B|a|}}{2B},
\quad\frac{1}{2\pi}
\int_{-\infty}^{\infty}\frac{\xi_{N}e^{ia\xi_{N}}}{\lambda+|\xi|^{2}}d\xi_{N}
={\rm sign}(a)\frac{i}{2}e^{-B|a|}, \\
&\frac{1}{2\pi}\int_{-\infty}^{\infty}\frac{\xi_{N}
e^{ia\xi_{N}}}{|\xi|^{2}(\lambda+|\xi|^{2})}\,d\xi_{N}
={\rm sign}(a)\frac{i}{2\lambda}(e^{-A|a|}-e^{-B|a|}), \\
&\frac{1}{2\pi}\int_{-\infty}^{\infty}
\frac{\xi_{N}^{2}e^{ia\xi_{N}}}{|\xi|^{2}(\lambda+|\xi|^{2})}\,d\xi_{N}
=-\frac{1}{2\lambda}(Ae^{-A|a|}-Be^{-B|a|}),
\end{align*}
where ${\rm sign}(a)$ defined by the formula: ${\rm sign}(a)=1$ when $a>0$ and ${\rm sign}(a)=-1$ when $a<0$.
\end{lemm}
In order to obtain
\begin{align}
\widehat{w}_{N}^{1}(\xi^{\prime},0,\lambda)=&\sum_{k=1}^{N-1}\frac{i\xi_{k}}{\lambda}\int_{0}^{\infty}\left(e^{-Ay_{N}}-e^{-By_{N}}\right)
\widehat{f}_{k}(\xi^{\prime},y_{N})\,dy_{N} \notag \displaybreak[0] \\
&+\int_{0}^{\infty}\frac{e^{-By_{N}}}{B}
\widehat{f}_{N}(\xi^{\prime},y_{N})\,dy_{N} \notag \displaybreak[0] \\
&+\frac{1}{\lambda}\int_{0}^{\infty}
\left(Ae^{-Ay_{N}}-Be^{-By_{N}}\right)
\widehat{f}_{N}(\xi^{\prime},y_{N})\,dy_{N},\notag \displaybreak[0] \\
\widehat{D_{N}w_{j}^{1}}(\xi^{\prime},0,\lambda)
=&-\sum_{k=1}^{N-1}\frac{\xi_{j}\xi_{k}}{\lambda}
\int_{0}^{\infty}\left(e^{-Ay_{N}}-e^{-By_{N}}\right)
\widehat{f}_{k}(\xi^{\prime},y_{N})\,dy_{N} \notag \displaybreak[0] \\
&+\int_{0}^{\infty}e^{-By_{N}}
\widehat{f}_{j}(\xi^{\prime},y_{N})\,dy_{N} \notag \displaybreak[0] \\
&+\frac{i\xi_{j}}{\lambda}
\int_{0}^{\infty}\left(Ae^{-Ay_{N}}-Be^{-By_{N}}\right)
\widehat{f}_{N}(\xi^{\prime},y_{N})\,dy_{N}, \label{0521_1}
\end{align}
we apply the partial Fourier transform with respect to $x'=(x_1,\dots,x_{N-1})$ to (\ref{sol:whole}),
insert the identities in Lemma \ref{lem:residue} into the resultant formula,
and use the formulas: 
\begin{align*}
\mathcal{F}[f_{j}^{o}](\xi)&
=\int_{0}^{\infty}\left(e^{-iy_{N}\xi_{N}}-e^{iy_{N}\xi_{N}}\right)
\widehat{f}_{j}(\xi^{\prime},y_{N})\,dy_{N}\quad(j=1,\dots,N-1), \\
\mathcal{F}[f_{N}^{e}](\xi)&
=\int_{0}^{\infty}\left(e^{-iy_{N}\xi_{N}}+e^{iy_{N}\xi_{N}}\right)
\widehat{f}_{N}(\xi^{\prime},y_{N})\,dy_{N}.
\end{align*}
Here and in the following, $j$ runs from $1$ through $N-1$.
By (\ref{0521_1}) and the fact that $\lambda = B^{2}-A^{2}$ and $e^{-By_{N}}-e^{-Ay_{N}}=(B-A)\CM(y_N)$,
%$\wh{D_j w_N^1}(\xi^\pr,0,\la)=i\xi_j\wh{w}_N^1(\xi^\pr,0,\la)$,
we have
\begin{align}
\widehat{w}_{N}^{1}(\xi^{\prime},0,\lambda)
=&\frac{A}{B(B+A)}\int_{0}^{\infty}
e^{-Bx_{N}}\widehat{f}_{N}(\xi^{\prime},y_{N})\,dy_{N} \notag \displaybreak[0] \\
&-\sum_{k=1}^{N-1}\frac{i\xi_{k}}{B+A}
\int_{0}^{\infty}\mathcal{M}(y_{N})
\widehat{f}_{k}(\xi^{\prime},y_{N})\,dy_{N} \notag \displaybreak[0] \\
&-\frac{A}{B+A}\int_{0}^{\infty}\mathcal{M}(y_{N})
\widehat{f}_{N}(\xi^{\prime},y_{N})\,dy_{N},	\notag \displaybreak[0] \\
\widehat{D_{N}w_{j}^{1}}(\xi^{\prime},0,\lambda)
=&\int_{0}^{\infty}e^{-By_{N}}\widehat{f_{j}}(\xi^{\prime},y_{N})\,dy_{N}
-\frac{i\xi_{j}}{B+A}\int_{0}^{\infty}e^{-By_{N}}
\widehat{f}_{N}(\xi^{\prime},y_{N})\,dy_{N} \notag \displaybreak[0] \\
&+\sum_{k=1}^{N-1}\frac{\xi_{j}\xi_{k}}{B+A}
\int_{0}^{\infty}\mathcal{M}(y_{N})
\widehat{f}_{k}(\xi^{\prime},y_{N})\,dy_{N} \notag \displaybreak[0] \\  
&-\frac{i\xi_{j}A}{B+A}\int_{0}^{\infty}\mathcal{M}(y_{N})
\widehat{f}_{N}(\xi^{\prime},y_{N})\,dy_{N}. \label{w1:trace}
\end{align}

Next we give the exact formula of $\widehat{w}_{N}^{2}(\xi^{\prime},0,\lambda)$.
Setting $w=w^{1}+w^{2}$ and $p=p^{1}+p^{2}$ in (\ref{eq:1}) 
and noting (\ref{0401_1}), we achieve the equations:
\begin{equation*}%\label{1007_1}
\left\{\begin{aligned}
\lambda w^{2}-{\rm Div}S(w^{2},p^{2})&=0,
&&{\rm div}w^{2}=0&&\text{in $\uhs$}, \\
D_{j}w_{N}^{2}+D_{N}w_{j}^{2}&=-h_{j} && 
&&\text{on $\bdry$}, \\
-p^{2}+2D_{N}w_{N}^{2}&=0 && 
&&\text{on $\bdry$}
\end{aligned}\right.
\end{equation*}
with $h_{j}=D_{j}w_{N}^{1}+D_{N}w_{j}^{1}$.
We then obtain the formulas (cf. \cite[Section 4]{S-S:model}):
\begin{align}\label{0918_6}
&w_{N}^{2}(x^\pr,x_N,\lambda)=\mathcal{F}_{\xi^{\prime}}^{-1}[\widehat{w}_{N}^{2}(\xi^{\prime},x_{N},\lambda)](x^{\prime}), \notag\\
&\widehat{w}_{N}^{2}(\xi^{\prime},x_{N},\lambda)=\left(\frac{B-A}{D(A,B)}e^{-Bx_{N}}+\frac{2AB}{D(A,B)}\mathcal{M}(x_{N})\right)
\sum_{j=1}^{N-1}i\xi_{j}\widehat{h}_{j}(\xi^{\prime},0,\lambda).
\end{align}
By (\ref{w1:trace}) and (\ref{0918_6}),
\begin{align*}
\widehat{w}_{N}^{2}(\xi^{\prime},0,\lambda)
=&\frac{B-A}{D(A,B)}\sum_{j=1}^{N-1}i\xi_{j}
\widehat{h}_{j}(\xi^{\prime},0,\lambda) \notag \displaybreak[0] \\ 
=&\sum_{j=1}^{N-1}\frac{i\xi_{j}(B-A)}{D(A,B)}
\left(i\xi_j\widehat{w}_{N}^{1}(\xi^{\prime},0,\lambda)+
\widehat{D_{N}w_{j}^{1}}(\xi^{\prime},0,\lambda)\right) \displaybreak[0] \\
=&\sum_{k=1}^{N-1}\frac{i\xi_{k}(B-A)}{D(A,B)}
\int_{0}^{\infty}e^{-By_{N}}\widehat{f}_{k}(\xi^{\prime},y_{N})\,dy_{N} \notag \displaybreak[0] \\
&+\frac{A^{2}(B-A)^{2}}{B(B+A)D(A,B)}\int_{0}^{\infty}e^{-By_{N}}
\widehat{f}_{N}(\xi^{\prime},y_{N})\,dy_{N} \displaybreak[0] \\
&+\sum_{k=1}^{N-1}\frac{2i\xi_{k}A^{2}(B-A)}{(B+A)D(A,B)}
\int_{0}^{\infty}\CM(y_{N})\widehat{f}_{k}(\xi^{\prime},y_{N})\,dy_{N} \notag \displaybreak[0] \\
&+\frac{2A^{3}(B-A)}{(B+A)D(A,B)}
\int_{0}^{\infty} \mathcal{M}(y_{N})\widehat{f}_{N}(\xi^{\prime},y_{N})\,dy_{N},
\end{align*}
which combined with (\ref{w1:trace}) furnishes (\ref{w:traceB}), 
because $\widehat{w}_{N}(\xi^{\prime},0,\lambda)
=\widehat{w}_{N}^{1}(\xi^{\prime},0,\lambda)
+\widehat{w}_{N}^{2}(\xi^{\prime},0,\lambda)$.
 
Finally, using the relation: $e^{-By_{N}}=e^{-Ay_{N}}+(B-A)\CM(y_{N})$ 
in (\ref{w:traceB}), we have (\ref{w:traceA}).
This completes the proof of the lemma.  
\end{proof}
\section{Decompositions of operators}\label{sec:decomp}
In this section, we construct the operators $S(t),\Pi(t)$, and $T(t)$ in Theorem \ref{theo:main},
and also show the decompositions \eqref{dfn:ops}.
For this purpose, we first give the exact formulas of the solution $(u,\te,h)$ to
\begin{equation}\label{RP}
\left\{\begin{aligned}
\la u-\Di S(u,\te)&=f && \di u=0 && \text{in $\uhs$}, \\
\la h+u_N&=d && &&\text{on $\bdry$}, \\
S(u,\te)\nu+(\gra-\sur\De^\pr)h\nu&=0 && && \text{on $\bdry$}.
\end{aligned}\right.
\end{equation}
Let $(w,p)$ be the solution to (\ref{eq:1})
and $(v,\pi,h)$ the solution to the equations:
\begin{equation}\label{eq:2}
\left\{\begin{aligned}
\lambda v-\Delta v+\nabla\pi&=0,\quad{\rm div}\,v=0 &&\text{in $\uhs$}, \\
\lambda h+ v_{N}&=-w_{N}+d &&\text{on $\bdry$},\\
S(v,\pi)\nu+(c_{g}-c_{\sigma}\Delta^{\prime})h\nu&=0 &&\text{on $\bdry$}.
\end{aligned}\right.
\end{equation}
Then, $u=v+w$, $\theta=\pi+p$, and $h$ solve \eqref{RP}.
Let $j$ and $k$ run from $1$ through $N-1$ and $J$ from $1$ through $N$, respectively, in the present section.
The exact formulas of $(v,\pi,h)$ are given by
\begin{align*}%\label{sol:eq:2}
v_{J}(x,\lambda)&=\mathcal{F}_{\xi^{\prime}}^{-1}[\widehat{v}_{J}(\xi^{\prime},x_{N},\lambda)](x^{\prime}),
\quad \pi(x,\lambda)=\mathcal{F}_{\xi^{\prime}}^{-1}[\widehat{\pi}(\xi^{\prime},x_{N},\lambda)](x^{\prime}) \notag \\
h(x^{\prime},\lambda)&=\mathcal{F}_{\xi^{\prime}}^{-1}\left[\frac{D(A,B)}{(B+A)L(A,B)}
\left(-\widehat{w}_{N}(\xi^{\prime},0,\lambda)+\widehat{d}(\xi^{\prime})\right)\right](x^{\prime})
\end{align*}
(cf. \cite[Section 7]{S-S:model}), where
\begin{align*}%\label{sol:F:eq:2}
\widehat{v}_{j}(\xi^{\prime},x_{N},\lambda)&=
\left(-\frac{i\xi_{j}(B-A)}{D(A,B)}e^{-Bx_{N}}+\frac{i\xi_{j}(B^{2}+A^{2})}{D(A,B)}\mathcal{M}(x_{N})\right)
(c_{g}+c_{\sigma}A^{2})\widehat{h}(\xi^{\prime},\lambda), \notag \\
\widehat{v}_{N}(\xi^{\prime},x_{N},\lambda)&=
\left(\frac{A(B+A)}{D(A,B)}e^{-Bx_{N}}-\frac{A(B^{2}+A^{2})}{D(A,B)}\mathcal{M}(x_{N})\right)
(c_{g}+c_{\sigma}A^{2})\widehat{h}(\xi^{\prime},\lambda), \notag \\
\widehat{\pi}(\xi^{\prime},x_{N},\lambda)&=
\frac{(B+A)(B^{2}+A^{2})}{D(A,B)}e^{-Ax_{N}}(c_{g}+c_{\sigma}A^{2})\widehat{h}(\xi^{\prime},\lambda).
\end{align*}
Inserting (\ref{w:traceB}) into $h(x^{\prime},\lambda)$,
we have the decompositions:
\begin{equation*}
\widehat{v}_{J}(\xi^{\prime},x_{N},\lambda)
=\widehat{v}_{J}^{f}(\xi^{\prime},x_{N},\lambda)
+\widehat{v}_{J}^{d}(\xi^{\prime},x_{N},\lambda),
\quad\widehat{\pi}(\xi^{\prime},x_{N},\lambda)
=\widehat{\pi}^{f}(\xi^{\prime},x_{N},\lambda)
+\widehat{\pi}^{d}(\xi^{\prime},x_{N},\lambda),
\end{equation*}
where each term on the right-hand sides is given by
\begin{align}\label{Fsol_eq:2}
\widehat{v}_J^f(\xi^\prime,x_N,\lambda)=
&\sum_{K=1}^N\frac{\CV_{JK}^{BB}(\xi^\prime,\lambda)(\gra+\sur A^2)}{L(A,B)}\int_0^\infty
e^{-B(x_N+y_N)}\widehat{f}_K(\xi^\prime,y_N)\,dy_N  \notag \\
&+\sum_{K=1}^N\frac{\CV_{JK}^{B\CM}(\xi^\prime,\lambda)(\gra+\sur A^2)}{L(A,B)}\int_0^\infty
e^{-Bx_N}\CM(y_N)\widehat{f}_K(\xi^\prime,y_N)\,dy_N  \notag \\
&+\sum_{K=1}^N\frac{\CV_{JK}^{\CM B}(\xi^\prime,\lambda)(\gra+\sur A^2)}{L(A,B)}
\int_0^\infty\CM(x_N)e^{-B y_N}\widehat{f}_K(\xi^\prime,y_N)\,dy_N  \notag \\
&+\sum_{K=1}^N\frac{\CV_{JK}^{\CM\CM}(\xi^\prime,\lambda)(\gra+\sur A^2)}{L(A,B)}
\int_0^\infty\CM(x_N)\CM(y_N)\widehat{f}_K(\xi^\prime,y_N)\,dy_N  \displaybreak[0]\notag \\
\widehat{v}_{j}^{d}(\xi^{\prime},x_{N},\lambda)=
&\frac{i\xi_{j}(c_{g}+c_{\sigma}A^{2})}{(B+A)L(A,B)}\left(-(B-A)e^{-Bx_{N}}+(B^{2}+A^{2})
\CM(x_{N})\right)\widehat{d}(\xi^{\prime}), \displaybreak[0] \notag \\
\widehat{v}_{N}^{d}(\xi^{\prime},x_{N},\lambda)=&
\frac{A(c_{g}+c_{\sigma}A^{2})}{(B+A)L(A,B)}\left((B+A)e^{-Bx_{N}}-(B^{2}+A^{2})
\CM(x_{N})\right)\widehat{d}(\xi^{\prime}), \notag \displaybreak[0] \\
\widehat{\pi}^f(\xi^\prime,x_N,\lambda)=&
\sum_{K=1}^N\frac{\CP_K^{AA}(\xi^\prime,\lambda)(\gra+\sur A^2)}{L(A,B)}
\int_0^\infty e^{-A(x_N+y_N)}\widehat{f}_K(\xi^\prime,y_N)\,dy_N \notag \\
&+\sum_{K=1}^{N}\frac{\CP_{K}^{A\CM}(\xi^\prime,\lambda)(\gra+\sur A^2)}{L(A,B)}
\int_0^\infty e^{-Ax_N}\CM(y_N)\widehat{f}_K(\xi^\prime,y_N)\,dy_N, \notag \displaybreak[0] \\
\widehat{\pi}^{d}(\xi^{\prime},x_{N},\lambda)=&
\frac{(B^{2}+A^{2})(c_{g}+c_{\sigma}A^{2})}{L(A,B)}e^{-Ax_{N}}\widehat{d}(\xi^{\prime}),
\end{align}
where we have set
\begin{align}\label{symbol}
\CV_{jk}^{BB}(\xi^\prime,\lambda)&=-\frac{\xi_j\xi_k(B-A)^2}{(B+A)D(A,B)},
&\CV_{jN}^{BB}(\xi^\prime,\lambda)&=\frac{i\xi_j A(B-A)}{D(A,B)}, \notag \displaybreak[0] \\
\CV_{Nk}^{BB}(\xi^\prime,\lambda)&=-\frac{i\xi_k A(B-A)}{D(A,B)},
&\CV_{NN}^{BB}(\xi^\prime,\lambda)&=-\frac{A^2(B+A)}{D(A,B)}, \notag \displaybreak[0] \\
\CV_{jk}^{B\CM}(\xi^\prime,\lambda)&=\frac{\xi_j\xi_k(B-A)(B^2+A^2)}{(B+A)D(A,B)},
&\CV_{jN}^{B\CM}(\xi^\prime,\lambda)&=-\frac{i\xi_j A(B-A)(B^2+A^2)}{(B+A)D(A,B)}, \notag \displaybreak[0] \\
\CV_{Nk}^{B\CM}(\xi^\prime,\lambda)&=\frac{i\xi_k A(B^2+A^2)}{D(A,B)},
&\CV_{NN}^{B\CM}(\xi^\prime,\lambda)&=\frac{A^2(B^2+A^2)}{D(A,B)}, \notag \displaybreak[0] \\
\CV_{jk}^{\CM B}(\xi^\prime,\lambda)&=\frac{\xi_j\xi_k(B-A)(B^2+A^2)}{(B+A)D(A,B)},
&\CV_{jN}^{\CM B}(\xi^\prime,\lambda)&=-\frac{i\xi_j A(B^2+A^2)}{D(A,B)}, \notag \displaybreak[0] \\
\CV_{Nk}^{\CM B}(\xi^\prime,\lambda)&=\frac{i\xi_k A(B-A)(B^2+A^2)}{(B+A)D(A,B)},
&\CV_{NN}^{\CM B}(\xi^\prime,\lambda)&=\frac{A^2(B^2+A^2)}{D(A,B)}, \notag \displaybreak[0] \\
\CV_{jk}^{\CM\CM}(\xi^\prime,\lambda)&=-\frac{\xi_j\xi_k(B^2+A^2)^2}{(B+A)D(A,B)},
&\CV_{jN}^{\CM\CM}(\xi^\prime,\lambda)&=\frac{i\xi_j A(B^2+A^2)^2}{(B+A)D(A,B)}, \notag \displaybreak[0] \\
\CV_{Nk}^{\CM\CM}(\xi^\prime,\lambda)&=-\frac{i\xi_k A(B^2+A^2)^2}{(B+A)D(A,B)},
&\CV_{NN}^{\CM\CM}(\xi^\prime,\lambda)&=-\frac{A^2(B^2+A^2)^2}{(B+A)D(A,B)}, \notag \displaybreak[0] \\
\CP_k^{AA}(\xi^\prime,\lambda)&=-\frac{i\xi_k (B-A)(B^2+A^2)}{D(A,B)},
&\CP_N^{AA}(\xi^\prime,\lambda)&=-\frac{A(B+A)(B^2+A^2)}{D(A,B)}, \notag \displaybreak[0] \\
\CP_k^{A\CM}(\xi^\prime,\lambda)&=\frac{2i\xi_k AB(B^2+A^2)}{D(A,B)},
&\CP_{N}^{A\CM}(\xi^\prime,\lambda)&=\frac{2A^3(B^2+A^2)}{D(A,B)}.
\end{align}
%
%
%Setting
%
%
%\begin{equation}\label{ext:Fourier}
%\wh{\CH}(\xi^\pr,x_N)
%=\frac{e^{-Ax_N}-e^{-A}e^{-A(1-x_N)}}{1-e^{-2A}}
%=\frac{e^{-Ax_N}(1-e^{-2A(1-x_N)})}{1-e^{-2A}}
%\end{equation}
%
%
%and $\wh{h}_{A}(\xi^\pr,x_N,\la)=\widehat{\CH}(\xi^\pr,x_N)\wh{h}(\xi^\pr,\la)$, 
%
%
%Set $\wh{h}_A(\xi^\pr,x_N,\la)=e^{-Ax_N}\wh{h}(\xi^\pr,\la)$, and then we have
%$\wh{h}_A(\xi^\pr,x_N,\la)=\wh{h}_A^f(\xi^\pr,x_N,\la)+\wh{h}_A^d(\xi^\pr,x_N,\la)$
%by inserting (\ref{w:traceA}) into $\wh{h}(\xi^\pr,\la)$, where
%
%
In addition, we see, by inserting \eqref{w:traceA} into $\wh{h}(\xi^\pr,\la)$, that
$\wh{h}(\xi^\pr,\la)=\wh{h}^f(\xi^\pr,\la)+\wh{h}^d(\xi^\pr,\la)$  with
\begin{align}\label{hFsol_eq:2}
\widehat{h}^{f}(\xi^{\prime},\la)=&-\sum_{k=1}^{N-1}\frac{i\xi_{k}(B-A)}{(B+A)L(A,B)}
\int_{0}^{\infty}e^{-Ay_N}\widehat{f}_{k}(\xi^{\prime},y_{N})\,dy_{N} \notag \displaybreak[0] \\
&-\frac{A}{L(A,B)}\int_{0}^{\infty}e^{-Ay_N}\widehat{f}_{N}(\xi^{\prime},y_{N})\,dy_{N} \displaybreak[0] \notag \\
&+\sum_{k=1}^{N-1}\frac{2i\xi_{k}AB}{(B+A)L(A,B)}	\int_{0}^{\infty}\CM(y_{N})
\widehat{f}_{k}(\xi^{\prime},y_{N})\,dy_{N} \displaybreak[0] \notag \\
&+\frac{2A^{3}}{(B+A)L(A,B)}\int_{0}^{\infty}\CM(y_{N})
\widehat{f}_{N}(\xi^{\prime},y_{N})\,dy_{N}, \notag \displaybreak[0] \\
\widehat{h}^{d}(\xi^{\prime},\lambda)=&\frac{D(A,B)}{(B+A)L(A,B)}\wh{d}(\xi^{\prime}).
\end{align}

Next we shall construct cut-off functions.
Let $\varphi\in C_{0}^{\infty}(\mathbf{R}^{N-1})$
be a function such that $0\leq\varphi(\xi^{\prime})\leq 1$,
$\varphi(\xi^{\prime})=1$ for $|\xi^{\prime}|\leq1/3$,
and $\varphi(\xi^{\prime})=0$ for $|\xi^{\prime}|\geq 2/3$.
Let $A_0$  be a number
in $(0, 1)$, which is determined in Section 4 below.
We then define $\varphi_0$ and $\varphi_\infty$ by
\begin{equation}\label{A0}
\varphi_{0}(\xi^\prime)=\varphi(\xi^{\prime}/A_{0}),\quad
\varphi_{\infty}(\xi^{\prime})=1-\varphi(\xi^{\prime}/A_{0}),
\end{equation}
and also set, for $a\in\{0,\infty\}$, $g\in\{f,d\}$, and $F=(f,d)$,
\begin{align}\label{sol:decomp}
S_{a}^{g}(t;A_0)F&=\frac{1}{2\pi i}\int_{\Gamma(\varepsilon)}e^{\lambda t}\mathcal{F}_{\xi^{\prime}}^{-1}[\varphi_{a}(\xi^{\prime})
\widehat{v}^{g}(\xi^{\prime},x_{N},\lambda)](x^{\prime})\,d\lambda, \notag \displaybreak[0] \\
\Pi_{a}^{g}(t;A_0)F&=\frac{1}{2\pi i}\int_{\Gamma(\varepsilon)}e^{\lambda t}\mathcal{F}_{\xi^{\prime}}^{-1}[\varphi_{a}(\xi^{\prime})
\widehat{\pi}^{g}(\xi^{\prime},x_{N},\lambda)](x^{\prime})\,d\lambda, \notag \displaybreak[0] \\
T_{a}^{g}(t;A_0)F&=\frac{1}{2\pi i}\int_{\Gamma(\varepsilon)}e^{\lambda t}\mathcal{F}_{\xi^{\prime}}^{-1}[\varphi_{a}(\xi^{\prime})
\widehat{h}^{g}(\xi^{\prime},\lambda)](x^{\prime})\,d\lambda, \notag \displaybreak[0] \\
R(t)f&=\frac{1}{2\pi i}\int_{\Gamma(\varepsilon)}e^{\lambda t}
\mathcal{F}_{\xi^{\prime}}^{-1}[\widehat{w}(\xi^{\prime},x_{N},\lambda)](x^{\prime})\,d\lambda, \notag \displaybreak[0] \\
P(t)f&=\frac{1}{2\pi i}\int_{\Gamma(\varepsilon)}e^{\lambda t}
\mathcal{F}_{\xi^{\prime}}^{-1}[\widehat{p}(\xi^{\prime},x_{N},\lambda)](x^{\prime})\,d\lambda\quad(t>0)
\end{align}
with $\wh{v}^g(\xi',x_N,\la)=(\wh{v}_1^g(\xi',x_N,\la),\dots,\wh{v}_N^g(\xi',x_N,\la))^T$.
Here we have taken the integral path $\Gamma(\varepsilon)$ as follows:
\begin{equation}\label{Gamma}
	\Gamma(\varepsilon)=\Gamma^{+}(\varepsilon)\cup\Gamma^{-}(\varepsilon),
		\quad\Gamma^{\pm}(\varepsilon)=\{\lambda\in\BC\ |\ \lambda=\widetilde{\lambda}_0(\varepsilon)+se^{\pm i(\pi-\varepsilon)},\ s\in(0,\infty)\}
\end{equation}
for $\widetilde{\lambda}_0(\varepsilon)=2\lambda_0(\varepsilon)/\sin{\ep}$ with $\ep\in(0,\pi/2)$,
where $\lambda_0(\varepsilon)$ is the same number as in  Lemma \ref{lem:symbol} (3).
%
%
%\begin{center}
%``PICTURE I"
%\end{center}
%
%
\begin{figure}[h]
	\begin{center}
		\includegraphics[width=8cm,clip]{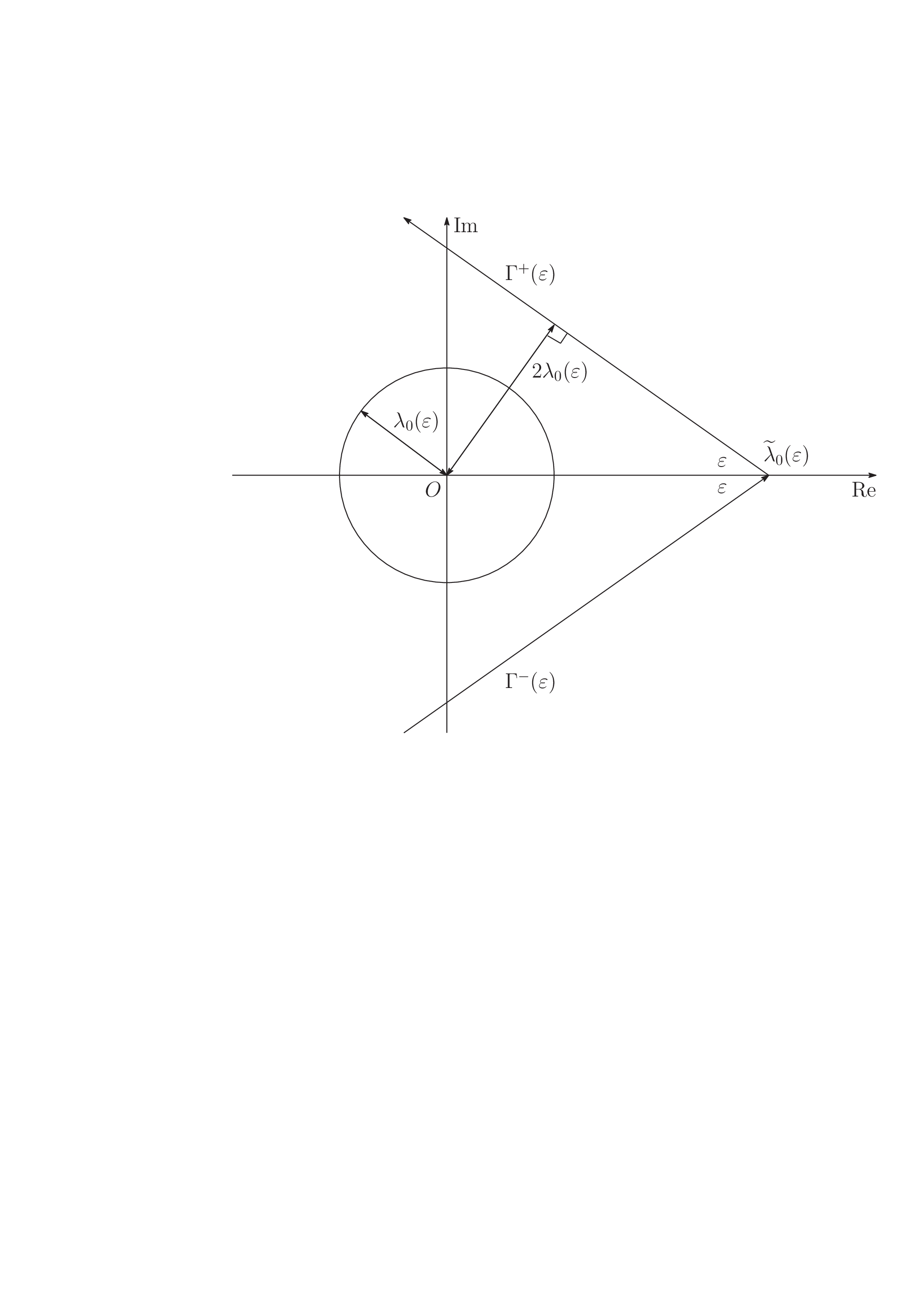}
	\end{center}
	\caption{
 			$\Gamma(\varepsilon)=\Gamma^{+}(\varepsilon)\cup\Gamma^-(\varepsilon)$
		}
\end{figure}
\footnote[0]
{Figure 1 is reprinted from Ph.D. thesis of the first author.}
%
%
%\textcolor{blue}{以下よく理解できない \\
%By Proposition \ref{prop:equiv} we note that
%the relation between (\ref{sol:SP}) and (\ref{sol:decomp}) is
%
%
%\begin{equation*}
%S(t)F=S_{0}(t)F+S_{\infty}(t)F+R(t)f, \quad\Pi(t)F=\Pi_{0}(t)F+\Pi_{\infty}(t)F+P(t)f,
%\end{equation*}
%
%
%where $S_{a}(t)F=S_{a}^{f}(t)F+S_{a}^{d}(t)F$ and $\Pi_{a}(t)F=\Pi_{a}^{f}(t)F+\Pi_{a}^{d}(t)F$ for $a\in\{0,\infty\}$.
%In addition, we set $\widetilde{T}(t)F=\widetilde{T}_{0}(t)F+\widetilde{T}_{\infty}(t)F$
%with $\widetilde{T}_{a}(t)F=\widetilde{T}_{a}^{f}(t)F+\widetilde{T}_{a}^{d}(t)F$ for $a\in\{0,\infty\}$,
%and then $\widetilde{T}(t)F$ is the solution to (\ref{eq:ext1}).
%This completes the decompositions of $S(t)F$, 
%$\Pi(t)F$, and $\widetilde{T}(t)F$.}
%
%
\begin{rema}
\begin{enumerate}[(1)]
\item
If we set
\begin{align*}%\label{140923_1}
S(t)F&=\sum_{a\in\{0,\infty\}}\sum_{g\in\{f,d\}}S_{a}^g(t;A_0)F+R(t)f, \notag \displaybreak[0] \\
\Pi(t)F&=\sum_{a\in\{0,\infty\}}\sum_{g\in\{f,d\}}\Pi_{a}^g(t;A_0)F+P(t)f,\notag \displaybreak[0] \\
T(t)F&=\sum_{a\in\{0,\infty\}}\sum_{g\in\{f,d\}}T_{a}^g(t;A_0)F,
\end{align*}
then $S(t)F$, $\Pi(t)F$, and $T(t)F$ are the requirements in Theorem \ref{theo:main} (1).
Especially, let $\CS(t):F\mapsto(S(t)F,T(t)F)$ and $1<p<\infty$,
and then $\{\CS(t)\}_{t\geq 0}$ is an analytic semi-group on $X_p$, defined in \eqref{X_q},
as was seen in \cite{Shibata1}.
On the other hand, by Lemma \ref{lem:1}, $\{R(t)\}_{t\geq 0}$ is an analytic semi-group on $J_p(\uhs)$,
and also $R(t)$ and $P(t)$ satisfy
\begin{align*}
&\|\nabla^\ell R(t)f\|_{L_p(\uhs)}\leq Ct^{-\ell/2}\|f\|_{L_p(\uhs)}, \\
&\|(\pa_t R(t)f,\nabla P(t)f)\|_{L_p(\uhs)}\leq Ct^{-1}\|f\|_{L_p(\uhs)}
\end{align*}
for $f\in L_p(\uhs)^N$, $\ell=0,1,2$, and $t>0$. These estimates imply that \eqref{140917_3} holds.
\item
For $a\in\{0,\infty\}$ and $g\in\{f,d\}$, the extension $\CE(T_a^g(t;A_0)F)$ defined as \eqref{ext:H} is decomposed into
\begin{equation}\label{140917_4}
\CE(T_a^g(t;A_0)F)%=\CE(T_a^g(t;A_0)F) %\notag \displaybreak[0] \\
=\frac{1}{2\pi i}\int_{\Ga(\ep)}e^{\la t}\CF_{\xi^\pr}^{-1}
[\ph_a(\xi^\pr)e^{-Ax_N}\wh{h}^g(\xi^\pr,\la)](x^\pr)\,d\la.
\end{equation}
\item
In the following sections, we show, for $a\in\{0,\infty\}$, that
\begin{align*}
&S_a(t)F=\sum_{g\in\{f,d\}}S_a^g(t;A_0)F,\quad
\Pi_a(t)F=\sum_{g\in\{f,d\}}\Pi_a^g(t;A_0)F, \\
&T_a(t)F=\sum_{g\in\{f,d\}}T_a^g(t;A_0)F
\end{align*}
satisfy the estimates \eqref{140917_1} and \eqref{140917_2}, respectively. 
%
%
%
%
%\begin{align}\label{140917_4}
%\CE(T(t)F)&=\sum_{a\in\{0,\infty\}}\sum_{g\in\{f,d\}}\CE(T_a^g(t;A_0)F) \notag \displaybreak[0] \\
%&=\sum_{a\in\{0,\infty\}}\sum_{g\in\{f,d\}}\frac{1}{2\pi i}\int_{\Ga(\ep)}e^{\la t}\CF_{\xi^\pr}^{-1}
%[\ph_a(\xi^\pr)e^{-Ax_N}\wh{h}(\xi^\pr,\la)](x^\pr)\,d\la.
%\end{align}
%
%
\end{enumerate}
\end{rema}

%%%%%%%%%%%%%%%%%%%%%%%%%%
We devote the last part of this section to the proof of the following lemma.
\begin{lemm}\label{lem:anal}
Let $\xi^{\prime}\in\mathbf{R}^{N-1}\setminus\{0\}$ and
$\lambda\in\{z\in\mathbf{C}\mid{\rm Re}z\geq0\}$. Then $L(A,B)\neq 0$.
\end{lemm}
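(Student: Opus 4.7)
The plan is to reduce $L(A,B)$ to a transparent algebraic form and then handle the cases where $\lambda$ is real and where $\lambda$ is not real separately. First I would expand $(B-A)D(A,B)$ directly to obtain
\[
(B-A)D(A,B)=B^4+2A^2B^2-4A^3B+A^4=(B^2+A^2)^2-4A^3B,
\]
which gives the clean identity
\[
L(A,B)=(B^2+A^2)^2-4A^3B+A(c_g+c_\sigma A^2).
\]

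Next I would extract the geometric constraint forced by $\mathrm{Re}\,\lambda\geq 0$. Writing $B=a+bi$ with $a\geq 0$ and using $B^2=\lambda+A^2$ gives $\mathrm{Re}(B^2)=a^2-b^2=\mathrm{Re}\,\lambda+A^2\geq A^2$ together with $|B|^2=a^2+b^2\geq \mathrm{Re}\,\lambda+A^2\geq A^2$. Adding these shows $a^2\geq A^2$, hence $a\geq A>0$, and at the same time $b^2\leq a^2-A^2$.

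In the case $\lambda\geq 0$ real, $B$ is real with $B\geq A$, so I would set $f(B):=B^4+2A^2B^2-4A^3B+A^4$ and check by direct substitution that $f(A)=0$; since $f'(B)=4(B^3+A^2B-A^3)\geq 4A^3>0$ on $[A,\infty)$, one has $f(B)\geq 0$, and therefore $L(A,B)\geq A(c_g+c_\sigma A^2)>0$.

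In the remaining case $\mathrm{Im}\,\lambda\neq 0$ one has $b\neq 0$. A short computation yields
\[
\mathrm{Im}\,L(A,B)=4b\bigl[a(a^2-b^2+A^2)-A^3\bigr].
\]
If $L(A,B)=0$ then this bracket must vanish, forcing $b^2=a^2+A^2-A^3/a$. Combined with the constraint $b^2\leq a^2-A^2$ derived above, this rearranges to $2A^2\leq A^3/a$, i.e., $a\leq A/2$, contradicting $a\geq A>0$. Hence $L(A,B)\neq 0$ in this case as well. The only mildly delicate step is the algebraic bookkeeping producing the clean identities for $L$ and $\mathrm{Im}\,L$; once those are in hand, each case collapses to a one-line calculation.
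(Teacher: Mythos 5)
Your proof is correct, but it takes a genuinely different route from the paper. The paper argues by contradiction via an energy identity: if $L(A,B)=0$, the Fourier-transformed homogeneous system \eqref{eq:F} has a nontrivial exponentially decaying solution $(\wh{u},\wh{\te},\wh{h})$; multiplying by conjugate components, integrating over $x_N>0$, and integrating by parts yields \eqref{0920_1}, and taking real parts (using $\mathrm{Re}\,\la\ge0$) forces $D_N\wh{u}_N=0$ and $D_N\wh{u}_j+i\xi_j\wh{u}_N=0$, whence $\wh{u}=0$, $\wh{\te}=0$, $\wh{h}=0$, a contradiction. Your argument is instead purely algebraic: from the clean factorization $(B-A)D(A,B)=(B^2+A^2)^2-4A^3B$, you note that $\mathrm{Re}\,\la\ge 0$ together with $\mathrm{Re}\,B\ge 0$ forces $a:=\mathrm{Re}\,B\ge A$ and $b^2:=(\mathrm{Im}\,B)^2\le a^2-A^2$ (in fact the single relation $a^2-b^2=\mathrm{Re}\,\la+A^2\ge A^2$ already gives both, so the $|B|^2$ estimate you add is not needed); the real case is a monotonicity check showing $(B^2+A^2)^2-4A^3B\ge 0$ for real $B\ge A$, and the non-real case ($b\ne0$) is settled by computing $\mathrm{Im}\,L=4b\bigl[a(a^2-b^2+A^2)-A^3\bigr]$, so that $L=0$ would force $b^2=a^2+A^2-A^3/a\le a^2-A^2$, i.e.\ $a\le A/2$, contradicting $a\ge A$. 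Your route is more elementary, explicit, and self-contained; the paper's route avoids the algebraic manipulation of $L$ but depends on the solution structure of the homogeneous ODE system and the energy structure of the linearized free-boundary problem.
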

\begin{proof}
Applying the partial Fourier transform with respect to tangential variable $x^\prime$ to the equations (\ref{RP}) with $f=0$ and $d=0$ yields that
\begin{align}\label{eq:F}
&\lambda\widehat{u}_{j}(x_{N})-\sum_{k=1}^{N-1}i\xi_{k}(i\xi_{j}\widehat{u}_{k}(x_{N})+i\xi_{k}\widehat{u}_{j}(x_{N})) \notag \\
&\qquad\qquad\qquad\qquad\qquad\qquad\,\,\enskip
-D_{N}(D_{N}\widehat{u}_{j}(x_{N})+i\xi_{j}\widehat{u}_{N}(x_{N}))+i\xi_{j}\widehat{\theta}(x_{N})=0, \displaybreak[0] \notag \\
&\lambda\widehat{u}_{N}(x_{N})-\sum_{k=1}^{N-1}i\xi_{k}(D_{N}\widehat{u}_{k}(x_{N})+i\xi_{k}\widehat{u}_{N}(x_{N}))
-2D_{N}^{2}\widehat{u}_{N}(x_{N})+D_{N}\widehat{\theta}(x_{N})=0, \displaybreak[0] \notag \\
&\sum_{k=1}^{N-1}i\xi_{k}\widehat{u}_{k}(x_{N})+D_{N}\widehat{u}_{N}(x_{N})=0,\quad 
\lambda\widehat{h}+\widehat{u}_{N}(0)=0, \displaybreak[0] \notag \\
&D_{N}\widehat{u}_{j}(0)+i\xi_{j}\widehat{u}_{N}(0)=0,\quad-\widehat{\theta}(0)+2D_{N}\widehat{u}_{N}(0)+(c_{g}+c_{\sigma}A^{2})\widehat{h}=0
\end{align}
for $x_{N}>0$, where we have used the symbols:
\begin{equation*}
\widehat{u}_{J}(x_{N})=\widehat{u}_{J}(\xi^{\prime},x_{N}),\quad
\widehat{\theta}(x_{N})=\widehat{\theta}(\xi^{\prime},x_{N}),			
\quad\widehat{h}=\widehat{h}(\xi^{\prime}).
\end{equation*}
We here set
\begin{equation*}
\widehat{u}(x_{N})=(\widehat{u}_{1}(x_{N}),\dots,\widehat{u}_{N}(x_{N}))^{T},
\quad\|f\|^2=\int_0^\infty f(x_{N})\overline{f(x_{N})}\,dx_{N},
\end{equation*}
and show that $L(A,B)\neq 0$ by contradiction.
%Note that if $L(A,B)=0$, then (\ref{eq:F}) have a non trivial solution
%whose decaying order is $e^{-ax_N}$ with some positive number 
%$a$ when $x_N\to\infty$ (see e.g. \cite[Section 4]{S-S:model}).
Suppose that $L(A,B)=0$. We know that
(\ref{eq:F}) admits a solution
$(\widehat{u}(x_N),\widehat{\theta}(x_N),\widehat{h})\neq 0$ that decays exponentially
when $x_N\to\infty$ (see e.g. \cite[Section 4]{S-S:model}). 
To obtain
\begin{multline}\label{0920_1}
0=\lambda\|\widehat{u}\|^{2}+2\|D_{N}\widehat{u}_{N}\|^{2}
+\sum_{j,k=1}^{N-1}\|i\xi_{k}\widehat{u}_{j}\|^{2} \\
+\|\sum_{j=1}^{N-1}i\xi_j\widehat{u}_j\|^2
+\sum_{j=1}^{N-1}\|D_{N}\widehat{u}_{j}+i\xi_{j}\widehat{u}_{N}\|^{2}
+\overline{\lambda}(c_{g}+c_{\sigma}A^{2})|\widehat{h}|^2,
\end{multline}
we multiply the first equation of \eqref{eq:F}
by $\overline{\widehat{u}_j(x_N)}$
and the second equation %of \eqref{eq:F}
by $\overline{\widehat{u}_N(x_N)}$,
and integrate the resultant formulas with respect to $x_N\in(0,\infty)$,
and furthermore, after integration  by parts, we use
the third to sixth equations of (\ref{eq:F}).
Taking the real part of \eqref{0920_1}, we have
\begin{equation*}
D_N\widehat{u}_N(x_N)=0,\quad
D_{N}\widehat{u}_j(x_N)+i\xi_j\widehat{u}_N=0\quad
\text{for ${\rm Re}\,\lambda\geq0$}.
\end{equation*}
In particular, $\widehat{u}_N$ is a constant,
but $\widehat{u}_N=0$ since $\lim_{x_N\to\infty}\widehat{u}_N=0$.
We thus have $D_N\widehat{u}_j=0$, which implies that $\widehat{u}_j=0$ since $\lim_{x_N\to\infty}\widehat{u}_j=0$.
Combining $\widehat{u}_j=0$ and the first equation of (\ref{eq:F}) yields that
$i\xi_j\widehat{\theta}=0$.
This implies that $\widehat{\theta}=0$ because $\xi^\pr\neq0$.
In addition, by the sixth equation of (\ref{eq:F}),
we have $(\gra+\sur A^2)\wh{h}=0$.
Since $\gra+\sur A^2\neq0$, we see that $\wh{h}=0$.
We thus have $\wh{u}=0$, $\wh{\te}=0$, and $\wh{h}=0$,
which leads to a contradiciton. 
This completes the proof of Lemma \ref{lem:anal}.
\end{proof}

%%%%%%%%%%%%%%%%%%%%%%%%%%%%%%%%%%%%%%%%%%%%%%%%%%%%%%%%%%%%%%%%%%%%%%
\section{Analysis of low frequency parts}\label{sec:low}
In this section, we show the estimates \eqref{140917_1} in Theorem \ref{theo:main} (2).
If we consider the Lopatinskii determinant $L(A,B)$ defined in (\ref{0921_1}) as a polynomial
with respect to $B$, then it has four roots $B_j^\pm$ ($j=1,2$), which have the following asymptotics:
\begin{equation}\label{expan:low}
B_j^\pm=e^{\pm i(2j-1)(\pi/4)}\gra^{1/4}A^{1/4}-\frac{A^{7/4}}{2e^{\pm i(2j-1)(\pi/4)}\gra^{1/4}}
-\frac{\sur A^{9/4}}{e^{\pm i(2j-1)(3\pi/4)}\gra^{3/4}}+O(A^{10/4})
\end{equation}
as $A\to0$. Set $\la_\pm=(B_1^\pm)^2-A^2$, and then
\begin{equation}\label{expan:low2}
\la_\pm=\pm i\gra^{1/2}A^{1/2}-2A^2\mp\frac{2\sur}{i\gra^{1/2}}A^{10/4}+O(A^{11/4})
\quad\text{as $A\to 0$.}
\end{equation}
\begin{rema}For $\lambda \in \Sigma_\epsilon$,
we choose a brunch such that
${\rm Re}B={\rm Re}\sqrt{\lambda+A^{2}}>0$. 
Note that $\lambda_\pm \in \Sigma_\epsilon$ and ${\rm Re}\, (\lambda_\pm+ A^2) < 0$.
\end{rema}
We define a positive number $\ep_0$ by $\ep_0=\tan^{-1}\{(A^2/8)/A^2\}=\tan^{-1}(1/8)$, and furthermore,
we set
\begin{align*}
\Ga_0^\pm&=\{\la\in\BC\mid\la=\la_\pm+(\gra^{1/2}/4)A^{1/2}e^{\pm iu},\ u:0\to2\pi\},\\
\Ga_1^\pm&=\{\la\in\BC\mid\la=-A^2+(A^2/4)e^{\pm iu},\ u:0\to\pi/2\},\\
\Ga_2^\pm&=\{\la\in\BC\mid\la=-(A^2(1-u)+\ga_0u)\pm i((A^2/4)(1-u)+\wit{\ga}_0 u),\ u:0\to1\},\\
\Ga_3^\pm&=\{\la\in\BC\mid\la=-(\ga_0\pm i\wit{\ga}_0)+ue^{\pm i(\pi-\ep_0)},\ u:0\to\infty\}
\end{align*}
with $\ga_0=\la_0(\ep_0)$ given by Lemma \ref{lem:symbol} (3) and
\begin{equation}\label{gam0:tilde}
\wit{\ga}_0=\frac{1}{8}\left(\la_0(\ep_0)+\wit{\la}_0(\ep_0)\right)
=\frac{1}{8}\left(1+\frac{2}{\sin{\ep_0}}\right)\la_0(\ep_0)=\frac{(1+2\sqrt{65})\ga_0}{8},
\end{equation}
where $\wit{\la}_0(\ep_0)$ is the same constant as in (\ref{Gamma}) with $\ep=\ep_0$.
%
%
%\begin{center}
%``PICTURE II"
%\end{center}
%
%
\begin{figure}[h]
\begin{center}
\includegraphics[width=10cm,clip]{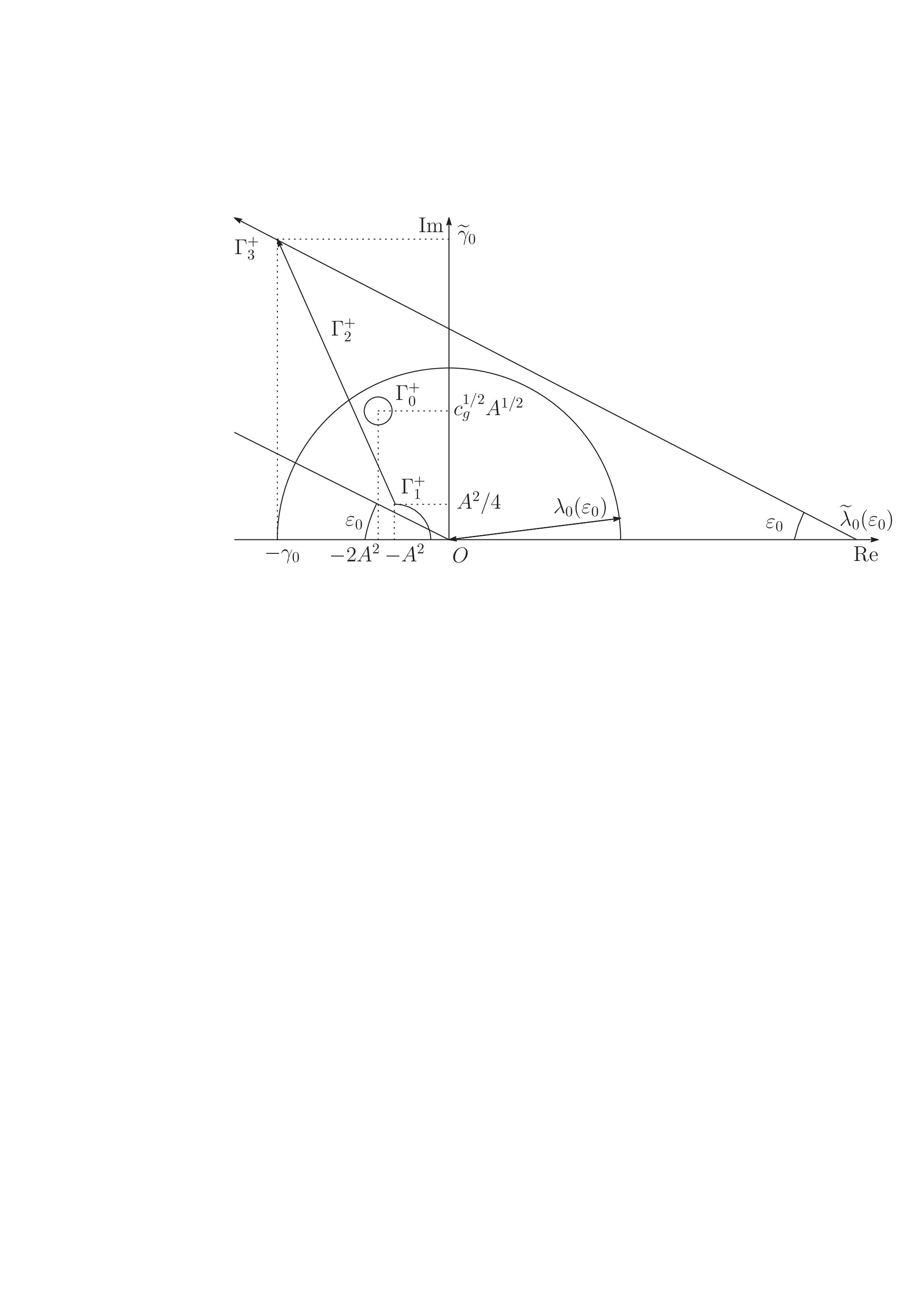}
\end{center}\caption{$\Gamma_\sigma^+$ $(\sigma=0,1,2,3)$}
\end{figure}
\footnote[0]
{Figure 2 is reprinted from Ph.D. thesis of the first author.}

Then, by Cauchy's integral theorem, we decompose $S_0^{g}(t;A_0)F$, $\Pi_0^g(t;A_0)F$, and $\CE({T}_0^g(t;A_0)F)$
given by (\ref{sol:decomp}) and \eqref{140917_4} as follows: For $g\in\{f,d\}$
\begin{align}\label{140923_10}
&S_0^g(t;A_0)F=\sum_{\si=0}^{3}S_0^{g,\si}(t;A_0)F, \quad
\Pi_0^g(t;A_0)F=\sum_{\si=0}^3\Pi_0^{g,\si}(t;A_0)F, \notag \\
&\CE(T_0^g(t;A_0)F)=\sum_{\si=0}^3\CE(T_0^{g,\si}(t;A_0)F)
\end{align}
with
\begin{align}\label{low:decomp}
S_0^{g,\si}(t;A_0)F&=\CF_{\xi^\pr}^{-1}\left[\frac{1}{2\pi i}\int_{\Ga_\si^+\cup\Ga_\si^-}
e^{\la t}\ph_0(\xi^\pr)\wh{v}^g(\xi^\pr,x_N,\la)\,d\la\right](x^\pr), \notag \\
\Pi_0^{g,\si}(t;A_0)F&=\CF_{\xi^\pr}^{-1}\left[\frac{1}{2\pi i}\int_{\Ga_\si^+\cup\Ga_\si^-}
e^{\la t}\ph_0(\xi^\pr)\wh{\pi}^g(\xi^\pr,x_N,\la)\,d\la\right](x^\pr), \notag \\
\CE(T_0^{g,\si}(t;A_0)F)&=\CF_{\xi^\pr}^{-1}\left[\frac{1}{2\pi i}\int_{\Ga_\si^+\cup\Ga_\si^-}
e^{\la t}\ph_{0}(\xi^\pr)e^{-Ax_N}\wh{h}^g(\xi^\pr,\la)\,d\la\right](x^\pr),
\end{align}
where $\varphi_0(\xi^\prime)$ is the cut-off function given in (\ref{A0}).
In order to estimate each term in (\ref{low:decomp}),
we here introduce operators $K_{n}^{\pm,\si}(t;A_0)$ and $L_{n}^{\pm,\si}(t;A_0)$ defined by
\begin{align}\label{op:low}
[K_{n}^{\pm,\si}(t;A_0)f](x)&=\int_0^{\infty}\CF_{\xi^\pr}^{-1}\left[\int_{\Ga_\si^\pm}
e^{\la t}\ph_0(\xi^\pr)k_n(\xi^\pr,\la)\CX_n(x_N,y_N)\,d\la\,\wh{f}(\xi^\pr,y_N)\right](x^\pr)\,dy_N, \notag \\
[L_{n}^{\pm,\si}(t;A_0)d](x)&=\CF_{\xi^\pr}^{-1}\left[\int_{\Ga_\si^\pm}e^{\la t}\ph_{0}(\xi^\pr)
\ell_n(\xi^\pr,\la)\CY_n(x_N)\,d\la\,\wh{d}(\xi^\pr)\right](x^\pr)\quad(\si=0,1,2,3)
\end{align}
with some multipliers $k_n(\xi^\pr,\la)$ and $\ell_n(\xi^\pr,\la)$, 
where $\CX_n(x_N,y_N)$ and $\CY_n(x_N)$ are given by
\begin{equation*}
\CX_n(x_N,y_N)=
\left\{\begin{aligned}
&e^{-A(x_N+y_N)}&(n=1),\\
&e^{-Ax_N}\CM(y_N)&(n=2), \\
&e^{-B(x_N+y_N)}&(n=3), \\
&e^{-Bx_N}\CM(y_N)&(n=4), \\
&\CM(x_N)e^{-By_N}&(n=5), \\
&\CM(x_N)\CM(y_N)&(n=6), \\
\end{aligned}\right.
\quad\CY_n(x_N)=
\left\{\begin{aligned}
&e^{-Ax_N}&(n=1), \\
&e^{-Bx_N}&(n=2), \\
&\CM(x_N)&(n=3).
\end{aligned}\right.
\end{equation*}

%%%%%%%%%%%%%%%%%%%%%%%%%%%%%%%%%%%%%%%%%%%%%%%%%%%%%%%%%%%%%%%%%%%%%%
\subsection{Analysis on $\Ga_0^\pm$}
Our aim here is to show the following theorem for the operators
given in (\ref{low:decomp}) with $\si=0$.
\begin{theo}\label{thm:Gam0}
Let $1\leq r\leq  2\leq q\leq\infty$ and $F=(f,d)\in L_r(\uhs)^N\times L_r(\tws)$. 
Then there exists an $A_0 \in (0, 1)$ such that the following assertions hold:
\begin{enumerate}[$(1)$]
\item
Let $k=0,1$, $\ell=0,1,2$, and $\al^\pr\in\BN_0^{N-1}$.
Then there exist a positive constant $C=C(\al^\pr)$ such that for any $t>0$
\begin{align*}
&\|\pa_t^k D_{x^\pr}^{\al^\pr}D_N^{\ell}S_0^{f,0}(t;A_0)F\|_{L_{q}(\uhs)}
\leq C(t+1)^{-\frac{N}{2}\left(\frac{1}{r}-\frac{1}{q}\right)
-\frac{k}{4}-\frac{|\al^\pr|}{2}-\frac{\ell}{8}}\|f\|_{L_{r}(\uhs)}, \\
&\|\pa_t^k D_{x^\pr}^{\al^\pr}D_N^{\ell}S_0^{d,0}(t;A_0)F\|_{L_{q}(\uhs)} \\
&\quad\leq C
\left\{\begin{aligned}
&(t+1)^{-\frac{N-1}{2}\left(\frac{1}{r}-\frac{1}{q}\right)-\frac{1}{2}
\left(\frac{1}{2}-\frac{1}{q}\right)-\frac{k}{4}-\frac{|\al^\pr|}{2}}\|d\|_{L_{r}(\tws)}&&(\ell=0), \\
&(t+1)^{-\frac{N-1}{2}\left(\frac{1}{r}-\frac{1}{q}\right)-\frac{1}{8}\left(2-\frac{1}{q}\right)
-\frac{k}{4}-\frac{|\al^\pr|}{2}-\frac{\ell}{8}}\|d\|_{L_{r}(\tws)}&&(\ell=1,2).
\end{aligned}\right.
\end{align*}
\item
There exists a positive constant $C$ such that for any $t>0$
\begin{align*}
\|\nabla\Pi_0^{f,0}(t;A_0)F\|_{L_{q}(\uhs)}
&\leq C(t+1)^{-\frac{N}{2}\left(\frac{1}{r}-\frac{1}{q}\right)-\frac{1}{4}}\|f\|_{L_{r}(\uhs)}, \\
\|\nabla\Pi_{0}^{d,0}(t;A_0)F\|_{L_{q}(\uhs)} &\leq C(t+1)^{-\frac{N-1}{2}\left(\frac{1}{r}-\frac{1}{q}\right)
-\frac{1}{2}\left(\frac{1}{2}-\frac{1}{q}\right)-\frac{1}{4}}\|d\|_{L_{r}(\tws)}.
\end{align*}
\item
Let $\al\in\BN_0^N$. Then there exists a positive constant $C=C(\alpha)$ such that for any $t>0$
\begin{align*}
\|D_x^\al \nabla\CE({T}_0^{f,0}(t;A_0)F)\|_{L_{q}(\uhs)}&\leq C(t+1)^{-\frac{N}{2}\left(\frac{1}{r}-\frac{1}{q}\right)
-\frac{1}{4}-\frac{|\al|}{2}}\|f\|_{L_{r}(\uhs)}, \\
\|D_x^{\al}\pa_t\CE(T_0^{f,0}(t;A_0)F)\|_{L_{q}(\uhs)}&\leq 
C(t+1)^{-\frac{N}{2}\left(\frac{1}{r}-\frac{1}{q}\right)-\frac{|\al|}{2}}\|f\|_{L_{r}(\uhs)}, \\
\|D_x^{\al}\nabla\CE(T_{0}^{d,0}(t;A_0)F)\|_{L_{q}(\uhs)}&\leq C(t+1)^{-\frac{N-1}{2}\left(\frac{1}{r}-\frac{1}{q}\right)
-\frac{1}{2}\left(\frac{1}{2}-\frac{1}{q}\right)-\frac{1}{4}-\frac{|\al|}{2}}\|d\|_{L_{r}(\tws)}.
\end{align*}
\end{enumerate}
\end{theo}
We here introduce some fundamental lemmas to show Theorem \ref{thm:Gam0}.
\begin{lemm}\label{lemm:fund1}
Let $s_i\geq 0$ $(i=0,1,2,3)$.
Then there exists a positive constant $C=C(s_0,s_1,s_2,s_3)$ such that for any $\tau>0$, $a\geq 0$, and $Z\geq0$
\begin{equation*}
e^{-s_0 (Z^2)\tau}Z^{s_1}e^{-s_2 (Z^{s_3})a}\leq C(\tau^{s_1/2}+a^{s_1/s_3})^{-1}.
\end{equation*}
\end{lemm}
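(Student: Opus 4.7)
The plan is to reduce the combined bound to two one-sided bounds via the elementary inequality $Y^{\alpha}e^{-cY}\le C(\alpha,c)$ valid for $Y\ge 0$, $\alpha\ge 0$, and $c>0$, and then combine them using the arithmetic comparison between the sum and the maximum of two positive numbers.

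First I would handle the two auxiliary factors separately. Setting $Y=Z^2\tau$ and $\alpha=s_1/2$, the elementary bound gives
\begin{equation*}
Z^{s_1}e^{-s_0 Z^2\tau}=\tau^{-s_1/2}(Z^2\tau)^{s_1/2}e^{-s_0 Z^2\tau}\le C_1\tau^{-s_1/2},
\end{equation*}
and similarly, taking $Y=Z^{s_3}a$ and $\alpha=s_1/s_3$ (assuming $s_3>0$),
\begin{equation*}
Z^{s_1}e^{-s_2 Z^{s_3}a}=a^{-s_1/s_3}(Z^{s_3}a)^{s_1/s_3}e^{-s_2 Z^{s_3}a}\le C_2 a^{-s_1/s_3}\quad(a>0).
\end{equation*}
Multiplying each of these by the remaining exponential factor (which is bounded by $1$), I obtain
\begin{equation*}
e^{-s_0 Z^2\tau}Z^{s_1}e^{-s_2 Z^{s_3}a}\le C_1\tau^{-s_1/2}\quad\text{and}\quad e^{-s_0 Z^2\tau}Z^{s_1}e^{-s_2 Z^{s_3}a}\le C_2 a^{-s_1/s_3},
\end{equation*}
so the left-hand side is bounded by $C\min(\tau^{-s_1/2},a^{-s_1/s_3})$.

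To finish I invoke the elementary identity $\min(x^{-1},y^{-1})=(\max(x,y))^{-1}\le 2(x+y)^{-1}$ for $x,y>0$, applied with $x=\tau^{s_1/2}$ and $y=a^{s_1/s_3}$; when $a=0$ the min collapses to $\tau^{-s_1/2}$ and the desired bound still holds since the right-hand side reduces to $\tau^{-s_1/2}$. This gives
\begin{equation*}
e^{-s_0 Z^2\tau}Z^{s_1}e^{-s_2 Z^{s_3}a}\le 2C\bigl(\tau^{s_1/2}+a^{s_1/s_3}\bigr)^{-1},
\end{equation*}
which is the claim. The only subtleties are the degenerate cases (e.g.\ $s_1=0$, or $s_3=0$, or one of $s_0,s_2$ vanishing), but these are straightforward: when $s_1=0$ the left-hand side is at most $1$ and the right-hand side is comparable to $1$ up to a constant once $\tau$ and $a$ stay bounded away from the degenerate regime, while the positivity of the relevant $s_0,s_2,s_3$ is what makes the elementary inequality $Y^{\alpha}e^{-cY}\le C$ applicable. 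There is no real obstacle here; the argument is a clean two-bound-and-combine, and the only thing to keep track of is that the constants depend on all four parameters $s_0,s_1,s_2,s_3$ but not on $\tau$, $a$, or $Z$.
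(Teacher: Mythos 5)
The paper states this lemma without proof, so there is nothing in the text to compare against; your argument is the natural one and is correct. The two one-sided bounds via $Y^{\alpha}e^{-cY}\leq C(\alpha,c)$ followed by $\min(x^{-1},y^{-1})\leq 2(x+y)^{-1}$ is exactly the right reduction, and the constants come out depending only on the $s_i$.

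One small remark worth keeping in mind: as literally written with $s_i\geq 0$, the inequality can fail. If, say, $s_0=0$ while $s_1,s_2,s_3>0$, the left-hand side is independent of $\tau$ and does not vanish for fixed $Z>0,a>0$, whereas the right-hand side tends to $0$ as $\tau\to\infty$; a symmetric counterexample in $a$ arises when $s_2=0$. Your proof implicitly (and correctly) uses $s_0,s_2,s_3>0$ whenever $s_1>0$, which is how the lemma is actually invoked in the paper (for instance with $s_0=1/8$ and $s_1=s_2=s_3=1$), and degenerates harmlessly when $s_1=0$. So your handling of the edge cases is the honest reading of the statement, and the core two-bound-and-combine argument is exactly what the authors must have had in mind.
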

\begin{lemm}\label{lemm:fund2}
Let $1\leq q,r\leq\infty$, $a>0$, $b_1>0$, and $b_2>0$.
\begin{enumerate}[$(1)$]
\item
Set $g(x_N,\tau)=(\tau^a+(x_N)^{b_1})^{-1}$for $x_N>0$ and $\tau>0$.
Then there exists a positive constant $C$ such that for any $\tau>0$
\begin{equation*}
\|g(\tau)\|_{L_q((0,\infty))}\leq C\tau^{-a\left(1-\frac{1}{b_1 q}\right)},
\end{equation*}
provided that $b_1 q>1$.
\item
Let $f\in L_r((0,\infty))$, and set, for $x_N>0$ and $\tau>0$,
\begin{equation*}
g(x_N,\tau)=\int_0^\infty\frac{f(y_N)}{\tau^a+(x_N)^{b_1}+(y_N)^{b_2}}\,dy_N.
\end{equation*}
Then there exists a positive constant $C$ such that for any $\tau>0$
\begin{equation*}
\|g(\tau)\|_{L_q((0,\infty))}
\leq C\tau^{-a\left(1-\frac{1}{b_1q}-\frac{1}{b_2}+\frac{1}{b_2 r}\right)}\|f\|_{L_r((0,\infty))},
\end{equation*}
provided that for $r^\pr =r/(r-1)$
\begin{equation*}
b_1 q>1,\quad b_2\left(1-\frac{1}{b_1 q}\right)r^{\pr}>1.
\end{equation*}
\end{enumerate}
\end{lemm}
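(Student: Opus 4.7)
My plan is to reduce both parts to elementary scaling computations; only bookkeeping on the exponents is needed, and the two hypotheses turn out to be exactly the convergence conditions for the two integrals produced by the scalings.

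For part (1), I would change variables $x_N = \tau^{a/b_1}y$ in $\|g(\cdot,\tau)\|_{L_q((0,\infty))}^q$. The Jacobian contributes $\tau^{a/b_1}$ and the integrand becomes $\tau^{-aq}(1+y^{b_1})^{-q}$, so
$$\|g(\cdot,\tau)\|_{L_q((0,\infty))}^q = \tau^{a/b_1 - aq}\int_0^\infty\frac{dy}{(1+y^{b_1})^q}.$$
The $y$-integral is finite iff $b_1 q > 1$, since the integrand decays like $y^{-b_1 q}$ at infinity. Taking $q$-th roots yields $\tau^{-a(1-1/(b_1 q))}$, as claimed.

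For part (2), I would first apply H\"older's inequality in $y_N$ with dual exponents $r$ and $r'$ to get
$$|g(x_N,\tau)| \leq \|f\|_{L_r((0,\infty))}\Big(\int_0^\infty (\tau^a+x_N^{b_1}+y_N^{b_2})^{-r'}\,dy_N\Big)^{1/r'}.$$
Setting $\sigma:=\tau^a+x_N^{b_1}$ and rescaling $y_N = \sigma^{1/b_2}z$ (just as in part (1)) gives
$$\int_0^\infty (\sigma+y_N^{b_2})^{-r'}\,dy_N = \sigma^{\,1/b_2-r'}\int_0^\infty (1+z^{b_2})^{-r'}\,dz,$$
and the $z$-integral converges because $b_2 r' > 1$, which is a consequence of the stronger assumption $b_2(1-1/(b_1 q))r' > 1$. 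Hence
$$|g(x_N,\tau)| \leq C\|f\|_{L_r((0,\infty))}(\tau^a+x_N^{b_1})^{-\alpha},\qquad \alpha := 1 - \tfrac{1}{b_2 r'}.$$

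Finally I would take the $L_q((0,\infty))$-norm in $x_N$ and repeat the scaling of part (1) on the factor $(\tau^a+x_N^{b_1})^{-\alpha}$. The same substitution $x_N = \tau^{a/b_1}y$ yields
$$\|(\tau^a+x_N^{b_1})^{-\alpha}\|_{L_q((0,\infty))}^q = \tau^{a/b_1 - a\alpha q}\int_0^\infty (1+y^{b_1})^{-\alpha q}\,dy,$$
and convergence of the $y$-integral requires $b_1 q \alpha > 1$, i.e.\ $b_1 q(1-1/(b_2 r')) > 1$; this is algebraically equivalent to the hypothesis $b_2(1-1/(b_1 q))r' > 1$. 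Extracting the $q$-th root produces the exponent $-a(\alpha - 1/(b_1 q))$, and substituting $\alpha = 1 - 1/(b_2 r')$ together with $1/r' = 1 - 1/r$ gives
$$-a\Big(\alpha - \tfrac{1}{b_1 q}\Big) = -a\Big(1 - \tfrac{1}{b_1 q} - \tfrac{1}{b_2} + \tfrac{1}{b_2 r}\Big),$$
which is the claimed decay rate. There is no real obstacle; the only thing to verify carefully is that the two convergence conditions produced by the two rescalings correspond exactly to the two hypotheses of the lemma, which is the elementary identity above.
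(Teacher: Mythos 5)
Your scaling argument is correct and is surely the intended proof; the paper states Lemma \ref{lemm:fund2} without supplying any proof, so there is no ``paper proof'' to compare against, only the implicit expectation that it follows from elementary substitutions of the kind you carried out. Part (1) is a single change of variables, and in part (2) your algebraic reduction $b_1 q\bigl(1-\tfrac{1}{b_2 r'}\bigr)>1 \iff b_2 r'\bigl(1-\tfrac{1}{b_1 q}\bigr)>1$ (divide by $b_1 q$) is exactly the identity that makes the second hypothesis of the lemma be the right one, and the exponent bookkeeping $\alpha-\tfrac{1}{b_1 q}=1-\tfrac{1}{b_1 q}-\tfrac{1}{b_2}+\tfrac{1}{b_2 r}$ using $1/r'=1-1/r$ is correct. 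The only thing worth flagging is that the substitution formulas are written for finite $q$ and $r$; when $q=\infty$ the $L_q$ norm should be read as a supremum, giving $\tau^{-a\alpha}$ directly, and when $r=1$ one uses $r'=\infty$ so the H\"older step degenerates to bounding $|g|$ by $\|f\|_{L_1}(\tau^a+x_N^{b_1})^{-1}$. Both endpoint cases are trivial and consistent with your formulas under the convention $1/\infty=0$, so the proof stands as written with that remark.
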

%
%
%We especially use the following lemma to estimate several terms on $\Ga_0^\pm$.
%
%
%\begin{lemm}\label{lemm:Gam0_1}
%\begin{enumerate}[$(1)$]
%\item
%abcde
%\item
%Let $s_i\geq0$ $(i=1,2,3,4,5)$ and set 
%
%
%\begin{equation*}
%[Kf](x,\tau)
%=\int_0^\infty\CF_{\xi^\pr}^{-1}
%[e^{\la_\pm \tau}k(\xi^\pr,\la_\pm)\CX(\xi^\pr,x_N,y_N,\la_\pm)\widehat{f}(\xi^\pr,y_N)](x^\pr)dy_N.
%\end{equation*}
%
%
%for $k$ and $X$ satisfy
%
%
%\begin{equation*}
%|k(\xi^\pr,\la_\pm)\CX(\xi^\pr,x_N,y_N,\la_\pm)|
%\leq CA^{s_1}e^{-s_2(A^{s_3})x_N}e^{-s_4(A^{s_5})y_N}
%\end{equation*}
%
%
%\begin{equation*}
%\|[Kf](\cdot,x_N,\tau)\|_{L_q(\tws)}
%\leq C\int_0^\infty\frac{\|f(\cdot,y_N)\|_{L_r(\tws)}}{\tau+x_N+y_N}\,dy_N
%\end{equation*}
%\end{enumerate}
%\end{lemm}
%
%
By using Lemma \ref{lemm:fund1} and Lemma \ref{lemm:fund2}, we obtain the following lemma.
\begin{lemm}\label{lem:Gam0}
Let $1\leq r\leq 2 \leq q\leq\infty$, and let $f\in L_{r}(\uhs)^{N}$ and $d\in L_{r}(\tws)$.
For multipliers $\kappa_n(\xi', \lambda)$ and $m_n(\xi', \lambda)$ given below, we set, in $(\ref{op:low})$,
\begin{equation*}
k_n(\xi^\pr,\la)=\frac{\ka_n(\xi^\pr,\la)}{L(A,B)},\quad
\ell_n(\xi^\pr,\la)=\frac{m_n(\xi^\pr,\la)}{L(A,B)}.
\end{equation*}
\begin{enumerate}[$(1)$]
\item
Let $s\geq0$ and suppose that there exist constants $A_1\in(0,1)$ and  $C=C(s)>0$
such that for any $A\in(0,A_1)$
\begin{equation*}
\begin{aligned}
|\ka_1(\xi^\pr,\la_\pm)|&\leq CA^{\frac{6}{4}+s},
&|\ka_2(\xi^\pr,\la_\pm)|&\leq CA^{\frac{7}{4}+s},
&|\ka_3(\xi^\pr,\la_\pm)|&\leq CA^{\frac{6}{4}+s}, \\
|\ka_4(\xi^\pr,\la_\pm)|&\leq CA^{\frac{7}{4}+s},
&|\ka_5(\xi^\pr,\la_\pm)|&\leq CA^{\frac{7}{4}+s},
&|\ka_6(\xi^\pr,\la_\pm)|&\leq CA^{\frac{8}{4}+s}.
\end{aligned}
\end{equation*}
Then there exist constants $A_0 \in (0, A_1)$ and  $C=C(s)>0$ such that for any $t>0$
\begin{align*}
\|K_{n}^{\pm,0}(t;A_0)f\|_{L_{q}(\uhs)}
&\leq C(t+1)^{-\frac{N}{2}\left(\frac{1}{r}-\frac{1}{q}\right)-\frac{s}{2}}
\|f\|_{L_{r}(\uhs)}\quad(n=1,2,6), \\
\|K_{3}^{\pm,0}(t;A_0)f\|_{L_{q}(\uhs)}
&\leq C(t+1)^{-\left(\frac{N-1}{2}+\frac{1}{8}\right)\left(\frac{1}{r}-\frac{1}{q}\right)-\frac{3}{8}-\frac{s}{2}}
\|f\|_{L_{r}(\uhs)}, \\
\|K_{4}^{\pm,0}(t;A_0)f\|_{L_{q}(\uhs)}
&\leq C(t+1)^{-\left(\frac{N-1}{2}+\frac{1}{8}\right)\left(\frac{1}{r}-\frac{1}{q}\right)
-\frac{3}{8r}-\frac{s}{2}}\|f\|_{L_{r}(\uhs)}, \\
\|K_{5}^{\pm,0}(t;A_0)f\|_{L_{q}(\uhs)}
&\leq C(t+1)^{-\left(\frac{N-1}{2}+\frac{1}{8}\right)
\left(\frac{1}{r}-\frac{1}{q}\right)-\frac{3}{8}\left(1-\frac{3}{q}\right)-\frac{s}{2}}
\|f\|_{L_{r}(\uhs)}.
\end{align*}
\item
Let $s\geq0$ and suppose that there exist constants $A_1\in(0,1)$
and $C=C(s)>0$ such that for any $A\in(0,A_1)$
\begin{equation*}
|m_1(\xi^\pr,\la_\pm)|\leq CA^{1+s},\enskip
|m_2(\xi^\pr,\la_\pm)|\leq CA^{1+s},\enskip
|m_3(\xi^\pr,\la_\pm)|\leq CA^{\frac{5}{4}+s}.
\end{equation*}
Then there exist constants $A_0 \in (0, A_1)$ and
$C=C(s)>0$ such that for any $t>0$
\begin{align*}
\|L_{n}^{\pm,0}(t;A_0)d\|_{L_{q}(\uhs)}
&\leq C(t+1)^{-\frac{N-1}{2}\left(\frac{1}{r}-\frac{1}{q}\right)
-\frac{1}{2}\left(\frac{1}{2}-\frac{1}{q}\right)-\frac{s}{2}}\|d\|_{L_{r}(\tws)}\quad(n=1,3), \\
\|L_{2}^{\pm,0}(t;A_0)d\|_{L_{q}(\uhs)}&\leq C(t+1)^{-\frac{N-1}{2}\left(\frac{1}{r}-\frac{1}{q}\right)
-\frac{1}{8}\left(2-\frac{1}{q}\right)-\frac{s}{2}}\|d\|_{L_{r}(\tws)}.
\end{align*}
\end{enumerate}
\end{lemm}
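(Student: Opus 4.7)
My plan is to reduce the estimates for $K_n^{\pm,0}$ and $L_n^{\pm,0}$ to pointwise multiplier bounds using Cauchy's residue theorem, and then invoke Lemmas~\ref{lemm:fund1}--\ref{lemm:fund2} to carry out the $L_q$ estimation. The starting point is that, by Lemma~\ref{lem:anal} and the expansion \eqref{expan:low2}, for $A_0 \in (0, A_1)$ chosen sufficiently small the function $\lambda \mapsto L(A, B(\lambda))$ has $\lambda_\pm$ as its unique simple zero inside the disk bounded by $\Gamma_0^\pm$ (the other zero of $L$ viewed in $\lambda$, which is $\lambda_\mp = (B_2^\mp)^2 - A^2$, lies at distance $\sim c_g^{1/2} A^{1/2}$, whereas the radius of $\Gamma_0^\pm$ is only $c_g^{1/2} A^{1/2}/4$). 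Residue calculus therefore converts the contour integral in \eqref{op:low} into $e^{\lambda_\pm t}\, \kappa_n(\xi',\lambda_\pm)/L'_\lambda(A, B_1^\pm)$ multiplied by $\mathcal{X}_n(x_N, y_N)|_{B = B_1^\pm}$, and likewise for $L_n^{\pm,0}$.

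Two analytic estimates will then be crucial. First, $\mathrm{Re}\,\lambda_\pm = -2A^2 + O(A^{5/2}) \leq -cA^2$ for $A \in (0, A_0)$ (after shrinking $A_0$ if necessary), so that $|e^{\lambda_\pm t}| \leq e^{-cA^2 t}$, which will ultimately provide the Gaussian-type decay in the tangential variable. Second, using $L'_\lambda(A, B_1^\pm) = L'_B(A, B_1^\pm)/(2 B_1^\pm)$ together with $B_1^\pm \sim e^{\pm i\pi/4} c_g^{1/4} A^{1/4}$ and the dominant cubic term of $D(A, B)$ in $L'_B$, I obtain $|L'_\lambda(A, B_1^\pm)| \geq c A^{1/2}$. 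Combining these with the hypothesized size of $\kappa_n$ yields a pointwise multiplier bound of the form $C\,\varphi_0(\xi')\, A^{1+s} e^{-cA^2 t}\, \widetilde{\mathcal{X}}_n(x_N, y_N)$ for $n \in \{1,2,6\}$ and $C\,\varphi_0(\xi')\, A^{5/4+s} e^{-cA^2 t}\, \widetilde{\mathcal{X}}_n$ for $n \in \{3,4,5\}$, where $\widetilde{\mathcal{X}}_n$ decays like $e^{-A(x_N+y_N)}$ in the first group and like $e^{-cA^{1/4}(x_N+y_N)}$ in the second. Analogous bounds hold for the multipliers $m_n/L$.

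For the $L_q$ estimate itself, I would apply Minkowski's integral inequality in $x_N$ together with Young's convolution inequality in $x'$, using Hausdorff--Young to dominate the $L_s^{x'}$-norm of the convolution kernel by the $L_{s'}$-norm of the symbol on $\mathbb{R}^{N-1}$, where $1/s = 1 - 1/r + 1/q$; the degenerate endpoint $r = q = 2$ will be handled separately by Plancherel's theorem. In polar coordinates $A = |\xi'|$ on the ball $|\xi'|\leq A_0$, the estimate then reduces to one-dimensional integrals of the shape $\int_0^{A_0} A^{\alpha} e^{-cA^2 t} e^{-A^{\beta}(x_N+y_N)}\, dA$, which is exactly the setting of Lemma~\ref{lemm:fund1}; the subsequent $L_{r'}^{y_N} L_q^{x_N}$-estimate of the resulting $(x_N, y_N)$-kernel is then furnished by Lemma~\ref{lemm:fund2}. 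The argument for $L_n^{\pm,0}$ is the same, but simpler because no $y_N$-integration is involved.

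The main obstacle I expect is the exponent bookkeeping. The distinction between $n \in \{1,2,6\}$ and $n \in \{3,4,5\}$ is precisely what produces the extra $-\frac{1}{8}(1/r - 1/q)$ factor in the rates for $K_3,K_4,K_5$, together with the different residual terms $-3/8$, $-3/(8r)$ and $-3/8(1-3/q)$: these come from the slower normal-direction decay $e^{-cA^{1/4}(x_N+y_N)}$ combined with whether the $y_N$-integrand is $e^{-By_N}$, $\mathcal{M}(y_N)$ or a mixture, and how it pairs with the $L_{r'}$ versus $L_q$ exponents. Tracking these powers consistently across six cases in (1) and three in (2), while making sure the same $A_0$ works for all of them, will be tedious, but each individual case reduces to a routine application of Lemmas~\ref{lemm:fund1}--\ref{lemm:fund2} once the key estimates $|e^{\lambda_\pm t}| \leq e^{-cA^2 t}$ and $|L'_\lambda(A, B_1^\pm)| \geq cA^{1/2}$ are in place.
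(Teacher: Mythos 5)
Your overall strategy — apply the residue theorem on $\Gamma_0^\pm$, identify $\lambda_\pm$ as the unique simple pole inside using \eqref{expan:low}--\eqref{expan:low2}, bound the residue factor by $|L'_\lambda(A,B_1^\pm)|\gtrsim A^{1/2}$ and use $|e^{\lambda_\pm t}|\le Ce^{-cA^2\widetilde t}$, then reduce to the one-dimensional Lemmas~\ref{lemm:fund1} and~\ref{lemm:fund2} — is exactly the route the paper takes (the paper writes the residue as $\kappa_n B_1^+/\{(B_1^+-B_1^-)(B_1^+-B_2^+)(B_1^+-B_2^-)\}$, which equals $\kappa_n/(2L'_\lambda)$ via $L'_\lambda=L'_B/(2B_1^+)$, so the algebra agrees with yours). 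However, the step where you pass from the tangential $L_r$ norm to the tangential $L_q$ norm has a genuine gap.

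You propose to use Young's inequality together with the Hausdorff--Young inequality, bounding the $L_s^{x'}$ norm of the convolution kernel by the $L_{s'}$ norm of its symbol with $1/s=1-1/r+1/q$. Since $1/s'=1/r-1/q$, the Hausdorff--Young inequality $\|\mathcal F^{-1}[\,\cdot\,]\|_{L_s}\leq \|\cdot\|_{L_{s'}}$ needs $s'\in[1,2]$, i.e.\ $1/r-1/q\geq 1/2$. In the regime $0<1/r-1/q<1/2$ (for instance $r=2$, $q=4$) this fails as a general inequality, so the stated estimate does not follow; and your separate treatment of the single degenerate point $(q,r)=(2,2)$ by Plancherel does not capture this whole intermediate range. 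The paper avoids the issue by factoring the Gaussian: one portion of $e^{-cA^2\widetilde t}$ is used as an $L_q\leftarrow L_2$ heat-kernel smoothing factor (Young plus Parseval), another portion as an $L_2\leftarrow L_r$ smoothing factor, and a third is kept to feed Lemma~\ref{lemm:fund1}, handling $q>2$, $q=2$ with $r<2$ (via H\"older and Lemma~\ref{lemm:fund3}), and $q=r=2$ (via \cite[Lemma~5.4]{S-S:model}) as three explicit sub-cases. You could fix your argument either by adopting this two-step passage through $L_2$, or by estimating the $L_s^{x'}$ norm of the kernel directly — it is essentially a rescaled Gaussian, so $\|K\|_{L_s}\lesssim \widetilde t^{-\frac{N-1}{2}(1/r-1/q)}$ holds by direct calculation rather than by Hausdorff--Young — but as written the Hausdorff--Young appeal does not cover the full range $1\leq r\leq 2\leq q\leq\infty$. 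A secondary, smaller point: for $n\in\{3,4,5\}$ the decays in $x_N$ and $y_N$ are genuinely anisotropic ($e^{-cA^{1/4}\cdot}$ from $e^{-B\cdot}$ versus $e^{-A\cdot}$ from $\mathcal M(\cdot)$), which is precisely what produces the three distinct rates $-3/8$, $-3/(8r)$, and $-\tfrac{3}{8}(1-\cdot/q)$; your blanket bound $e^{-cA^{1/4}(x_N+y_N)}$ collapses that distinction, though you do acknowledge this in your final paragraph.
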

\begin{proof}
We use the abbreviations:
$\|\cdot\|_{2}=\|\cdot\|_{L_{2}(\tws)}$,
$\wh{f}(y_N)=\wh{f}(\xi^\pr,y_{N})$, and $\wt=t+1$ for $t>0$ in this proof,
and consider only the estimates on $\Ga_0^+$ since the estimates on $\Ga_0^-$ can be shown similarly. \\
%
%
%\begin{enumerate}[(1)]
%
%
(1) We first show the inequality for $K_{1}^{+,0}(t;A_0)$.
Noting that $B^2-(B_1^+)^2=\la-\la_+$,
by the residue theorem, we have
\begin{align}\label{0810_1}
&[K_{1}^{+,0}(t;A_0)f](x)= \notag \\
&\int_0^\infty\CF_{\xi^\pr}^{-1}\left[\int_{\Ga_0^+}e^{\la t}
\frac{\ph_0(\xi^\pr)\ka_1(\xi^\pr,\la)(B+B_1^+)}
{(\la-\la_+)(B-B_1^-)(B-B_2^+)(B-B_2^-)}
e^{-A(x_{N}+y_{N})}\,d\la \wh{f}(y_{N})\right](x^\pr)\,dy_{N} \notag \\
&=4\pi i\int_{0}^{\infty}\CF_{\xi^\pr}^{-1}\left[\frac{e^{\la_+ t}\ph_0(\xi^\pr)\ka_1(\xi^\pr,\la_+)B_{1}^{+}}
{(B_{1}^{+}-B_{1}^{-})(B_{1}^{+}-B_{2}^{+})(B_{1}^{+}-B_{2}^{-})}e^{-A(x_{N}+y_{N})}
\wh{f}(y_N)\right](x^\pr)\,dy_{N}.
\end{align}
In view of (\ref{expan:low}) and (\ref{expan:low2}), we can choose $A_0 \in (0, A_1)$ in such a way that 
\begin{equation}\label{0810_6}
|e^{\la_+ t}|\leq C e^{-A^2\wt},\enskip|B_1^+-B_1^-|\geq CA^{\frac{1}{4}},
\enskip|B_1^+-B_2^+|\geq CA^{\frac{1}{4}},\quad|B_1^+-B_2^-|\geq CA^{\frac{1}{4}}
\end{equation}
for any $A\in (0,A_0)$ and $t>0$ with some constant $C$. 
Thus, by $L_q\text{-}L_r$ estimates of 
the $(N-1)$-dimensional heat kernel and Parseval's theorem, we have
\begin{align}\label{0810_2}
&\|[K_{1}^{+,0}(t;A_0)f](\cdot\,,x_N)\|_{L_{q}(\tws)} \notag \displaybreak[0] \\
&\quad\leq C\wt^{-\frac{N-1}{2}\left(\frac{1}{2}-\frac{1}{q}\right)}
\int_{0}^{\infty}\left\|\frac{e^{-(A^2/2)\wt}\ph_0(\xi^\pr)A^{\frac{6}{4}+s}A^{\frac{1}{4}}}{A^{\frac{3}{4}}}
e^{-A(x_N+y_N)}\wh{f}(y_N)\right\|_{2}dy_{N} \notag \displaybreak[0] \\
&\quad\leq C\wt^{-\frac{N-1}{2}\left(\frac{1}{2}-\frac{1}{q}\right)-\frac{s}{2}}\int_{0}^{\infty}
\left\|e^{-(A^2/4)\wt}Ae^{-A(x_N+y_N)}\wh{f}(y_N)\right\|_{2}dy_{N} \notag \displaybreak[0] \\
&\quad\leq C\wt^{-\frac{N-1}{2}\left(\frac{1}{2}-\frac{1}{q}\right)-\frac{s}{2}}
\int_{0}^{\infty}\frac{\|e^{-(A^2/8)\wt}
\,\wh{f}(y_N)\|_{L_{2}(\tws)}}{\wt^{\,1/2}+x_{N}+y_{N}}\,dy_{N} \notag \displaybreak[0] \\
&\quad\leq C\wt^{-\frac{N-1}{2}\left(\frac{1}{r}-\frac{1}{q}\right)-\frac{s}{2}}\int_{0}^{\infty}
\frac{\|f(\cdot\,,y_{N})\|_{L_{r}(\tws)}}{\wt^{\,1/2}+x_{N}+y_{N}}\,dy_{N},
\end{align}
where we have used Lemma \ref{lemm:fund1} with $s_0=1/8$, $s_i=1$ $(i=1,2,3)$, $a=x_N+y_N$, and $Z=A$.
%
%
%\begin{lemm}\label{ele:1}
%Let $s_{i}\geq0$ $(i=1,2,3,4)$, $a>0$ and $\tau>0$.
%Then there exists a positive constant $C$, 
%which is independent of $\tau$ and $a$, such that
%
%
%\begin{equation*}
%e^{-s_1A^2 \tau}A^{s_2}e^{-s_3 A^{s_4} a}\leq
%C(\tau^{(s_2/2)}+a^{(s_2/s_4)})^{-1}.
%\end{equation*}
%\end{lemm}
%
%
%To proceed the estimate, we use the following lemma.
%
%
%\begin{lemm}\label{ele:0}
%\begin{enumerate}[$(1)$]
%\item
%Let $a>0$, $b>0$ and $1<q\leq\infty$.
%Set $g(x_N,\tau)=(\tau^a+(x_N)^b)^{-1}$ for $\tau>0$ and $x_N>0$.
%Suppose that $bq>1$. Then we have
%
%
%\begin{equation*}
%\|g(\cdot,\tau)\|_{L_q(\BR_+)}\leq C\tau^{-a\left(1-\frac{1}{bq}\right)}.
%\end{equation*}
%
%
%\item
%Let $a_1$, $b_1$ and $b_2$ be positive numbers and $1\leq r<2< q\leq\infty$.
%Set 
%
%
%\begin{equation*}
%g(x_N,\tau)=\int_0^\infty\frac{\|f(\cdot,y_N)\|_{L_r(\tws)}}
%{\tau^a+(x_N)^{b_1}+(y_N)^{b_2}}\,dy_N
%\end{equation*}
%
%
%\textcolor{red}{for $\tau>0$, $x_N>0$ and $f\in L_r(\uhs)$}.
%Suppose that $b_2r^\pr>1$ with 
%$r^\pr=r/(r-1)$ and $1-1/b_2+1/(b_2r)-1/(b_1q)>0$.
%Then we have
%
%
%\begin{equation*}
%\left(\int_0^\infty|g(x_N,\tau)|^q\,dx_N\right)^{1/q}
%\leq C\tau^{-a\left(1-\frac{1}{b_2}+\frac{1}{b_2r}-\frac{1}{b_1q}\right)}\|f\|_{L_r(\uhs)}
%\end{equation*}
%
%
%for some positive constant $C$ depending solely on 
%$q$, $r$, $a$, $b_1$ and $b_2$.
%\end{enumerate}
%\end{lemm}
%
%
If $q>2$, then applying Lemma \ref{lemm:fund2} (2) with $a=1/2$ and $b_1=b_2=1$ to (\ref{0810_2}) furnishes that
\begin{equation*}%\label{140918_1}
\|K_{1}^{+,0}(t;A_0)f\|_{L_q(\uhs)}\leq C
\wt^{-\frac{N}{2}\left(\frac{1}{r}-\frac{1}{q}\right)-\frac{s}{2}}
\|f\|_{L_r(\uhs)}.
\end{equation*}
In the case of $(q,r)=(2,2)$, by \eqref{0810_2}
\begin{equation*}
\|[K_1^{+,0}(t;A_0)f](\cdot,x_N)\|_2
\leq C \wt^{-\frac{s}{2}}\int_0^\infty\left\|\CF_{\xi^\pr}^{-1}\left[Ae^{-A(x_N+y_N)}\wh{f}(y_N)\right]\right\|_2dy_N,
\end{equation*}
and then it follows from \cite[Lemma 5.4]{S-S:model} that
\begin{equation*}%\label{140918_2}
\|K_{1}^{+,0}(t;A_0)f\|_{L_2(\uhs)}\leq C
\wt^{-\frac{s}{2}}\|f\|_{L_2(\uhs)}.
\end{equation*}
On the other hand, in the case of $1\leq r<2$ and $q=2$, by the second inequality of \eqref{0810_2}, Lemma \ref{lemm:fund1}, and H\"older's inequality
\begin{align*}%\label{140918_3}
\|K_1^{+,0}(t;A_0)f\|_{L_2(\uhs)}
&\leq C \wt^{-\frac{s}{2}}\int_0^\infty\|e^{-(A^2/2)\wt}A^{1/2}e^{-Ay_N}\wh{f}(y_N)\|_2\,dy_N \displaybreak[0] \notag \\
&\leq C\wt^{-\frac{N-1}{2}\left(\frac{1}{r}-\frac{1}{2}\right)-\frac{s}{2}}
\int_0^\infty\frac{\|f(\cdot,y_N)\|_{L_r(\tws)}}{\wt^{1/4}+(y_N)^{1/2}}\,dy_N \displaybreak[0] \notag \\
&\leq C\wt^{-\frac{N}{2}\left(\frac{1}{r}-\frac{1}{2}\right)-\frac{s}{2}}\|f\|_{L_r(\uhs)},
\end{align*}
which implies that the required inequality for $K_1^{+,0}(t;A_0)$ holds.
Summing up the arguments above, we see that the following lemma holds.
\begin{lemm}\label{lemm:fund3}
Let $1\leq r\leq 2\leq q\leq\infty$, $\tau >0$, and $s_i>0$ $(i=1,2)$. For $x_N>0$ and $f\in L_r(\uhs)$, we set
\begin{equation*}
F(x_N,\tau)=\int_0^\infty\left\|e^{-s_1 A^2 \tau}A e^{-s_2 A(x_N+y_N)}\wh{f}(\xi^\pr,y_N)\right\|_{L_2(\tws)}\,dy_N.
\end{equation*}
Then there exits a positive constant $C$ such that for any $\tau>0$
\begin{equation*}
\|F(\tau)\|_{L_q((0,\infty))}\leq C \tau^{-\frac{N-1}{2}\left(\frac{1}{r}-\frac{1}{2}\right)-\frac{1}{2}\left(\frac{1}{r}-\frac{1}{q}\right)}
\|f\|_{L_r(\uhs)}.
\end{equation*}
\end{lemm}

Secondly we show the inequality for $K_{2}^{+,0}(t;A_0)$.
We here set
\begin{equation*}
\CM_\pm(a)=\frac{e^{-B_1^\pm a}-e^{-Aa}}{B_1^\pm-A}\quad\text{for $a>0$}.
\end{equation*}
In view of \eqref{expan:low} and \eqref{expan:low2}, we can choose $A_0 \in (0, A_1)$ in such a way that
for any $A\in (0,A_0)$ and $a>0$
\begin{equation}\label{est:M}
|\CM_\pm(a)|=\frac{|e^{-B_1^\pm a}-e^{-Aa}|}{|B_1^\pm-A|}\leq CA^{-1/4}e^{-Aa}
\end{equation}
with some constant $C$.
Thus, by the same calculations as in (\ref{0810_1}) and (\ref{0810_2}),
we obtain
%
%
%\begin{equation*}
%\|[K_{0,2}^{+,0}(t)f](\cdot\,,x_{N})\|_{L_{q}(\tws)}
%\leq C\wt^{-\frac{N-1}{2}\left(\frac{1}{2}-\frac{1}{q}\right)-\frac{s}{2}}
%\int_{0}^{\infty}\|e^{-(A^2/4)\wt}A^{5/4}e^{-Ax_{N}}\CM_+(y_N)\wh{f}(y_{N})\|_{2}\,dy_{N},
%\end{equation*}
%
%
\begin{multline*}
\|[K_{2}^{+,0}(t;A_0)f](\cdot\,,x_N)\|_{L_{q}(\tws)} \\
\leq C\wt^{-\frac{N-1}{2}\left(\frac{1}{2}-\frac{1}{q}\right)-\frac{s}{2}}
\int_{0}^{\infty}\left\|e^{-(A^2/2)\wt}Ae^{-A(x_N+y_N)}\wh{f}(y_N)\right\|_2 dy_N,
\end{multline*}
which furnishes the required inequality of $K_2^{+,0}(t;A_0)$ by Lemma \ref{lemm:fund3}.
%in the same manner as we have obtained \eqref{140918_1}-\eqref{140918_3} from \eqref{0810_2}.

Thirdly we show the inequality for $K_{3}^{+,0}(t;A_0)$.
In view of \eqref{expan:low} and \eqref{expan:low2}, we can
choose $A_0 \in (0, A_1)$ such that
\begin{equation*}%\label{0810_4}
|e^{-B_{1}^+(x_{N}+y_{N})}|\leq e^{-CA^{1/4}(x_{N}+y_{N})}
\quad\text{for any $A\in(0,A_0)$}
\end{equation*}
with some constant $C$, so that we easily see that by Lemma \ref{lemm:fund1}
\begin{align*}
&\|[K_3^{+,0}(t;A_0)f](\cdot\,,x_N)\|_{L_q(\tws)} \\
&\quad\leq C\wt^{-\frac{N-1}{2}\left(\frac{1}{2}-\frac{1}{q}\right)-\frac{s}{2}}
\int_{0}^{\infty}\left\|e^{-(A^2/2)\wt}A e^{-CA^{1/4}(x_N+y_N)}
\wh{f}(y_N)\right\|_2\,dy_{N} \notag \\
&\quad\leq C\wt^{-\frac{N-1}{2}\left(\frac{1}{r}-\frac{1}{q}\right)-\frac{s}{2}}
\int_{0}^{\infty}\frac{\|f(\cdot\,,y_{N})\|_{L_{r}(\tws)}}{\wt^{\,1/2}+(x_{N})^4+(y_{N})^4}\,dy_{N}.
\end{align*}
Combining the inequality above with Lemma \ref{lemm:fund2} (2) with $a=1/2$ and $b_1=b_2=4$,
we obtain the required inequality of $K_3^{+,0}(t;A_0)$.

Finally we show the inequalities for $K_n^{+,0}(t;A_0)$ $(n=4,5,6)$.
Using similar argumentations to the above cases, we have for $n=4,5$
\begin{align*}
\|[K_{4}^{+,0}(t;A_0)f](\cdot\,,x_{N})\|_{L_{q}(\tws)}
&\leq C \wt^{-\frac{N-1}{2}\left(\frac{1}{r}-\frac{1}{q}\right)-\frac{s}{2}}
\int_{0}^{\infty}\frac{\|f(\cdot\,,y_{N})\|_{L_{r}(\tws)}}
{\wt^{\,1/2}+(x_N)^4+y_N}\,dy_{N}, \\
\|[K_{5}^{+,0}(t;A_0)f](\cdot\,,x_{N})\|_{L_{q}(\tws)}
&\leq C\wt^{-\frac{N-1}{2}\left(\frac{1}{r}-\frac{1}{q}\right)-\frac{s}{2}}
\int_{0}^{\infty}\frac{\|f(\cdot\,,y_{N})\|_{L_{r}(\tws)}}
{\wt^{\,1/2}+x_N+(y_N)^4}\,dy_{N},
\end{align*}
which, combined with Lemma \ref{lemm:fund2} (2), furnishes the required inequalities of $K_n^{+,0}(t;A_0)$ $(n=4,5)$.
In addition, for $n=6$, we have
\begin{multline*}
\|[K_{6}^{+,0}(t;A_0)f](\cdot\,,x_{N})\|_{L_{q}(\tws)} \\
\leq C\wt^{-\frac{N-1}{2}\left(\frac{1}{2}-\frac{1}{q}\right)-\frac{s}{2}}
\int_0^\infty\left\|e^{-(A^2/2)\wt}A e^{-CA(x_N+y_N)}\wh{f}(y_N)\right\|_2 dy_N
\end{multline*}
with a positive constant $C$,
which yields the required inequality of $K_{6}^{+,0}(t;A_0)$ by Lemma \ref{lemm:fund3}.
%in the same manner as we have obtained 
%\eqref{140918_1}-\eqref{140918_3} from \eqref{0810_2}. 
\\
(2) %We first show the inequality for $L_{n}^{+,0}(t;A_0)$ $(n=1,3)$.
We consider the case of $n=1,3$.
Noting that $B^2-(B_1^\pm)^2=\la-\la_\pm$, by the residue theorem, we have
\begin{equation*}
[L_{n}^{+,0}(t;A_0)d](x)
=4\pi i\CF_{\xi^\pr}^{-1}\left[\frac{e^{\la_+ t}\ph_0(\xi^\pr)m_{n}(\xi^\pr,\la_+)B_1^+}
{(B_1^+-B_1^-)(B_1^+-B_2^-)(B_1^+-B_2^-)}
\CY_n(x_N)\wh{d}(\xi^\pr)\right](x^\pr).
\end{equation*}
Thus, by (\ref{0810_6}), \eqref{est:M}, Lemma \ref{lemm:fund1}, $L_q\text{-}L_r$ estimates of
the $(N-1)$-dimensional heat kernel, and Parseval's theorem, we have
\begin{align}\label{140918_4}
&\|[L_{n}^{+,0}(t;A_0)d](\cdot\,,x_{N})\|_{L_{q}(\tws)} \notag \displaybreak[0] \\
&\quad\leq C\wt^{-\frac{N-1}{2}\left(\frac{1}{2}-\frac{1}{q}\right)-\frac{s}{2}}
\|e^{-(A^2/2)\wt}A^{1/2}e^{-Ax_N}\wh{d}(\xi^\pr)\|_{2} \notag \displaybreak[0] \\
&\quad\leq C\wt^{-\frac{N-1}{2}\left(\frac{1}{2}-\frac{1}{q}\right)-\frac{s}{2}}
\|e^{-(A^2/4)\wt}\ \wh{d}(\xi^\pr)\|_{2}/(\wt^{\,1/4}+(x_{N})^{1/2}) \notag \displaybreak[0] \\
&\quad\leq C\wt^{-\frac{N-1}{2}\left(\frac{1}{r}-\frac{1}{q}\right)-\frac{s}{2}}
\|d\|_{L_{r}(\tws)}/(\wt^{\,1/4}+(x_{N})^{1/2}).
\end{align}
If $q>2$, then by Lemma \ref{lemm:fund2} (1) we obtain the required inequality of $L_{n}^{+,0}(t;A_0)$ $(n=1,3)$.
In the case of $q=2$, we see that by the first inequality of \eqref{140918_4}
\begin{align*}
\|L_{n}^{+,0}(t;A_0)d\|_{L_2(\uhs)}
\leq C\wt^{-\frac{s}{2}}\|e^{-(A^2/2)\wt}\wh{d}(\xi^\pr)\|_2
\leq C\wt^{-\frac{N-1}{2}\left(\frac{1}{r}-\frac{1}{2}\right)-\frac{s}{2}}\|d\|_{L_r(\tws)}.
\end{align*}
Analogously, we can obtain the required inequality of $L_{2}^{+,0}(t;A_0)$,
which complete the proof of the lemma.
% by Lemma \ref{lemm:fund1} and Lemma \ref{lemm:fund2} (1). 
\end{proof}
Noting that for some $A_2\in(0,1)$ and $C>0$ there holds
$|D(A,B_1^\pm)|\geq C A^{3/4}$ for $A\in(0,A_2)$,
we see that there exist positive numbers $A_1\in(0,A_2)$ and $C$ such that for any $A\in(0,A_1)$ and $j,k=1,\dots,N$
\begin{equation*}
\begin{aligned}
|\CV_{jk}^{BB}(\xi^\pr,\la_\pm)|&\leq CA^{\frac{6}{4}},
&|\CV_{jk}^{B\CM}(\xi^\pr,\la_\pm)|&\leq CA^{\frac{7}{4}},
&|\CV_{jk}^{\CM B}(\xi^\pr,\la_\pm)|&\leq CA^{\frac{7}{4}},\\
|\CV_{jk}^{\CM\CM}(\xi^\pr,\la_\pm)|&\leq C A^{\frac{8}{4}},
&|\CP_{j}^{AA}(\xi^\pr,\la_\pm)|&\leq CA,
&|\CP_{j}^{A\CM}(\xi^\pr,\la_\pm)|&\leq CA^{\frac{5}{4}}.
\end{aligned}
\end{equation*}
%
%
%and furthermore, for $\ell=1,\dots,N-1$
%
%
%\begin{align*}
%|i\xi_\ell(B_1^\pm-A)(c_g+c_\si A^2)/(B_1^\pm+A)|+|A(c_g+c_\si A^2)|&\leq CA, \\
%|i\xi_\ell(c_g+c_\si A^2)((B_1^{\pm})^2+A^2)/(B_1^\pm+A)|+|A(c_g+c_\si A^2)((B_1^{\pm})^2+A^2)/(B_1^\pm+A)|\leq CA^\frac{5}{4}
%\end{align*}
%
%
Therefore, recalling the formulas \eqref{Fsol_eq:2}, \eqref{symbol}, \eqref{hFsol_eq:2}, and \eqref{low:decomp} with $\si=0$
and using \eqref{deriv:M},
we obtain the required inequalities of Theorem \ref{thm:Gam0} by Lemma \ref{lem:Gam0}.

\subsection{Analysis on $\Gamma_{1}^{\pm}$}
Our aim here is to show the following theorem for the operators defined in (\ref{low:decomp}) with $\si=1$.
\begin{theo}\label{thm:Gam1}
Let $1\leq r\leq 2\leq q\leq\infty$ and $F=(f,d)\in L_r(\uhs)^N \times L_r(\tws)$.
Then, there exists an $A_0 \in (0, 1)$ such that we have the following assertions:
\begin{enumerate}[$(1)$]
\item
Let $k=0,1$, $\ell=0,1,2$, and $\alpha^{\prime}\in\mathbf{N}_{0}^{N-1}$.
Then there exists a positive constant $C=C(\alpha^{\prime})$ such that for any $t>0$
\begin{align*}
\|\pa_{t}^{k}D_{x^{\prime}}^{\alpha^{\prime}}D_{N}^{\ell}S_{0}^{f,1}(t;A_0)F\|_{L_{q}(\mathbf{R}_{+}^{N})}
&\leq C(t+1)^{-\frac{N}{2}\left(\frac{1}{r}-\frac{1}{q}\right)-\frac{2k+|\alpha^{\prime}|+\ell}{2}}
\|f\|_{L_{r}(\mathbf{R}_{+}^{N})}, \\
\|\pa_{t}^{k}D_{x^{\prime}}^{\alpha^{\prime}}D_{N}^{\ell}S_{0}^{d,1}(t;A_0)F\|_{L_{q}(\mathbf{R}_{+}^{N})}
&\leq C(t+1)^{-\frac{N-1}{2}\left(\frac{1}{r}-\frac{1}{q}\right)-\frac{1}{2}\left(\frac{1}{2}-\frac{1}{q}\right)
-\frac{3}{4}-\frac{2k+|\alpha^{\prime}|+\ell}{2}}\|d\|_{L_{r}(\tws)}.
\end{align*}
\item
There exists a positive constant $C$ such that for any $t>0$
\begin{align*}
\|\nabla\Pi_{0}^{f,1}(t;A_0)F\|_{L_{q}(\uhs)}&\leq C(t+1)^{-\frac{N}{2}\left(\frac{1}{r}-\frac{1}{q}\right)-1}
\|f\|_{L_{r}(\mathbf{R}_{+}^{N})}, \\
\|\nabla \Pi_{0}^{d,1}(t;A_0)F\|_{L_{q}(\uhs)}&\leq C(t+1)^{-\frac{N-1}{2}\left(\frac{1}{r}-\frac{1}{q}\right)
-\frac{1}{2}\left(\frac{1}{2}-\frac{1}{q}\right)-\frac{7}{4}}\|d\|_{L_{r}(\mathbf{R}^{N-1})}.
\end{align*}
\item
Let $\alpha\in\BN_{0}^{N}$. Then there exists a positive constant $C=C(\alpha)$
such that for any $t>0$
\begin{align*}
\|D_{x}^{\alpha}\nabla \CE(T_{0}^{f,1}(t;A_0)F)\|_{L_{q}(\mathbf{R}_{+}^{N})}&\leq 
C(t+1)^{-\frac{N}{2}\left(\frac{1}{r}-\frac{1}{q}\right)-1-\frac{|\alpha|}{2}}\|f\|_{L_{r}(\uhs)}, \\
\|D_{x}^{\alpha}\pa_{t}\CE(T_{0}^{f,1}(t;A_0)F)\|_{L_{q}(\mathbf{R}_{+}^{N})}&\leq
C(t+1)^{-\frac{N}{2}\left(\frac{1}{r}-\frac{1}{q}\right)-\frac{3}{2}-\frac{|\alpha|}{2}}
\|f\|_{L_{r}(\mathbf{R}_{+}^{N})}, \\
\|D_{x}^{\alpha}\nabla\CE(T_{0}^{d,1}(t;A_0)F)\|_{L_{q}(\mathbf{R}_{+}^{N})}&\leq
C(t+1)^{-\frac{N-1}{2}\left(\frac{1}{r}-\frac{1}{q}\right)-\frac{1}{2}\left(\frac{1}{2}-\frac{1}{q}\right)
-\frac{7}{4}-\frac{|\alpha|}{2}}\|d\|_{L_{r}(\tws)}.
\end{align*}
\end{enumerate}
\end{theo}
We start with the following lemmas in order to show Theorem \ref{thm:Gam1}.
\begin{lemm}\label{lem:poly}
Let $f(z)=z^{3}+2z^{2}+12z-8$.
Then $f(z)\neq 0$ for $z\in\{\omega\in\mathbf{C}\mid{\rm Re}\,\omega\geq0\}\setminus(0,1)$.
\end{lemm}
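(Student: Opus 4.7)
The plan is to reduce the cubic $f$ to a linear factor times a quadratic with positive coefficients, and then read off the location of the roots by elementary means. First I would show that $f$ has a unique real root and it lies in $(0,1)$. Since $f'(z) = 3z^2 + 4z + 12$ has discriminant $16 - 144 < 0$ and positive leading coefficient, $f'(z) > 0$ for all $z \in \BR$, so $f$ is strictly increasing on the real line. Combined with $f(0) = -8 < 0$ and $f(1) = 7 > 0$, the intermediate value theorem yields exactly one real root $\al \in (0,1)$.

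Next I would factor $f(z) = (z - \al)(z^2 + bz + c)$ and match coefficients against $z^3 + 2z^2 + 12z - 8$, which gives
\begin{equation*}
b = 2 + \al, \qquad c = 8/\al.
\end{equation*}
Because $\al \in (0,1)$, both $b > 0$ and $c > 0$, and the two roots of $f$ other than $\al$ are exactly the roots of the quadratic $z^2 + bz + c$.

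Finally I would argue that both roots of $z^2 + bz + c$ lie in the open left half-plane $\{{\rm Re}\,z < 0\}$. If they are real, the product $c > 0$ forces them to share sign and the sum $-b < 0$ forces that sign to be negative. If they form a complex conjugate pair, their common real part is $-b/2 < 0$. In either case, combined with the first step, every zero of $f$ either equals $\al \in (0,1)$ or lies in $\{{\rm Re}\,z < 0\}$, which is exactly the assertion of the lemma.

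The argument is purely algebraic and I do not expect any real obstacle. The only mild subtlety is noting that $\al$ lies \emph{strictly} inside $(0,1)$ rather than at an endpoint, which is what makes the coefficients $b$ and $c$ of the remaining quadratic factor strictly positive and therefore pushes its roots strictly off the imaginary axis.
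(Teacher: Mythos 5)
Your proof is correct and essentially the same as the paper's: both first locate the unique real root $\al\in(0,1)$ via monotonicity and the sign change between $0$ and $1$, and then deduce that the remaining two roots have negative real part by a Vieta's-type argument. The paper does this in a single line by reading $\al+\beta+\bar\beta=-2$ off the $z^{2}$ coefficient (after noting the remaining roots are a conjugate pair), whereas you factor out $(z-\al)$ and observe that the residual quadratic $z^{2}+bz+c$ has positive coefficients $b=2+\al$, $c=8/\al$, so its roots lie in the left half-plane; these are minor variants of the same idea, and yours is slightly longer but also self-contained in that it does not presuppose the other two roots form a conjugate pair.
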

\begin{proof}
We note that $f(z)$ has only one real root $\al$ because $f(0)=-8$, $f(1)=7$ and $f^\pr(z)=3z^{2}+4z+12>0$ for $z\in\BR$,
and it is clear that $\al$ is in $(0,1)$.
Let $\beta$ and $\bar{\beta}$ be the other roots of $f(z)$.
Since $\alpha +\beta +\bar{\beta}=-2$, 
we have $2{\rm Re}\beta=-2-\alpha<0$.
This completes the proof.
\end{proof}
\begin{lemm}\label{lem1:Gam1}
Let $\lambda\in\Gamma_{1}^{\pm}$ and $\xi^\pr\in\tws$.
Then
\begin{equation*}%\label{B:Gam1}
\frac{A}{4}\leq{\rm Re}B\leq|B|\leq\frac{A}{2},\quad
|D(A,B)|\geq CA^{3}
\end{equation*}
for some positive constant $C$ independent of $\xi^\pr$ and $\la$.
In addition, there exist positive constants $A_1\in(0,1)$ and $C$ such that $|L(A,B)|\geq CA$ for any $A\in(0,A_1)$.
\end{lemm}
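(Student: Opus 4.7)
The plan is to parametrize $B$ explicitly on $\Gamma_1^\pm$, then recognize $D(A,B)/A^3$ as the polynomial $f(z)$ from Lemma \ref{lem:poly} evaluated on a compact arc away from zeros, and finally absorb the $(B-A)D(A,B)$ term of $L(A,B)$ into the dominant $A(c_g+c_\sigma A^2)$ for small $A$.

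First, on $\Gamma_1^\pm$ we have $\lambda+A^2=(A^2/4)e^{\pm iu}$ for $u\in[0,\pi/2]$, so choosing the branch with $\mathrm{Re}\,B\geq 0$ gives $B=(A/2)e^{\pm iu/2}$. This immediately yields $|B|=A/2$ and
\[
\mathrm{Re}\,B=\tfrac{A}{2}\cos(u/2)\geq\tfrac{A}{2}\cos(\pi/4)=\tfrac{A}{2\sqrt{2}}\geq \tfrac{A}{4},
\]
which proves the first chain of inequalities.

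Second, setting $w=B/A$, we rewrite $D(A,B)=A^{3}(w^{3}+w^{2}+3w-1)$. A direct check shows $f(2w)=8(w^{3}+w^{2}+3w-1)$, so $D(A,B)=\frac{A^{3}}{8}f(2B/A)$ where $f$ is the polynomial of Lemma \ref{lem:poly}. On $\Gamma_1^\pm$, $z=2B/A=e^{\pm iu/2}$ traces an arc of the unit circle with $\arg z\in[-\pi/4,\pi/4]$. Every such $z$ lies in the closed right half-plane and is \emph{not} in the open interval $(0,1)$ (indeed, the only real point of the arc is $z=1$, and $f(1)=7\neq 0$). Lemma \ref{lem:poly} therefore gives $f(z)\neq 0$ on this arc, and by continuity and compactness there is a constant $c>0$ with $|f(z)|\geq c$ uniformly. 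Consequently $|D(A,B)|\geq (c/8)A^{3}$, independently of $\xi^\pr$ and $\la$.

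For the final estimate, I would exploit the fact that the first summand of $L(A,B)=(B-A)D(A,B)+A(c_{g}+c_{\sigma}A^{2})$ is of order $A^{4}$ while the second is of order $A$. Concretely, $|B-A|\leq|B|+A\leq 3A/2$ and the crude bound $|D(A,B)|\leq|B|^{3}+A|B|^{2}+3A^{2}|B|+A^{3}\leq C A^{3}$ give $|(B-A)D(A,B)|\leq CA^{4}$, while $|A(c_{g}+c_{\sigma}A^{2})|\geq c_{g}A$. Hence
\[
|L(A,B)|\geq c_{g}A-CA^{4}=A(c_{g}-CA^{3})\geq \tfrac{c_{g}}{2}A
\]
provided $A\in(0,A_{1})$ with $A_{1}\in(0,1)$ chosen so that $CA_{1}^{3}\leq c_{g}/2$. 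The main (very mild) obstacle is the second step: spotting the substitution $z=2B/A$ that matches $D(A,B)$ to the polynomial $f$ of Lemma \ref{lem:poly}, and then verifying that the arc $\{e^{\pm iu/2}:u\in[0,\pi/2]\}$ avoids the exceptional interval $(0,1)$; once that is done the remaining bounds are elementary.
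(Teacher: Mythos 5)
Your proof is correct. The treatment of ${\rm Re}\,B$, $|B|$, and $|D(A,B)|$ is essentially the paper's: parametrize $B=(A/2)e^{\pm iu/2}$ on $\Gamma_1^\pm$ and observe $D(A,B)=\tfrac{A^3}{8}f(2B/A)$ with $f$ the polynomial from Lemma~\ref{lem:poly}, the arc $2B/A=e^{\pm iu/2}$, $u\in[0,\pi/2]$, staying in the closed right half-plane and avoiding $(0,1)$.

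For the bound $|L(A,B)|\geq CA$, however, your argument is genuinely different from and more elementary than the paper's. The paper factors $L(A,B)=\prod_j\prod_\pm(B-B_j^\pm)$, uses the low-frequency expansions \eqref{expan:low}--\eqref{expan:low2} to get $|B^2-(B_1^\pm)^2|\geq CA^{1/2}$ and $|B+B_1^\pm|\leq CA^{1/4}$, deduces $|B-B_1^\pm|\geq CA^{1/4}$, and then uses $|B-B_1^\pm|\leq|B-B_2^\pm|$ (from ${\rm Re}\,B\geq0$) to conclude. You instead read the bound directly off the definition $L(A,B)=(B-A)D(A,B)+A(c_g+c_\sigma A^2)$, noting that on $\Gamma_1^\pm$ the first summand is $O(A^4)$ while the gravity term is $\geq c_gA$, so the latter dominates once $A$ is small. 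Your route is shorter and gives an explicit $A_1$, and it does not invoke the root asymptotics at all. The reason the paper does not argue this way is that the root-factoring machinery is needed anyhow on $\Gamma_0^\pm$ (where $L$ actually vanishes at $\lambda_\pm$ and a residue is computed) and on $\Gamma_2^\pm$ (where for $u$ away from $0$ the term $(B-A)D(A,B)$ is no longer small relative to $Ac_g$, so a crude triangle-inequality splitting would fail); on $\Gamma_1^\pm$ alone, your simpler estimate is perfectly adequate.
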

\begin{proof}
We first show the inequalities for $B$ and $D(A,B)$.
Note that
\begin{equation}\label{0920_3}
B=\sqrt{\lambda+A^{2}}=(A/2)e^{\pm i(u/2)}
\end{equation}
since $\la=-A^{2}+(A^{2}/4)e^{\pm iu}$ for $u\in[0,\pi/2]$ on $\Ga_1^\pm$.
Therefore, it is clear that the required inequalities of $B$ hold.
We insert the identity \eqref{0920_3} into $D(A,B)$ to obtain
\begin{equation*}
D(A,B)=\frac{A^{3}}{8}\left((e^{\pm i(u/2)})^{3}+2(e^{\pm i(u/2)})^{2}+12(e^{\pm i(u/2)})-8\right),
\end{equation*}
which, combined with Lemma \ref{lem:poly}, furnishes that
$|D(A,B)|\geq CA^{3}$ for some positive constant $C$ independent of $\xi^\pr$ and $\la$.

We finally show the last inequality.
By (\ref{expan:low2})
\begin{equation*}
B^{2}-(B_{1}^{\pm})^{2}=\mp ic_{g}^{1/2}A^{1/2}+A^{2}\left(1+\frac{e^{\pm iu}}{4}\right)+O(A^{10/4})
\quad\text{as $A\to0$},
\end{equation*}
so that there exist positive constants $A_1\in(0,1)$ and $C$ such that
\begin{equation}\label{ineq:1}
|B^{2}-(B_{1}^{\pm})^{2}|\geq  CA^{1/2}\quad\text{for any $A\in(0,A_1)$}.
\end{equation}
On the other hand, we have $|B+B_1^\pm|\leq  CA^{1/4}$ on $\Ga_1^\pm$ when $A$ is sufficiently small,
which, combined with \eqref{ineq:1}, furnishes that
%there exists a positive number $A_0\in(0,1)$ and a positive constant $C$ such that
%
%
\begin{equation*}
|B-B_{1}^{\pm}|=\frac{|B^{2}-(B_{1}^{\pm})^{2}|}{|B+B_{1}^{\pm}|}\geq CA^{1/4} \quad\text{for any $A\in(0,A_1)$}.
\end{equation*}
Since $|B-B_{1}^{\pm}|\leq|B-B_{2}^{\pm}|$ as follows from ${\rm Re}B\geq0$ and (\ref{expan:low}), we thus obtain
\begin{equation*}%\label{140921_1}
|L(A,B)|=|(B-B_1^+)(B-B_1^-)(B-B_2^+)(B-B_2^-)|\geq CA
\end{equation*}
for any $A\in(0,A_1)$, $\lambda\in\Gamma_1^\pm$, and a positive constant $C$ independent of $\xi^\prime$ and $\lambda$.
\end{proof}
Next, we show some multiplier theorem on $\Ga_1^\pm$.
\begin{lemm}\label{lem2:Gam1}
Let $1\leq r\leq 2\leq q\leq\infty$, and let $f\in L_{r}(\mathbf{R}_{+}^{N})^{N}$ and $d\in L_{r}(\mathbf{R}^{N-1})$.
We use the symbols defined in $(\ref{op:low})$.
\begin{enumerate}[$(1)$]
\item
Let $s\geq0$ and suppose that there exist
constants $A_1\in(0,1)$ and  $C=C(s)>0$ such that
for any $\lambda\in\Gamma_{1}^{\pm}$ and $A\in(0,A_1)$
\begin{align*}%\label{0813_2}
&|k_{n}(\xi^\prime,\lambda)|\leq CA^{-1+s}\enskip (n=1,3),\quad
|k_{n}(\xi^\prime,\lambda)|\leq CA^{s}\enskip (n=2,4,5), \\
&|k_{6}(\xi^\prime,\lambda)|\leq CA^{1+s}.
\end{align*}
Then there exist constants $A_0 \in (0, A_1)$ and  $C=C(s)>0$ such that for any $t>0$ we have the estimates:
\begin{align*}
%\|K_{0,n}^{\pm,1}(t)f\|_{L_{q}(\mathbf{R}_{+}^{N})}&\leq C(t+1)^{-\frac{N}{2}\left(\frac{1}{r}-\frac{1}{q}\right)
%-\frac{1}{2}-\frac{s}{2}}\|f\|_{L_{r}(\mathbf{R}_{+}^{N})}\quad(n=1,2), \\
\|K_{n}^{\pm,1}(t;A_0)f\|_{L_{q}(\mathbf{R}_{+}^{N})}&\leq C(t+1)^{-\frac{N}{2}\left(\frac{1}{r}-\frac{1}{q}\right)
-\frac{s}{2}}\|f\|_{L_{r}(\mathbf{R}_{+}^{N})}\quad(n=1,2,3,4,5,6).
\end{align*}
\item
Let $s\geq0$ and suppose that there exist
constants $A_1\in(0,1)$ and  $C=C(s)>0$ such that
for any $\lambda\in\Gamma_{1}^{\pm}$ and $A\in(0,A_1)$
\begin{equation*}%\label{0813_3}
|\ell_{n}(\xi^\prime,\lambda)|\leq CA^{s}\ (n=1,2),\quad
|\ell_{3}(\xi^\prime,\lambda)|\leq CA^{1+s}.
\end{equation*}
Then there exist constants $A_0 \in (0, A_1)$ and $C=C(s)>0$ such that for any $t>0$ we have the estimates:
\begin{align*}
%\|L_{0,1}^{\pm,1}(t)d\|_{L_{q}(\mathbf{R}_{+}^{N})}&\leq C(t+1)^{-\frac{N-1}{2}\left(\frac{1}{r}-\frac{1}{q}\right)
%-\frac{1}{2}\left(\frac{1}{2}-\frac{1}{q}\right)-\frac{5}{4}-\frac{s}{2}}\|d\|_{L_{r}(\mathbf{R}^{N-1})}, \\
\|L_{n}^{\pm,1}(t;A_0)d\|_{L_{q}(\mathbf{R}_{+}^{N})}&\leq C(t+1)^{-\frac{N-1}{2}\left(\frac{1}{r}-\frac{1}{q}\right)
-\frac{1}{2}\left(\frac{1}{2}-\frac{1}{q}\right)-\frac{3}{4}-\frac{s}{2}}\|d\|_{L_{r}(\mathbf{R}^{N-1})}\quad(n=1,2,3).
\end{align*}
\end{enumerate}
\end{lemm}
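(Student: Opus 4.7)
The plan is to parametrize the arc $\Gamma_1^\pm$ explicitly and apply the estimates of Lemma \ref{lem1:Gam1}, following the same scheme that was used for Lemma \ref{lem:Gam0}. With the parametrization $\lambda=-A^2+(A^2/4)e^{\pm i u}$, $u\in[0,\pi/2]$, we have $|d\lambda|=(A^2/4)\,du$ and $\mathrm{Re}\,\lambda\leq -3A^2/4$, so $|e^{\lambda t}|\leq e^{-3A^2 t/4}$. Combined with $|B|\leq A/2$, $\mathrm{Re}\,B\geq A/4$, $|L(A,B)|\geq CA$, and the estimate $|\CM(a)|\leq CA^{-1}e^{-cAa}$ (which follows from $|B-A|\geq A/2$ together with $|e^{-Ba}|,|e^{-Aa}|\leq e^{-cAa}$), every exponential factor satisfies $|\CX_n|\leq CA^{-\beta_n}e^{-cA(x_N+y_N)}$ (resp. $|\CY_n|\leq CA^{-\gamma_n}e^{-cAx_N}$), where $\beta_n,\gamma_n\geq 0$ simply count the $\CM$-factors in $\CX_n$ (resp.\ $\CY_n$).

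The key observation is that the hypothesized bounds on $k_n$ and $\ell_n$ are calibrated precisely so that the total $A$-power of the $\lambda$-integrand is uniform in $n$. Multiplying $|k_n|\cdot|\CX_n|\cdot|d\lambda|$ produces $A^{1+s}$ in every case of part $(1)$---for example, $n=1$ gives $A^{-1+s}\cdot 1\cdot A^2$, $n=4$ gives $A^{s}\cdot A^{-1}\cdot A^2$, and $n=6$ gives $A^{1+s}\cdot A^{-2}\cdot A^2$---and analogously $A^{2+s}$ in every case of part $(2)$. Writing $\widetilde{t}=t+1$, the $\lambda$-integrated Fourier multiplier is therefore dominated, uniformly in $n$, by
\begin{equation*}
C\,\varphi_0(\xi')\,A^{\alpha+s}\,e^{-cA^2\widetilde{t}}\,e^{-cA(x_N+y_N)}
\end{equation*}
with $\alpha=1$ for the $K_n$-operators and $\alpha=2$ for the $L_n$-operators (with $y_N$ absent in the latter case). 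Applying Lemma \ref{lemm:fund1} to split $A^s e^{-cA^2\widetilde{t}/2}\leq C\widetilde{t}^{-s/2}$ absorbs the $A^s$ into the target decay rate $\widetilde{t}^{-s/2}$, so it suffices to treat the case $s=0$.

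The remaining step is an $L_q$-$L_r$ boundedness argument for the operator carrying the $s=0$ model multiplier, carried out exactly as in the proof of Lemma \ref{lem:Gam0}. For $q>2$ (so $r\leq 2$) one combines Parseval in the tangential direction with Hausdorff--Young and Young's convolution inequality, computing the resulting radial integral $\int_0^\infty A^{\alpha}e^{-cA^2\widetilde{t}}(\cdots)\,dA$ via the change of variables $A=\widetilde{t}^{-1/2}v$; this produces $\widetilde{t}^{-\frac{N-1}{2}(1/r-1/q)}$ of tangential heat-kernel decay, while the $x_N$-integration against $A^{\alpha} e^{-cAx_N}$ is handled by Lemma \ref{lemm:fund2} with $b_1=b_2=1$ (for $K_n$) or $b_1=1$ (for $L_n$). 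The bookkeeping yields exactly the announced rate $\widetilde{t}^{-\frac{N}{2}(1/r-1/q)-s/2}$ in part $(1)$; in part $(2)$ the additional power $\alpha=2$ and the absence of the $y_N$-integration supply the extra factor $\widetilde{t}^{-3/4}$ together with the $x_N$-contribution $\widetilde{t}^{-\frac12(1/2-1/q)}$, matching the stated exponent.

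The main obstacle is the endpoint cases $q=2$, in particular $(q,r)=(2,2)$ and $q=2$ with $r<2$, where the Young exponent $p$ falls outside the admissible range for the convolution estimate above. As in the treatment of $K_1^{+,0}$ in the proof of Lemma \ref{lem:Gam0}, one must here revert to Parseval's theorem and the direct tangential-frequency bound of \cite[Lemma 5.4]{S-S:model}, taking care that the $A^\alpha$ factor of the multiplier combines correctly with the uniform exponential $e^{-cA^2\widetilde{t}}$ to yield the same final rate. Once these cases are settled, the uniform estimate is complete.
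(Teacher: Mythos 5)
Your argument is essentially the paper's: parametrize $\Gamma_1^{\pm}$ as $\lambda=-A^2+(A^2/4)e^{\pm iu}$, bound $|e^{\lambda t}|\leq Ce^{-(3/4)A^2\widetilde t}$, use Lemma \ref{lem1:Gam1} and the bound on $\CM$ to reduce each kernel to a uniform $A^{\alpha+s}e^{-cA^2\widetilde t}e^{-cA(x_N+y_N)}$, absorb $A^s$ into $\widetilde t^{-s/2}$ via Lemma \ref{lemm:fund1}, then apply the $(N-1)$-dimensional heat-kernel $L_q$-$L_r$ estimate together with Parseval and Lemma \ref{lemm:fund2}. The one small simplification you could adopt is that the endpoint cases $q=2$ and $(q,r)=(2,2)$, which you propose to re-handle ad hoc, are already packaged in Lemma \ref{lemm:fund3} (established inside the proof of Lemma \ref{lem:Gam0}), which the paper simply cites for the $K_n$-operators.
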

%%ここまで見た.
%
%
\begin{proof}
We use the abbreviations: $\|\cdot\|_{2}=\|\cdot\|_{L_{2}(\mathbf{R}^{N-1})}$,
$\widehat{f}(y_{N})=\widehat{f}(\xi^{\prime},y_{N})$,
and $\wt=t+1$ for $t>0$ in this proof,
and consider only the estimates on $\Gamma_1^{+}$
since the estimates on $\Gamma_1^-$ can be shown similarly. \\
(1)
Since $\lambda=-A^{2}+(A^{2}/4)e^{iu}$ for $u\in[0,\pi/2]$ on $\Gamma_{1}^{+}$, we have
\begin{align*}
&[K_{1}^{+,1}(t;A_0)f](x)= \\
&\int_{0}^{\infty}\CF_{\xi^{\prime}}^{-1}\left[\int_{0}^{\frac{\pi}{2}}e^{\left(-A^{2}+(A^{2}/4)e^{iu}\right)t}
\varphi_{0}(\xi^{\prime})k_{1}(\xi^\prime,\lambda)e^{-A(x_{N}+y_{N})}\frac{iA^{2}}{4}e^{iu}\,du
\widehat{f}(y_{N})\right](x^{\prime})\,dy_{N}.
\end{align*}
Noting that $|e^{\left(-A^{2}+(A^{2}/4)e^{iu}\right)t}|\leq C e^{-(3/4)A^{2}\wt}$
for some positive constant $C$ independent of $\xi^\pr$, $u$, and $t$,
%
%
%\begin{equation*}%\label{0816_1}
%\begin{aligned}
%&|e^{\left(-A^{2}+(A^{2}/4)e^{iu}\right)t}|=e^{\left(-A^{2}+(A^{2}/4)\cos{u}\right)t}\leq e^{-(3/4)A^{2}t}\leq
%e^{(3/4) A_0^2}e^{-(3/4)A^{2}\wt}&&{\rm on}&&{\rm supp}\,\varphi_{0}.
%\end{aligned}
%\end{equation*}
%
%
we see that by Lemma \ref{lemm:fund1}, $L_q\text{-}L_r$ estimates of the $(N-1)$-dimensional heat kernel, and Parseval's theorem
\begin{multline*}
\|[K_{1}^{+,1}(t;A_0)f](\cdot\,,x_{N})\|_{L_{q}(\mathbf{R}^{N-1})} \\
\leq C\wt^{-\frac{N-1}{2}\left(\frac{1}{2}-\frac{1}{q}\right)-\frac{s}{2}}\int_{0}^{\infty}%\int_{0}^{\frac{\pi}{2}}
\left\|e^{-(A^2/2)\wt}A e^{-A(x_{N}+y_{N})}\widehat{f}(y_{N})\right\|_{2}dy_{N},
%\leq& C\wt^{-\frac{N-1}{2}\left(\frac{1}{r}-\frac{1}{q}\right)-\frac{s}{2}}
%\int_{0}^{\infty}\frac{\left\|f(\cdot\,,y_{N})\right\|_{L_{r}(\tws)}}{\wt^{\,1/2}+x_{N}+y_{N}}\,dy_{N},
\end{multline*}
and furthermore, for $n=2,3,4,5,6$
\begin{multline*}
\|[K_{n}^{+,1}(t;A_0)f](\cdot\,,x_{N})\|_{L_{q}(\mathbf{R}^{N-1})} \\
\leq C\wt^{-\frac{N-1}{2}\left(\frac{1}{2}-\frac{1}{q}\right)-\frac{s}{2}}\int_{0}^{\infty}
\left\|e^{-(A^2/2)\wt}A e^{-CA(x_{N}+y_{N})}\widehat{f}(y_{N})\right\|_{2}dy_{N}
\end{multline*}
with some positive constant $C$ analogously,
 where we have used the fact that for $a>0$ and $\la\in\Ga_1^\pm$
\begin{equation}\label{est:M:Gam1}
|\mathcal{M}(a)|\leq a\int_{0}^{1}|e^{-\left(B\theta+A(1-\theta)\right)y_{N}}|\,d\theta\leq ae^{-(A/4)a}
\leq 8A^{-1}e^{-(A/8)a}.
\end{equation}
We thus obtain the required inequality for $K_{n}^{+,1}(t;A_0)$ $(n=1,\dots,6)$ by Lemma \ref{lemm:fund3}. \\
%in the same way as we have obtained \eqref{140918_1}-\eqref{140918_3} from \eqref{0810_2}.\\
%
%
%
%\begin{equation*}
%\|[K_{0,n}^{+,1}(t)f](\cdot,x_{N})\|_{L_{q}(\tws)}\leq C\wt^{-\frac{N-1}{2}\left(\frac{1}{r}-\frac{1}{q}\right)-\frac{s}{2}}
%\int_{0}^{\infty}\frac{\|f(\cdot\,,y_{N})\|_{L_{r}(\tws)}}{\wt^{\,1/2}+x_{N}+y_{N}}\,dy_{N}
%\end{equation*}
%
%
%for $n=4,5,6$, which completes the proof. \\
%
%
(2)
Since $\lambda=-A^{2}+(A^{2}/4)e^{iu}$ for $u\in[0,\pi/2]$ on $\Gamma_{1}^{+}$, we have
\begin{align*}
&[L_{1}^{+,1}(t;A_0)d](x) \\
&\quad=\CF_{\xi^{\prime}}^{-1}\left[\int_{0}^{\frac{\pi}{2}}e^{\left(-A^{2}+(A^{2}/4)e^{iu}\right)t}
\varphi_{0}(\xi^{\prime})\ell_{1}(\xi^\prime,\lambda)e^{-Ax_{N}}\frac{i A^{2}}{4}e^{iu}\,du
\,\widehat{d}(\xi^{\prime})\right](x^{\prime}).
\end{align*}
By calculations similar to the case of $K_{1}^{+,1}(t)$ and Lemma \ref{lemm:fund1},
\begin{align*}
&\|[L_{1}^{+,1}(t;A_0)d](\cdot,x_{N})\|_{L_{q}(\tws)} \\
&\quad\leq C\wt^{-\frac{N-1}{2}\left(\frac{1}{2}-\frac{1}{q}\right)-\frac{1}{2}-\frac{s}{2}}
\int_{0}^{\frac{\pi}{2}}\left\|e^{-(A^2/2)\wt}Ae^{-Ax_{N}}\widehat{d}(\xi^{\prime})\right\|_{2}\,du \\
&\quad\leq C\wt^{-\frac{N-1}{2}\left(\frac{1}{r}-\frac{1}{q}\right)-\frac{1}{2}-\frac{s}{2}}
\|d\|_{L_{r}(\mathbf{R}^{N-1})}/(\wt^{\,1/2}+x_{N}),
\end{align*}
and also for $n=2,3$ we have by \eqref{est:M:Gam1}
\begin{equation*}
\|[L_{n}^{+,1}(t;A_0)d](\cdot,x_{N})\|_{L_{q}(\tws)}
\leq C\wt^{-\frac{N-1}{2}\left(\frac{1}{r}-\frac{1}{q}\right)-\frac{1}{2}-\frac{s}{2}}
\|d\|_{L_{r}(\mathbf{R}^{N-1})}/(\wt^{\,1/2}+x_{N}).
\end{equation*}
We thus obtain the required inequality for $L_{n}^{+,1}(t;A_0)$ $(n=1,2,3)$ by Lemma \ref{lemm:fund2} (1).
\end{proof}
By Lemma \ref{lem1:Gam1} we see that there exist
constants $A_1\in(0,1)$ and $C>0$ such that
for any $\lambda\in\Gamma_1^\pm$, $A\in (0,A_1)$, and $j,k=1,\dots,N$
\begin{equation*}
\begin{aligned}
&|\CV_{jk}^{BB}(\xi^\prime,\lambda)/L(A,B)|\leq CA^{-1},
&&|\CV_{jk}^{B\CM}(\xi^\prime,\lambda)/L(A,B)|\leq C, \\
&|\CV_{jk}^{\CM B}(\xi^\prime,\lambda)/L(A,B)|\leq C,
&&|\CV_{jk}^{\CM\CM}(\xi^\prime,\lambda)/L(A,B)|\leq CA, \\
&|\CP_j^{AA}(\xi^\prime,\lambda)/L(A,B)|\leq C,
&&|\CP_j^{A\CM}(\xi^\prime,\lambda)/L(A,B)|\leq CA.
\end{aligned}
\end{equation*}
Therefore, recalling (\ref{Fsol_eq:2})-(\ref{hFsol_eq:2}) 
and (\ref{low:decomp}) with $\sigma=1$ and using Lemma \ref{lem2:Gam1},
we have Theorem \ref{thm:Gam1}.

%%%%%%%%%%%%%%%%%%%%%%%%%%%%%%%%%%%%%%%%%%%%%%%%%%%%%%%%%%%%
\subsection{Analysis on $\Gamma_{2}^{\pm}$}
Our aim here is to show the following theorem for the operators defined in (\ref{low:decomp}) with $\si=2$.
\begin{theo}\label{thm:Gam2}
Let $1\leq r\leq 2\leq q\leq\infty$ and $F=(f,d)\in L_r(\uhs)^N\times L_r(\tws)$.
Then there exists an $A_0\in(0,1)$ such that the following assertions hold:
\begin{enumerate}[$(1)$]
\item
Let $k=0,1,\ell=0,1,2$, and $\alpha^{\prime}\in\mathbf{N}_{0}^{N-1}$.
Then there exists a positive constant $C=C(\alpha^{\prime})$ 
such that for any $t>0$
\begin{align*}
\|\pa_t^k D_{x^{\prime}}^{\alpha^{\prime}}D_{N}^{\ell}
S_{0}^{f,2}(t;A_0)F\|_{L_{q}(\uhs)}&\leq
C(t+1)^{-\frac{N}{2}\left(\frac{1}{r}-\frac{1}{q}\right)-k-
\frac{|\alpha^{\prime}|+\ell}{2}}\|f\|_{L_r(\uhs)}, \\
\|\pa_t^{k}D_{x^{\prime}}^{\alpha^{\prime}}
D_{N}^{\ell}S_{0}^{d,2}(t;A_0)F\|_{L_{q}(\uhs)}&\leq
C(t+1)^{-\frac{N-1}{2}\left(\frac{1}{r}-\frac{1}{q}\right)
-\frac{1}{2}\left(\frac{1}{2}-\frac{1}{q}\right)
-k-\frac{|\alpha^{\prime}|+\ell}{2}}\|d\|_{L_r(\tws)},
\end{align*}
provided that $k+\ell+|\al^\pr|\neq0$.
In addition, if $(q,r)\neq(2,2)$, then
\begin{align*}
\|S_{0}^{f,2}(t;A_0)F\|_{L_q(\uhs)}
&\leq 
C(t+1)^{-\frac{N}{2}\left(\frac{1}{r}-\frac{1}{q}\right)}\|f\|_{L_r(\uhs)}, \\
\|S_{0}^{d,2}(t;A_0)F\|_{L_{q}(\uhs)}
&\leq
C(t+1)^{-\frac{N-1}{2}\left(\frac{1}{r}-\frac{1}{q}\right)
-\frac{1}{2}\left(\frac{1}{2}-\frac{1}{q}\right)}\|d\|_{L_r(\tws)}.
\end{align*}
\item
There exists a positive constant $C$ such that for any $t>0$
\begin{align*}
\|\nabla\Pi_{0}^{f,2}(t;A_0)F\|_{L_{q}(\uhs)}&
\leq C(t+1)^{-\frac{N}{2}\left(\frac{1}{r}-\frac{1}{q}\right)
-\frac{1}{4}}\|f\|_{L_{r}(\mathbf{R}_{+}^{N})}, \\
\|\nabla\Pi_{0}^{d,2}(t;A_0)F\|_{L_{q}(\mathbf{R}_{+}^{N})}&
\leq C(t+1)^{-\frac{N-1}{2}\left(\frac{1}{r}-\frac{1}{q}\right)
-\frac{1}{2}\left(\frac{1}{2}-\frac{1}{q}\right)
-1}\|d\|_{L_{r}(\tws)}.
\end{align*}
\item
Let $\alpha\in\mathbf{N}_{0}^{N}$. 
Then there exists a positive constant 
$C=C(\alpha)$ such that for any $t>0$
\begin{align*}
\|D_{x}^{\alpha}\nabla\CE({T}_{0}^{f,2}(t;A_0)F)\|_{L_{q}(\uhs)}
&\leq C(t+1)^{-\frac{N}{2}\left(\frac{1}{r}-\frac{1}{q}\right)
-\frac{1}{4}-\frac{|\alpha|}{2}}\|f\|_{L_{r}(\uhs)}, \\
\|D_{x}^{\alpha}\pa_t\CE({T}_{0}^{f,2}(t;A_0)F)\|_{L_{q}(\mathbf{R}_{+}^{N})}
&\leq C(t+1)^{-\frac{N}{2}\left(\frac{1}{r}-\frac{1}{q}\right)
-\frac{|\alpha|}{2}}\|f\|_{L_{r}(\mathbf{R}_{+}^{N})}\quad\text{if $|\al|\neq 0$}, \\
\|D_{x}^{\alpha}\nabla\CE({T}_{0}^{d,2}(t;A_0)F)\|_{L_{q}(\uhs)}
&\leq C(t+1)^{-\frac{N-1}{2}\left(\frac{1}{r}-\frac{1}{q}\right)
-\frac{1}{2}\left(\frac{1}{2}-\frac{1}{q}\right)
-1-\frac{|\alpha|}{2}}\|d\|_{L_{r}(\tws)}.
\end{align*}
In addition, if $(q,r)\neq(2,2)$, then
\begin{equation*}
\|\pa_t\CE({T}_{0}^{f,2}(t;A_0)F)\|_{L_{q}(\mathbf{R}_{+}^{N})}
\leq C(t+1)^{-\frac{N}{2}\left(\frac{1}{r}-\frac{1}{q}\right)}\|f\|_{L_{r}(\mathbf{R}_{+}^{N})}.
\end{equation*}
\end{enumerate}
\end{theo}
We start with the following lemma in order 
to show Theorem \ref{thm:Gam2}.
\begin{lemm}\label{lem1:Gam2}
There exist positive constants $A_1\in(0,1)$, $b_{0}\geq1$, and $C$ 
such that for any $\lambda\in\Gamma_{2}^{\pm}$
and $A\in(0,A_1)$
\begin{align*}
&b_{0}^{-1}(A\sqrt{1-u}+\sqrt{u}+A)\leq{\rm Re}B\leq|B|\leq b_{0}(A\sqrt{1-u}+\sqrt{u}+A), \\
&|D(A,B)|\geq C(A\sqrt{1-u}+\sqrt{u}+A)^{3}, \\
&|L(A,B)|\geq C(A\sqrt{1-u}+\sqrt{u}+A^{1/4})^{4}.
\end{align*}
\end{lemm}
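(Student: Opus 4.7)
The plan is to establish the three chains of inequalities in the lemma separately. The first two, for ${\rm Re}\,B$, $|B|$ and for $|D(A,B)|$, will follow from Lemma~\ref{lem:symbol} once I verify that $\la\in\Si_{\ep_1}$ on $\Ga_2^\pm$ for a suitable $\ep_1>0$. The main obstacle will be the lower bound on $|L(A,B)|$, which requires factoring $L$ into its four roots $B_j^\pm$ together with a delicate separate treatment of the factor $|B-B_1^+|$ (on $\Ga_2^+$): this factor corresponds to the root lying in the same quadrant as $B$ and is asymptotically close to $B$ for small $u$.

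First, I would parametrize $\la\in\Ga_2^\pm$ as $\la=-f(u)\pm ig(u)$ with $f(u)=A^2(1-u)+\ga_0 u$ and $g(u)=(A^2/4)(1-u)+\wit{\ga}_0 u$. A direct computation of the sign of $(g/f)'$ shows that $g(u)/f(u)$ is monotone in $u$ with endpoint values $1/4$ at $u=0$ and $\wit{\ga}_0/\ga_0$ at $u=1$; since \eqref{gam0:tilde} gives $\wit{\ga}_0/\ga_0=(1+2\sqrt{65})/8>1/4$, the ratio satisfies $g(u)/f(u)\geq 1/4$, hence $|\arg\la|\leq\pi-\arctan(1/4)$, so $\la\in\Si_{\ep_1}$ with $\ep_1=\arctan(1/4)$. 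Since $|\la|^2=f(u)^2+g(u)^2\sim (A^2(1-u)+u)^2$ for small $A$, we obtain $|\la|^{1/2}+A\sim A\sqrt{1-u}+\sqrt{u}+A$. Lemma~\ref{lem:symbol}~(1) with $\ep=\ep_1$ then yields the first chain of inequalities, and Lemma~\ref{lem:symbol}~(2) applied with $s=-1$ furnishes $|D(A,B)|\geq C(|\la|^{1/2}+A)^3$, which gives the second.

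For $|L(A,B)|$, direct expansion of $(B-A)D(A,B)+A(\gra+\sur A^2)$ shows that $L(A,B)=B^4+2A^2B^2-4A^3B+A^4+A(\gra+\sur A^2)$ is monic of degree four in $B$, so $L(A,B)=(B-B_1^+)(B-B_1^-)(B-B_2^+)(B-B_2^-)$. From \eqref{expan:low}, $|B_j^\pm|\sim A^{1/4}$ with arguments clustering near $\pm\pi/4$ and $\pm 3\pi/4$ for $A$ small, while the argument analysis above combined with ${\rm Im}(B^2)\geq 0$ shows $\arg B\in[\pi/4,\pi/2-\eta]$ on $\Ga_2^+$ for some $\eta>0$ depending on $\ga_0,\wit{\ga}_0$. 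Consequently, for each of the three roots $B_1^-$, $B_2^+$, $B_2^-$, either ${\rm Re}\,B_j^\pm\leq -cA^{1/4}$ or ${\rm Im}\,B_j^\pm\leq -cA^{1/4}$ for some $c>0$; combined with ${\rm Re}\,B,\,{\rm Im}\,B\geq c|B|$ (the former from Lemma~\ref{lem:symbol}~(1), the latter from $\arg B\leq\pi/2-\eta$), this yields $|B-B_j^\pm|\gtrsim |B|+A^{1/4}\sim A\sqrt{1-u}+\sqrt{u}+A^{1/4}$ for these three factors.

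The remaining factor $|B-B_1^+|$ is the main obstacle since $B_1^+$ lies in the same quadrant as $B$ and their arguments asymptotically coincide. I would exploit the identity $B^2-(B_1^+)^2=\la-\la_+$ to write $|B-B_1^+|=|\la-\la_+|/|B+B_1^+|$. Substituting the expansion \eqref{expan:low2} of $\la_+$ into the parametrization of $\la$ and splitting cases at $u\sim A^{1/2}$ shows $|\la-\la_+|\gtrsim u+A^{1/2}$: for $u\lesssim A^{1/2}$, the imaginary part of $\la-\la_+$ is dominated by $-\gra^{1/2}A^{1/2}$, while for $u\gtrsim A^{1/2}$, the real part is dominated by $-\ga_0 u$. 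Combined with $|B+B_1^+|\leq |B|+|B_1^+|\lesssim \sqrt{u}+A^{1/4}$, a case analysis in $u$ gives $|B-B_1^+|\gtrsim (u+A^{1/2})/(\sqrt{u}+A^{1/4})\gtrsim \sqrt{u}+A^{1/4}$. Multiplying the four factor estimates and using $A\leq A^{1/4}$ for $A\in(0,1)$ to absorb the $A\sqrt{1-u}$ term into $A^{1/4}$ produces $|L(A,B)|\geq C(A\sqrt{1-u}+\sqrt{u}+A^{1/4})^4$. The bounds on $\Ga_2^-$ follow from those on $\Ga_2^+$ by complex conjugation.
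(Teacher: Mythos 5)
Your overall strategy matches the paper's for the heart of the lemma—the lower bound on $|L(A,B)|$—but you diverge in presentation on the first two estimates, and there is one genuine imprecision in the $|L(A,B)|$ estimate that needs repair.

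For $B$ and $D(A,B)$: the paper estimates ${\rm Re}\,B$ by a bare-hands calculation from the parametrization $\sigma=\lambda+A^2$, reserving Lemma~\ref{lem:symbol} only for $D(A,B)$, whereas you note $g(u)/f(u)\geq 1/4$ (your monotonicity computation checks out, since $g'f-gf'=(\wit\gamma_0-\gamma_0/4)A^2>0$ is constant in $u$) and deduce $\Gamma_2^\pm\subset\Sigma_\ep$ for a fixed $\ep$, then read off both bounds from Lemma~\ref{lem:symbol}. This is a legitimate shortcut and arguably cleaner. One small caution: at $u=0$ the angle equals $\pi-\arctan(1/4)$ exactly, so take $\ep$ strictly less than $\arctan(1/4)$ (e.g.\ $\ep_0=\arctan(1/8)$, as the paper does) to stay inside the open sector.

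For $|L(A,B)|$: both you and the paper write $|B-B_1^\pm|=|\lambda-\lambda_\pm|/|B+B_1^\pm|$ and identify $B_1^+$ on $\Gamma_2^+$ as the critical root. The paper then computes $|\lambda-\lambda_\pm|^2$ algebraically and treats the remaining three factors by the single observation that $B_1^\pm$ is the closest root. You instead bound each far factor $|B-B_j^\pm|$ by a geometric separation argument; this is fine (although your parenthetical swaps which angle bound delivers which inequality—$\arg B\geq\pi/4$ is what gives ${\rm Im}\,B\gtrsim|B|$, and $\arg B\leq\pi/2-\eta$ gives ${\rm Re}\,B\gtrsim|B|$). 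The genuine gap is in your case split for $|\lambda-\lambda_+|$: you assert that for $u\lesssim A^{1/2}$ the imaginary part ${\rm Im}(\lambda-\lambda_+)=(A^2/4)(1-u)+\wit\gamma_0 u-\gra^{1/2}A^{1/2}+O(A^{5/2})$ is ``dominated by $-\gra^{1/2}A^{1/2}$.'' This fails near $u\approx \gra^{1/2}A^{1/2}/\wit\gamma_0$, where this quantity vanishes; whether that $u$ lies in your regime $u\lesssim A^{1/2}$ depends on $\gra$ and $\wit\gamma_0$, and you cannot exclude it. At such $u$ you must fall back on the real part, $-\gamma_0 u+O(A^2)\sim -\gamma_0 A^{1/2}$, to rescue the bound $|\lambda-\lambda_+|\gtrsim A^{1/2}$. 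The paper sidesteps this by estimating the full modulus-squared ${\rm Re}^2+{\rm Im}^2$ from below at once rather than by isolating one of the two parts. Your argument is correct in spirit and the conclusion $|\lambda-\lambda_+|\gtrsim u+A^{1/2}$ does hold, but the case split as written leaves a hole that must be filled, either by choosing the threshold $u\sim\gra^{1/2}A^{1/2}/(2\wit\gamma_0)$ rather than $u\sim A^{1/2}$, or by bounding $\max(|{\rm Re}|,|{\rm Im}|)\gtrsim A^{1/2}$ as the paper effectively does.
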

\begin{proof}
We fist show the inequalities for $B$.
Set $\si=\la+A^2$ and $\te=\arg\si$. Noting that
%
%
%\begin{equation*}
%\sigma=\lambda+A^{2}=-(A^{2}(1-u)+\gamma_{0}u-A^{2})\pm i
%\left(\frac{A^{2}}{4}(1-u)+\widetilde{\gamma}_{0}u\right)
%\end{equation*}
%
%
%and $\theta=\arg{\sigma}$, noting that 
$$
\lambda=-(A^{2}(1-u)+\gamma_{0}u)+\pm i
((A^{2}/4)(1-u)+\widetilde{\gamma}_{0}u)
$$
for $u\in[0,1]$ on $\Gamma_{2}^{\pm}$,
we have
\begin{align*}
&|\sigma|+A^{2}(1-u)+\gamma_{0}u-A^{2}\leq 2(A^{2}(1-u)+\gamma_{0}u+A^{2})+\frac{A^{2}}{4}(1-u)+\widetilde{\gamma}_{0}u \\
&\leq 3\max(\gamma_0,\widetilde{\gamma}_0)(A^{2}(1-u)+u+A^{2})\leq 3\max(\gamma_0,\widetilde{\gamma}_0)(A\sqrt{1-u}+\sqrt{u}+A)^{2},
\end{align*}
which is used to obtain
\begin{align*}
{\rm Re}B&=|\sigma|^{\frac{1}{2}}\cos{\frac{\theta}{2}}=\frac{|\sigma|^{\frac{1}{2}}}{\sqrt{2}}(1+\cos\theta)^{\frac{1}{2}}
=\frac{|\sigma|^{\frac{1}{2}}}{\sqrt{2}}\left(\frac{|\sigma|+{\rm Re}\,\sigma}{|\sigma|}\right)^{\frac{1}{2}}
=\frac{1}{\sqrt{2}}\left(\frac{|\sigma|^{2}-({\rm Re}\sigma)^{2}}{|\sigma|-{\rm Re}\sigma}\right)^{\frac{1}{2}} \displaybreak[0] \\
%&=\frac{(A^{2}/4)(1-u)+\gamma_{0}u}{\sqrt{2}(|\sigma|+A^{2}(1-u)+\gamma_{0}u-A^{2})^{1/2}} \\
&=\frac{(A^{2}/4)(1-u)+\gamma_{0}u+(A^{2}/8)-(A^{2}/8)(1-u)-(A^{2}/8)u}{\sqrt{2}(|\sigma|+A^{2}(1-u)+\gamma_0u-A^{2})^{1/2}} \displaybreak[0] \\
&\geq\frac{(A^2/8)(1-u)+\gamma_0 u +(A^2/8)-(A_0^2/8)u}
{\sqrt{6}\max(\gamma_0^{1/2},\widetilde{\gamma}_0^{1/2})(A\sqrt{1-u}+\sqrt{u}+A)} \displaybreak[0] \\
&\geq\frac{(1/8)\{A^{2}(1-u)+\gamma_{0}u+A^{2}\}}{\sqrt{6}\max(\gamma_0^{1/2},\widetilde{\gamma}_0^{1/2})(A\sqrt{1-u}+\sqrt{u}+A)}
\geq\frac{A\sqrt{1-u}+u+A}{24\sqrt{6}\max(\gamma_0^{1/2},\widetilde{\gamma}_0^{1/2})}
\end{align*}
for any $A\in(0,A_1)$ provided that $A_1^2\leq 7 \gamma_0$.
It is clear that the other inequalities concerning $B$ hold.

Next we consider $D(A,B)$.
Noting that $\lambda\in\Gamma_{2}^{\pm}\subset \Sigma_{\ep_0}$ and using Lemma \ref{lem:symbol} (2),
we obtain
\begin{equation*}
	|D(A,B)|\geq C(\ep_0)(|\lambda|^{\frac{1}{2}}+A)^{3}\geq C(\ep_0)(A\sqrt{1-u}+\sqrt{u}+A)^{3}.
\end{equation*}

Finally, we show the inequality for $L(A,B)$.
By (\ref{expan:low2})
\begin{equation*}
B^{2}-(B_{1}^{\pm})^{2}=
-(A^{2}(1-u)+\gamma_{0}u)\pm i
\left(\frac{A^{2}}{4}(1-u)+\widetilde{\gamma}_{0}u 
-c_{g}^{1/2}A^{1/2}\right)
+2A^{2}+O(A^{10/4})
\end{equation*}
as $A\to0$, and also we have 
\begin{align*}
&\left|-(A^{2}(1-u)+\gamma_{0}u)\pm i\left(\frac{A^{2}}{4}(1-u)+\widetilde{\gamma}_{0}u-c_{g}^{1/2}A^{1/2}\right)\right|^{2} \displaybreak[0] \\
&=(A^{2}(1-u)+\gamma_{0}u)^{2}+\left(\frac{A^{2}}{4}(1-u)+\widetilde{\gamma}_{0}u\right)^{2}+c_{g}A
-2c_{g}^{1/2}A^{1/2}\left(\frac{A^{2}}{4}(1-u)+\widetilde{\gamma}_{0}u\right) \displaybreak[0] \\
&\geq\left(A^{2}(1-u)+\frac{\widetilde{\gamma}_0}{3}u\right)^{2}+\frac{1}{11}c_g A
-\frac{1}{10}\left(\frac{A^2}{4}(1-u)+\widetilde{\gamma}_0u\right)^2  \displaybreak[0] \\
%&\quad\geq(A^{2}(1-u)+\gamma_{0}u)^{2}-\frac{1}{2}\left(\frac{A^{2}}{4}(1-u)+\widetilde{\gamma}_{0}u\right)^{2}+\frac{1}{3}c_{g}A \\
&\geq\frac{1}{90}(A^{2}(1-u)+\widetilde{\gamma}_{0}u)^{2}+\frac{1}{11}c_{g}A
\geq C\left(A\sqrt{1-u}+\sqrt{u}+A^{1/4}\right)^{4}.
\end{align*}
We thus see that 
there exist positive constants $A_1$ and $C$ such that
for any $A\in(0,A_1)$ and $\lambda\in\Gamma_2^\pm$
\begin{multline*}
|B-B_{1}^{\pm}|=\frac{|B^{2}-(B_{1}^{\pm})^{2}|}{|B+B_{1}^{\pm}|} \\
\geq
\frac{C\left(A\sqrt{1-u}+\sqrt{u}+A^{1/4}\right)^{2}}
{b_{0}(A\sqrt{1-u}+\sqrt{u}+A)+c_{g}^{1/4}A^{1/4}}
\geq C(A\sqrt{1-u}+\sqrt{u}+A^{1/4}).
\end{multline*}
Since $|B-B^\pm_1| \leq |B-B^\pm_2|$ as follows from
${\rm Re}\,B \geq 0$ and \eqref{expan:low}, we have
the required inequality for $L(A,B)$, which completes the proof 
 of Lemma \ref{lem1:Gam2}.
\end{proof}
\begin{lemm}\label{lem2:Gam2}
Let $1\leq r\leq  2\leq q\leq\infty$, and let $f\in L_{r}(\mathbf{R}_{+}^{N})^{N}$ and $d\in L_{r}(\mathbf{R}^{N-1})$.
We use the symbols defined in $(\ref{op:low})$.
\begin{enumerate}[$(1)$]
\item
Let $s\geq0$ and suppose that there 
exist constants $A_1\in(0,1)$ and  $C=C(s)>0$ such that
for any $\lambda\in\Gamma_{2}^{\pm}$ and $A\in(0,A_1)$
\begin{equation*}
\begin{aligned}
|k_{n}(\xi^\prime,\lambda)|&\leq C(A\sqrt{1-u}
+\sqrt{u}+A)^{-2}A|B|^s &&(n=1,3), \\
|k_{2}(\xi^\prime,\lambda)|&\leq C(A\sqrt{1-u}
+\sqrt{u}+A)^{-2}A^2|B|^s, \\
|k_{n}(\xi^\prime,\lambda)|&\leq C(A\sqrt{1-u}
+\sqrt{u}+A)^{-1}A|B|^s && (n=4,5), \\
%|k_{n}(\xi^\prime,\lambda)|&\leq C(A\sqrt{1-u}+\sqrt{u}+A)^{-1}A|B|^{s}
%&&(n=4,5), \\
%|k_{n}(\xi^\prime,\lambda)|&\leq C(A\sqrt{1-u}+\sqrt{u}+A)^{-1}A|B|^{s}
%\quad(n=4,5),\quad
|k_{6}(\xi^\prime,\lambda)|&\leq CA|B|^{s}.
\end{aligned}
\end{equation*}
Then there exist constants $A_0 \in (0, A_1)$ and
$C=C(s)>0$ such that for any $t>0$ and $n=1,\dots,6$ we have 
the estimates:
\begin{equation*}
\|K_{n}^{\pm,2}(t;A_0)f\|_{L_{q}(\uhs)}\leq C(t+1)^{-\frac{N}{2}\left(\frac{1}{r}-\frac{1}{q}\right)-\frac{s}{2}}
\|f\|_{L_{r}(\uhs)},
\end{equation*}
provided that $s>0$.
In the case of $s=0$, we have
\begin{equation*}
\|K_{n}^{\pm,2}(t;A_0)f\|_{L_{q}(\uhs)}\leq C(t+1)^{-\frac{N}{2}\left(\frac{1}{r}-\frac{1}{q}\right)}
\|f\|_{L_{r}(\uhs)}\quad\text{if $(q,r)\neq(2,2)$}.
\end{equation*}
\item
Let $s\geq0$ and suppose that there exist constants $A_1\in(0,1)$ 
and  $C=C(s)>0$ such that
for any $\lambda\in\Gamma_{2}^{\pm}$ and $A\in(0,A_1)$ 
\begin{equation*}
\begin{aligned}
|\ell_{n}(\xi^\prime,\lambda)|&\leq C(A\sqrt{1-u}
+\sqrt{u}+A^{1/4})^{-4}A|B|^s &&(n=1,2), \\
%\quad|\ell_{2}(\xi^\prime,\lambda)|\leq C|B|^{s}, \\
|\ell_{3}(\xi^\prime,\lambda)|&\leq C(A\sqrt{1-u}
+\sqrt{u}+A^{1/4})^{-3}A|B|^{s}.
\end{aligned}
\end{equation*}
Then there exist constants $A_0 \in (0, A_1)$ and 
$C=C(s)>0$ such that for any $t>0$ we have the estimates:
\begin{align*}
\|L_{n}^{\pm,2}(t;A_0)d\|_{L_{q}(\uhs)}&\leq C(t+1)^{-\frac{N-1}{2}
\left(\frac{1}{r}-\frac{1}{q}\right)
-\frac{1}{2}\left(\frac{1}{2}-\frac{1}{q}\right)
-\frac{s}{2}}\|d\|_{L_{r}(\tws)} \quad(n=1,3,\ s>0), \\
\|L_{2}^{\pm,2}(t;A_0)d\|_{L_{q}(\uhs)}
&\leq C(t+1)^{-\frac{N-1}{2}\left(\frac{1}{r}-\frac{1}{q}\right)
-\frac{1}{2}\left(\frac{1}{2}-\frac{1}{q}\right)
-\frac{3}{4}-\frac{s}{2}}\|d\|_{L_{r}(\tws)}\quad(s\geq0).
\end{align*}
%
%
%provided that $s>0$.
In the case of $s=0$, for $n=1,3$, we have
\begin{equation*}
\|L_{n}^{\pm,2}(t;A_0)d\|_{L_{q}(\uhs)}\leq C(t+1)^{-\frac{N-1}{2}
\left(\frac{1}{r}-\frac{1}{q}\right)
-\frac{1}{2}\left(\frac{1}{2}-\frac{1}{q}\right)}\|d\|_{L_{r}(\tws)}
\quad\text{if $(q,r)\neq(2,2)$}.
\end{equation*}
\end{enumerate}
\end{lemm}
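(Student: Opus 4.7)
The argument runs in parallel with Lemmas \ref{lem:Gam0} and \ref{lem2:Gam1}, the novel ingredient being that $\Ga_2^\pm$ is traversed by a parameter $u\in[0,1]$ interpolating between the $\Ga_1^\pm$-scale (order $A^2$) and the fixed-scale vertex $-\ga_0\pm i\wit{\ga}_0$. Parametrizing $\la(u)=-(A^2(1-u)+\ga_0 u)\pm i((A^2/4)(1-u)+\wit{\ga}_0 u)$, one has $|d\la/du|\leq C$ uniformly and, after choosing $A_0$ small enough that $A_0^2\leq\ga_0$,
\begin{equation*}
|e^{\la t}|=e^{-(A^2(1-u)+\ga_0 u)t}\leq e^{-(A^2/2)\wt}\,e^{-(\ga_0/2)u\wt},\qquad \wt=t+1.
\end{equation*}
The first factor will supply the usual heat-kernel decay through Parseval, while the second will render the $u$-integral absolutely convergent uniformly in the remaining variables.

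For $K_n^{\pm,2}(t;A_0)$, I combine the multiplier hypothesis with $|B|\leq b_0(A\sqrt{1-u}+\sqrt{u}+A)$ from Lemma \ref{lem1:Gam2} and, when a factor $\CM$ appears in $\CX_n$, the bound (\ref{est:M:Gam1}) which is equally valid on $\Ga_2^\pm$. Taking the $L_q(\tws)$-norm in $x'$ via the $L_q$-$L_r$ estimate of the $(N-1)$-dimensional heat kernel and Parseval's identity reduces the integrand to the standard configuration to which Lemma \ref{lemm:fund3} applies in tangential variables, and the $x_N$-integrability is then settled by Lemma \ref{lemm:fund2}(2). The prefactor $A$ in the multiplier hypothesis combined with $(A\sqrt{1-u}+\sqrt{u}+A)^{s-2}$ and the damping above yields, upon integration in $u\in[0,1]$, precisely the extra $\wt^{-s/2}$ rate announced in the statement, on top of the $\wt^{-(N/2)(1/r-1/q)}$ coming from the heat-kernel and the $x_N$-integration.

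For $L_n^{\pm,2}(t;A_0)$ the scheme is analogous, but uses the sharper denominator bound $|L(A,B)|\geq C(A\sqrt{1-u}+\sqrt{u}+A^{1/4})^4$ of Lemma \ref{lem1:Gam2} and replaces Lemma \ref{lemm:fund2}(2) by Lemma \ref{lemm:fund2}(1) for the $x_N$-integration, which is where the additional $\wt^{-(1/2)(1/2-1/q)}$ decay appears. The extra $\wt^{-3/4}$ for $L_2^{\pm,2}$ comes from combining the $A^{-1}$ factor supplied by (\ref{est:M:Gam1}) with the asymmetric $A^{1/4}$ scaling in the lower bound for $L(A,B)$. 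The exceptional endpoint $(q,r)=(2,2)$ with $s=0$ is argued separately: Parseval no longer produces heat-kernel decay and one invokes the direct $L_2$-Fourier multiplier estimate of \cite[Lemma 5.4]{S-S:model} exactly as in the $(2,2)$ subcase of the proof of Lemma \ref{lem:Gam0}.

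The main technical obstacle I expect is the delicate analysis of the $u$-integral near $u=0$, where the weights $(A\sqrt{1-u}+\sqrt{u}+A)^{s-2}$ and $(A\sqrt{1-u}+\sqrt{u}+A^{1/4})^{-4}$ are singular in $A$ and the regime effectively reverts to the $\Ga_1^\pm$ analysis. Controlling this cleanly requires splitting $[0,1]$ into a small-$u$ piece on which one uses the $e^{-(A^2/2)\wt}$ damping, and a complementary piece on which the $e^{-(\ga_0/2)u\wt}$ damping absorbs any algebraic singularity in $u$; the $A$-prefactor built into each multiplier hypothesis is precisely what prevents a loss from accumulating at the junction.
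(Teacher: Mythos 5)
Your overall architecture matches the paper's: parametrize $\Ga_2^\pm$ by $u\in[0,1]$, split $|e^{\la t}|$ into a factor $e^{-cA^2\wt}$ that feeds the $L_q$--$L_r$ heat-kernel estimate after Parseval and a factor $e^{-cu\wt}$ that tames the $u$-integral, then land the $y_N$-integral on Lemma \ref{lemm:fund3} or Lemma \ref{lemm:fund2}. However, there is a genuine gap in the handling of the $\CM$ factors, and the discussion of the endpoint $(q,r)$ cases is inaccurate.

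\textbf{The $\CM$ bound.} You propose to reuse the $\Ga_1^\pm$ estimate \eqref{est:M:Gam1}, i.e. $|\CM(a)|\leq CA^{-1}e^{-cAa}$, on $\Ga_2^\pm$. That inequality is indeed correct on $\Ga_2^\pm$ (since ${\rm Re}\,B\geq b_0^{-1}A$ there by Lemma \ref{lem1:Gam2}), but it is the \emph{wrong} tool for $K_n^{\pm,2}$ with $n=4,5,6$ and for $L_3^{\pm,2}$. Take $K_4$: the hypothesis is $|k_4|\leq C(A\sqrt{1-u}+\sqrt{u}+A)^{-1}A|B|^s$ and $\CX_4=e^{-Bx_N}\CM(y_N)$. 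Inserting $|\CM(y_N)|\leq CA^{-1}e^{-cAy_N}$ kills the sole power of $A$ in the hypothesis, leaving $|k_4\CX_4|\leq C(A\sqrt{1-u}+\sqrt{u}+A)^{-1}|B|^s e^{-cA(x_N+y_N)}$ with no spare factor of $A$. Lemma \ref{lemm:fund3} (and even the underlying Lemma \ref{lemm:fund1}) requires the structure $e^{-cA^2\wt}A\,e^{-cA(x_N+y_N)}$; without the $A$ you cannot extract a bound of the form $(\wt^{1/2}+x_N+y_N)^{-1}$ uniformly as $A\to0$, and the subsequent $x_N$- and $y_N$-integrals fail. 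The splitting of $[0,1]$ you propose does not repair this, because on any subinterval the missing $A$ is still missing (the obstruction is in $\xi'$-space, not in $u$). What the paper uses instead is the refined bound
\begin{equation*}
|\CM(a)|=\frac{|e^{-Ba}-e^{-Aa}|}{|B-A|}\leq C|B|^{-1}e^{-cAa}\leq C\frac{e^{-cAa}}{\sqrt{u}},
\end{equation*}
which follows from Lemma \ref{lem1:Gam2} (both bounds on $|B|$ and $|B-A|\geq c|B|$ once $A_0$ is small). This keeps the explicit $A$ intact and converts the $\CM$ loss into a $(\sqrt{u})^{-1}$ singularity that the $e^{-c|B|^2\wt}$ damping absorbs via a computation like
$\int_0^1\frac{e^{-c|B|^2\wt}|B|^{s-\de}}{(\sqrt{u})^{2-\de}}\,du\leq C\wt^{-s/2}$.
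The same replacement, $A^{-1}\to|B|^{-1}$, is essential for $K_5$, $K_6$, and $L_3$; your plan as written loses an $A$ in each of those cases.

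\textbf{Endpoint case.} You describe the exceptional case as ``$(q,r)=(2,2)$ with $s=0$'' and say it is handled by the $L_2$-multiplier lemma of \cite{S-S:model} as in Lemma \ref{lem:Gam0}. This is not accurate: for $s=0$ the statement explicitly excludes $(q,r)=(2,2)$, so there is nothing to prove there. The genuinely delicate endpoint is $s=0$, $q=2$, $1\leq r<2$; for the $K_n$ operators the paper handles it by passing to $A^{1-\de}$ in place of $A$ and applying Lemma \ref{lemm:fund2}(2) (whose integrability condition is exactly what forces $(q,r)\neq(2,2)$), while for $L_1$ and $L_3$ it uses Young's inequality together with the Fourier-transform decay estimate of Proposition \ref{prop:SS01} (\cite{SS01}), \emph{not} the $L_2$ resolvent estimate of \cite{S-S:model}.

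These are fixable, but both points need to be addressed explicitly before the argument closes.
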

\begin{proof}
We use the abbreviations: $\|\cdot\|_{2}=\|\cdot\|_{L_{2}(\mathbf{R}^{N-1})}$,
$\widehat{f}(y_{N})=\widehat{f}(\xi^{\prime},y_{N})$
and $\wt=t+1$ for $t>0$ in this proof,
and consider only the estimates on $\Gamma_2^{+}$
since the estimates on $\Gamma_2^-$ can be shown similarly. \\
(1) We first show the inequality for $K_{1}^{+,2}(t;A_0)$.
Recalling that
$\lambda=-(A^{2}(1-u)+\gamma_{0}u)+i((A^{2}/4)(1-u)+\widetilde{\gamma}_{0}u)$ for $u\in[0,1]$ on $\Gamma_{2}^{+}$,
we have, by \eqref{op:low},
\begin{align*}%\label{0817_2}
&[K_{1}^{+,2}(t;A_0)f](x) \\
&=\int_{0}^{\infty}\CF_{\xi^{\prime}}^{-1}\left[\int_{0}^{1}
e^{\{-(A^{2}(1-u)+\gamma_{0}u)+i((A^{2}/4)(1-u)+\widetilde{\gamma}_{0}u)\}t}\varphi_{0}(\xi^{\prime})
k_{1}(\xi^\prime,\lambda)e^{-A(x_{N}+y_{N})}\right. \displaybreak[0] \\
&\quad\left.\times\left\{-(\gamma_{0}-A^{2})+ i\left(\widetilde{\gamma}_{0}-\frac{A^{2}}{4}\right)\right\}\,du
\,\widehat{f}(y_{N})\right](x^{\prime})\,dy_{N}.
\end{align*}
Since it follows from Lemma \ref{lem1:Gam2} that
%
%
%		\begin{align*}
%			({\rm Re}B)^{2}&\leq 2b_{0}^{2}(A^{2}(1-u)+\gamma_{0}u+A^{2})=2b_{0}^{2}(A^{2}(1-u)+\gamma_{0}u+A^{2}(1-u)+A^{2}u) \\
%			&\leq 4b_{0}^{2}(A^{2}(1-u)+\gamma_{0}u)
%		\end{align*}
%
%
\begin{align}\label{140922_2}
&|e^{\{-(A^{2}(1-u)+\gamma_{0}u)+ i((A^{2}/4)(1-u)+\widetilde{\gamma}_{0}u)\}t}| \notag \\ %=e^{-(A^{2}(1-u)+\gamma_{0}u)t}
&\quad\leq e^{-\frac{3}{4}A^{2}t}e^{-\frac{1}{4}(A^{2}(1-u)+\gamma_{0}u)t}
%&\quad\quad\leq e^{-\frac{3}{4}A^{2}t}e^{-(b_{0}^{-2}/24)|B|^{2}t}
%\leq Ce^{-\frac{3}{4}A^{2}\wt}e^{-(b_{0}^{-2}/24)|B|^{2}\wt}
\leq Ce^{-\frac{3}{4}A^{2}\wt}e^{-C|B|^{2}\wt}
\end{align}
with some positive constant $C$, independent of $\xi^\prime$, $\lambda$, and $t$,
for any $A\in(0,A_0)$ by choosing suitable $A_0\in(0,A_1)$,
we have, by $L_q\text{-}L_r$ estimates of the $(N-1)$-dimensional heat kernel and Parseval's theorem,
\begin{align}\label{140922_1}
&\|[K_{1}^{+,2}(t;A_0)f](\cdot,x_{N})\|_{L_{q}(\tws)} \notag \\
&\leq C\wt^{-\frac{N-1}{2}\left(\frac{1}{2}-\frac{1}{q}\right)}\int_{0}^{\infty}\left\|\int_{0}^{1}
\frac{e^{-\frac{A^{2}}{2}\wt}e^{-C|B|^{2}\wt}\varphi_{0}(\xi^{\prime})A|B|^s
e^{-A(x_{N}+y_{N})}}{(A\sqrt{1-u}+\sqrt{u}+A)^{2}}\,du\,\widehat{f}(y_{N})\right\|_{2}\,dy_{N}  \\
%&\quad\leq C\wt^{-\frac{N-1}{2}\left(\frac{1}{2}-\frac{1}{q}\right)}\int_{0}^{\infty}\left\|\int_{0}^{1}
%\frac{e^{-(b_{0}^{-4}/24)u\wt}}{\sqrt{u}^{2-(\delta_{1}+\delta_{2})}}\,du
%\,e^{-\frac{A^{2}}{2}\wt}A^{1-\frac{\delta_{1}+\delta_{2}}{4}+s}e^{-A(x_{N}+y_{N})}\widehat{f}(y_{N})
%\right\|_{2}\,dy_{N} \\
&\leq C\wt^{-\frac{N-1}{2}\left(\frac{1}{2}-\frac{1}{q}\right)}\int_{0}^{\infty}
\left\|\int_{0}^{1}\frac{e^{-C |B|^2\wt}|B|^{s-\de}\varphi_{0}(\xi^{\prime})}{(\sqrt{u})^{2-\de}}\,du
\,e^{-\frac{A^{2}}{2}\wt}A e^{-A(x_{N}+y_{N})}\widehat{f}(y_{N})\right\|_{2}\,dy_{N} \notag
\end{align}
for a sufficiently small $\de>0$. If $s>0$, then we have,
by Lemma \ref{lem1:Gam2} and Lemma \ref{lemm:fund1} with $Z=|B|$ and $a=0$,
\begin{equation}\label{140922_4}
\int_0^1\frac{e^{-C |B|^2\wt}|B|^{s-\de}\varphi_{0}(\xi^{\prime})}{(\sqrt{u})^{2-\de}}\,du
%\leq C\int_0^1\frac{e^{-C |B|^2\wt}|B|^{s-\de}\varphi_{0}(\xi^{\prime})}{(\sqrt{u})^{2-\de}}\,du
\leq C \widetilde{t}^{-\frac{s-\de}{2}}\int_0^1\frac{e^{-C u\wt}}{(\sqrt{u})^{2-\de}}\,du
\leq C \widetilde{t}^{-\frac{s}{2}}.
\end{equation}
We thus obtain
\begin{align*}
&\|[K_{1}^{+,2}(t;A_0)f](\cdot,x_{N})\|_{L_{q}(\tws)} \\
&\quad\leq C\wt^{-\frac{N-1}{2}\left(\frac{1}{2}-\frac{1}{q}\right)-\frac{s}{2}}
\int_0^\infty\left\|e^{-(A^2/2)\widetilde{t}}A e^{-A(x_N+y_N)}\wh{f}(y_N)\right\|_2\,dy_N,
\end{align*}
which furnishes the required inequality by Lemma \ref{lemm:fund3}.
% in the same way as we have obtained \eqref{140918_1}-\eqref{140918_3} from \eqref{0810_2}.
In the case of $s=0$, by Lemma \ref{lemm:fund1} and \eqref{140922_1}
\begin{align*}
&\|K_1^{+,2}(t;A_0)f\|_{L_q(\tws)} \\
&\quad\leq C\wt^{-\frac{N-1}{2}\left(\frac{1}{2}-\frac{1}{q}\right)}\int_{0}^{\infty}\left\|\int_{0}^{1}
\frac{e^{-C|B|^{2}\wt}\varphi_{0}(\xi^{\prime})}{(\sqrt{u})^{2-\de}}\,du
\,e^{-\frac{A^{2}}{2}\wt} A^{1-\de} e^{-A(x_{N}+y_{N})}\,\widehat{f}(y_{N})\right\|_{2}\,dy_{N}  \\
&\quad\leq
C\wt^{-\frac{N-1}{2}\left(\frac{1}{r}-\frac{1}{q}\right)-\frac{\de}{2}}
\int_0^\infty\frac{\|f(\cdot,y_N)\|_{L_r(\tws)}}{\widetilde{t}^{(1-\de)/2}+(x_N)^{1-\de}+(y_N)^{1-\de}}\,dy_N,
\end{align*}
which implies that the required inequality holds by Lemma \ref{lemm:fund2} (2) if we choose a sufficiently small $\de>0$
and $(q,r)\neq(2,2)$.
Analogously, for $K_{2}^{+,2}(t;A_0)$, we see that the required inequality holds,
noting that there holds, by \eqref{deriv:M} and Lemma \ref{lem1:Gam2},
\begin{equation*}
|\CM(a)|
\leq a\int_{0}^{1}e^{-\{({\rm Re}B)\te+A(1-\te)\}a}\,d\te
%e^{\{-b_{0}^{-1}(A\sqrt{1-u}+\sqrt{u}+A)\theta+A(1-\theta)\}y_{N}}\,du
\leq a e^{-b_{0}^{-1}Aa}\leq 2b_{0}A^{-1}e^{-(b_{0}^{-1}/2)Aa}
\end{equation*}
for $a>0$ and any $A\in(0,A_0)$ by choosing suitable $A_0\in(0,A_1)$.
%Secondly, we consider .
%In a similar way to the case of $K_{0,1}^{+,2}(t)$, we have
%
%
%\begin{align*}%\label{0818_6}
%&\|[K_{0,2}^{+,2}(t)f](\cdot,x_{N})\|_{L_{q}(\tws)} \\
%&\leq C\wt^{-\frac{N-1}{2}\left(\frac{1}{2}-\frac{1}{q}\right)-\frac{s}{2}}\int_{0}^{\infty}
%\left\|\int_{0}^{1}\frac{e^{-(b_{0}^{-4}/24)u\wt}}
%{(\sqrt{u})^{2-(\delta_{1}+\delta_{2})}}\,du\,e^{-\frac{A^{2}}{2}\wt}A^{2-\frac{\delta_{1}+\delta_{2}}{4}}
%e^{-Ax_{N}}\mathcal{M}(y_{N})\widehat{f}(y_{N})\right\|_{2}dy_{N}
%\end{align*}
%
%
%for any $0<\delta_{1},\delta_{2}<1$.
%On the other hand,  by Lemma \ref{lem1:Gam2} and (\ref{deriv:M}) we have
%
%
%\begin{align*}
%\end{align*}
%
%
%for $a>0$ and $\lambda\in\Gamma_2^+$, which combined with the above inequality implies that
%
%
%\begin{equation*}
%\|[K_{0,2}^{+,0}(t)f](\cdot\,,x_{N})\|_{L_{q}(\tws)}
%\leq C\wt^{-\frac{N-1}{2}\left(\frac{1}{r}-\frac{1}{q}\right)-\frac{\delta_{1}+\delta_{2}}{2}-\frac{s}{2}}
%\int_{0}^{\infty}\frac{\|f(\cdot\,,y_{N})\|_{L_{r}(\tws)}}
%{(\wt^{\frac{1}{4}-\frac{\delta_{1}}{8}}+x_{N}^{\frac{1}{2}-\frac{\delta_{1}}{4}})
%(\wt^{\frac{1}{4}-\frac{\delta_{2}}{8}}+y_{N}^{\frac{1}{2}-\frac{\delta_{2}}{4}})}dy_{N}.
%\end{equation*}
%
%
%We thus obtain the required inequality for $K_{0,2}^{+,2}(t)$ in the same manner as $K_{0,1}^{+,2}(t)$.

Next we show the inequalities for $K_{n}^{+,2}(t;A_0)$ $(n=3,4,5)$.
Note that by Lemma \ref{lem1:Gam2} we have
\begin{equation}\label{140922_11}
|\CM(a)|=\frac{|e^{-Ba}-e^{-Aa}|}{|B-A|}\leq C|B|^{-1}e^{-CAa}
\leq C\frac{e^{-CAa}}{\sqrt{u}}
\end{equation}
%
%
%\begin{align*}%\label{0817_3}
%|\mathcal{M}(a)|&\leq a\int_{0}^{1}e^{-\{({\rm Re}B)\theta+A(1-\theta)\}a}\,d\theta
%\leq a\int_{0}^{1}e^{-\{b_{0}^{-1}(A\sqrt{1-u}+\sqrt{u}+A)\theta+A(1-\theta)\}y_{N}}\,d\theta \\
%&\leq a\left\{\int_{0}^{\frac{1}{2}}e^{-b_{0}^{-1}\sqrt{u}\theta a}e^{-\frac{A}{2}a}\,d\theta
%+\int_{\frac{1}{2}}^{1}e^{-(b_{0}^{-1}/2)(\gamma_{0}^{1/2}\sqrt{u}+A)a}\,d\theta\right\} \\
%5= a\left(e^{-\frac{A}{2}a}\frac{b_{0}}{\sqrt{u}a}+\frac{1}{2}e^{-(b_{0}^{-1}/2)(\sqrt{u}+A)a}\right)
%\leq \frac{Ce^{-(b_{0}^{-1}/2)Aa}}{\sqrt{u}}\quad(a>0,\ \lambda\in\Gamma_2^+)
%\end{align*}
%
%
with $a>0$ and some positive constant $C$
for any $A\in(0,A_0)$ by choosing suitable $A_0\in(0,A_1)$.
Then, by \eqref{140922_2} and Lemma \ref{lem1:Gam2}, there holds
\begin{align*}
&\|[K_n^{+,2}(t;A_0)f](\cdot,x_N)\|_{L_q(\tws)} \\
&\quad\leq
C\widetilde{t}^{-\frac{N-1}{2}\left(\frac{1}{2}-\frac{1}{q}\right)}\int_0^\infty
\left\|\int_0^1\frac{e^{-\frac{A^2}{2}\widetilde{t}}e^{-C|B|^2\widetilde{t}}A|B|^se^{-CA(x_N+y_N)}\ph_0(\xi^\pr)}
{(A\sqrt{1-u}+\sqrt{u}+A)\sqrt{u}}\,du\,\wh{f}(y_N)\right\|_2 dy_N,
\end{align*}
which furnishes that the required inequalities of $K_{n}^{+,2}(t;A_0)$ $(n=3,4,5)$ hold
in the same manner as we have obtained the inequality of $K_{1}^{+,2}(t;A_0)$ from \eqref{140922_1}.

Finally, we consider $K_{6}^{+,2}(t;A_0)$. 
%If we choose a sufficiently small $A_0\in(0,A_1]$, then by Lemma \ref{lem1:Gam2} there holds
%
%
%\begin{equation*}
%|B-A|=\frac{|\la|}{|B+A|}\geq C\frac{A^2(1-u)+u}{A\sqrt{1-u}+\sqrt{u}+A}
%\geq C(A\sqrt{1-u}+\sqrt{u}+A)\geq C|B|
%\end{equation*}
%
%
%with a positive constant $C$ for any $A\in[0,A_0)$ and $\la\in\Ga_2^+$ if we choose some $A_0\in(0,A_1]$.
%Then, for $a>0$, we obtain by Lemma \ref{lem1:Gam2}
%
%
By \eqref{140922_4} and \eqref{140922_11}, we have for $s>0$
\begin{align*}
&\|[K_6^{+,2}(t;A_0)f](\cdot,x_N)\|_{L_q(\tws)} \\
&\quad\leq
C\widetilde{t}^{-\frac{N-1}{2}\left(\frac{1}{2}-\frac{1}{q}\right)}
\int_0^\infty\left\|\int_0^1e^{-\frac{A^2}{2}\widetilde{t}}e^{-C|B|^2\widetilde{t}}\ph_0(\xi')
A|B|^s\CM(x_N)\CM(y_N)\,du\,\wh{f}(y_N)\right\|_2\,dy_N \\
&\quad\leq
C\widetilde{t}^{-\frac{N-1}{2}\left(\frac{1}{2}-\frac{1}{q}\right)}
\int_0^\infty\left\|\int_0^1e^{-\frac{A^2}{2}\widetilde{t}}e^{-C|B|^2\widetilde{t}}\ph_0(\xi')
A|B|^{s-2}e^{-CA(x_N+y_N)}\,du\,\wh{f}(y_N)\right\|_2\,dy_N \\
&\quad\leq
C\widetilde{t}^{-\frac{N-1}{2}\left(\frac{1}{2}-\frac{1}{q}\right)}
\int_0^\infty\left\|\int_0^1\frac{e^{-C|B|^2\widetilde{t}}|B|^{s-\de}\ph_0(\xi')}{(\sqrt{u})^{2-\de}}\,du
\,e^{-\frac{A^2}{2}\widetilde{t}}Ae^{-CA(x_N+y_N)}\,\wh{f}(y_N)\right\|_2\,dy_N \\
&\quad\leq
C\widetilde{t}^{-\frac{N-1}{2}\left(\frac{1}{2}-\frac{1}{q}\right)-\frac{s}{2}}
\int_0^\infty\left\|e^{-(A^2/2)\widetilde{t}}Ae^{-CA(x_N+y_N)}\,\wh{f}(y_N)\right\|_2dy_N
\end{align*}
by choosing sufficiently small $\de>0$.
We thus obtain the required inequality of $K_6^{+,2}(t;A_0)$ by Lemma \ref{lemm:fund3} if $s>0$.
In the case of $s=0$, since it follows from Lemma \ref{lem1:Gam2} that 
\begin{align*}
|\mathcal{M}(a)|&\leq a\int_{0}^{1}e^{-\{({\rm Re}B)\theta+A(1-\theta)\}a}\,d\theta
\leq a\int_{0}^{1}e^{-\{(b_{0}^{-1}(\sqrt{u}+A))\theta+A(1-\theta)\}x_{N}}\,d\theta \\
&\leq a e^{-b_{0}^{-1}Aa}\int_{0}^{1}e^{-b_{0}^{-1}\sqrt{u}\theta a}\,d\theta\quad(a>0,\ \lambda\in\Gamma_2^+)
\end{align*}
for any $A\in(0,A_0)$ by choosing some $A_0\in(0,A_1)$,
%\textcolor{red}{on $\text{supp\,$\ph_0$}$ 
%as follows from Lemma \ref{lem1:Gam2}, } 
%
%
we easily obtain by Lemma \ref{lemm:fund1}
\begin{align*}
&\|[K_{6}^{+,2}(t;A_0)f](\cdot\,,x_{N})\|_{L_{q}(\tws)} \\
&\quad\leq C\wt^{-\frac{N-1}{2}\left(\frac{1}{2}-\frac{1}{q}\right)}
\int_{0}^{\infty}\left\|\int_0^1e^{-\frac{A^{2}}{2}\wt}e^{-C|B|^{2}\wt}\varphi_{0}(\xi^{\prime})
A\CM(x_{N})\CM(y_{N})\,du\,\widehat{f}(y_{N})\right\|_{2}dy_{N} \\
&\quad\leq C\wt^{-\frac{N-1}{2}\left(\frac{1}{2}-\frac{1}{q}\right)}\int_{0}^{\infty}x_{N}y_{N}
\left\|\,e^{-\frac{A^{2}}{2}\wt} A e^{-CA(x_{N}+y_{N})}\,\widehat{f}(y_{N})\right\|_{2} \\
&\qquad\ \times\iiint_{[0,1]^{3}}e^{-C u\wt}
e^{-C\sqrt{u}\varphi x_{N}}e^{-C\sqrt{u}\psi y_{N}}\,dud\varphi d\psi dy_{N} \\
&\quad\leq C\wt^{-\frac{N-1}{2}\left(\frac{1}{r}-\frac{1}{q}\right)}\int_{0}^{\infty}
\frac{x_{N}y_{N}\|f(\cdot\,,y_{N})\|_{L_{r}(\tws)}}{\wt^{\,1/2}+x_{N}+y_{N}}
\iint_{[0,1]^{2}}\frac{d\varphi d\psi}{\wt+(\varphi x_{N})^{2}+(\psi y_{N})^{2}}\,dy_{N}
\end{align*}
for some positive constant $C$. The change of variable: 
$\psi y_{N}=\{\wt+(\varphi x_{N})^{2}\}^{1/2}\ell$ yields that
\begin{multline*}
\int_{0}^{1}\frac{d\psi}{\wt+(\varphi x_{N})^{2}+(\psi y_{N})^{2}} \\
\leq\frac{1}{\wt+(\varphi x_{N})^{2}}\int_{0}^{\infty}
\frac{1}{1+\ell^{2}}\frac{\{\wt+(\varphi x_{N})^{2}\}^{1/2}}{y_{N}}\,d\ell
\leq \frac{C}{y_{N}(\wt^{\,1/2}+\varphi x_{N})}
\end{multline*}
for a positive constant $C$, so that
\begin{align*}
&\|[K_{6}^{+,2}(t;A_0)f](\cdot\,,x_{N})\|_{L_{q}(\mathbf{R}^{N-1})} \\
&\quad\leq C\wt^{-\frac{N-1}{2}\left(\frac{1}{r}-\frac{1}{q}\right)}\int_{0}^{\infty}
\frac{x_{N}\|f(\cdot\,,y_{N})\|_{L_{r}(\mathbf{R}^{N-1})}}{\wt^{1/2}+x_{N}+y_{N}}
\int_{0}^{1}\frac{d\varphi}{\wt^{1/2}+\varphi x_{N}}\,dy_{N} \\
&\quad= C\wt^{-\frac{N-1}{2}\left(\frac{1}{r}-\frac{1}{q}\right)}
\int_{0}^{\infty}\frac{x_{N}\|f(\cdot\,,y_{N})\|_{L_{r}(\mathbf{R}^{N-1})}}
{(\wt^{\,1/2}+x_{N}+y_{N})^{1-\delta}}\int_{0}^{1}\frac{d\varphi dy_{N}}
{(\wt^{\,1/2}+x_{N}+y_{N})^{\delta}(\wt^{\,\frac{1}{2}}+\varphi x_{N})}
\end{align*}
for any $0<\delta<1$.
By the change of variable: $\varphi x_{N}=\wt^{\,1/2}\ell$, we then have
\begin{align*}
\int_{0}^{1}\frac{d\varphi}{(\wt^{\frac{1}{2}}+x_{N}+y_{N})^{\delta}(\wt^{\,\frac{1}{2}}+\varphi x_{N})}
&\leq
\int_{0}^{1}\frac{d\varphi}{(\wt^{\,1/2}+\varphi x_{N})^{\delta}(\wt^{\,1/2}+\varphi x_{N})} \\
&\leq C\int_{0}^{1}\frac{d\varphi}{\wt^{\,(1+\delta)/2}+(\varphi x_{N})^{1+\delta}} \\
&\leq \frac{C}{\wt^{\,(1+\delta)/2}}\int_{0}^{\infty}\frac{1}{1+\ell^{1+\delta}}\frac{\wt^{1/2}}{x_{N}}\,d\ell
\leq \frac{C}{x_{N}\wt^{\,\delta/2}}
\end{align*}
with a positive constant $C$, which furnishes that
\begin{equation*}
\|[K_{6}^{+,2}(t;A_0)f](\cdot\,,x_{N})\|_{L_{q}(\tws)}\leq C\wt^{-\frac{N-1}{2}\left(\frac{1}{r}-\frac{1}{q}\right)
-\frac{\delta}{2}}\int_{0}^{\infty}\frac{\|f(\cdot\,,y_{N})\|_{L_{r}(\tws)}}
{\wt^{\,(1-\delta)/2}+x_{N}^{1-\delta}+y_{N}^{1-\delta}}\,dy_{N}.
\end{equation*}
We therefore obtain the required inequality by Lemma \ref{lemm:fund2} (2)
by choosing a sufficiently small $\de>0$ when $(q,r)\neq (2,2)$. \\
%
%
%Therefore, choosing $\delta$ such that $0<\delta\leq1/2$ and 
%$0<\delta<(1/r)-(1/q)$, \textcolor{red}{by Lemma \ref{ele:0} (2)} 
%we obtain the required inequality for $K_{0,6}^{+,2}(t)$. \\
%
%
(2)	First, we show the inequality for $L_{1}^{+,2}(t;A_0)$.
Noting that $\lambda=-(A^{2}(1-u)+\gamma_{0}u) + i((A^{2}/4)(1-u)+\gamma_{0}u)$
for $u\in[0,1]$ on $\Gamma_{2}^{+}$,
we have, by (\ref{op:low}),
\begin{align*}%\label{0905_1}
&[L_{1}^{+,2}(t;A_0)d](x) \\
&=\CF_{\xi^{\prime}}^{-1}\left[\int_{0}^{1}e^{\{-(A^{2}(1-u)+\gamma_{0}u)
+ i((A^{2}/4)(1-u)+\widetilde{\gamma}_{0}u)\}t}\varphi_{0}(\xi^{\prime})\ell_{1}(\xi^\prime,\lambda)e^{-A(x_{N}+y_{N})}\right. \\
&\quad\left.\times\left\{-(\gamma_{0}-A^{2})+ i\left(\widetilde{\gamma}_{0}-\frac{A^{2}}{4}\right)\right\}
\widehat{d}(\xi^{\prime})\right](x^{\prime}).
\end{align*}
In a similar way to the case of $K_{1}^{+,2}(t;A_0)$, we have by \eqref{140922_4} and Lemma \ref{lemm:fund1}
\begin{align}
&\|[L_{1}^{+,2}(t;A_0)d](\cdot\,,x_{N})\|_{L_{q}(\tws)} \notag \\
&\quad\leq
C\wt^{-\frac{N-1}{2}\left(\frac{1}{2}-\frac{1}{q}\right)}\left\|\int_{0}^{1}
\frac{e^{-(A^{2}/2)\wt} e^{-C|B|^{2}\wt}\varphi_{0}(\xi^{\prime})A^{1/2}|B|^s e^{-Ax_{N}}}
{(A\sqrt{1-u}+\sqrt{u}+A^{1/4})^{2}}\,du\,\widehat{d}(\xi^{\prime})\right\|_{2} \label{140922_13} \\
&\quad\leq C\wt^{-\frac{N-1}{2}\left(\frac{1}{2}-\frac{1}{q}\right)}\left\|\int_{0}^{1}
\frac{e^{-C |B|^2\wt}|B|^{s-\de}\varphi_{0}(\xi^{\prime})}{(\sqrt{u})^{2-\delta}}\,du\,e^{-(A^2/2)\wt}A^{1/2}e^{-Ax_{N}}
\widehat{d}(\xi^{\prime})\right\|_{2} \label{140922_5} \\
&\quad\leq
C\wt^{-\frac{N-1}{2}\left(\frac{1}{r}-\frac{1}{q}\right)-\frac{s}{2}}\|d\|_{L_r(\tws)}/(\widetilde{t}^{\,1/4}+(x_N)^{1/2}).
\notag
%&\leq C\wt^{-\frac{N-1}{2}\left(\frac{1}{2}-\frac{1}{q}\right)-\frac{\delta}{2}-\frac{s}{2}}\left\|
%\,e^{-\frac{A^{2}}{4}\wt}A^{\frac{1}{2}-\frac{\delta}{4}}e^{-Ax_{N}}\widehat{d}(\xi^{\prime})\right\|_{2} \\
%&\leq C\wt^{-\frac{N-1}{2}\left(\frac{1}{r}-\frac{1}{q}\right)-\frac{\delta}{2}-\frac{s}{2}}
%\|d\|_{L_{r}(\mathbf{R}^{N-1})}/(\wt^{\,\frac{1}{4}-\frac{\delta}{8}}+x_{N}^{\frac{1}{2}-\frac{\delta}{4}})
\end{align}
%
%
%for any $0<\delta<\min\{1,4(1/2-1/q)\}$,
%which combined with Lemma \ref{ele:0} (1) furnishes \textcolor{red}{
%the required inequality for $L_{0,1}^{\pm,2}(t)d$.}
%
%
We thus obtain the required inequality by Lemma \ref{lemm:fund2} (1) if $s>0$ and $q>2$.
In the case of $s>0$ and $q=2$, by \eqref{140922_5} and using \eqref{140922_4} again, we have
\begin{align*}
\|L_1^{+,2}(t;A_0)d\|_{L_2(\uhs)}
&\leq
C
\left\|\int_{0}^{1}
\frac{e^{-C |B|^2\wt}|B|^{s-\de}}{(\sqrt{u})^{2-\delta}}\,du\,e^{-(A^2/2)\wt}
\widehat{d}(\xi^{\prime})\right\|_{2} \\
&\leq C\widetilde{t}^{-\frac{N-1}{2}\left(\frac{1}{r}-\frac{1}{2}\right)-\frac{s}{2}}\|d\|_{L_r(\tws)}.
\end{align*}
If $s=0$, then we have by Lemma \ref{lemm:fund1} and Lemma \ref{lem1:Gam2}
\begin{align}\label{140922_8}
&\|[L_1^{+,2}(t;A_0)d](\cdot,x_N)\|_{L_q(\tws)} \notag \\
&\quad\leq
C\widetilde{t}^{-\frac{N-1}{2}\left(\frac{1}{2}-\frac{1}{q}\right)}
\left\|
\int_0^1\frac{e^{-(A^{2}/2)\wt} e^{-C|B|^{2}\wt}\varphi_{0}(\xi^{\prime})A^{1/2} e^{-Ax_{N}}}
{(A\sqrt{1-u}+\sqrt{u}+A^{1/4})^{2}}\,du\,\widehat{d}(\xi^{\prime})
\right\|_2\notag \displaybreak[0] \\ 
&\quad\leq
C\widetilde{t}^{-\frac{N-1}{2}\left(\frac{1}{2}-\frac{1}{q}\right)}
\left\|
\int_0^1\frac{ e^{-C u\wt} }
{(\sqrt{u})^{2-\de}}\,du\,e^{-(A^{2}/2)\wt}A^{(1/2)-(\de/4)}e^{-Ax_{N}} \,\widehat{d}(\xi^{\prime})
\right\|_2 \displaybreak[0] \\
&\quad\leq
C\widetilde{t}^{-\frac{N-1}{2}\left(\frac{1}{r}-\frac{1}{q}\right)-\frac{\de}{2}}
\|d\|_{L_r(\tws)}/(\widetilde{t}^{(1/4)-(\de/8)}+(x_N)^{(1/2)-(\de/4)}),
\notag
\end{align}
which, combined with Lemma \ref{lemm:fund2} (1), furnishes that the required inequality holds for $q>2$
by choosing a sufficiently small $\de>0$.
In the case of $s=0$ and $q=2$, by \eqref{140922_8} and Young's inequality with $1+(1/2)=(1/p)+(1/r)$ for $1\leq r<2$,
we have
\begin{equation}\label{140922_10}
\|L_1^{+,2}(t;A_0)d\|_{L_2(\uhs)}
\leq C\widetilde{t}^{-\frac{\de}{2}}
\|\CF_{\xi^\pr}^{-1}[e^{-(A^{2}/2)\widetilde{t}}A^{-\de/4}]\|_{L_p(\tws)}\|d\|_{L_r(\tws)}.
\end{equation}
We use the following proposition proved by \cite[Theorem 2.3]{SS01} 
to calculate the right-hand side of \eqref{140922_10}.
\begin{prop}\label{prop:SS01}
Let $X$ be a Banach space and $\|\cdot\|_X$ its norm.
Suppose that $L$ and $n$ be a non-negative integer and positive integer, respectively.
Let $0<\si\leq 1$ and $s=L+\si-n$.
Let $f(\xi)$ be a $C^\infty$-function, defined on $\BR^n\setminus\{0\}$ with value X, which satisfies the following two conditions:
\begin{enumerate}[$(1)$]
\item
$D_\xi^\al f \in L_1(\BR^n,X)$ for any multi-index $\al\in\BN_0^n$ with $|\al|\leq L$.
\item
For any multi-index $\al\in\BN_0^n$, there exists a positive constant $C(\al)$ such that
\begin{equation*}
\|D_\xi^\al f(\xi)\|_X\leq C(\al)|\xi|^{s-|\al|}\quad(\xi\in\BR^n\setminus\{0\}).
\end{equation*}
\end{enumerate}
Then there exists a positive constant $C(n,s)$ such that
\begin{equation*}
\|\CF_{\xi}^{-1}[f](x)\|_X\leq C(n,s)\left(\max_{|\al|\leq L+2}C(\al)\right)|x|^{-(n+s)}
\quad(x\in\BR^n\setminus\{0\}).
\end{equation*}
\end{prop}
By Proposition \ref{prop:SS01} with $n=N-1$, $L=N-2$, and $\si=1-\de/4$,
we have
\begin{equation*}
|\CF_{\xi^\pr}^{-1}[e^{-(A^{2}/2)\widetilde{t}}A^{-\de/4}](x^\pr)|
\leq C|x'|^{-(N-1-\de/4)}
\end{equation*}
for a positive constant $C$, and furthermore, by direct calculations
\begin{equation*}
|\CF_{\xi^\pr}^{-1}[e^{-(A^{2}/2)\widetilde{t}}A^{-\de/4}](x^\pr)|\leq C
\widetilde{t}^{\,-(1/2)(N-1-\de/4)}.
\end{equation*}
We thus obtain
\begin{equation*}
|\CF_{\xi^\pr}^{-1}[e^{-(A^{2}/2)\widetilde{t}}A^{-\de/4}](x^\pr)|
\leq \frac{C}{\widetilde{t}^{\,(1/2)(N-1-\de/4)}+|x'|^{(N-1-\de/4)}}
\end{equation*}
for some positive constant $C$.
Therefore, by choosing a sufficiently small $\de>0$, we see that
\begin{equation*}
\|\CF_{\xi^\pr}^{-1}[e^{-(A^{2}/2)\widetilde{t}}A^{-\de/4}]\|_{L_p(\tws)}
\leq C\widetilde{t}^{-\frac{N-1}{2}\left(1-\frac{1}{p}\right)+\frac{\de}{8}}
=C\widetilde{t}^{-\frac{N-1}{2}\left(\frac{1}{r}-\frac{1}{2}\right)+\frac{\de}{8}}
\end{equation*}
since $p>1$ by $1\leq r<2$, which, combined with \eqref{140922_10}, furnishes that
the required inequality holds.
Summing up in the case of $s=0$, we have obtained
\begin{equation*}
\|L_1^{+,2}(t;A_0)d\|_{L_q(\uhs)}
\leq C
\widetilde{t}^{-\frac{N-1}{2}\left(\frac{1}{r}-\frac{1}{q}\right)-\frac{1}{2}\left(\frac{1}{2}-\frac{1}{q}\right)}
\|d\|_{L_r(\tws)}
\end{equation*}
for some positive constant $C$ and $1\leq r\leq 2\leq q\leq\infty$ when $(q,r)\neq(2,2)$.

Concerning $L_{2}^{+,2}(t;A_0)$, we see, by Lemma \ref{lemm:fund1}, that 
\begin{align*}
&\|[L_{2}^{+,2}(t;A_0)d](\cdot\,,x_{N})\|_{L_{q}(\mathbf{R}^{N-1})} \\
%&\leq C\wt^{-\frac{N-1}{2}\left(\frac{1}{2}-\frac{1}{q}\right)}\left\|\int_{0}^{1}e^{-\frac{A^{2}}{2}\wt}
%e^{-(b_{0}^{-4}/24)|B|^{2}\wt}\varphi_{0}(\xi^{\prime})|B|^{s}e^{-({\rm Re}B)x_{N}}\,du
%\,\widehat{d}(\xi^{\prime})\right\|_{2} \\
&\quad\leq C\wt^{-\frac{N-1}{2}\left(\frac{1}{2}-\frac{1}{q}\right)-\frac{s}{2}}
\left\|\int_{0}^{1}e^{-(A^2/2)\wt}e^{-C|B|^{2}\wt}\varphi_{0}(\xi^{\prime})
e^{-({\rm Re}B)x_{N}}\,du\,\widehat{d}(\xi^{\prime})\right\|_{2} \\
&\quad\leq C\wt^{-\frac{N-1}{2}\left(\frac{1}{2}-\frac{1}{q}\right)-\frac{s}{2}}\left\|\int_{0}^{1}
e^{-Cu\wt}e^{-C\sqrt{u}x_{N}}\,du\,e^{-(A^2/2)\wt}\widehat{d}(\xi^{\prime})\right\|_{2} \\
&\quad\leq C\wt^{-\frac{N-1}{2}\left(\frac{1}{2}-\frac{1}{q}\right)-\frac{s}{2}}\frac{\|\,e^{-(A^2/2)\wt}
\widehat{d}(\xi^{\prime})\|_{2}}{\wt+(x_{N})^{2}}
\leq C\wt^{-\frac{N-1}{2}\left(\frac{1}{r}-\frac{1}{q}\right)-\frac{s}{2}}
\frac{\|d\|_{L_{r}(\mathbf{R}^{N-1})}}{\wt+(x_{N})^{2}},
\end{align*}
which, combined with Lemma \ref{lemm:fund2} (1), furnishes the required inequality for $L_{2}^{+,2}(t;A_0)$.

Finally, we show the inequality for $L_{3}^{+,2}(t;A_0)$.
We easily have by \eqref{140922_11} and Lemma \ref{lem1:Gam2}
\begin{multline*}
\|[L_{3}^{+,2}(t)d](\cdot\,,x_{N})\|_{L_{q}(\mathbf{R}^{N-1})} \\
\leq C\wt^{-\frac{N-1}{2}\left(\frac{1}{2}-\frac{1}{q}\right)}\left\|\int_{0}^{1}
\frac{e^{-(A^2/2)\wt}e^{-C|B|^2\wt}\ph_0(\xi')A^{1/2}|B|^se^{-CAx_{N}}}
{(A\sqrt{1-u}+\sqrt{u}+A^{1/4})\sqrt{u}}\,du\,\widehat{d}(\xi^{\prime})\right\|_{2}
\end{multline*}
for a positive constant $C$.
We thus obtain the required inequality in the same manner as we have obtained the inequality of $L_1^{+,2}(t;A_0)$
from \eqref{140922_13}.
%
%
%Since
%
%
%\begin{equation*}
%(A\sqrt{1-u}+\sqrt{u}+A^{1/4})^{3}=(A\sqrt{1-u}+\sqrt{u}+A^{1/4})^{2+\delta}(A\sqrt{1-u}+\sqrt{u}+A^{1/4})^{1-\delta}
%\geq A^{\frac{2+\delta}{4}}\sqrt{u}^{1-\delta}
%\end{equation*}
%
%
%for any $0<\delta<1$, \textcolor{red}{we have} 
%
%
%\begin{align*}
%\|[L_{0,3}^{\pm,2}(t)d](\cdot\,,x_{N})\|_{L_{q}(\mathbf{R}^{N-1})}
%&\leq C\wt^{-\frac{N-1}{2}\left(\frac{1}{2}-\frac{1}{q}\right)-\frac{s}{2}}\left\|\int_{0}^{1}
%\frac{e^{-(b_{0}^{-4}/48)u\wt}}{(\sqrt{u})^{2-\delta}}\,du\,e^{-\frac{A^{2}}{2}\wt}A^{\frac{2-\delta}{4}}
%e^{-(b_{0}^{-1}/2)Ax_{N}}\widehat{d}(\xi^{\prime})\right\|_{2} \\
%&\leq C\wt^{-\frac{N-1}{2}\left(\frac{1}{r}-\frac{1}{q}\right)-\frac{s}{2}-\frac{\delta}{2}}
%\|d\|_{L_{r}(\mathbf{R}^{N-1})}/(\wt^{\frac{2-\delta}{8}}+x_{N}^{\frac{2-\delta}{4}}).
%\end{align*}
%
%
%\textcolor{red}{Thus, choosing $\delta$ 
%in such a way that $0<\delta<\min\{1,4(1/2-1/q)\}$
%and using Lemma \ref{ele:0} (1), we have 
% the required inequality for $L_{0,3}^{\pm,2}(t)d$. 
%This completes the proof of Lemma \ref{lem2:Gam2}.}
\end{proof}
\begin{coro}\label{cor:Gam2}
Let $1\leq r\leq 2\leq q\leq\infty$, and let $f\in L_{r}(\mathbf{R}_{+}^{N})^{N}$ and $d\in L_{r}(\mathbf{R}^{N-1})$.
We use the symbols defined in $(\ref{op:low})$.
\begin{enumerate}[$(1)$]
\item
Let $\alpha\in\mathbf{N}_{0}^{N}$ and we assume that there exist positive constants $A_1\in(0,1)$ and $C$
such that for any $\lambda\in\Gamma_{2}^{\pm}$ and $A\in(0,A_1)$
\begin{align*}
|k_{1}(\xi^\prime,\lambda)|&\leq C(A\sqrt{1-u}+\sqrt{u}+A^{1/4})^{-4}A, \\
|k_{2}(\xi^\prime,\lambda)|&\leq C(A\sqrt{1-u}+\sqrt{u}+A^{1/4})^{-4}A^2, \\
|\ell_{1}(\xi^\prime,\lambda)|&\leq C(A\sqrt{1-u}+\sqrt{u}+A^{1/4})^{-4}|B|^2.
\end{align*}
Then there exist positive constants $A_0\in(0,A_1)$ and $C=C(\alpha)$ such that for any $t>0$ and $n=1,2$
\begin{align*}
\|D_{x}^{\alpha}\nabla K_{n}^{\pm,2}(t;A_0)f\|_{L_{q}(\uhs)}&\leq C(t+1)^{-\frac{N}{2}\left(\frac{1}{r}-\frac{1}{q}\right)
-\frac{1}{4}-\frac{|\alpha|}{2}}\|f\|_{L_{r}(\uhs)}, \\
\|D_{x}^{\alpha}\pa_t K_{n}^{\pm,2}(t;A_0)f\|_{L_{q}(\uhs)}&\leq C(t+1)^{-\frac{N}{2}\left(\frac{1}{r}-\frac{1}{q}\right)
-\frac{|\alpha|}{2}}\|f\|_{L_{r}(\uhs)} \quad\text{if $|\al|\neq0$},\\
\|D_{x}^{\alpha}\nabla L_{1}^{\pm,2}(t;A_0)d\|_{L_{q}(\uhs)}&\leq C(t+1)^{-\frac{N-1}{2}\left(\frac{1}{r}-\frac{1}{q}\right)
-\frac{1}{2}\left(\frac{1}{2}-\frac{1}{q}\right)-1-\frac{|\alpha|}{2}}\|d\|_{L_{r}(\tws)}.
\end{align*}
In addition, if $(q,r)\neq(2,2)$, then we have for any $t>0$ and $n=1,2$
\begin{equation*}
\|\pa_t K_{n}^{\pm,2}(t;A_0)f\|_{L_{q}(\uhs)}
\leq C(t+1)^{-\frac{N}{2}\left(\frac{1}{r}-\frac{1}{q}\right)}\|f\|_{L_{r}(\uhs)}.
\end{equation*}

\item
Let $k=0,1$, $\ell=0,1,2$, and  $\alpha^{\prime}\in\mathbf{N}_{0}^{N-1}$.
We assume that there exist positive constants $A_1\in(0,1)$ and $C$
such that for any $\lambda\in\Gamma_{2}^{\pm}$ and $A\in(0,A_1)$
\begin{align*}
|k_{3}(\xi^\prime,\lambda)|&\leq C(A\sqrt{1-u}+\sqrt{u}+A)^{-2}A, \\
|k_{n}(\xi^\prime,\lambda)|&\leq C(A\sqrt{1-u}+\sqrt{u}+A)^{-2}A|B| \quad(n=4,5), \\
|k_{6}(\xi^\prime,\lambda)|&\leq C(A\sqrt{1-u}+\sqrt{u}+A)^{-2}A|B|^{2} \\
|\ell_{2}(\xi^\prime,\lambda)|&\leq C(A\sqrt{1-u}+\sqrt{u}+A^{1/4})^{-4}A, \\
|\ell_{3}(A,B)|&\leq C(A\sqrt{1-u}+\sqrt{u}+A^{1/4})^{-3}A.
\end{align*}
Then there exist positive constants $A_0\in(0,A_1)$ and  $C=C(\alpha^{\prime})$ such that for any $t>0$
\begin{align*}
&\|\pa_{t}^{k}D_{x^{\prime}}^{\alpha^{\prime}}D_{N}^{\ell}K_{n}^{\pm,2}(t;A_0)f\|_{L_{q}(\mathbf{R}_{+}^{N})} \\
&\quad\leq C(t+1)^{-\frac{N}{2}\left(\frac{1}{r}-\frac{1}{q}\right)-k-\frac{|\alpha^{\prime}|+\ell}{2}}
\|f\|_{L_{r}(\mathbf{R}_{+}^{N})}\quad(n=3,4,5,6), \\
&\|\pa_{t}^{k}D_{x^{\prime}}^{\alpha^{\prime}}D_{N}^{\ell}L_{n}^{\pm,2}(t;A_0)d\|_{L_{q}(\mathbf{R}_{+}^{N})} \\
&\quad\leq C(t+1)^{-\frac{N-1}{2}\left(\frac{1}{r}-\frac{1}{q}\right)-\frac{1}{2}\left(\frac{1}{2}-\frac{1}{q}\right)
-k-\frac{|\alpha^{\prime}|+\ell}{2}}\|d\|_{L_{r}(\mathbf{R}^{N-1})}\quad(n=2,3),
\end{align*}
provided that $k+\ell+|\al^\pr|\neq0$.
In addition, if $(q,r)\neq(2,2)$, then there hold for any $t>0$
\begin{align*}
\|K_{n}^{\pm,2}(t;A_0)f\|_{L_{q}(\mathbf{R}_{+}^{N})}
&\leq C(t+1)^{-\frac{N}{2}\left(\frac{1}{r}-\frac{1}{q}\right)}
\|f\|_{L_{r}(\mathbf{R}_{+}^{N})}\quad(n=3,4,5,6), \\
\|L_{n}^{\pm,2}(t;A_0)d\|_{L_{q}(\mathbf{R}_{+}^{N})}
&\leq C(t+1)^{-\frac{N-1}{2}\left(\frac{1}{r}-\frac{1}{q}\right)-\frac{1}{2}\left(\frac{1}{2}-\frac{1}{q}\right)
}\|d\|_{L_{r}(\mathbf{R}^{N-1})}\quad(n=2,3).
\end{align*}
\end{enumerate}
\end{coro}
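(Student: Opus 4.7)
The plan is to reduce every derivative estimate in Corollary~\ref{cor:Gam2} to a direct application of Lemma~\ref{lem2:Gam2}. The observation is that any derivative $\pa_t^{k}D_{x^\pr}^{\al^\pr}D_{N}^{\ell}$ (or $D_{x}^{\al}\nabla$, $D_{x}^{\al}\pa_t$) applied to the operators $K_n^{\pm,2}(t;A_0)$ and $L_n^{\pm,2}(t;A_0)$ given in \eqref{op:low} again produces operators of the same form, but with modified multipliers and possibly different base kernels $\CX_n$, $\CY_n$. The point of the proof is that the extra factors brought down by differentiation exactly fit the hypotheses of Lemma~\ref{lem2:Gam2} with an appropriate choice of the parameter $s$.

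First I would carry out the differentiations symbol by symbol. Under the integral sign, $\pa_t$ on $e^{\la t}$ produces a factor $\la$; $D_{x^\pr}^{\al^\pr}$ produces $(i\xi^\pr)^{\al^\pr}$, of size $A^{|\al^\pr|}$; $D_N$ acting on $e^{-Ax_N}$ gives $-A e^{-Ax_N}$; $D_N$ on $e^{-Bx_N}$ gives $-B e^{-Bx_N}$; and $D_N^\ell$ on $\CM(x_N)$, by formula \eqref{deriv:M}, splits into a sum of terms of type $e^{-Bx_N}$ and $\CM(x_N)$ with coefficients bounded by $A^\ell + (|B|+A)^{\ell-1}$. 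Thus every differentiated operator is a finite linear combination of operators $K_n^{\pm,2}(t;A_0)$ and $L_n^{\pm,2}(t;A_0)$ with $n$ possibly changing but with modified multipliers $\wit k_n$, $\wit \ell_n$.

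Second, I would estimate the modified multipliers using the structural bounds available on $\Gamma_2^\pm$. On this contour Lemma~\ref{lem1:Gam2} gives $A \leq C|B|$, $|B| \leq C(A\sqrt{1-u}+\sqrt{u}+A)$, and $|\la|=|B^2-A^2|\leq C|B|^2$. Combined with the hypotheses in Corollary~\ref{cor:Gam2}, the modified multipliers satisfy bounds of the form required by Lemma~\ref{lem2:Gam2} with explicit $s$. For instance, in the estimate for $D_x^\al\nabla K_1^{\pm,2}$, the new multiplier is bounded by
\begin{equation*}
|\wit k_1(\xi^\pr,\la)| \leq C A^{|\al|+1}(A\sqrt{1-u}+\sqrt{u}+A^{1/4})^{-4} A
\leq C(A\sqrt{1-u}+\sqrt{u}+A)^{-2}A|B|^{\tfrac{1}{2}+|\al|},
\end{equation*}
where in the last step we used $A\leq A^{1/4}$, $A\leq C|B|$, and $(A\sqrt{1-u}+\sqrt{u}+A)^2 \leq C(A\sqrt{1-u}+\sqrt{u}+A^{1/4})^4$. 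This matches the hypothesis of Lemma~\ref{lem2:Gam2}(1) with $s=\tfrac{1}{2}+|\al|$, which produces the asserted decay rate $-\tfrac{N}{2}(\tfrac{1}{r}-\tfrac{1}{q})-\tfrac{1}{4}-\tfrac{|\al|}{2}$. The other estimates for $K_n^{\pm,2}$ $(n=2,3,4,5,6)$ and for $L_n^{\pm,2}$ $(n=1,2,3)$ are handled by exactly the same recipe, with $s$ chosen so that $-s/2$ matches the additional decay one reads off the right-hand side of Corollary~\ref{cor:Gam2}. The borderline cases $(q,r)=(2,2)$ where $s=0$ is required are covered by the explicitly stated $s=0$ conclusion in Lemma~\ref{lem2:Gam2}.

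The main obstacle is purely bookkeeping: for each of the combinations $(n,k,\ell,\al^\pr)$ in part~(2) and $(n,\al)$ in part~(1), one must identify the correct $s$, verify the required algebraic inequality between $(A\sqrt{1-u}+\sqrt{u}+A)$, $(A\sqrt{1-u}+\sqrt{u}+A^{1/4})$, $A$, and $|B|$, and, when $D_N^\ell$ hits a $\CM$-kernel, sum the contributions from the decomposition in \eqref{deriv:M}. No new analytic input beyond Lemma~\ref{lem1:Gam2} and Lemma~\ref{lem2:Gam2} is needed, so the corollary follows once this verification is done case by case.
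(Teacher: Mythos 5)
Your reduction to Lemma~\ref{lem2:Gam2} via differentiation under the integral sign, the splitting of $D_N^\ell\CM$ through \eqref{deriv:M}, and the bookkeeping of $s$ is exactly the strategy the paper follows; the paper writes out only the $K_5$, $K_6$, $L_3$ cases because those are the ones whose kernel type changes under $D_N$. However, the specific algebraic step you display for $D_x^\al\nabla K_1^{\pm,2}$ is incorrect: the inequality
\begin{equation*}
(A\sqrt{1-u}+\sqrt{u}+A)^2 \leq C(A\sqrt{1-u}+\sqrt{u}+A^{1/4})^4
\end{equation*}
is \emph{false}. For example, take $u=A^{1/2}$ and $A$ small; then $A\sqrt{1-u}+\sqrt{u}+A\approx A^{1/4}$ while $A\sqrt{1-u}+\sqrt{u}+A^{1/4}\approx 2A^{1/4}$, so the left side is of order $A^{1/2}$ and the right side of order $A$, and the ratio blows up like $A^{-1/2}$. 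Thus the claimed chain $a^{-4}\leq Cb^{-2}$ (with $a=A\sqrt{1-u}+\sqrt{u}+A^{1/4}$, $b=A\sqrt{1-u}+\sqrt{u}+A$) is unavailable.

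The conclusion you want is nonetheless true, but one must keep the extra powers of $A$ produced by differentiation in play rather than drop them early. What is actually needed is the inequality
\begin{equation*}
A^{1+|\al|}\,(A\sqrt{1-u}+\sqrt{u}+A)^{\frac{3}{2}-|\al|}\leq C\,(A\sqrt{1-u}+\sqrt{u}+A^{1/4})^4,
\end{equation*}
which \emph{does} hold for all $u\in[0,1]$, $A\in(0,1)$. Indeed for $|\al|=0$ one checks $A b^{3/2}\leq Ca^4$ by splitting the region $\sqrt{u}+A\sqrt{1-u}\gtrless A^{1/4}$: on one side $a\sim\sqrt{u}+A\sqrt{1-u}\gtrsim b$ and $A\leq b^{5/2}\cdot b^{-5/2}\cdot a^{5/2}$ gives the claim, on the other side $a\sim A^{1/4}$, $b\lesssim A^{1/4}$, and $A b^{3/2}\lesssim A^{11/8}\leq A= a^4$. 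For $|\al|\geq 1$ one simply observes $A^{1+|\al|}b^{3/2-|\al|}=(A/b)^{|\al|}\cdot Ab^{3/2}\leq Ab^{3/2}$ since $b\geq A$. Once this (and its analogues for the other values of $s$) is established, your application of Lemma~\ref{lem2:Gam2}(1) with $s=\frac{1}{2}+|\al|$ for $D_x^\al\nabla K_n^{\pm,2}$, $n=1,2$, and with $s=|\al|$ for $D_x^\al\pa_t K_n^{\pm,2}$, is valid, and the corollary follows as you outline. So the plan is sound; the cited intermediate inequality is the one step that must be replaced.
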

\begin{proof}
We only show the inequalities for $K_{5}^{\pm,2}(t)$, $K_{6}^{\pm,2}(t)$, and $L_{3}^{\pm,2}(t)$.
The other inequalities can be proved by Lemma \ref{lem2:Gam2} directly.
By (\ref{op:low})
\begin{align*}
&\pa_{t}^{k}D_{x^{\prime}}^{\alpha^{\prime}}[K_{n}^{\pm,2}(t;A_0)f](x) \\
&\quad=\int_{0}^{\infty}
\CF_{\xi^{\prime}}^{-1}\left[\int_{\Gamma_{2}^{\pm}}e^{\lambda t}\varphi_{0}(\xi^{\prime})\lambda^{k}(i\xi^{\prime})^{\alpha^{\prime}}
k_{n}(\xi^\prime,\lambda)\CX_{n}(x_{N},y_{N})\,d\lambda\,\widehat{f}(\xi^{\prime},y_{N})\right](x^{\prime}), \\
&\pa_{t}^{k}D_{x^{\prime}}^{\alpha^{\prime}}[L_{3}^{\pm,2}(t;A_0)d](x)=\CF_{\xi^{\prime}}^{-1}
\left[\int_{\Gamma_{2}^{\pm}}e^{\lambda t}\varphi_{0}(\xi^{\prime})\lambda^{k}(i\xi^{\prime})^{\alpha^{\prime}}
\ell_{3}(\xi^\prime,\lambda)\CM(x_N)\,d\lambda\,\widehat{d}(\xi^{\prime})\right](x^{\prime})
\end{align*}
for $n=5,6$. Since by Lemma \ref{lem1:Gam2}
\begin{align*}
|\lambda^{k}(i\xi^{\prime})^{\alpha^{\prime}}k_{n}(\xi^\prime,\lambda)|&\leq C
\left\{\begin{aligned}
&(A\sqrt{1-u}+\sqrt{u}+A)^{-1}A|B|^{2k+|\alpha^{\prime}|}&&(n=5), \\
&A|B|^{2k+|\alpha^{\prime}|}&&(n=6),
\end{aligned}\right. \displaybreak[0] \\
|\lambda^{k}(i\xi^{\prime})^{\alpha^{\prime}}\ell_{3}(\xi',\lambda)|
&\leq (A\sqrt{1-u}+\sqrt{u}+A^{1/4})^{-3}A|B|^{2k+|\alpha^{\prime}|}
\end{align*}
for $\lambda\in\Gamma_2^\pm$ and $A\in (0,A_0)$ by choosing some $A_0\in(0,A_1)$,
%
%
%\begin{equation*}
%		\begin{aligned}
%				&|B|\leq b_{0}(A\sqrt{1-u}+\gamma_{0}^{1/2}\sqrt{u}+A^{1/4}),&&|\lambda|=|B^{2}-A^{2}|\leq2|B|^{2}
%		\end{aligned}
%\end{equation*}
%
%
we obtain by Lemma \ref{lem2:Gam2}
\begin{align}\label{0821_2}
\|\partial_{t}^{k}D_{x^{\prime}}^{\alpha^{\prime}}K_{n}^{\pm,2}(t)f\|_{L_{q}(\mathbf{R}_{+}^{N})}
&\leq C(t+1)^{-\frac{N}{2}\left(\frac{1}{r}-\frac{1}{q}\right)-k-\frac{|\alpha^{\prime}|}{2}}
\|f\|_{L_{r}(\mathbf{R}_{+}^{N})}\quad(n=5,6), \notag \\
\|\partial_{t}^{k}D_{x^{\prime}}^{\alpha^{\prime}}L_{3}^{\pm,2}(t)d\|_{L_{q}(\mathbf{R}_{+}^{N})}&\leq
C(t+1)^{-\frac{N-1}{2}\left(\frac{1}{r}-\frac{1}{q}\right)
-\frac{1}{2}\left(\frac{1}{2}-\frac{1}{q}\right)-k-\frac{|\alpha^{\prime}|}{2}}
\|d\|_{L_{r}(\mathbf{R}^{N-1})}
\end{align}
for any $t>0$, provided that $k+|\al^\pr|\neq0$.
In the case of $k+|\al^\pr|=0$, we have by Lemma \ref{lem2:Gam2}
\begin{align}\label{140923_5}
\|K_{n}^{\pm,2}(t)f\|_{L_{q}(\mathbf{R}_{+}^{N})}
&\leq C(t+1)^{-\frac{N}{2}\left(\frac{1}{r}-\frac{1}{q}\right)}
\|f\|_{L_{r}(\mathbf{R}_{+}^{N})}\quad(n=5,6), \notag \\
\|L_{3}^{\pm,2}(t)d\|_{L_{q}(\mathbf{R}_{+}^{N})}
&\leq
C(t+1)^{-\frac{N-1}{2}\left(\frac{1}{r}-\frac{1}{q}\right)
-\frac{1}{2}\left(\frac{1}{2}-\frac{1}{q}\right)}
\|d\|_{L_{r}(\mathbf{R}^{N-1})}
\end{align}
when $(q,r)\neq(2,2)$. On the other hand, by (\ref{deriv:M})
\begin{align*}
&\pa_{t}^{k}D_{x^{\prime}}^{\alpha^{\prime}}D_{N}^{\ell}[K_{5}^{\pm,2}(t)f](x)=(-1)^{\ell}\Bigg\{ \\
&\int_{0}^{\infty}\CF_{\xi^{\prime}}^{-1}\left[\int_{\Gamma_{2}^{\pm}}e^{\lambda t}
\varphi_{0}(\xi^{\prime})\lambda^{k}(i\xi^{\prime})^{\alpha^{\prime}}(B+A)^{\ell-1}k_{5}(\xi^\prime,\lambda)e^{-B(x_{N}+y_{N})}
d\lambda\,\widehat{f}(\xi^{\prime},y_{N})\right](x^{\prime})\,dy_{N} \\
&+\int_{0}^{\infty}\mathcal{F}_{\xi^{\prime}}^{-1}\left[\int_{\Gamma_{2}^{\pm}}e^{\lambda t}
\varphi_{0}(\xi^{\prime})\lambda^{k}(i\xi^{\prime})^{\alpha^{\prime}}A^{\ell}k_{5}(\xi^\prime,\lambda)\mathcal{M}(x_{N})e^{-By_{N}}
d\lambda\,\widehat{f}(\xi^{\prime},y_{N})\right](x^{\prime})\,dy_{N}\Bigg\}, \displaybreak[0] \\
&\pa_{t}^{k}D_{x^{\prime}}^{\alpha^{\prime}}D_{N}^{\ell}[K_{6}^{\pm,2}(t)f](x)=(-1)^{\ell}\Bigg\{ \\
&\int_{0}^{\infty}\mathcal{F}_{\xi^{\prime}}^{-1}\left[\int_{\Gamma_{2}^{\pm}}e^{\lambda t}
\varphi_{0}(\xi^{\prime})\lambda^{k}(i\xi^{\prime})^{\alpha^{\prime}}(B+A)^{\ell-1}k_{6}(\xi^\prime,\lambda)e^{-Bx_{N}}
\mathcal{M}(y_{N})d\lambda\,\widehat{f}(\xi^{\prime},y_{N})\right](x^{\prime})\,dy_{N} \\
&+\int_{0}^{\infty}\mathcal{F}_{\xi^{\prime}}^{-1}\left[\int_{\Gamma_{2}^{\pm}}e^{\lambda t}
\varphi_{0}(\xi^{\prime})\lambda^{k}(i\xi^{\prime})^{\alpha^{\prime}}A^{\ell}k_{6}(\xi^\prime,\lambda)\CM(x_{N})\CM(y_{N})
d\lambda\,\widehat{f}(\xi^{\prime},y_{N})\right](x^{\prime})\,dy_{N}\Bigg\}, \displaybreak[0] \\
&\pa_{t}^{k}D_{x^{\prime}}^{\alpha^{\prime}}D_{N}^{\ell}[L_{3}^{\pm,2}(t)d](x) \\
&=(-1)^{\ell}\Bigg\{\mathcal{F}_{\xi^{\prime}}^{-1}\left[\int_{\Gamma_{2}^{\pm}}e^{\lambda t}
\varphi_{0}(\xi^{\prime})\lambda^{k}(i\xi^{\prime})^{\alpha^{\prime}}
(B+A)^{\ell-1}\ell_{3}(\xi^\prime,\lambda)e^{-Bx_{N}}d\lambda\,\widehat{f}(\xi^{\prime},y_{N})\right](x^{\prime})\\
&+\mathcal{F}_{\xi^{\prime}}^{-1}\left[\int_{\Gamma_{2}^{\pm}}e^{\lambda t}
\varphi_{0}(\xi^{\prime})\lambda^{k}(i\xi^{\prime})^{\alpha^{\prime}}
A^{\ell}\ell_3(\xi^\prime,\lambda)\mathcal{M}(x_{N})d\lambda\,\widehat{f}(\xi^{\prime},y_{N})\right](x^{\prime})\Bigg\}
\end{align*}
for $\ell=1,2$. Since by Lemma \ref{lem1:Gam2}
\begin{align*}
|\lambda^{k}(i\xi^{\prime})^{\alpha^{\prime}}(B+A)^{\ell-1}k_{5}(\xi^\prime,\lambda)|&\leq
C(A\sqrt{1-u}+\sqrt{u}+A)^{-2}A|B|^{2k+|\alpha^{\prime}|+\ell} \\
|\lambda^{k}(i\xi^{\prime})^{\alpha^{\prime}}A^{\ell}k_{5}(\xi^\prime,\lambda)|
&\leq C(A\sqrt{1-u}+\sqrt{u}+A)^{-1}A|B|^{2k+|\alpha^{\prime}|+\ell}, \\
|\lambda^{k}(i\xi^{\prime})^{\alpha^{\prime}}(B+A)^{\ell-1}k_{6}(\xi^\prime,\lambda)|&\leq 
C(A\sqrt{1-u}+\sqrt{u}+A)^{-1}A|B|^{2k+|\alpha^{\prime}|+\ell}, \\
|\lambda^{k}(i\xi^{\prime})^{\alpha^{\prime}}A^{\ell}k_{6}(\xi^\prime,\lambda)| &\leq CA|B|^{2k+|\alpha^{\prime}|+\ell}
\end{align*}
for any $\lambda\in\Gamma_{2}^{\pm}$ and $A\in(0,A_0)$ by choosing suitable $A_0\in(0,A_1)$,
we have by Lemma \ref{lem2:Gam2}
\begin{equation}\label{0823_1}
\|\partial_{t}^{k}D_{x^{\prime}}^{\alpha^{\prime}}D_{N}^{\ell}K_{n}^{\pm,2}(t)f\|_{L_{q}(\mathbf{R}_{+}^{N})}\leq
C(t+1)^{-\frac{N}{2}\left(\frac{1}{r}-\frac{1}{q}\right)-k-\frac{|\alpha^{\prime}|+\ell}{2}}\|f\|_{L_{r}(\mathbf{R}_{+}^{N})}\quad(n=5,6)
\end{equation}
for $\ell=1,2$. In addition,
\begin{align*}
&|\lambda^{k}(i\xi^{\prime})^{\alpha^{\prime}}(B+A)^{\ell-1}\ell_{3}(\xi^\prime,\lambda)|\leq
C(A\sqrt{1-u}+\sqrt{u}+A^{1/4})^{-4}A|B|^{2k+|\alpha^{\prime}|+\ell} \\
%\leq C|B|^{2k+|\alpha^{\prime}|+\ell-\frac{3}{4}}, \\
&|\lambda^{k}(i\xi^{\prime})^{\alpha^{\prime}}A^{\ell}\ell_{3}(\xi^\prime,\lambda)| \leq
C(A\sqrt{1-u}+\sqrt{u}+A^{1/4})^{-3}A|B|^{2k+|\alpha^{\prime}|+\ell}
\end{align*}
for any $\lambda\in\Gamma_{2}^{\pm}$ and $A\in (0,A_0)$, and therefore by Lemma \ref{lem2:Gam2}
\begin{equation*}
\|\partial_{t}^{k}D_{x^{\prime}}^{\alpha^{\prime}}D_{N}^{\ell}L_{3}^{\pm,2}(t)d\|_{L_{q}(\mathbf{R}_{+}^{N})}\leq
C(t+1)^{-\frac{N-1}{2}\left(\frac{1}{r}-\frac{1}{q}\right)-\frac{1}{2}\left(\frac{1}{2}-\frac{1}{q}\right)-k-\frac{|\alpha^{\prime}|+\ell}{2}}
\|d\|_{L_{r}(\mathbf{R}^{N-1})}
\end{equation*}
for $\ell=1,2$, which, combined with (\ref{0821_2}), \eqref{140923_5}, and (\ref{0823_1}), furnishes the required estimates
for $K_{5}^{\pm,2}(t)$, $K_{6}^{\pm,2}(t)$, and $L_{3}^{\pm,2}(t)$.
This completes the proof of Corollary \ref{cor:Gam2}.
\end{proof}
By Lemma \ref{lem1:Gam2} 
there exist a positive number $A_1\in(0,1)$
and a positive constant $C$ such that for $j,k=1,\dots,N$,
$\lambda\in\Gamma_2^\pm$, and $A\in(0,A_1)$ we have
\begin{align*}
&\left|\frac{\CV_{jk}^{BB}(\xi^\prime,\lambda)}{L(A,B)}\right|
\leq \frac{CA}{(A\sqrt{1-u}+\sqrt{u}+A)^{2}},
&\left|\frac{\CV_{jk}^{B\CM}(\xi^\prime,\lambda)}{L(A,B)}\right|
&\leq \frac{CA|B|}{(A\sqrt{1-u}+\sqrt{u}+A)^{2}}, \\
&\left|\frac{\CV_{jk}^{\CM B}(\xi^\prime,\lambda)}{L(A,B)}\right|
\leq \frac{CA|B|}{(A\sqrt{1-u}+\sqrt{u}+A)^{2}},
&\left|\frac{\CV_{jk}^{\CM\CM}(\xi^\prime,\lambda)}{L(A,B)}\right|
&\leq \frac{CA|B|^{2}}{(A\sqrt{1-u}+\sqrt{u}+A)^{2}}, \\
&\left|\frac{\CP_{j}^{AA}(\xi^\prime,\lambda)}{L(A,B)}\right|
\leq \frac{CA}{(A\sqrt{1-u}+\sqrt{u}+A^{1/4})^4},
&\left|\frac{\CP_j^{A\CM}(\xi^\prime,\lambda)}{L(A,B)}\right|
&\leq \frac{CA^2}{(A\sqrt{1-u}+\sqrt{u}+A^{1/4})^4},
\end{align*}
and furthermore,
\begin{align*}
|A/L(A,B)|&\leq C(A\sqrt{1-u}+\sqrt{u}+A^{1/4})^{-4}A, \\
|\{A(B^2+A^2)\}/\{(B+A)L_(A,B)\}|&\leq C(A\sqrt{1-u}+\sqrt{u}+A^{1/4})^{-3}A, \\
|D(A,B)/\{(B+A)L(A,B)\}|&\leq C(A\sqrt{1-u}+\sqrt{u}+A^{1/4})^{-4}|B|^2.
\end{align*}
Therefore, remembering  (\ref{Fsol_eq:2})-(\ref{hFsol_eq:2}), 
and (\ref{low:decomp}) with $\sigma=2$, and using  
Corollary \ref{cor:Gam2}, we have Theorem \ref{thm:Gam2}.
This completes the proof of Theorem \ref{thm:Gam2}.

%%%%%%%%%%%%%%%%%%%%%%%%%%%%%%%%%%%%%%%%%%%%%%%%%%%%%%%%%%%%%%%%%%%%%%
\subsection{Analysis on $\Gamma_{3}^{\pm}$}
Our aim here is to show the following theorem for the operators defined 
in (\ref{low:decomp}) with $\si=3$.
\begin{theo}\label{thm:Gam3}
Let $1\leq r\leq 2\leq q\leq\infty$, $(\al^\pr,\al)\in\BN_0^{N-1}\times\BN_0^N$, and $F=(f,d)\in L_r(\uhs)^N\times L_r(\tws)$.
Then there exist positive constants $\de_0$, $A_0$, and $C$ such that for any $t\geq 1$
\begin{align*}
&\|(\pa_t S_0^{f,3}(t;A_0)F,\nabla\Pi_0^{f,3}(t;A_0)F)\|_{L_q(\uhs)} \\
&\quad+\|(D_{x^\pr}^{\al^\pr}S_{0}^{f,3}(t;A_0)F,
D_x^\al\pa_t \CE(T_0^{f,3}(t;A_0)F),D_x^\al\nabla\CE(T_0^{f,3}(t;A_0)F))\|_{W_q^2(\uhs)} \\
&\leq C e^{-\delta_0 t}\|f\|_{L_r(\uhs)}, \\
&\|(\pa_t S_0^{d,3}(t;A_0)F,\nabla\Pi_0^{d,3}(t;A_0)F)\|_{L_q(\uhs)} \\
&\quad+\|(D_{x^\pr}^{\al^\pr} S_{0}^{d,3}(t;A_0)F,D_x^\al\nabla\CE(T_0^{d,3}(t;A_0)F))\|_{W_q^2(\uhs)}
\leq C  e^{-\delta_0 t}\|d\|_{L_r(\tws)}.
\end{align*}
\end{theo}
In order to show Theorem \ref{thm:Gam3}, we start with the following lemma.
\begin{lemm}\label{lem:Gam3}
Let $1\leq r\leq 2\leq q\leq\infty$, and let $f\in L_{r}(\uhs)^{N}$ and $d\in L_{r}(\mathbf{R}^{N-1})$.
We use the operators defined in $(\ref{op:low})$ with the forms:
\begin{equation*}
k_n(\xi^\prime,\lambda)=\kappa_n(\xi^\prime,\lambda)/L(A,B),\quad
\ell_n(\xi^\prime,\lambda)=m_n(\xi^\prime,\lambda)/L(A,B).
\end{equation*}
\begin{enumerate}[$(1)$]
\item
Let $s\geq0$ and suppose that there exist positive constants $A_1\in(0,1)$ and $C$ such that for any $\lambda\in \Gamma_{3}^{\pm}$
and $A\in(0,A_1)$
\begin{equation*}
|\kappa_{n}(\xi^\prime,\lambda)|\leq C(|\lambda|^{1/2}+A)^2 A^{1+s}\enskip(n=1,2,4,5,6),
\quad|\kappa_{3}(\xi^\prime,\lambda)|\leq C(|\lambda|^{1/2}+A)^2A^s.
\end{equation*}
Then there exist positive constants $\de_{0}$, $A_0\in(0,A_1)$, and $C$ such that
for any $t\geq1$
\begin{equation*}%\label{0921_10}
\|K_{n}^{\pm,3}(t;A_0)f\|_{L_{q}(\mathbf{R}_{+}^{N})}
\leq C e^{-\delta_{0}t} \|f\|_{L_{r}(\mathbf{R}_{+}^{N})}\quad(n=1,\dots,6).
\end{equation*}
\item
Let $s\geq0$ and suppose that there exist positive constants $A_1\in(0,1)$ and $C$ such that for any $\lambda\in\Gamma_{3}^{\pm}$
and $A\in(0,A_1)$
\begin{align*}
|m_{1}(\xi^\prime,\lambda)|&\leq C(|\lambda|^{1/2}+A)^2 A^{1+s}, \quad
|m_{2}(\xi^\prime,\lambda)|\leq C(|\lambda|^{1/2}+A)^2A^s,\\
|m_3(\xi^\prime,\lambda)|&\leq C|\lambda|^{1/2}(|\lambda|^{1/2}+A)^2 A^{1+s}.
\end{align*}
Then there exist positive constants $A_0\in(0,A_1)$, $\delta_{0}$, and $C$ such that for any $t\geq 1$
\begin{equation*}%\label{0921_11}
\|L_{n}^{\pm,3}(t;A_0)d\|_{L_{q}(\mathbf{R}_{+}^{N})}
\leq C e^{-\delta_{0}t}\|d\|_{L_{r}(\mathbf{R}^{N-1})}\quad(n=1,2,3).
\end{equation*}
\end{enumerate}
\end{lemm}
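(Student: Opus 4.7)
The guiding observation is that on $\Ga_3^\pm$ one has $\mathrm{Re}\,\la \leq -\ga_0$, and after the parametrization $\la(u)=-(\ga_0\pm i\wit{\ga}_0)+ue^{\pm i(\pi-\ep_0)}$ with $|d\la|=du$, the pointwise bound $|e^{\la t}|\leq e^{-\ga_0 t}e^{-u\cos\ep_0\cdot t}$ holds. Hence every contour integral inherits the factor $e^{-\ga_0 t}$, while the remaining $u$-integral $\int_0^\infty e^{-u\cos\ep_0\cdot t}/|\la(u)|\,du$ stays uniformly bounded for $t\geq 1$ (split at $u=1$: on $u\leq 1$ use $|\la|\geq \ga_0$; on $u\geq 1$ use $|\la|\sim u$ together with $\int_1^\infty e^{-u\cos\ep_0}du<\infty$). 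My plan is therefore to run the same $L_r$--$L_q$ machinery developed on $\Ga_0^\pm$ in Lemma \ref{lem:Gam0}, but with all polynomial $t$-decay factors replaced by the single exponential $e^{-\ga_0 t}$.

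Concretely, since $\Ga_3^\pm\subset\Si_{\ep_0,\ga_0}$, Lemma \ref{lem:symbol}~(1),(3) yield $\mathrm{Re}\,B\geq b_{\ep_0}(|\la|^{1/2}+A)$ and $|L(A,B)^{-1}|\leq C\{|\la|(|\la|^{1/2}+A)^2\}^{-1}$ (using $|\la|\geq \ga_0$ on $\Ga_3$). Combining these with the hypotheses, one obtains $|k_n(\xi^\pr,\la)|\leq CA^{1+s}/|\la|$ for $n=1,2,4,5,6$, $|k_3|\leq CA^s/|\la|$, and the analogous bounds for $\ell_n$. Together with the pointwise estimates for the exponential kernels from Lemma \ref{lem:symbol}~(2), this produces the uniform bound
\begin{equation*}
\Bigl|\int_{\Ga_3^\pm}e^{\la t}k_n(\xi^\pr,\la)\CX_n(x_N,y_N)\,d\la\Bigr|\leq Ce^{-\ga_0 t}\,\ph_0(\xi^\pr)\,A^{1+s}\,\wit{\CX}_n(x_N,y_N),
\end{equation*}
where $\wit{\CX}_n(x_N,y_N)$ is a pure exponential in $A(x_N+y_N)$ with a $\la$-independent rate. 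Substituting into \eqref{op:low}, applying Plancherel in $x^\pr$ and Minkowski in $y_N$, and invoking Lemma \ref{lemm:fund3} (together with a straightforward $L_\infty$-variant based on the compact support of $\ph_0$ to cover $q=\infty$) then produces the bound $Ce^{-\ga_0 t}\|f\|_{L_r(\uhs)}$; the parallel argument with $\CY_n(x_N)$ in place of $\CX_n(x_N,y_N)$ handles $L_n^{\pm,3}$.

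The main obstacle is the $y_N$-integration in the cases $n=1,2$ (and for $\ell_1,\ell_2$), where $\wit{\CX}_n$ contains only the slowly decaying factor $e^{-Ay_N}$: after H\"older with $L_r$ in $y_N$ this yields a factor $A^{-1/r^\pr}$ that is singular at $\xi^\pr=0$, and the weight $A^{1+s}$ built into the hypothesis on $\ka_n$ is precisely what compensates, so that $A^{1+s}A^{-1/r^\pr}$ remains integrable near the origin of $\tws$ after the $\ph_0$ cut-off. A minor additional technicality is that the $u$-integration may also introduce a polynomial factor in $1/t$ for $t\geq 1$, but since this is absorbed harmlessly into any $e^{-\de_0 t}$ with $\de_0\in(0,\ga_0)$, choosing for instance $\de_0=\ga_0/2$ completes the argument.
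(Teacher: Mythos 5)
Your overall route is the one the paper takes: parametrize $\Ga_3^\pm$ to extract $|e^{\la t}|\le e^{-\ga_0 t}e^{-u(\cos\ep_0)t}$, use $\Ga_3^\pm\subset\Si_{\ep_0,\ga_0}$ with Lemma~\ref{lem:symbol}\thinspace(1),(3) to get $|k_n|\le CA^{1+s}/|\la|$ (and $|k_3|\le CA^s/|\la|$ but with the faster kernel $e^{-B(x_N+y_N)}$), observe the $u$-integral is bounded for $t\ge1$, and finish with the $L_r$--$L_2$--$L_q$ machinery. This is correct in outline and matches the paper's proof.

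There is, however, one step you do not actually justify and which is the only genuinely delicate point. After the contour integration your pointwise bound on the $\la$-integral is $Ce^{-\ga_0 t}\ph_0(\xi')A^{1+s}\wit\CX_n(x_N,y_N)$ with \emph{no} $t$-dependence in the multiplier in $\xi'$, yet you ``invoke Lemma~\ref{lemm:fund3}''--- but that lemma requires the factor $e^{-s_1A^2\tau}$ inside the $L_2$-norm, and this factor is exactly what makes the multiplier an $L_r\!\to\!L_q$ bounded operator for $q>2$ (it is the heat kernel). With only $\ph_0(\xi')A\cdot e^{-CA(x_N+y_N)}$ and no heat factor, the multiplier is merely $L_\infty$ on a compact set, which does not by itself give the $L_r\!\to\!L_q$ bound except at the endpoints. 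The way the paper (and you, implicitly, when you allow $\de_0<\ga_0$) closes this gap is to spend part of the exponential decay: write $e^{-\ga_0 t}=e^{-(\ga_0/2)t}e^{-(\ga_0/2)t}$ and use that, after shrinking $A_0$ so that $2A_0^2\lesssim\ga_0/2$, one has $e^{-(\ga_0/2)t}e^{A^2\wt}\le Ce^{-A^2\wt}$ on $\operatorname{supp}\ph_0$; this reinstates the $e^{-A^2\wt}$ factor that Lemma~\ref{lemm:fund3} needs and leaves $e^{-(\ga_0/2)t}$ as the advertised decay $e^{-\de_0 t}$. Your closing remark about ``a polynomial factor in $1/t$ from the $u$-integration'' is off target (that integral is bounded by a constant for $t\ge1$); the actual issue to flag is the reinstatement of the heat factor, and this is also why $A_0$ must be (re)chosen small at this stage.
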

\begin{proof}
We use the abbreviations: $\|\cdot\|_{2}=\|\cdot\|_{L_{2}(\mathbf{R}^{N-1})}$,
$\widehat{f}(y_{N})=\widehat{f}(\xi^{\prime},y_{N})$, and $\wt=t+1$ for $t>0$ in this proof,
and consider only the estimates on $\Gamma_3^{+}$, 
because the estimates on $\Gamma_3^-$ can be shown similarly. \\
(1) First, we show the inequality for $K_{1}^{+,3}(t)$.
Noting that $\lambda=-\gamma_{0}+ i\widetilde{\gamma}_{0}+ue^{i(\pi-\ep_0)}$
for $u\in[0,\infty)$ on $\Gamma_{3}^{+}$, we have, by (\ref{op:low}),
\begin{multline*}%\label{0921_12}
[K_{1}^{+,3}(t)f](x)=
\int_{0}^{\infty}\CF_{\xi^{\prime}}^{-1}\left[\int_{0}^{\infty}e^{\{-\gamma_{0}+ i\widetilde{\gamma}_{0}+ue^{i(\pi-\ep_0)}\}t} \right. \\
\left.\times\varphi_{0}(\xi^{\prime})\frac{\kappa_{1}(\xi^\prime,\lambda)}{L(A,B)}e^{-A(x_{N}+y_{N})}e^{i(\pi-\varepsilon_0)}\,du
\widehat{f}(y_{N})\right](x^{\prime})\,dy_{N}.
\end{multline*}
Since $e^{-(\gamma_{0}/2)t}e^{A^{2}\wt}\leq Ce^{-A^{2}\wt}$ for any $A\in(0,A_0)$ by choosing some $A_0\in(0,A_1)$,
we obtain by Lemma \ref{lem:symbol} (3),
$L_{q}\text{-}L_{r}$ estimates of the $(N-1)$ dimensional heat kernel, and Parseval's theorem
\begin{align*}%\label{0921_15}
&\|[K_{1}^{+,3}(t;A_0)f](\cdot\,,x_{N})\|_{L_{q}(\tws)} \\
&\quad\leq
C\wt^{-\frac{N-1}{2}\left(\frac{1}{2}-\frac{1}{q}\right)}\int_{0}^{\infty}
\left\|\int_{0}^{\infty}\varphi_{0}(\xi^{\prime})e^{A^{2}\wt}
e^{-(\gamma_{0}+u\cos{\varepsilon_0})t}\frac{A^{1+s}}{|\lambda|}e^{-A(x_{N}+y_{N})}\,du\widehat{f}(y_{N})\right\|_{2}\,dy_{N} \\
&\quad\leq
C\wt^{-\frac{N-1}{2}\left(\frac{1}{2}-\frac{1}{q}\right)}e^{-(\gamma_{0}/2)t}\int_{0}^{\infty}
\left\|\int_{0}^{\infty}\frac{e^{-u(\cos{\varepsilon_0})t}}{|\lambda|}\,du\,e^{-A^2\wt}Ae^{-A(x_{N}+y_{N})}
\,\widehat{f}(y_{N})\right\|_{2}\,dy_{N} \\
&\quad\leq
C\wt^{-\frac{N-1}{2}\left(\frac{1}{2}-\frac{1}{q}\right)}e^{-(\gamma_{0}/2)t}
%\frac{e^{-u(\cos{\varepsilon})t}}{|\lambda|}\,du
\int_{0}^{\infty}\left\|e^{-A^2\widetilde{t}}Ae^{-A(x_N+y_N)}\wh{f}(y_N)\right\|_2dy_N
\end{align*}
for any $t\geq1$ with some positive constant $C$, where we note that $|\la|\geq\ga_\infty$ on $\Ga_3^+$
and $A^s\leq C$ on $\text{supp\,}\ph_0$.
We thus obtain the required inequality of $K_{1}^{+,3}(t;A_0)$ by Lemma \ref{lemm:fund3}.
Analogously, we can show the case of $n=2,4,5,6$ by
using the fact that
\begin{equation}\label{140923_6}
|e^{-Ba}|\leq C e^{-Ca},\quad
|\CM(a)|\leq C|\la|^{-1}e^{-Ca}\leq Ce^{-Ca}
\end{equation}
for any $a>0$ and $\la\in\Ga_3^+$ with some positive constant $C$ by Lemma \ref{lem:symbol} (1) and \eqref{deriv:M}.

We finally show the inequality for $K_{3}^{+,3}(t;A_0)$.
By H\"{o}lder's inequality and \eqref{140923_6},
we easily have for $r'=r/(r-1)$
\begin{align*}%\label{0922_2}
&\|[K_{3}^{+,3}(t;A_0)f](\cdot,x_N)\|_{L_{q}(\tws)} \\
&\leq C\wt^{-\frac{N-1}{2}\left(\frac{1}{2}-\frac{1}{q}\right)}e^{-(\ga_0/2)t}\int_{0}^{\infty}
\left\|\int_{0}^{\infty}\frac{e^{-u(\cos{\varepsilon_0})t}}{|\lambda|}e^{-C|\lambda|^{\frac{1}{2}}(x_{N}+y_{N})}
\,du\,e^{-A^{2}\wt}\widehat{f}(y_{n})\right\|_{2}\,dy_{N} \\
%&\leq C\wt^{-\frac{N-1}{2}\left(\frac{1}{r}-\frac{1}{q}\right)}e^{-(\gamma_{0}/2)t}\int_{0}^{\infty}
%\frac{e^{-u(\cos{\varepsilon})t}e^{-b_{\varepsilon}|\lambda|^{\frac{1}{2}}x_{N}}}{|\lambda|}
%\int_{0}^{\infty}e^{-b_{\varepsilon}|\lambda|^{\frac{1}{2}}y_{N}}\|f(\cdot\,,y_{N})\|_{L_r(\tws)}\,dy_{N}du \\
%&\leq C\wt^{-\frac{N-1}{2}\left(\frac{1}{r}-\frac{1}{q}\right)}e^{-\frac{\gamma_{0}}{2}t}
&\leq Ce^{-(\ga_0/2)t}
\int_{0}^{\infty}\frac{e^{-u(\cos{\varepsilon_0})t}e^{-C|\lambda|^{\frac{1}{2}}x_{N}}}{|\lambda|}
\left(\int_{0}^{\infty}e^{-C r^{\prime}|\lambda|^{\frac{1}{2}}y_{N}}\,dy_{N}\right)^{1/{r'}}\,du
\|f\|_{L_{r}(\uhs)},% \\
%&\leq C\wt^{-\frac{N-1}{2}\left(\frac{1}{r}-\frac{1}{q}\right)}e^{-(\gamma_{0}/2)t}\|f\|_{L_{r}(\mathbf{R}_{+}^{N})}
%\int_{0}^{\infty}\frac{e^{-u(\cos{\varepsilon})t}e^{-b_{\varepsilon}|\lambda|^{\frac{1}{2}}x_{N}}}{|\lambda|} \,du\\
%&\leq C\wt^{-\frac{N-1}{2}\left(\frac{1}{r}-\frac{1}{q}\right)}e^{-(\gamma_{0}/2)t}\|f\|_{L_{r}(\mathbf{R}_{+}^{N})}
%\int_{0}^{\infty}\frac{e^{-u(\cos{\varepsilon})t}}{|\lambda|}\,du,
\end{align*}
%
%
%which implies that
%%
%%
%\begin{equation*}
%\|K_{0,3}^{\pm,3}(t)f\|_{L_{q}(\mathbf{R}_{+}^{N})}
%%&\leq C\wt^{-\frac{N-1}{2}\left(\frac{1}{r}-\frac{1}{q}\right)}
%%e^{-(\gamma_{0}/2)t}\|f\|_{L_{r}(\mathbf{R}_{+}^{N})}\int_{0}^{\infty}
%%\frac{e^{-u(\cos{\varepsilon})t}}{|\lambda|}\left\|e^{-b_{\varepsilon}|\lambda|^{\frac{1}{2}}x_{N}}\right\|_{L_{q}(0,\infty)}\,du \\
%\leq C\wt^{-\frac{N-1}{2}\left(\frac{1}{r}-\frac{1}{q}\right)}e^{-(\gamma_{0}/2)t}\|f\|_{L_{r}(\mathbf{R}_{+}^{N})}
%\int_{0}^{\infty}\frac{e^{-u(\cos{\varepsilon})t}}{|\lambda|}\,du.
%\end{equation*}
%
%
Therefore, we see that
\begin{align*}
\|K_{3}^{+,3}(t;A_0)f\|_{L_{q}(\uhs)}
&\leq
C
%\wt^{-\frac{N-1}{2}\left(\frac{1}{r}-\frac{1}{q}\right)}
e^{-(\gamma_{0}/2)t}
\int_0^\infty
\frac{e^{-u(\cos{\varepsilon_0})t}}{|\lambda|^{1+1/(2q)+1/(2r')}}\,du\,\|f\|_{L_r(\uhs)}\\
&\leq
Ce^{-(\gamma_{0}/2)t}\|f\|_{L_r(\uhs)}
\end{align*}
for any $t\geq 1$ with some positive constant $C$.
%
%\textcolor{red}{which combined with \eqref{add:1} furnishes the
% required inequality for $K_{0,3}^{+,3}(t)$}.
%
%
%Analogously, we see that for $n=4,5,6$ there hold
%
%
%\begin{equation*}
%\|K_{0,n}^{\pm,3}(t)f\|_{L_{q}(\mathbf{R}_{+}^{N})}\leq C\wt^{-\frac{N-1}{2}\left(\frac{1}{r}-\frac{1}{q}\right)}
%e^{-(\gamma_{0}/2)t}\|f\|_{L_{r}(\mathbf{R}_{+}^{N})}\int_{0}^{\infty}\frac{e^{-u(\cos{\ep})t}}{|\lambda|}\,du,
%\end{equation*}
%
%
%\textcolor{red}{which combined with \eqref{add:1} furnishes the
% required inequalities for $K_{0,n}^{\pm,3}(t)$ ($n=4,5,6$). This completes the proof of \thetag{1}. }.
\\
(2)
Employing an argumentation similar to \thetag{1}
and using \eqref{140923_6} for $L_3^{+,3}(t;A_0)$, % and employing the similar argumentation to that in proof
%of \thetag{1},
we can prove \thetag{2}, so that we may omit the detailed 
proof of \thetag2.  This completes the proof of Lemma \ref{lem:Gam3}.
\end{proof}
We see that by Lemma \ref{lem:symbol}
there exist positive constants $A_1\in(0,1)$ and $C$ such that for any $A\in(0,A_1)$ and $\lambda\in\Gamma_3^\pm$
we have
\begin{equation*}
\begin{aligned}
|\CV_{jk}^{BB}(\xi^\prime,\lambda)|&\leq C, &|\CV_{jk}^{B\CM}(\xi^\prime,\lambda)|&\leq CA, &|\CV_{jk}^{\CM B}(\xi^\prime,\lambda)|&\leq CA, \\
|\CV_{jk}^{\CM\CM}(\xi^\prime,\lambda)|&\leq CA, &|\CP_{j}^{AA}(\xi^\prime,\lambda)|&\leq CA, &|\CP_{j}^{A\CM}(\xi^\prime,\lambda)|&\leq CA
\end{aligned}
\end{equation*}
for $j,k=1,\dots,N$.
Therefore, remembering  (\ref{Fsol_eq:2})-(\ref{hFsol_eq:2}), 
and (\ref{low:decomp}) with $\sigma=3$, and using  
Lemma \ref{lem:Gam3}, we have Theorem \ref{thm:Gam3}.
This completes the proof of Theorem \ref{thm:Gam3}.

We finally consider the term $\partial_{t}\CE(T_0^d(t;A_0)F)$ given by
\begin{align*}
&\partial_{t}\CE({T}_{0}^{d}(t;A_0)F)=\mathcal{F}_{\xi^{\prime}}^{-1}\left[\frac{1}{2\pi i}\int_{\Gamma}e^{\lambda t}
\frac{\varphi_{0}(\xi^{\prime})\lambda D(A,B)}{(B+A)L(A,B)}\,d\lambda\,e^{-Ax_{N}}\widehat{d}(\xi^{\prime})\right](x^{\prime}) \\
&\quad=\mathcal{F}_{\xi^{\prime}}^{-1}\left[\frac{1}{2\pi i}\int_{\Gamma}e^{\lambda t}\,d\lambda
\,\varphi_{0}(\xi^{\prime})e^{-Ax_{N}}\widehat{d}(\xi^{\prime})\right](x^{\prime}) \\
&\quad-\mathcal{F}_{\xi^{\prime}}^{-1}\left[\frac{1}{2\pi i}\int_{\Gamma}e^{\lambda t}
\frac{\varphi_{0}(\xi^{\prime})A(c_{g}+c_{\sigma}A^{2})}{L(A,B)}\,d\lambda\,e^{-Ax_{N}}\widehat{d}(\xi^{\prime})\right](x^{\prime}),
\end{align*}
where we have used the relations: $D(A,B)=(B-A)^{-1}\{L(A,B)-A(c_{g}+c_{\sigma}A^{2})\}$ and $B^{2}-A^{2}=\lambda$.
Note that the first term vanishes by Cauchy's integral theorem,
so that it suffices to consider the second term only. Set
\begin{equation*}
I_{\sigma}^{\pm}(t;A_0)=-\mathcal{F}_{\xi^{\prime}}^{-1}\left[\frac{1}{2\pi i}\int_{\Gamma_{\sigma}^{\pm}}e^{\lambda t}
\frac{\varphi_{0}(\xi^{\prime})A(c_{g}+c_{\sigma}A^{2})}{L(A,B)}\,d\lambda\,e^{-Ax_{N}}\widehat{d}(\xi^{\prime})
\right](x^{\prime})\quad(\sigma=0,1,2,3).
\end{equation*}
Since by Lemma \ref{lem1:Gam2} there exist positive constants $A_1\in(0,1)$ and $C$ such that
for any $\la\in\Ga_2^\pm$ and $A\in(0,A_1)$
\begin{equation*}
|A(c_{g}+c_{\sigma}A^{2})/L(A,B)|\leq C(A\sqrt{1-u}+\sqrt{u}+A^{1/4})^{-4}A,
\end{equation*}
by Lemma \ref{lem2:Gam2} we have for $t>0$, $\alpha\in\BN_0^N$ with $|\al|\neq0$, and $1\leq r\leq 2\leq q\leq \infty$
\begin{equation*}
\|D_{x}^{\alpha}I_{2}^{\pm}(t;A_0)\|_{L_{q}(\mathbf{R}_{+}^{N})}\leq C(t+1)^{-\frac{N-1}{2}\left(\frac{1}{r}-\frac{1}{q}\right)
-\frac{1}{2}\left(\frac{1}{2}-\frac{1}{q}\right)-\frac{|\alpha|}{2}}\|d\|_{L_{r}(\mathbf{R}^{N-1})}
\end{equation*}
with some positive constant $C$. If $(q,r)\neq(2,2)$, then we also have
\begin{equation*}
\|I_{2}^{\pm}(t;A_0)\|_{L_{q}(\mathbf{R}_{+}^{N})}\leq C(t+1)^{-\frac{N-1}{2}\left(\frac{1}{r}-\frac{1}{q}\right)
-\frac{1}{2}\left(\frac{1}{2}-\frac{1}{q}\right)}\|d\|_{L_{r}(\mathbf{R}^{N-1})}.
\end{equation*}
In addition, by Lemma \ref{lem:Gam0}, \ref{lem2:Gam1}, and \ref{lem:Gam3}, we have  
\begin{align*}
\|D_{x}^{\alpha}I_{n}^{\pm}(t;A_0)\|_{L_{q}(\mathbf{R}_{+}^{N})}&\leq C(t+1)^{-\frac{N-1}{2}\left(\frac{1}{r}-\frac{1}{q}\right)
-\frac{1}{2}\left(\frac{1}{2}-\frac{1}{q}\right)-\frac{|\alpha|}{2}}\|d\|_{L_{r}(\mathbf{R}^{N-1})}\quad(n=0,1), \\
\|D_{x}^{\alpha}I_{3}^{\pm}(t;A_0)\|_{L_{q}(\mathbf{R}_{+}^{N})}&\leq C e^{-\delta_{0}t}\|d\|_{L_{r}(\mathbf{R}^{N-1})}
\end{align*}
for any $t\geq 1$, $\alpha\in\BN_0^N$, and $1\leq r\leq 2\leq q\leq\infty$
with some positive constant $C$.
Thus, we have
\begin{align*}
&\|D_x^\al\partial_{t}\CE({T}_{0}^{d}(t;A_0)F)\|_{L_q(\uhs)} \\
&\quad\leq 
C(t+1)^{-\frac{N-1}{2}\left(\frac{1}{r}-\frac{1}{q}\right)
-\frac{1}{2}\left(\frac{1}{2}-\frac{1}{q}\right)-\frac{|\alpha|}{2}}\|d\|_{L_{r}(\mathbf{R}^{N-1})}
\quad(1\leq r\leq 2\leq q\leq\infty,\ |\al|\neq0), \\
&\|\partial_{t}\CE({T}_{0}^{d}(t;A_0)F)\|_{L_q(\uhs)} \\
&\quad\leq 
C(t+1)^{-\frac{N-1}{2}\left(\frac{1}{r}-\frac{1}{q}\right)
-\frac{1}{2}\left(\frac{1}{2}-\frac{1}{q}\right)}\|d\|_{L_{r}(\mathbf{R}^{N-1})}
\quad(\text{$1\leq r\leq 2\leq q\leq\infty$ and $(q,r)\neq(2,2)$})
\end{align*}
for any $t\geq 1$ with some positive constant $C$,
which, combined with Theorem \ref{thm:Gam0}, \ref{thm:Gam1}, \ref{thm:Gam2},
and \ref{thm:Gam3}, completes the proof of \eqref{140917_1} in Theorem \ref{theo:main} (2),
because
\begin{align*}
&S_0(t)F=\sum_{g\in\{f,d\}}\sum_{\si=0}^3 S_{0}^{g,\si}(t;A_0)F,\quad
\Pi_0(t)F=\sum_{g\in\{f,d\}}\sum_{\si=0}^3\Pi_{0}^{g,\si}(t;A_0)F, \\
&T_0(t)F=\sum_{g\in\{f,d\}}\sum_{\si=0}^3T_{0}^{g,\si}(t;A_0)F.
\end{align*}
%
%

%%%%%%%%%%%%%%%%%%%%%%%%%%%%%%%%%%%%%%%%%%%%%%%%%%%%%%%%%%%%%%%%%%%%%%
\section{Analysis of high frequency part}\label{sec:high}
In this section, we show the estimate \eqref{140917_2} in Theorem \ref{theo:main} (2).
If we consider the Lopatinskii determinant $L(A,B)$ defined by (\ref{0921_1}) as a polynomial
with respect to $B$, it has the following four roots:
\begin{equation}\label{expan:high}
B_{j}=a_{j}A+\frac{c_{\sigma}}{4(1-a_{j}-a_{j}^{3})}
+\frac{(1+3a_{j}^{2})c_{\sigma}^{2}}{32(1-a_{j}-a_{j}^{3})^{3}}\frac{1}{A}
+O\left(\frac{1}{A^{2}}\right)
\quad\text{as $A\to\infty$},
\end{equation}
where $a_{j}$ $(j=1,\dots,4)$ are the solutions to the equation: $x^{4}+2x^{2}-4x+1=0$.
We have the following informations about $a_j$:
$a_1$ and $a_2$ are real numbers
such that $a_1=1$ and $0<a_2<1/2$,
and $a_3$ and $a_4$ are complex numbers satisfying ${\rm Re}\,a_j<0$ for $j=3,4$. 
%
%
%\begin{equation*}
%	\begin{aligned}
%		&a_{1}=1, && 0<a_{2}<1/2, && {\rm Re}\,a_{j}< 0\ (j=3,4).
%	\end{aligned}
%\end{equation*}
%
%
We define $\la_j$ by $\lambda_j=B_j^2-A^2$ for $j=1,2$, and then
\begin{equation}\label{expan:high2}
\lambda_{1}=-\frac{c_{\sigma}}{2}A-\frac{3}{16}c_{\sigma}^{2}+O(\frac{1}{A}),
\enskip\lambda_{2}=-(1-a_{2}^{2})A^{2}+\frac{a_{2}c_{\sigma}}{2(1-a_{2}-a_{2}^{3})}A+O(1)
\quad\text{as $A\to\infty$}.
\end{equation}
Let $L_{0}=\{\lambda\in \mathbf{C}\ |\ L(A,B)=0,\ {\rm Re}B\geq0,\ A\in{\rm supp}\,\varphi_{\infty}\}$,
where $\varphi_\infty$ is defined in (\ref{A0}),
and then we see,  by the expansion formulas (\ref{expan:low2}), (\ref{expan:high2}), 
and Lemma \ref{lem:anal}, that  
 there exist positive numbers 
$0<\varepsilon_{\infty}<\pi/2$ and $\lambda_{\infty}>0$ such that
 $L_{0}\subset\Sigma_{\varepsilon_{\infty}}
\cap\{z\in\mathbf{C}\ |\ {\rm Re}z<-\lambda_{\infty}\}$.
Set $\gamma_{\infty}=\min\{\lambda_{\infty},4^{-1}\times(A_{0}/6)^{2}\}$ for $A_{0}$ defined in (\ref{A0}),
and set, for \eqref{sol:decomp} and $g\in\{f,d\}$,
\begin{equation*}%\label{140923_20}
S_\infty^g(t)=S_\infty^g(t;A_0),\quad
\Pi_\infty^g(t)=\Pi_\infty^g(t;A_0), \quad
T_\infty^g(t)=T_\infty^g(t;A_0).
\end{equation*}
In order to estimate each term above, we use the integral paths:
\begin{align*}
\Gamma_{4}^{\pm}
&=\{\lambda\in\BC\mid\lambda=-\gamma_{\infty}\pm i u,\ u:0\to\widetilde{\gamma}_\infty\}, \\
\Gamma_{5}^{\pm}&=\{\lambda\in\BC\mid\lambda=-\gamma_{\infty}\pm i\widetilde{\gamma}_\infty
+u e^{\pm i(\pi-\varepsilon_{\infty})},\ u:0\to\infty \},
\end{align*}
where $\widetilde{\gamma}_\infty=(\tan\ep_\infty)(\widetilde{\lambda}_{0}(\ep_\infty)+\gamma_\infty)$
and $\widetilde{\lambda}_0(\ep_\infty)$ is the same constant as in \eqref{Gamma} with $\ep=\ep_\infty$.
Furthermore, for $g\in\{f,d\}$, setting $v_{\infty}^{g}(x,\lambda)=(v_{1,\infty}^{g}(x,\lambda),\dots,v_{N,\infty}^{g}(x,\lambda))^{T}$ and
\begin{align*}
&v_{j,\infty}^{g}(x,\lambda)=\CF_{\xi^{\prime}}^{-1}[\varphi_{\infty}(\xi^{\prime})\widehat{v}_{j}^{g}(\xi^{\prime},x_{N},\lambda)](x^{\prime})
\quad(j=1,\dots,N), \displaybreak[1] \\
&\pi_{\infty}^{g}(x,\lambda)=\CF_{\xi^{\prime}}^{-1}[\varphi_{\infty}(\xi^{\prime})\widehat{\pi}^{g}(\xi^{\prime},x_{N},\lambda)](x^{\prime}), \\
&h_{A,\infty}^{g}(x,\lambda)=\CF_{\xi^{\prime}}^{-1}[\varphi_{\infty}(\xi^{\prime})e^{-Ax_N}\widehat{h}^{g}(\xi^{\prime},\lambda)](x^{\prime})
\end{align*}
by (\ref{Fsol_eq:2})-(\ref{A0}), we have, by Cauchy's integral theorem, the following decompositions: 
\begin{equation*}%\label{140923_21}
S_{\infty}^{g}(t)F=\sum_{\sigma=4}^{5}S_{\infty}^{g,\sigma}(t)F,\enskip
\Pi_{\infty}^{g}(t)F=\sum_{\sigma=4}^{5}\Pi_{\infty}^{g,\sigma}(t)F,\enskip
\CE(T_{\infty}^{g}(t)F)=\sum_{\sigma=4}^{5}\CE({T}_{\infty}^{g,\sigma}(t)F),
\end{equation*}
where the right-hand sides are given by
\begin{align}\label{high:decomp}
&S_{\infty}^{g,\sigma}(t)F=\frac{1}{2\pi i}\int_{\Gamma_{\sigma}^{+}\cup\Gamma_{\sigma}^{-}}e^{\lambda t}v_{\infty}^{g}(x,\lambda)\,d\lambda,
\quad\Pi_{\infty}^{g,\sigma}(t)F=\frac{1}{2\pi i}\int_{\Gamma_{\sigma}^{+}\cup\Gamma_{\sigma}^{-}}
e^{\lambda t}\pi_{\infty}^{g}(x,\lambda)\,d\lambda, \notag \\
&\CE({T}_{\infty}^{g,\sigma}(t)F)=\frac{1}{2\pi i}\int_{\Gamma_{\sigma}^{+}\cup\Gamma_{\sigma}^{-}}e^{\lambda t}
h_{A,\infty}^{g}(x,\lambda)\,d\lambda.
\end{align}
By the relation $1=B^{2}/B^{2}=(\lambda+A^{2})/B^{2}$,
we write $v_{\infty}^{f}$, $\pi_{\infty}^{f}$, and $h_{A,\infty}^{f}$ as follows:
For $j=1,\dots,N$, $\wh{f}_j(y_N)=\wh{f}_j(\xi',y_N)$, and $\ph_\infty=\ph_\infty(\xi')$,
\begin{align}\label{1011_3}
&v_{j,\infty}^{f}(x,\la)
=\sum_{k=1}^{N}\int_{0}^{\infty}\CF_{\xi^{\prime}}^{-1}\left[\varphi_{\infty}
\frac{\CV_{jk}^{BB}(\xi^\prime,\lambda)(c_{g}+c_{\sigma}A^{2})}{AL(A,B)}Ae^{-B(x_{N}+y_{N})}
\widehat{f}_{k}(y_{N})\right](x^{\prime})\,dy_{N} \displaybreak[0] \notag \\
&+\sum_{k=1}^{N}\int_{0}^{\infty}\CF_{\xi^{\prime}}^{-1}\left[\varphi_{\infty}
\frac{\lambda|\lambda|^{-\frac{1}{2}}\CV_{jk}^{B\CM}(\xi^\prime,\lambda)(c_{g}+c_{\sigma}A^{2})}{AB^{2}L(A,B)}
A|\lambda|^{\frac{1}{2}}e^{-Bx_{N}}\CM(y_{N})\widehat{f}_{k}(y_{N})\right](x^{\prime})\,dy_{N} \notag \\
&+\sum_{k=1}^{N}\int_{0}^{\infty}\CF_{\xi^{\prime}}^{-1}\left[\varphi_{\infty}
\frac{\CV_{jk}^{B\CM}(\xi^\prime,\lambda)(c_{g}+c_{\sigma}A^{2})}{B^{2}L(A,B)}A^{2}e^{-Bx_{N}}\CM(y_{N})
\widehat{f}_{k}(y_{N})\right](x^{\prime})\,dy_{N} \notag \\
&+\sum_{k=1}^{N}\int_{0}^{\infty}\CF_{\xi^{\prime}}^{-1}\left[\varphi_{\infty}
\frac{\lambda|\lambda|^{-\frac{1}{2}}\CV_{jk}^{\CM B}(\xi^\prime,\lambda)(c_{g}+c_{\sigma}A^{2})}{AB^{2}L(A,B)}
A|\lambda|^{\frac{1}{2}}\CM(x_{N})e^{-By_{N}}\widehat{f}_{k}(y_{N})\right](x^{\prime})\,dy_{N}\displaybreak[0]\notag\\
&+\sum_{k=1}^{N}\int_{0}^{\infty}\CF_{\xi^{\prime}}^{-1}\left[\varphi_{\infty}
\frac{\CV_{jk}^{\CM B}(\xi^\prime,\lambda)(c_{g}+c_{\sigma}A^{2})}{B^{2}L(A,B)}A^{2}\CM(x_{N})e^{-By_{N}}
\widehat{f}_{k}(y_{N})\right](x^{\prime})\,dy_{N}\displaybreak[0]\notag\\
&+\sum_{k=1}^{N}\int_{0}^{\infty}\CF_{\xi^{\prime}}^{-1}\left[\varphi_{\infty}
\frac{\CV_{jk}^{\CM\CM}(\xi^\prime,\lambda)(c_{g}+c_{\sigma}A^{2})}{AB^{2}L(A,B)}A\lambda\CM(x_{N})\CM(y_{N})
\widehat{f}_{k}(y_{N})\right](x^{\prime})\,dy_{N}\displaybreak[0]\notag\\
&+\sum_{k=1}^{N}\int_{0}^{\infty}\CF_{\xi^{\prime}}^{-1}\left[\varphi_{\infty}
\frac{\CV_{jk}^{\CM\CM}(\xi^\prime,\lambda)(c_{g}+c_{\sigma}A^{2})}{AB^{2}L(A,B)}A^{3}\CM(x_{N})\CM(y_{N})
\widehat{f}_{k}(y_{N})\right](x^{\prime})\,dy_{N},\displaybreak[0]\notag\\
&\pi_{\infty}^{f}(x,\la)
=\sum_{k=1}^{N}\int_{0}^{\infty}\CF_{\xi^{\prime}}^{-1}\left[\varphi_{\infty}
\frac{\CP_k^{AA}(\xi^\prime,\lambda)(c_{g}+c_{\sigma}A^{2})}{AL(A,B)}Ae^{-A(x_{N}+y_{N})}
\widehat{f}_{k}(y_{N})\right](x^{\prime})\,dy_{N}\displaybreak[0]\notag\\
&+\sum_{k=1}^{N}\int_{0}^{\infty}\CF_{\xi^{\prime}}^{-1}
\left[\varphi_{\infty}\frac{\CP_k^{A\CM}(\xi^\prime,\lambda)(c_{g}+c_{\sigma}A^{2})}
{A^{2}L(A,B)}A^{2}e^{-Ax_{N}}\CM(y_{N})\widehat{f}_{k}(y_{N})\right](x^{\prime})\,dy_{N},\displaybreak[0]\notag\\
&h_{A,\infty}^{f}(x,\la)
=-\sum_{k=1}^{N-1}\int_{0}^{\infty}\CF_{\xi^{\prime}}^{-1}\left[\varphi_{\infty}
\frac{i\xi_k(B-A)}{A(B+A)L(A,B)}Ae^{-A(x_{N}+y_{N})}\widehat{f}_{k}(y_{N})\right](x^{\prime})\,dy_{N}\displaybreak[0]\notag\\
&-\int_{0}^{\infty}\CF_{\xi^{\prime}}^{-1}\left[\varphi_{\infty}
\frac{1}{L(A,B)}Ae^{-A(x_{N}+y_{N})}\widehat{f}_{N}(y_{N})\right](x^{\prime})\,dy_{N}\displaybreak[0]\notag\\
&+\sum_{k=1}^{N-1}\int_{0}^{\infty}\CF_{\xi^{\prime}}^{-1}\left[\varphi_{\infty}
\frac{2i\xi_k B}{A(B+A)L(A,B)}A^{2}e^{-Ax_{N}}\CM(y_{N})\widehat{f}_{k}(y_{N})\right](x^{\prime})\,dy_{N}\notag\\
&+\int_{0}^{\infty}\CF_{\xi^{\prime}}^{-1}\left[\varphi_{\infty}
\frac{2A}{(B+A)L(A,B)}A^{2}e^{-Ax_{N}}\CM(y_{N})\widehat{f}_{N}(y_{N})\right](x^{\prime})\,dy_{N}.
\end{align}
Moreover, using the relations:
\begin{align}\label{vole}
	e^{-Bx_{N}}\widehat{g}(0)=&\int_{0}^{\infty}Be^{-B(x_{N}+y_{N})}\widehat{g}(y_{N})\,dy_{N}
			-\int_{0}^{\infty}e^{-B(x_{N}+y_{N})}\widehat{D_{N}g}(y_{N})\,dy_{N}, \notag \\
	\mathcal{M}(x_{N})\widehat{g}(0)=&\int_{0}^{\infty}\left(e^{-B(x_{N}+y_{N})}+A\mathcal{M}(x_{N}+y_{N})\right)
			\widehat{g}(y_{N})\,dy_{N} \notag \\
		&+\int_{0}^{\infty}\mathcal{M}(x_{N}+y_{N})\widehat{D_{N}g}(y_{N})\,dy_{N},
\end{align}
where $\widehat{g}(y_N)=\widehat{g}(\xi^\prime,y_N)$, and 
 using the identity: $1=A^{2}/A^{2}=-\sum_{k=1}^{N-1}(i\xi_{k})^{2}/A^{2}$, 
we write  $v_{\infty}^{d}$, $\pi_{\infty}^{d}$, and $h_{A,\infty}^{d}$
as follows: For $j=1,\dots,N-1$,
%
%
%
%
%
%%%%%%%%%%%%%%%%%%%%%%%%%%%%%%%%%%%%%%%%%%%%%%%%%%%%%%%%%%%%%%%%%%%%%%
%%%%%%%%%%%%%%%%%%%%%%%%%%%%%%%%%%%%%%%%%%%%%%%%%%%%%%%%%%%%%%%%%%%%%%
\begin{align}\label{1011_4}
&v_{j,\infty}^{d}(x,\la)
=-\int_{0}^{\infty}\CF_{\xi^{\prime}}^{-1}\left[\varphi_{\infty}
\frac{i\xi_{j}(c_{g}+c_{\sigma}A^{2})}{A^{2}L(A,B)}
Ae^{-B(x_{N}+y_{N})}\widehat{\Delta^{\prime}d}(y_{N})\right](x^{\prime})\,dy_{N} \notag \displaybreak[0]\\
&+\sum_{k=1}^{N-1}\int_{0}^{\infty}\mathcal{F}_{\xi^{\prime}}^{-1}\left[\varphi_{\infty}
\frac{\xi_{j}\xi_{k}(B-A)(c_{g}+c_{\sigma}A^{2})}{A^{3}(B+A)L(A,B)}
Ae^{-B(x_{N}+y_{N})}\widehat{D_{k}D_{N}d}(y_{N})\right](x^{\prime})\,dy_{N}\notag\displaybreak[0]\\
&-\int_{0}^{\infty}\mathcal{F}_{\xi^{\prime}}^{-1}\left[\varphi_{\infty}
\frac{i\xi_{j}(B^{2}+A^{2})(c_{g}+c_{\sigma}A^{2})}{A^{3}(B+A)L(A,B)}
A^{2}\CM(x_{N}+y_{N})\widehat{\Delta^{\prime}d}(y_{N})\right](x^{\prime})\,dy_{N}\notag\displaybreak[0]\\
&-\sum_{k=1}^{N-1}\int_{0}^{\infty}\mathcal{F}_{\xi^{\prime}}^{-1}\left[\varphi_{\infty}
\frac{\xi_{j}\xi_{k}(B^{2}+A^{2})(c_{g}+c_{\sigma}A^{2})}{A^{4}(B+A)L(A,B)}
A^{2}\CM(x_{N}+y_{N})\widehat{D_{k}D_{N}d}(y_{N})\right](x^{\prime})\,dy_{N},\notag\displaybreak[0]\\
%
%=&-\sum_{k=1}^{N-1}\int_{0}^{\infty}\CF_{\xi^{\prime}}^{-1}\left[\varphi_{\infty}(\xi^{\prime})
%\frac{\xi_{j}\xi_k(c_{g}+c_{\sigma}A^{2})}{A^{2}L(A,B)}
%Ae^{-B(x_{N}+y_{N})}\widehat{D_kd}(\xi^{\prime},y_{N})\right](x^{\prime})\,dy_{N}\notag\displaybreak[0]\\
%&+\sum_{k=1}^{N-1}\int_{0}^{\infty}\mathcal{F}_{\xi^{\prime}}^{-1}\left[\varphi_{\infty}(\xi^{\prime})
%\frac{i\xi_{j}\xi_{k}^2(B-A)(c_{g}+c_{\sigma}A^{2})}{A^{3}(B+A)L(A,B)}
%Ae^{-B(x_{N}+y_{N})}\widehat{D_{N}d}(\xi^{\prime},y_{N})\right](x^{\prime})\,dy_{N}\notag\displaybreak[0]\\
%&-\sum_{k=1}^{N-1}\int_{0}^{\infty}\mathcal{F}_{\xi^{\prime}}^{-1}\left[\varphi_{\infty}(\xi^{\prime})
%\frac{\xi_{j}\xi_k(B^{2}+A^{2})(c_{g}+c_{\sigma}A^{2})}{A^{3}(B+A)L(A,B)}
%A^{2}\CM(x_{N}+y_{N})\widehat{D_kd}(\xi^{\prime},y_{N})\right](x^{\prime})\,dy_{N}\notag\displaybreak[0]\\
%&-\sum_{k=1}^{N-1}\int_{0}^{\infty}\mathcal{F}_{\xi^{\prime}}^{-1}\left[\varphi_{\infty}(\xi^{\prime})
%\frac{i\xi_{j}\xi_{k}^2(B^{2}+A^{2})(c_{g}+c_{\sigma}A^{2})}{A^{4}(B+A)L(A,B)}
%A^{2}\CM(x_{N}+y_{N})\widehat{D_{N}d}(\xi^{\prime},y_{N})\right](x^{\prime})\,dy_{N},\notag\displaybreak[0]\\
%
&v_{N,\infty}^{d}(x,\la)
=-\int_{0}^{\infty}\mathcal{F}_{\xi^{\prime}}^{-1}\left[\varphi_{\infty}
\frac{(B-A)(c_{g}+c_{\sigma}A^{2})}{A(B+A)L(A,B)}Ae^{-B(x_{N}+y_{N})}
\widehat{\Delta^{\prime}d}(y_{N})\right](x^{\prime})\,dy_{N}\notag\displaybreak[0]\\
&+\sum_{k=1}^{N-1}\int_{0}^{\infty}\mathcal{F}_{\xi^{\prime}}^{-1}\left[\varphi_{\infty}
\frac{i\xi_{k}(c_{g}+c_{\sigma}A^{2})}{A^{2}L(A,B)}Ae^{-B(x_{N}+y_{N})}
\widehat{D_{k}D_{N}d}(y_{N})\right](x^{\prime})\,dy_{N}\notag\displaybreak[0]\\
&+\int_{0}^{\infty}\mathcal{F}_{\xi^{\prime}}^{-1}\left[\varphi_{\infty}
\frac{(B^{2}+A^{2})(c_{g}+c_{\sigma}A^{2})}{A^{2}(B+A)L(A,B)}A^{2}\CM(x_{N}+y_{N})
\widehat{\Delta^{\prime}d}(y_{N})\right](x^{\prime})\,dy_{N}\notag\displaybreak[0]\\
&-\sum_{k=1}^{N-1}\int_{0}^{\infty}\mathcal{F}_{\xi^{\prime}}^{-1}\left[\varphi_{\infty}
\frac{i\xi_{k}(B^{2}+A^{2})(c_{g}+c_{\sigma}A^{2})}{A^{3}(B+A)L(A,B)}A^{2}\CM(x_{N}+y_{N})
\widehat{D_{k}D_{N}d}(y_{N})\right](x^{\prime})\,dy_{N},\notag\displaybreak[0]\\
%
%=&\sum_{k=1}^{N-1}\int_{0}^{\infty}\mathcal{F}_{\xi^{\prime}}^{-1}\left[\varphi_{\infty}(\xi^{\prime})
%\frac{i\xi_k(B-A)(c_{g}+c_{\sigma}A^{2})}{A(B+A)L(A,B)}Ae^{-B(x_{N}+y_{N})}
%\widehat{D_kd}(\xi^{\prime},y_{N})\right](x^{\prime})\,dy_{N}\notag\displaybreak[0]\\
%&-\sum_{k=1}^{N-1}\int_{0}^{\infty}\mathcal{F}_{\xi^{\prime}}^{-1}\left[\varphi_{\infty}(\xi^{\prime})
%\frac{\xi_{k}^2(c_{g}+c_{\sigma}A^{2})}{A^{2}L(A,B)}Ae^{-B(x_{N}+y_{N})}
%\widehat{D_{N}d}(\xi^{\prime},y_{N})\right](x^{\prime})\,dy_{N}\notag\displaybreak[0]\\
%&-\sum_{k=1}^{N-1}\int_{0}^{\infty}\mathcal{F}_{\xi^{\prime}}^{-1}\left[\varphi_{\infty}(\xi^{\prime})
%\frac{i\xi_k(B^{2}+A^{2})(c_{g}+c_{\sigma}A^{2})}{A^{2}(B+A)L(A,B)}A^{2}\CM(x_{N}+y_{N})
%\widehat{D_kd}(\xi^{\prime},y_{N})\right](x^{\prime})\,dy_{N}\notag\displaybreak[0]\\
%&+\sum_{k=1}^{N-1}\int_{0}^{\infty}\mathcal{F}_{\xi^{\prime}}^{-1}\left[\varphi_{\infty}(\xi^{\prime})
%\frac{\xi_{k}^2(B^{2}+A^{2})(c_{g}+c_{\sigma}A^{2})}{A^{3}(B+A)L(A,B)}A^{2}\CM(x_{N}+y_{N})
%\widehat{D_{N}d}(\xi^{\prime},y_{N})\right](x^{\prime})\,dy_{N},\notag\displaybreak[0]\\
%
&\pi_{\infty}^{d}(x,\la)
=-\int_{0}^{\infty}\mathcal{F}_{\xi^{\prime}}^{-1}\left[\varphi_{\infty}
\frac{(B^{2}+A^{2})(c_{g}+c_{\sigma}A^{2})}{A^{2}L(A,B)}Ae^{-A(x_{N}+y_{N})}
\widehat{\Delta^{\prime}d}(y_{N})\right](x^{\prime})\,dy_{N}\displaybreak[0]\notag\\
&+\sum_{k=1}^{N-1}\int_{0}^{\infty}\mathcal{F}_{\xi^{\prime}}^{-1}\left[\varphi_{\infty}
\frac{i\xi_{k}(B^{2}+A^{2})(c_{g}+c_{\sigma}A^{2})}{A^{3}L(A,B)}Ae^{-A(x_{N}+y_{N})}
\widehat{D_{k}D_{N}d}(y_{N})\right](x^{\prime})\,dy_{N},\notag\displaybreak[0]\\
%
%=&-\int_{0}^{\infty}\mathcal{F}_{\xi^{\prime}}^{-1}\left[\varphi_{\infty}(\xi^{\prime})
%\frac{(B^{2}+A^{2})(c_{g}+c_{\sigma}A^{2})}{A^{2}L(A,B)}Ae^{-A(x_{N}+y_{N})}
%\widehat{\Delta^{\prime}d}(\xi^{\prime},y_{N})\right](x^{\prime})\,dy_{N}\displaybreak[0]\notag\\
%&+\sum_{k=1}^{N-1}\int_{0}^{\infty}\mathcal{F}_{\xi^{\prime}}^{-1}\left[\varphi_{\infty}(\xi^{\prime})
%\frac{i\xi_{k}(B^{2}+A^{2})(c_{g}+c_{\sigma}A^{2})}{A^{3}L(A,B)}Ae^{-A(x_{N}+y_{N})}
%\widehat{D_{k}D_{N}d}(\xi^{\prime},y_{N})\right](x^{\prime})\,dy_{N},\notag\displaybreak[0]\\
%
&h_{A,\infty}^{d}(x,\la)
=-\int_{0}^{\infty}\mathcal{F}_{\xi^{\prime}}^{-1}\left[\varphi_{\infty}
\frac{D(A,B)}{A^{2}(B+A)L(A,B)}Ae^{-A(x_{N}+y_{N})}\widehat{\Delta^{\prime}d}(y_{N})
\right](x^{\prime})\,dy_{N}\notag\displaybreak[0]\\
&+\sum_{k=1}^{N-1}\int_{0}^{\infty}\mathcal{F}_{\xi^{\prime}}^{-1}\left[\varphi_{\infty}
\frac{i\xi_{k}D(A,B)}{A^{3}(B+A)L(A,B)}Ae^{-A(x_{N}+y_{N})}\widehat{D_{k}D_{N}d}(y_{N})\right](x^{\prime})\,dy_{N}.%\notag\displaybreak[0]\\
%
%=&\sum_{k=1}^{N-1}\int_{0}^{\infty}\mathcal{F}_{\xi^{\prime}}^{-1}\left[\varphi_{\infty}(\xi^{\prime})
%\frac{i\xi_kD(A,B)}{A^{2}(B+A)L(A,B)}Ae^{-A(x_{N}+y_{N})}\widehat{D_kd}(\xi^{\prime},y_{N})
%\right](x^{\prime})\,dy_{N}\notag\displaybreak[0]\\
%&-\sum_{k=1}^{N-1}\int_{0}^{\infty}\mathcal{F}_{\xi^{\prime}}^{-1}\left[\varphi_{\infty}(\xi^{\prime})
%\frac{\xi_{k}^2D(A,B)}{A^{3}(B+A)L(A,B)}Ae^{-A(x_{N}+y_{N})}\widehat{D_{N}d}(\xi^{\prime},y_{N})\right](x^{\prime})\,dy_{N}.
\end{align}
\begin{rema}
We extend $d\in W_{p}^{2-1/p}(\tws)$ to a function $\widetilde{d}$,
which is defined on $\uhs$ and satisfies 
$\|\widetilde{d}\|_{W_{p}^{2}(\uhs)}\leq C\|d\|_{W_{p}^{2-1/p}(\tws)}$
for a positive constant $C$ independent of $d$ and $\wit{d}$.
For simplicity, such a $\widetilde{d}$ is denoted by $d$ again in the present section.
\end{rema}
To estimates all the terms given in (\ref{1011_3}) and (\ref{1011_4}), we introduce the following operators:
\begin{align}\label{K}
[K_{1}(\lambda)f](x)&=\int_{0}^{\infty}\CF_{\xi^{\prime}}^{-1}\left[\varphi_\infty(\xi^\prime)k_1(\xi^{\prime},\lambda)Ae^{-A(x_{N}+y_{N})}
\widehat{f}(\xi^{\prime},y_{N})\right](x^{\prime})\,dy_{N}, \displaybreak[0] \notag \\
[K_{2}(\lambda)f](x)&=\int_{0}^{\infty}\CF_{\xi^{\prime}}^{-1}\left[\varphi_\infty(\xi^\prime)k_2(\xi^{\prime},\lambda)A^{2}e^{-Ax_{N}}\CM(y_{N})
\widehat{f}(\xi^{\prime},y_{N})\right](x^{\prime})\,dy_{N}, \displaybreak[0] \notag \\
[K_{3}(\lambda)f](x)&=\int_{0}^{\infty}\CF_{\xi^{\prime}}^{-1}\left[\varphi_\infty(\xi^\prime)k_3(\xi^{\prime},\lambda)Ae^{-B(x_{N}+y_{N})}
\widehat{f}(\xi^{\prime},y_{N})\right](x^{\prime})\,dy_{N}, \notag \displaybreak[0] \\
[K_{4}(\lambda)f](x)&=\int_{0}^{\infty}\CF_{\xi^{\prime}}^{-1}\left[\varphi_\infty(\xi^\prime)k_4(\xi^{\prime},\lambda)A^{2}e^{-Bx_{N}}\CM(y_{N})
\widehat{f}(\xi^{\prime},y_{N})\right](x^{\prime})\,dy_{N}, \displaybreak[0] \notag \\
[K_{5}(\lambda)f](x)&=\int_{0}^{\infty}\CF_{\xi^{\prime}}^{-1}
\left[\varphi_\infty(\xi^\prime)k_5(\xi^{\prime},\lambda)A|\lambda|^{\frac{1}{2}}e^{-Bx_{N}}\mathcal{M}(y_{N})
\widehat{f}(\xi^{\prime},y_{N})\right](x^{\prime})\,dy_{N}, \displaybreak[0] \notag \\
[K_{6}(\lambda)f](x)&=\int_{0}^{\infty}\CF_{\xi^{\prime}}^{-1}\left[\varphi_\infty(\xi^\prime)k_6(\xi^{\prime},\lambda)A^{2}\CM(x_{N})e^{-By_{N}}
\widehat{f}(\xi^{\prime},y_{N})\right](x^{\prime})\,dy_{N}, \displaybreak[0] \notag \\
[K_{7}(\lambda)f](x)&=\int_{0}^{\infty}\CF_{\xi^{\prime}}^{-1}
\left[\varphi_\infty(\xi^\prime)k_7(\xi^{\prime},\lambda)A|\lambda|^{\frac{1}{2}}\CM(x_{N})e^{-By_{N}}
\widehat{f}(\xi^{\prime},y_{N})\right](x^{\prime})\,dy_{N}, \displaybreak[0] \notag \\
[K_{8}(\lambda)f](x)&=\int_{0}^{\infty}\CF_{\xi^{\prime}}^{-1}\left[\varphi_\infty(\xi^\prime)k_8(\xi^{\prime},\lambda)A^{2}\CM(x_{N}+y_{N})
\widehat{f}(\xi^{\prime},y_{N})\right](x^{\prime})\,dy_{N}, \displaybreak[0] \notag \\
[K_{9}(\lambda)f](x)&=\int_{0}^{\infty}\CF_{\xi^{\prime}}^{-1}
\left[\varphi_\infty(\xi^\prime)k_9(\xi^{\prime},\lambda)A^{3}\mathcal{M}(x_{N})\mathcal{M}(y_{N})
\widehat{f}(\xi^{\prime},y_{N})\right](x^{\prime})\,dy_{N}, \displaybreak[0] \notag \\
[K_{10}(\lambda)f](x)&=\int_{0}^{\infty}\CF_{\xi^{\prime}}^{-1}
\left[\varphi_\infty(\xi^\prime)k_{10}(\xi^{\prime},\lambda)A\lambda \mathcal{M}(x_{N})\mathcal{M}(y_{N})
\widehat{f}(\xi^{\prime},y_{N})\right](x^{\prime})\,dy_{N}.
\end{align}
%\textcolor{red}{where  $\varphi_\infty(\xi^\prime)$ 
%is the function defined in (\ref{A0})}. 
%
%
We know the following proposition (cf. \cite[Lemma 5.4]{S-S:model}).
\begin{prop}\label{prop:S-S}
Let $1<p<\infty$, $0<\ep<\pi/2$, and $f\in L_p(\uhs)$,
and let $\La_\ep$ be a subset of $\Si_\ep$.
Suppose that for every $\al^\pr\in\BN_0^{N-1}$ there exists a positive constant $C=C(\al')$ such that
for any $\la\in\La_\ep$ and $\xi^\pr\in\tws\setminus\{0\}$
\begin{equation*}
|D_{\xi^\pr}^{\al^\pr}\{\ph_\infty(\xi^\pr)k_n(\xi^\pr,\la)\}|\leq CA^{-|\al^\pr|}
\quad(n=1,\dots,10).
\end{equation*}
Then there exists a positive constant $C$ such that for any $\la\in\La_\ep$
\begin{equation*}
\|K_n(\la)f\|_{L_p(\uhs)}\leq C\|f\|_{L_p(\uhs)}\quad(n=1,\dots,10).
\end{equation*}
\end{prop}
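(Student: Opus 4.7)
The plan is to show, for each $n\in\{1,\dots,10\}$, that $K_n(\lambda)\colon L_p(\uhs)\to L_p(\uhs)$ is bounded uniformly in $\lambda\in\La_\ep$. The strategy combines the Mikhlin--H\"ormander Fourier multiplier theorem in the tangential variable $x'\in\tws$ with a direct $L_p$ estimate in the normal variable $x_N$, built from the exponential kernels $e^{-Az_N}$, $e^{-Bz_N}$, and $\CM(z_N)$. Uniformity in $\lambda$ is automatic: the assumed bound on $\ph_\infty(\xi')k_n(\xi',\lambda)$ is $\lambda$-independent, and the estimates from Lemma \ref{lem:symbol} for the remaining factors $A$, $B$, $\CM$ are uniform on $\La_\ep$.

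First I would treat $K_1(\lambda)$. Rewriting
\[
[K_1(\lambda)f](x) = \CF_{\xi'}^{-1}\left[\ph_\infty(\xi')k_1(\xi',\lambda)Ae^{-Ax_N}\int_0^\infty e^{-Ay_N}\wh{f}(\xi',y_N)\,dy_N\right](x'),
\]
note that $\CF_{\xi'}^{-1}[Ae^{-Az_N}](x')$ is, up to a constant, the normal derivative of the Poisson kernel of $\uhs$ at height $z_N$. For each fixed $(x_N,y_N)$ the map $g\mapsto\CF_{\xi'}^{-1}[\ph_\infty k_1\, Ae^{-A(x_N+y_N)}\,\wh{g}]$ is a Fourier multiplier on $L_p(\tws)$ whose Mikhlin constants are controlled by the hypothesis together with Lemma \ref{lem:symbol}(2) via the Leibniz rule. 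Combining this with Minkowski's integral inequality in $y_N$ and the $L_p$-boundedness of the Poisson semigroup in the normal direction yields the desired estimate. The case of $K_3$ is analogous once one observes, via Lemma \ref{lem:symbol}(1) and the fact that $A\geq A_0/3$ on $\mathrm{supp}\,\ph_\infty$, that $\mathrm{Re}\,B\geq cA$ there, so $B$-exponentials behave like $A$-exponentials.

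For $K_2$ and $K_4$--$K_8$, I would invoke the representation \eqref{deriv:M},
\[
\CM(a) = -a\int_0^1 e^{-(B\theta+A(1-\theta))a}\,d\theta,
\]
which expresses $\CM$ as a mixture of pure exponentials. Substituting reduces each of these operators to a $\theta$-average of operators of the form $K_1$ or $K_3$, the $\theta$-integration contributing only a bounded factor. The extra $A$ or $A^2$ present in the prefactor is absorbed by the pointwise bound $|\CM(a)|\leq CA^{-1}e^{-(A/8)a}$ of Lemma \ref{lem:symbol}(2), while the Mikhlin condition on the tangential symbol is preserved by Leibniz and the same lemma.

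The hard part will be $K_9$ and $K_{10}$, where the kernel $\CM(x_N)\CM(y_N)$ couples the $x_N$- and $y_N$-directions and the prefactor $A^3$ or $A\lambda$ is large. After expanding both occurrences of $\CM$ via \eqref{deriv:M}, the integrand factorises into a product of exponentials in $x_N$ and $y_N$, so the operator becomes a composition of two Poisson-type pieces acting on the two half-lines, each $L_p$-bounded by the argument used for $K_1$ and $K_3$. For $K_{10}$ I would rewrite $\lambda=B^2-A^2=(B-A)(B+A)$ and use Lemma \ref{lem:symbol}(1)--(2) to dominate $|A\lambda|\leq CA(|\lambda|^{1/2}+A)^2$; the $A^{-2}$ gain from the two $\CM$-factors then renders the tangential symbol bounded with the correct Mikhlin scaling. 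Fubini plus Minkowski in $(x_N,y_N)$ closes the estimate, and the resulting constant depends only on $p$, $N$, $\ep$, and the $A_0$ of the cut-off.
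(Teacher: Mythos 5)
The paper itself does not prove this proposition: it cites [Lemma 5.4] of Shibata--Shimizu \cite{S-S:model} and re-uses it as a known fact. Your blind reconstruction -- Mikhlin--H\"ormander in the tangential variable combined with a quantitative estimate on the normal integral, using that $\mathrm{Re}\,B\geq cA$ on $\mathrm{supp}\,\varphi_\infty$ to make the $B$-exponentials behave like $A$-exponentials -- is the right shape for that reference. Two places, however, need repair.

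First, ``the $L_p$-boundedness of the Poisson semigroup in the normal direction'' is not the tool that closes the $x_N$-estimate, and an unqualified Minkowski inequality in $y_N$ leads to a divergent integral. The correct step is: fix $(x_N,y_N)$ and apply the multiplier theorem to the symbol $\varphi_\infty k_n$ times the exponential factor, whose Mikhlin constant -- by Leibniz, the hypothesis, Lemma \ref{lem:symbol}(2), and the elementary bound $Ae^{-(A/2)a}\leq Ca^{-1}$ -- is at most $C(x_N+y_N)^{-1}$; this yields $\|[K_n(\lambda)f](\cdot\,,x_N)\|_{L_p(\tws)}\leq C\int_0^\infty(x_N+y_N)^{-1}\|f(\cdot\,,y_N)\|_{L_p(\tws)}\,dy_N$, and one then invokes the $L_p((0,\infty))$-boundedness of the integral operator with kernel $(x_N+y_N)^{-1}$ for $1<p<\infty$. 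That Hardy/Hilbert-type inequality, not a semigroup bound, is the actual workhorse.

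Second, your $K_{10}$ estimate does not close as written. You propose $|A\lambda|\leq CA(|\lambda|^{1/2}+A)^2$ together with the ``$A^{-2}$ gain from the two $\CM$-factors,'' but that gain uses $|\CM(a)|\leq CA^{-1}e^{-(b_\ep/8)Aa}$ twice and leaves a residual factor $A^{-1}(|\lambda|^{1/2}+A)^2$, which is unbounded in $|\lambda|$. Use instead the second $\CM$-estimate of Lemma \ref{lem:symbol}(2), namely $|D_{\xi'}^{\alpha'}\CM(a)|\leq C|\lambda|^{-1/2}A^{-|\alpha'|}e^{-(b_\ep/8)Aa}$, for both occurrences of $\CM$; this gives $|A\lambda\,\CM(x_N)\CM(y_N)|\leq CAe^{-(b_\ep/8)A(x_N+y_N)}\leq C(x_N+y_N)^{-1}$, restoring the same scaling as the other kernels. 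The same $|\lambda|^{-1/2}$-bound, rather than the $A^{-1}$ one, is also what is needed for $K_5$ and $K_7$, whose prefactor $A|\lambda|^{1/2}$ carries an uncancelled half-power of $|\lambda|$.
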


%%%%%%%%%%%%%%%%%%%%%%%%%%%%%%%%%%%%%%%%%%%%%%%%%%%%%%%%%%%%%%%%%%%%%%
\subsection{Analysis on $\Gamma_{4}^{\pm}$}\label{subs:high1}
%In this subsection, we shall show the following theorem.
%
%
%\begin{theo}\label{thm:Gam4}
%Let $1<p<\infty$ and $F=(f,d)\in X_p$ in (\ref{X_q}).
%Then, there exist a positive number $\delta_{\infty}$ and a positive constant $C$ such that for any $t>0$ and $g\in\{f,d\}$ there holds
%
%
%\begin{equation*}
%\|(\partial_{t}S_{\infty}^{g,4}(t)F,\nabla\Pi_{\infty}^{g,4}(t)F)\|_{L_{p}(\mathbf{R}_{+}^{N})}
%+\|(S_{\infty}^{g,4}(t)F,\partial_{t}\widetilde{T}_{\infty}^{f,4}(t)F,\nabla\widetilde{T}_{\infty}^{g,4}(t)F)\|_{W_{p}^{2}(\mathbf{R}_{+}^{N})}
%\leq  Ce^{-\delta_{\infty}t}\|F\|_{X_{p}}.
%\end{equation*}
%\end{theo}
%
%
We first show the following lemma concerning estimates of the symbols defined in (\ref{0921_1})
%Theorem \ref{thm:Gam4}.
%
%
\begin{lemm}\label{lem1:Gam4}
\begin{enumerate}[$(1)$]
\item
There exists a positive constant $A_{\infty}$ such that for any $A\geq A_{\infty}$ and $\lambda\in\Gamma_{4}^{\pm}$
\begin{equation*}
2^{-1}A\leq{\rm Re}B\leq|B|\leq 2A,\enskip
|D(A,B)|\geq A^{3},\enskip
|L(A,B)|\geq (c_{\sigma}/16)(8^{-1}A)^{3}.
\end{equation*}
\item
There exist  positive constants $C_1, C_2$, and $C$
such that for any $A\in[A_{0}/6,2A_{\infty}]$ and $\lambda\in\Gamma_{4}^{\pm}$,
\begin{equation*}
C_1A\leq {\rm Re}B\leq|B|\leq C_2A,\enskip
|D(A,B)|\geq CA^{3},\enskip
|L(A,B)|\geq CA^{3}.
\end{equation*}
where $A_{\infty}$ and $A_{0}$ are the same constants as in \thetag1 and in $(\ref{A0})$, respectively.
\item
Let $\alpha^{\prime}\in\mathbf{N}_{0}^{N-1}$, $s\in\mathbf{R}$, and $a>0$.
Then there exist constants  $C> 0$ and $b_{\infty}\geq1$, independent of $a$, such that
for any $\lambda\in \Gamma_{4}^{\pm}$ and $A\geq A_0/6$ with $A_0$ defined as in \eqref{A0}
\begin{equation*}
\begin{aligned}
&|D_{\xi^{\prime}}^{\alpha^{\prime}}B^{s}|\leq CA^{s-|\alpha^{\prime}|},
\quad|D_{\xi^{\prime}}^{\alpha^{\prime}}D(A,B)^{s}|\leq CA^{3s-|\alpha^{\prime}|},
\quad|D_{\xi^{\prime}}^{\alpha^{\prime}}e^{-Ba}|\leq CA^{-|\alpha^{\prime}|}e^{-b_{\infty}^{-1}Aa}, \\
&|D_{\xi^{\prime}}^{\alpha^{\prime}}L(A,B)^{-1}|\leq CA^{-3-|\alpha^{\prime}|},
\quad|D_{\xi^{\prime}}^{\alpha^{\prime}}\CM(a)|\leq CA^{-1-|\alpha^{\prime}|}e^{-b_{\infty}^{-1}Aa}.
\end{aligned}
\end{equation*}
\end{enumerate}
\end{lemm}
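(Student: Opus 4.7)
\medskip

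\noindent\textbf{Proof proposal.} The three parts are essentially separate tasks, linked only by the choice of the threshold $A_\infty$ and the cut-off constant $\gamma_\infty$.

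For part (1), I would start from the identity $B^2 = \lambda + A^2$ with $\lambda \in \Gamma_4^\pm$ bounded (since $|\lambda| \leq \gamma_\infty + \widetilde{\gamma}_\infty$ there). Writing $B = A\sqrt{1 + \lambda/A^2}$ and choosing $A_\infty$ so large that $|\lambda|/A^2 \leq 1/4$ for $A \geq A_\infty$, a direct binomial estimate gives $\tfrac{1}{2}A \leq \operatorname{Re} B \leq |B| \leq 2A$. Substituting these bounds into $D(A,B) = B^3 + AB^2 + 3A^2 B - A^3$ and using $B - A = \lambda/(B+A) = O(1/A)$, one finds $D(A,B) = 4A^3 + O(A^2)$, hence $|D(A,B)| \geq A^3$ after increasing $A_\infty$. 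The key observation for $L(A,B) = (B-A)D(A,B) + A(c_g + c_\sigma A^2)$ is that the first term is $O(A^2)$ while the second is comparable to $c_\sigma A^3$; choosing $A_\infty$ large enough that the first term is absorbed into a small fraction of the second yields the claimed lower bound $(c_\sigma/16)(A/8)^3$.

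For part (2), I would use a pure compactness argument. The set $K = \bigl(\Gamma_4^+ \cup \Gamma_4^-\bigr) \times [A_0/6, 2A_\infty]$ is compact, and by the choice $\gamma_\infty \leq (A_0/6)^2/4$ we have $\operatorname{Re}(\lambda + A^2) \geq A^2 - \gamma_\infty \geq \tfrac{3}{4}(A_0/6)^2 > 0$ on $K$. Thus $B$ is continuous and nonvanishing on $K$, giving the two-sided bounds on $|B|$ and $\operatorname{Re} B$ by compactness, with constants $C_1, C_2$ depending on $A_0, A_\infty$. For $D(A,B)$ I would invoke the nonvanishing provided by the Stokes resolvent structure (cf.\ Lemma \ref{lem:1} and Lemma \ref{lem:symbol}(2)), which extends continuously to $\Gamma_4^\pm$ since $\operatorname{Re} B > 0$ there. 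For $L(A,B)$ I would combine the choice $\gamma_\infty \leq \lambda_\infty$, which places $\Gamma_4^\pm$ strictly to the right of the zero set $L_0$, with the nonvanishing of $L(A,B)$ established in Lemma \ref{lem:anal} for the ``pre-$L_0$'' range; continuity on the compact $K$ then yields the uniform lower bound $CA^3$.

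Part (3) is essentially a Leibniz/chain-rule exercise, mirroring Lemma \ref{lem:symbol}(2)--(3). The bounds for $D_{\xi'}^{\alpha'} B^s$, $D_{\xi'}^{\alpha'} D(A,B)^s$, and $D_{\xi'}^{\alpha'} L(A,B)^{-1}$ follow by combining the lower bounds from parts (1)--(2) with the estimate $|D_{\xi'}^{\alpha'} A^s| \leq C A^{s-|\alpha'|}$ from Lemma \ref{lem:symbol}(2) and the observation that $B \sim A$ forces $D_{\xi'}^{\alpha'} B = O(A^{1-|\alpha'|})$ on $\Gamma_4^\pm$ for $A \geq A_0/6$. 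The exponential bound $|D_{\xi'}^{\alpha'} e^{-Ba}| \leq C A^{-|\alpha'|} e^{-b_\infty^{-1} A a}$ follows since $\operatorname{Re} B \geq b_\infty^{-1} A$ with $b_\infty := \max(2, 1/C_1)$, and $|D_{\xi'}^{\alpha'} \mathcal{M}(a)|$ is handled via the integral representation \eqref{deriv:M}.

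The main obstacle I anticipate is the nonvanishing of $L(A,B)$ on $\Gamma_4^\pm$ for the subrange $A \in [A_0/6, A_0/3)$, where $\varphi_\infty$ vanishes and so $L_0$ as defined does not constrain the location of zeros. One either has to enlarge $L_0$ at the outset to cover all $A \geq A_0/6$ (and verify the corresponding $\lambda_\infty$ still exists using the asymptotics \eqref{expan:low2}, \eqref{expan:high2}), or show directly via Lemma \ref{lem:anal} together with the expansions near the boundary of $\Sigma_{\varepsilon_\infty}$ that no zero of $L$ touches $\Gamma_4^\pm$ for such $A$. Once this nonvanishing is secured, compactness does the rest routinely.
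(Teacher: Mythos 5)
Your part (1) is sound, and your argument for $L(A,B)$ is in fact a genuine (and arguably cleaner) alternative to the paper's: the paper factors $L(A,B)=\prod_{j=1}^4(B-B_j)$ and bounds each factor from below using the root expansion \eqref{expan:high}, obtaining $|B-B_1|\geq c_\sigma/16$ and $|B-B_j|\geq A/8$ for $j=2,3,4$, whereas you observe directly that $(B-A)D(A,B)=O(A^2)$ is dominated by $A(c_g+c_\sigma A^2)\sim c_\sigma A^3$. Your route gives an even stronger constant; either way the claim holds for large $A_\infty$.

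The genuine gap is in part (2), for the lower bound on $D(A,B)$. Your claim that the nonvanishing ``extends continuously to $\Gamma_4^\pm$ since $\operatorname{Re}B>0$ there'' does not work, for two reasons. First, $\Gamma_4^\pm$ is \emph{not} contained in any $\Sigma_\varepsilon$: the endpoint $\lambda=-\gamma_\infty$ (where $u=0$) lies on the negative real axis, and $\Sigma_\varepsilon$ excludes that axis for every $\varepsilon>0$, so Lemma \ref{lem:symbol}\,(2)/(3) simply does not apply there. Second, even on the closure of $\Sigma_\varepsilon$ the nonvanishing of $D(A,B)$ is \emph{not} an automatic consequence of $\operatorname{Re}B>0$: writing $D(A,B)=(A/2)^3 f(2B/A)$ with $f(z)=z^3+2z^2+12z-8$, the polynomial $f$ vanishes at a real $\alpha\in(0,1)$, which corresponds to positive real $B=\alpha A/2$ with $\operatorname{Re}B>0$ and $\lambda=(\alpha^2/4-1)A^2$ a negative real; so what has to be excluded is precisely that $2B/A$ lies in $(0,1)$. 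The paper handles this via Lemma \ref{lem:poly} combined with the explicit computation $\operatorname{Re}(2B/A)\geq\sqrt 3>1$ on $\Gamma_4^\pm$ for $A\geq A_0/6$, which is where the choice $\gamma_\infty\leq\frac14(A_0/6)^2$ earns its keep; your observation that $\operatorname{Re}(\lambda+A^2)\geq\frac34(A_0/6)^2>0$ goes in the right direction but stops short of showing $\operatorname{Re}(2B/A)>1$, which is the inequality actually needed. Without this step the compactness argument has nothing to feed on, since $D$ may \emph{a priori} vanish somewhere on the compact set $K$.

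On the other hand, your flag at the end concerning $L(A,B)$ on $A\in[A_0/6,A_0/3)$ is a legitimate point: the paper's one-line ``follows clearly from the definition of the integral path'' implicitly relies on $L_0$ being defined over $\operatorname{supp}\varphi_\infty=\{A\geq A_0/3\}$, which does not literally cover the range $[A_0/6,A_0/3)$ asserted in the lemma. The gap is easy to close (for such $A$ the only zeros with $\operatorname{Re}B\geq 0$ are $\lambda_\pm(A)$ with $\operatorname{Re}\lambda_\pm\approx -2A^2\leq -A_0^2/18<-\gamma_\infty$, so $\Gamma_4^\pm$ stays clear of them), but it does need saying, and your instinct to either enlarge $L_0$ or appeal directly to \eqref{expan:low2} is the right fix.

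Part (3) is as you describe: Leibniz and a Bell/Fa\`a di Bruno argument applied to the bounds from (1) and (2), with $b_\infty$ coming from the lower bound on $\operatorname{Re}B$.
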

\begin{proof}
(1) We first consider the estimates of $B$.
For $\lambda\in\Gamma_{4}^{\pm}$, set $\sigma=\lambda+A^{2}=-\gamma_{\infty}+A^{2}\pm iu$ $(u\in[0,\widetilde{\gamma}_\infty])$
and $\theta=\arg{\sigma}$. Then we have
\begin{equation*}
{\rm Re}B=|\sigma|^{\frac{1}{2}}\cos{\frac{\theta}{2}}=\frac{|\sigma|^{\frac{1}{2}}}{\sqrt{2}}(1+\cos{\theta})^{\frac{1}{2}}
=\frac{1}{\sqrt{2}}(|\sigma|+A^{2}-\gamma_{\infty})^{\frac{1}{2}},
\end{equation*}
so that for any $A\geq A_{\infty}$
\begin{equation*}
{\rm Re}B\geq\frac{1}{\sqrt{2}}\left(2A^{2}-2\gamma_{\infty}-\widetilde{\gamma}_{\infty}\right)^{\frac{1}{2}}\geq\frac{A}{\sqrt{2}},
\end{equation*}
provided that $A_{\infty}$ satisifes
 $A_{\infty}^2\geq2\gamma_{\infty}+\widetilde{\gamma}_{\infty}$.
On the other hand, it is clear that $|B|\leq 2A$.

Next, we show the inequality for $D(A,B)$. Since
\begin{equation*}
D(A,B)=B(B^{2}+3A^{2})+A(B^{2}-A^{2})=B(\lambda+4A^{2})+\lambda A=4A^{2}B+(B+A)(-\gamma_{\infty}\pm iu),
\end{equation*}
we see, by the inequality of $B$ obtained above, that 
\begin{align*}
|D(A,B)|&\geq 4A^{2}|B|-|B+A||-\gamma_{\infty}\pm iu|\geq 4A^{2}({\rm Re}B)-(|B|+A)(\gamma_{\infty}+\widetilde{\gamma}_{\infty}) \\
&\geq 2A^{3}-3(\gamma_{\infty}+\widetilde{\gamma}_{\infty})A\geq A^{3}
\end{align*}
for any $A\geq A_{\infty}$,
provided that $A_{\infty}$ satisfies $A_{\infty}^{2}\geq 3(\gamma_{\infty}+\widetilde{\gamma}_{\infty})$.

Finally, we show the inequality for $L(A,B)$. Since
\begin{equation*}
B_{1}^{2}-B^{2}=-\frac{c_{\sigma}}{2}A-\frac{3}{16}c_{\sigma}^{2}-(-\gamma_{\infty}\pm iu)+O(\frac{1}{A})
\quad\text{as $A\to\infty$},
\end{equation*}
there exist positive constants $A_\infty$ 
and $C$ such that for any $A\geq A_\infty$ and $\lambda\in\Gamma_4^{\pm}$
we have 
$|B_{1}^{2}-B^{2}|\geq(c_{\sigma}/4)A$, which, combined 
with the inequality of $B$ obtained above 
and (\ref{expan:high}), furnishes that
\begin{equation*}%\label{0924_2}
|B_{1}-B|\geq\frac{|B_{1}^{2}-B^{2}|}{|B_{1}+B|}\geq\frac{(c_{\sigma}/4)A}{4A}
\geq\frac{c_{\sigma}}{16}\quad(A\geq A_\infty\ {\rm and}\ \lambda\in\Gamma_4^\pm).
\end{equation*}
On the other hand, we have
\begin{equation*}
B_{2}^{2}-B^{2}=-(1-a_{2}^{2})A^{2}+O(A)
\quad\text{as $A\to\infty$},
\end{equation*}
so that there exists a positive number $A_\infty$ such that for any $A\geq A_\infty$ and $\lambda\in\Gamma_4^{\pm}$ we have
 $|B_{2}^{2}-B^{2}|\geq (A^{2}/2)$, from which it follows that
\begin{equation*}%\label{0926_1}
|B_{2}-B|=\frac{|B_{2}^{2}-B^{2}|}{|B_{2}+B|}\geq\frac{(A^{2}/2)}{4A}=\frac{A}{8}.
\end{equation*}
Since $|B-B_{2}|\leq|B-B_{j}|$ $(j=3,4)$, we thus obtain
\begin{equation*}
|L(A,B)|\geq (c_{\sigma}/16)(8^{-1}A)^{3}\quad(A\geq A_\infty\ {\rm and}\ \lambda\in\Gamma_4^\pm).
\end{equation*}
(2)	It is sufficient to show the existence of
 positive constants $C_1$, $C_2$, and $C$ 
such that for any $A\in[A_0/6,2A_\infty]$ and $\lambda\in \Gamma_{4}^{\pm}$
\begin{equation*}
C_1\leq{\rm Re}B\leq|B|\leq C_2,\enskip
|D(A,B)|\geq C,\enskip
|L(A,B)|\geq C.
\end{equation*}
It is obvious that the inequalities for $B$ holds, so that 
we here consider $D(A,B)$ and $L(A,B)$ only.

First, we show the inequality for $D(A,B)$. Set
\begin{equation*}%\label{0926_2}
\widetilde{A}=\frac{A}{2},\quad
\widetilde{\lambda}=-\gamma_{\infty}+3\widetilde{A}^{2}\pm iu\quad
\text{for $u\in[0,\widetilde{\gamma}_\infty]$},
\end{equation*}
and note that $B=(\widetilde{\lambda}+\widetilde{A}^{2})^{1/2}$.
%and ${\rm Re}B>0$ because
%
%
%\begin{equation*}
%{\rm Re}\,\it{\la}=-\gamma_{\infty}+3\widetilde{A}^{2}\geq-(1/4)\times(A_{0}/6)^{2}+(3/4)\times(A_{0}/6)^{2}>0.
%\end{equation*}
%
%
%There holds for $\lambda\in\Gamma_{4}^{\pm}$
We then see that 
\begin{equation*}
\{B/\widetilde{A}\in\mathbf{C}\ |\ \lambda\in\Gamma_{4}^{\pm}\ {\rm and}\ A\in[A_{0}/6,2A_{\infty}]\}
\subset\{z\in\mathbf{C}\ |\ 1\leq {\rm Re}z\}.
\end{equation*}
In fact, setting $\sigma=1-(\gamma_{\infty}/A^2)\pm i(u/A^2)$ and $\theta=\arg{\sigma}$, we have
\begin{align*}
{\rm Re}\frac{B}{\widetilde{A}}&=2|\sigma|^{1/2}\cos{\frac{\theta}{2}}=2|\sigma|^{1/2}
\left(\frac{1+\cos\theta}{2}\right)^{1/2}=\sqrt{2}(|\sigma|+{\rm Re}\sigma)^{1/2}\geq2({\rm Re}\sigma)^{1/2} \\
&=2\left(1-\frac{\gamma_{\infty}}{A^2}\right)^{1/2}\geq 2\left(1-\frac{4^{-1}\times(A_{0}/6)^{2}}{(A_{0}/6)^2}\right)^{1/2}
=\sqrt{3},
%=2\left(1-\frac{A_{0}}{24}\right)^{1/2}\geq1,
\end{align*}
which, combined  with Lemma \ref{lem:poly} and
the formula:
\begin{align*}
D(A,B)=B^{3}+2\widetilde{A}B^{2}+12\widetilde{A}^{2}B-8\widetilde{A}^{3}=\widetilde{A}^{3}
\left\{\left(\frac{B}{\widetilde{A}}\right)^{3}+2\left(\frac{B}{\widetilde{A}}\right)^{2}
+12\left(\frac{B}{\widetilde{A}}\right)-8\right\},
\end{align*}
furnishes the existence of a positive constant $C$ such that
for any $A\in[A_{0}/6,2A_{\infty}]$ and $\lambda\in\Gamma_{4}^{\pm}$ 
we have  $|D(A,B)|\geq C$. The inequality for $L(A,B)$ follows clearly 
from the definition of the integral path $\Gamma_{4}^{\pm}$.\\
(3)
We see, by Lemma \ref{lem1:Gam4} (1) and (2), that  there exist positive constants $C_1, C_2$, and $C$ such that 
for any $\lambda\in\Gamma_{4}^{\pm}$ and $A\geq A_0/6$
\begin{equation}\label{140923_15}
C_1A\leq{\rm Re}B\leq|B|\leq C_2A,\enskip
|D(A,B)|\geq CA^{3},\enskip
|L(A,B)|\geq CA^{3}.
\end{equation}
We thus obtain the required inequalities by using Leibniz's rule and Bell's formula, and noting
\begin{align*}
|D_{\xi^{\prime}}^{\alpha^{\prime}}D(A,B)|
&=|D_{\xi^{\prime}}^{\alpha^{\prime}}(B^{3}+AB^{2}+3A^{2}B-A^{3})|\leq CA^{3}, 
\\
|D_{\xi^{\prime}}^{\alpha^{\prime}}L(A,B)|
&=\left|D_{\xi^{\prime}}^{\alpha^{\prime}}
\left(\frac{\lambda}{B+A}D(A,B)+A(c_{g}
+c_{\sigma}A^{2})\right)\right|\leq CA^{3}
\end{align*}
for any $\alpha^{\prime}\in\mathbf{N}_{0}^{N-1}$, $\lambda\in\Gamma_{4}^{\pm}$, and $A\geq A_0/6$ by \eqref{140923_15}
(cf. \cite[Lemma 5.2, Lemma 5.3, Lemma 7.2]{S-S:model}).
\end{proof}
Now, we have a multiplier theorems on $\Gamma_{4}^{\pm}$.
\begin{lemm}\label{lem2:Gam4}
Let $1<p<\infty$, $n=1,\dots,10$, and $f\in L_{p}(\mathbf{R}_{+}^{N})$.
We use the symbols defined in $(\ref{K})$ and assume that
for any $\alpha^{\prime}\in\mathbf{N}_{0}^{N-1}$ there exists a positive constant $C=C(\alpha^{\prime})$ such that
%
%
%\begin{equation*}
$|D_{\xi^{\prime}}^{\alpha^{\prime}}k_n(\xi^{\prime},\lambda)|\leq CA^{-|\alpha^{\prime}|}$
%\end{equation*}
%
%
for any $\lambda\in\Gamma_4^\pm$ and $A\geq A_0/6$ with $A_0$ defined as in \eqref{A0}.
Then there exists a positive constant  $C$ such that for any $\lambda\in\Gamma_4^\pm$
\begin{equation*}
\|K_{n}(\lambda)f\|_{L_{p}(\mathbf{R}_{+}^{N})}\leq C\|f\|_{L_{p}(\mathbf{R}_{+}^{N})}
\quad(n=1,\dots,10).
\end{equation*}
\end{lemm}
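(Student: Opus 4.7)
The strategy is to reduce Lemma \ref{lem2:Gam4} directly to Proposition \ref{prop:S-S} by verifying that the hypotheses of that proposition are in force on the integration path $\Gamma_4^\pm$. Although $\Gamma_4^\pm$ is not contained in any sector $\Sigma_\varepsilon$ (it crosses the negative real axis), the proof of Proposition \ref{prop:S-S} in \cite{S-S:model} only relies on the symbol estimates of the building blocks $A$, $B$, $L(A,B)^{-1}$, $e^{-Ba}$, and $\mathcal{M}(a)$, and Lemma \ref{lem1:Gam4} (3) shows that \emph{exactly} those symbol estimates hold on $\Gamma_4^\pm$ (with the constant $b_\infty^{-1}$ taking the place of $b_\varepsilon/8$). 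So the argument is essentially a book-keeping reduction.

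The concrete plan is the following. First, combine the assumption of the lemma with Lemma \ref{lem1:Gam4} (3) via Leibniz's rule to conclude that for every $\alpha'\in\mathbf{N}_0^{N-1}$ there exists a constant $C=C(\alpha')$ so that
\begin{equation*}
|D_{\xi'}^{\alpha'}\{\varphi_\infty(\xi')k_n(\xi',\lambda)\}|\leq CA^{-|\alpha'|}
\end{equation*}
uniformly for $\lambda\in\Gamma_4^\pm$ and $\xi'\in\mathbf{R}^{N-1}\setminus\{0\}$; this verifies the symbol hypothesis of Proposition \ref{prop:S-S}. Second, observe that in the operators $K_n(\lambda)$ defined in \eqref{K}, the kernel building blocks $A e^{-A(x_N+y_N)}$, $A^2 e^{-Ax_N}\mathcal{M}(y_N)$, $A e^{-B(x_N+y_N)}$, $A^2\mathcal{M}(x_N)\mathcal{M}(y_N)$, etc., all obey the same exponential decay and polynomial bounds on $\Gamma_4^\pm$ as on $\Sigma_\varepsilon$, thanks again to Lemma \ref{lem1:Gam4} (3). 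Third, apply Proposition \ref{prop:S-S} (whose proof only consumes these symbol-type estimates) to obtain the desired uniform bound $\|K_n(\lambda)f\|_{L_p(\uhs)}\leq C\|f\|_{L_p(\uhs)}$.

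The main obstacle is psychological rather than technical: one must be convinced that the proof of Proposition \ref{prop:S-S} is genuinely insensitive to the location of $\lambda$ and uses only the symbol-class bounds. Once this is recognized, the content of Lemma \ref{lem2:Gam4} is already encoded in Lemma \ref{lem1:Gam4} (3), which is precisely the analog on $\Gamma_4^\pm$ of Lemma \ref{lem:symbol}. If a reader is uncomfortable citing the proposition on a non-sectorial path, an alternative is to re-run the argument: split into the pieces $K_1,\ldots,K_{10}$ and, for each, apply the Fourier multiplier theorem (or, for the $\mathcal{M}$-kernels, use the integral representation $\mathcal{M}(a)=-a\int_0^1 e^{-(B\theta+A(1-\theta))a}\,d\theta$ in \eqref{deriv:M} to reduce to Poisson-type operators), and in each case the uniform symbol estimates furnish $L_p$-boundedness with constants independent of $\lambda\in\Gamma_4^\pm$. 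Either route terminates the proof.
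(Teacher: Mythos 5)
Your proposal matches the paper's proof in substance: the paper disposes of the lemma in one line by running the argument of \cite[Lemma 5.4]{S-S:model} (the source of Proposition \ref{prop:S-S}) with the symbol estimates of Lemma \ref{lem1:Gam4} (3) substituted for the sectorial ones of Lemma \ref{lem:symbol}, which is precisely the reduction you describe, and you also correctly flag that $\Gamma_4^\pm$ is not in a sector so one re-runs rather than literally cites Proposition \ref{prop:S-S}. One small inaccuracy: the Leibniz step for $|D_{\xi'}^{\alpha'}\{\varphi_\infty k_n\}|\leq CA^{-|\alpha'|}$ uses only the lemma's hypothesis on $k_n$ together with the smoothness and support of $\varphi_\infty$ (whose derivatives vanish outside the annulus $A_0/3\leq A\leq 2A_0/3$, where $A$ is bounded away from $0$ and $\infty$), not Lemma \ref{lem1:Gam4} (3); that lemma is instead what controls $B$, $e^{-Ba}$, $\mathcal{M}(a)$, and $L(A,B)^{-1}$ in the kernels, as you note in your second step.
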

\begin{proof}
Employing the similar argumentation to 
the proof of \cite[Lemma 5.4]{S-S:model} and using Lemma \ref{lem1:Gam4},
we can prove the lemma.  
%noting that $|\lambda|^{\frac{1}{2}}\leq CA$ for $\lambda\in \Gamma_{4}^{\pm}$ and on ${\rm supp}\,\varphi_{\infty}$ with a positive constant $C$.
\end{proof}
%
%
%\begin{coro}\label{cor:Gam4}
%Let $1<p<\infty$, and let $f\in L_{p}(\mathbf{R}_{+}^{N})$. We use the symbols defined in (\ref{K}).
%
%
%If for any $\alpha^{\prime}\in\mathbf{N}_{0}^{N-1}$
%there exists a positive constant $C=C(\alpha^{\prime})$ such that for any $\lambda\in\Gamma_{4}^{\pm}$ there holds
%
%
%\begin{equation}\label{1012_4}
%|D_{\xi^{\prime}}^{\alpha^{\prime}}k_n(\xi^{\prime},\lambda)|\leq CA^{-\ell-|\alpha^{\prime}|}\quad(\ell=1,2,3,\ n=1,2),
%\end{equation}
%
%
%then there exists a positive constant $C$ such that for any $\lambda\in\Gamma_{4}^{\pm}$ there holds
%
%
%\begin{equation*}
%\|(\lambda K_{n}(\lambda)f,K_{n}(\lambda)f)\|_{W_{p}^{\ell}(\mathbf{R}_{+}^{N})}\leq C\|f\|_{L_{p}(\mathbf{R}_{+}^{N})}\quad(\ell=1,2,3,\ n=1,2).
%\end{equation*}
%
%
%On the other hand, if for any $\alpha^{\prime}\in\mathbf{N}_{0}^{N-1}$
%there exists a positive constant $C$ such that for any $\lambda\in\Gamma_{4}^{\pm}$ there holds
%
%
%\begin{equation}\label{1011_1}
%|D_{\xi^{\prime}}^{\alpha^{\prime}}k_n(\xi^{\prime},\lambda)|\leq CA^{-2-|\alpha^{\prime}|}\quad(n=3,\dots,10),
%\end{equation}
%
%
%then there exists a positive constant $C$ such that for any $\lambda\in\Gamma_{4}^{\pm}$ there holds
%
%
%\begin{equation*}%\label{0924_6}
%\|\lambda K_{n}(\lambda)f\|_{L_{p}(\mathbf{R}_{+}^{N})}+\|K_{n}(\lambda)f\|_{W_{p}^{2}(\mathbf{R}_{+}^{N})}\leq
%C\|f\|_{L_{p}(\mathbf{R}_{+}^{N})}\quad(n=3,\dots,10).
%\end{equation*}
%\end{coro}
%
%
%\begin{proof}
%Calculate (\ref{K}) directly and use Lemma \ref{lem2:Gam4}.
%\end{proof}
%
%
By (\ref{symbol}), (\ref{1011_3}), (\ref{1011_4}), Lemma \ref{lem1:Gam4}, and Lemma \ref{lem2:Gam4},
we have the following lemma.
\begin{lemm}\label{lem3:Gam4}
Let $1<p<\infty$, $f\in L_p(\uhs)^N$, and $d\in W_p^2(\uhs)$.
Then there exists
a positive constant $C$ such that for any $\la\in\Ga_4^\pm$
\begin{align*}
&\|v_\infty^{f}\|_{W_p^2(\uhs)}+\|\pi_\infty^{f}\|_{W_p^1(\uhs)}
+\|h_{A,\infty}^{f}\|_{W_p^3(\uhs)}
\leq C\|f\|_{L_p(\uhs)}, \\
&\|v_\infty^{d}\|_{W_p^2(\uhs)}+\|\pi_\infty^{d}\|_{W_p^1(\uhs)}+\|h_{A,\infty}^{d}\|_{W_p^3(\uhs)}
\leq C\| d\|_{W_p^2(\uhs)}.
%&\|v_\infty^{d}\|_{W_p^1(\uhs)}+\|h_{A,\infty}^{d}\|_{W_p^2(\uhs)}
%\leq C\|\nabla d\|_{L_p(\uhs)}.
\end{align*}
\end{lemm}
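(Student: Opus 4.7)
The plan is to match each summand appearing in the representations \eqref{1011_3} and \eqref{1011_4} to one of the ten prototypes $K_1(\lambda),\dots,K_{10}(\lambda)$ defined in \eqref{K}, and then to apply the $L_p$ multiplier estimate of Lemma \ref{lem2:Gam4} term-by-term. The decompositions \eqref{1011_3} and \eqref{1011_4} were written precisely so that each integrand factors as one of the ten ``good'' $A$-homogeneous kernels times a symbol $k_n(\xi^\pr,\la)$; thus the problem is reduced to verifying that for every summand the associated $k_n$ obeys
\[
|D_{\xi^\pr}^{\al^\pr}k_n(\xi^\pr,\la)|\leq CA^{-|\al^\pr|}\qquad(\la\in\Ga_4^\pm,\ A\geq A_0/6,\ \al^\pr\in\BN_0^{N-1}).
\]

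The verification rests entirely on Lemma \ref{lem1:Gam4}\,(3), which provides the bounds $|D_{\xi^\pr}^{\al^\pr}B^s|\leq CA^{s-|\al^\pr|}$, $|D_{\xi^\pr}^{\al^\pr}D(A,B)^s|\leq CA^{3s-|\al^\pr|}$, $|D_{\xi^\pr}^{\al^\pr}L(A,B)^{-1}|\leq CA^{-3-|\al^\pr|}$, combined with Leibniz's rule and the explicit formulas \eqref{symbol}. For instance, the representative multiplier $\CV_{jk}^{BB}(\gra+\sur A^2)/(AL(A,B))$ of the first term of $v_{j,\infty}^f$ is of order $A^{-2}$ with its $\al^\pr$-th derivative of order $A^{-2-|\al^\pr|}$; analogous estimates hold for all remaining prefactors in \eqref{1011_3}--\eqref{1011_4}. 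Observe further that on $\Ga_4^\pm$ the parameter $\la$ stays in the bounded set $\{-\ga_\infty\pm iu:u\in[0,\wit\ga_\infty]\}$, so the factors $|\la|^{1/2}$ and $\la$ entering $K_5,K_7,K_{10}$ contribute only bounded constants and do not affect the symbol bounds.

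To upgrade the $L_p$ control into the $W_p^2$-norm of $v_\infty^g$, the $W_p^1$-norm of $\pi_\infty^g$, and the $W_p^3$-norm of $h_{A,\infty}^g$, I differentiate under the inverse partial Fourier transform and the $y_N$-integral. Each tangential derivative $D_{x^\pr}$ contributes $i\xi^\pr$, and each normal derivative acting on $e^{-Ax_N}$, $e^{-Bx_N}$, or $\CM(x_N)$ produces factors $-A$, $-B$, or the combination in \eqref{deriv:M}; in every instance the extra factor has modulus $O(A)$ and its $\xi^\pr$-derivative is controlled as $A^{-|\al^\pr|}$. These additional $A$-powers are absorbed by the negative powers already carried by the multiplier (primarily from $L(A,B)^{-1}\sim A^{-3}$), so that for derivatives of order $\leq 2$ in $v_\infty^g$ and $\pi_\infty^g$, and $\leq 3$ in $h_{A,\infty}^g$, the modified symbols still satisfy the hypothesis of Lemma \ref{lem2:Gam4}. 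The $d$-contribution is handled using \eqref{1011_4}, in which the pre-arranged identities \eqref{vole} together with $1=-\sum_{k=1}^{N-1}(i\xi_k)^2/A^2$ have already traded the raw factor $\wh d(\xi^\pr)$ for $\widehat{\Delta^\pr d}(\xi^\pr,y_N)$ and $\widehat{D_kD_N d}(\xi^\pr,y_N)$; the right-hand side is therefore controlled by $\|\Delta^\pr d\|_{L_p(\uhs)}+\|D^\pr D_N d\|_{L_p(\uhs)}\leq C\|d\|_{W_p^2(\uhs)}$.

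The main obstacle is not conceptual but bookkeeping: one must carry out the symbol-bound verification for each of the numerous summands and keep careful track of the exact powers of $A$ contributed by $\xi_j$, $B-A$, $B^2+A^2$, $B+A$, $D(A,B)$, and $L(A,B)$. The tightest margin occurs in the terms of $h_{A,\infty}^d$ involving the factor $D(A,B)/(A^2(B+A)L(A,B))\sim A^{-3}$, which exactly saturates the budget for the $W_p^3$-estimate; this explains why two additional $A^{-1}$ factors had to be extracted via \eqref{vole} and the identity $1=-\sum(i\xi_k)^2/A^2$ before the multiplier theorem could be applied.
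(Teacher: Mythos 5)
Your proposal is correct and follows essentially the same route as the paper, which proves Lemma~\ref{lem3:Gam4} in one line by citing \eqref{symbol}, \eqref{1011_3}, \eqref{1011_4}, Lemma~\ref{lem1:Gam4}, and Lemma~\ref{lem2:Gam4}: each summand is matched to one of the prototypes $K_1,\dots,K_{10}$, the symbol bounds of Lemma~\ref{lem1:Gam4}\,(3) verify the hypothesis of Lemma~\ref{lem2:Gam4}, and the extra derivatives for the $W_p^2$/$W_p^1$/$W_p^3$ norms are absorbed by the negative $A$-powers in the multipliers. Your remarks about the boundedness of $\la$ on $\Ga_4^\pm$ and about \eqref{vole} trading trace data for interior second derivatives of $d$ correctly spell out the bookkeeping that the paper leaves implicit.
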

Applying Lemma \ref{lem3:Gam4} to the terms in (\ref{high:decomp}), we have 
\begin{align}\label{est:Gam4}
&\|(\pa_t S_\infty^{f,4}(t)F,\nabla\Pi_\infty^{f,4}(t)F)\|_{L_p(\uhs)} \notag\\
&\qquad+\|(S_\infty^{f,4}(t)F,\pa_t\CE({T}_\infty^{f,4}(t)F),\nabla\CE({T}_\infty^{f,4}(t)F))\|_{W_p^2(\uhs)}
\leq Ce^{-\ga_\infty t}\|f\|_{L_p(\uhs)},\displaybreak[0] \notag\\
&\|(\pa_t S_\infty^{d,4}(t)F,\nabla\Pi_\infty^{d,4}(t)F)\|_{L_p(\uhs)} \notag\\
&\qquad+\|(S_\infty^{d,4}(t)F,\pa_t\CE({T}_\infty^{d,4}(t)F),\nabla\CE({T}_\infty^{d,4}(t)F))\|_{W_p^2(\uhs)}
\leq Ce^{-\ga_\infty t}\| d\|_{W_p^2(\uhs)}% \notag \displaybreak[0] \\
%
%&\|(S_\infty^{d,4}(t)F,\nabla\wit{T}_\infty^{d,4}(t)F,\pa_t\wit{T}_\infty^{d,4}(t)F)\|_{L_p(\uhs)}
%\leq Ce^{-\ga_\infty t}\|\nabla d\|_{L_p(\uhs)}.
%&\|(\pa_t S_\infty^{d,4}(t)F,\nabla\Pi_\infty^{d,4}(t)F)\|_{L_p(\uhs)} \notag\\
%&\qquad+\|(S_\infty^{d,4}(t)F,\pa_t\wit{T}_\infty^{d,4}(t)F,\nabla\wit{T}_\infty^{d,4}(t)F)\|_{W_p^2(\uhs)}
%\leq Ce^{-\ga_\infty t}\|\nabla^2 d\|_{L_p(\uhs)}, \notag\\
%&\|\pa_t S_\infty^{d,4}(t)F\|_{L_p(\uhs)}
%+\|S_\infty^{d,4}(t)F,\pa_t\wit{T}_\infty^{d,4}(t)F,\nabla\wit{T}_\infty^{d,4}(t)F\|_{W_p^1(\uhs)}
%\leq Ce^{-\ga_\infty t}\|\nabla d\|_{L_p(\uhs)},
\end{align}
for any $t>0$ with some positive constant $C$.

%We see that by Lemma \ref{lem1:Gam4}, (\ref{1011_3}) and (\ref{1011_4})
%there exists a positive constant $C$ such that for any $\lambda\in\Gamma_4^\pm$ and $g\in\{f,d\}$
%
%
%\begin{equation*}
%\|(\lambda v_\infty^g,\nabla\pi_\infty^g)\|_{L_q(\uhs)}+\|(v_\infty^g,\lambda h_{A,\infty}^g,\nabla h_{A,\infty}^g)\|_{W_p^2(\uhs)}
%\leq C\|F\|_{X_p},
%\end{equation*}
%
%
%where we have used Corollary \ref{cor:Gam4} for $v_\infty^g$
%with (\ref{1011_1}), $\pi_\infty^g$ with (\ref{1012_4}) and $\ell=1$, and $h_{A,\infty}^g$ with (\ref{1012_4}) and $\ell=3$.
%This inequality yields Theorem \ref{thm:Gam4}.

%%%%%%%%%%%%%%%%%%%%%%%%%%%%%%%%%%%%%%%%%%%%%%%%%%%%%%%%%%%%%%%%%%%%%%
\subsection{Analysis on $\Gamma_{5}^{\pm}$}\label{subs:high2}
By Lemma \ref{lem:symbol}, (\ref{symbol}), (\ref{1011_3}), (\ref{1011_4}), and Proposition \ref{prop:S-S}, 
we easily see that the following lemma holds.
\begin{lemm}\label{lem:Gam5}
Let $1<p<\infty$, $f\in L_p(\uhs)^N$, and $d\in W_p^2(\uhs)$.
Then there exists a positive constant $C$ such that for any $\la\in\Ga_5^\pm$
\begin{align*}
&\|(\la^{3/2}v_\infty^f,\la\nabla v_\infty^f,\nabla^2 v_\infty^f,\nabla\pi_\infty^f)\|_{L_p(\uhs)}
\leq C\|f\|_{L_p(\uhs)}, \\
&\|(\la^2 h_{A,\infty}^f,\la^{3/2}\nabla h_{A,\infty}^f,\la\nabla^2 h_{A,\infty}^f,\nabla^3 h_{A,\infty}^f)\|_{L_p(\uhs)}
\leq C\|f\|_{L_p(\uhs)},\\
&%\|(\la v_\infty^d,\nabla v_\infty^d)\|_{L_p(\uhs)}
%\leq C\|\nabla d\|_{L_p(\uhs)},\quad 
\|(\la^{3/2} v_\infty^d,\lambda \nabla v_\infty^d,\nabla^2v_\infty^d,\nabla\pi_\infty^d)\|_{L_p(\uhs)}
\leq C\| d\|_{W_p^2(\uhs)},\\
&\|\la h_{A,\infty}^d\|_{W_p^2(\uhs)}
+\|h_{A,\infty}^d\|_{W_p^3(\uhs)}
\leq C\|d\|_{W_p^2(\uhs)}.
%&\|\la h_{A,\infty}^d\|_{W_q^2(\uhs)}+\|\nabla^3 h_{A,\infty}^d\|_{L_q(\uhs)}
%\leq C\|\nabla^2 d\|_{L_q(\uhs)}
\end{align*}
\end{lemm}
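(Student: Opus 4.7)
The plan is to rewrite each component of $(v_\infty^g,\pi_\infty^g,h_{A,\infty}^g)$ for $g\in\{f,d\}$ from the formulas \eqref{1011_3}--\eqref{1011_4} as a finite sum of the volume-potential operators $K_1(\lambda),\dots,K_{10}(\lambda)$ introduced in \eqref{K}, and then invoke Proposition \ref{prop:S-S}. Since the integral path $\Gamma_5^{\pm}$ lies in $\Sigma_{\varepsilon_\infty,\lambda_\infty}$, Lemma \ref{lem:symbol} applies on $\Gamma_5^{\pm}$. In particular, on $\mathrm{supp}\,\varphi_\infty$ we have $A\geq A_0/3$, so
\[
|D_{\xi'}^{\alpha'}L(A,B)^{-1}|\leq C\bigl\{|\lambda|(|\lambda|^{1/2}+A)^{2}+A(c_{g}+c_\sigma A^{2})\bigr\}^{-1}A^{-|\alpha'|},
\]
together with the standard estimates on $B$, $D(A,B)^{-1}$, $\CM(a)$, $e^{-Ba}$, and on the numerators $\CV_{jk}^{\bullet\bullet}$, $\CP_k^{\bullet\bullet}$ collected in \eqref{symbol}.

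The forward-looking execution is as follows. For the first bound, I would observe that the seven lines in the formula for $v_{j,\infty}^f$ in \eqref{1011_3} are exactly of the form $K_3(\lambda)$, $K_5(\lambda)$, $K_4(\lambda)$, $K_7(\lambda)$, $K_6(\lambda)$, $K_{10}(\lambda)$, $K_9(\lambda)$ applied to $\widehat{f}_k$, with multipliers
\[
k_{n}(\xi',\lambda)=\frac{\CV_{jk}^{\bullet\bullet}(\xi',\lambda)(c_g+c_\sigma A^{2})}{A^{p}\,B^{q}\,L(A,B)}
\]
for appropriate small integer powers $p,q\geq 0$. Multiplying by $\lambda^{3/2}$ and using Lemma \ref{lem:symbol} together with the bound $|\CV^{\bullet\bullet}_{jk}|\leq C(|\lambda|^{1/2}+A)^{\text{deg}}A^{-|\alpha'|+\text{deg}'}$ (worked out by inspection of \eqref{symbol}), one checks in each case that
\[
|D_{\xi'}^{\alpha'}\{\varphi_\infty(\xi')\,\lambda^{3/2}k_n(\xi',\lambda)\}|\leq CA^{-|\alpha'|},
\]
so Proposition \ref{prop:S-S} yields $\|\lambda^{3/2}v_\infty^f\|_{L_p(\uhs)}\leq C\|f\|_{L_p(\uhs)}$. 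The bounds on $\lambda\nabla v_\infty^f$, $\nabla^2 v_\infty^f$ and $\nabla\pi_\infty^f$ are obtained identically: each tangential derivative $D_{x_k}$ contributes $i\xi_k$ (of order $A$), each $D_N$ applied to an exponential $e^{-Bx_N}$ contributes $-B$, and each $D_N$ applied to $\CM(x_N)$ produces $-e^{-Bx_N}-A\CM(x_N)$ via \eqref{deriv:M}; all these modifications preserve the multiplier class up to replacing the extracted prefactor (one of $Ae^{-A(x_N+y_N)}$, $A^2e^{-Bx_N}\CM(y_N)$, etc.) by another one of the allowable forms.

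For the bounds involving $h_{A,\infty}^f$, I would use the fourth block of \eqref{1011_3}: each of the four terms is of type $K_1(\lambda)$ or $K_2(\lambda)$ with multiplier containing a further factor $L(A,B)^{-1}$, and weighting by $\lambda^2$, $\lambda^{3/2}$, $\lambda$ only costs extra powers of $|\lambda|^{1/2}+A$ which are absorbed by the denominator $|\lambda|(|\lambda|^{1/2}+A)^{2}$ in the bound for $L(A,B)^{-1}$. For the $d$-part, I would use the extension of $d$ to $W_p^2(\uhs)$ noted in the remark, rewrite each term of \eqref{1011_4} (which already contains either $\widehat{\Delta'd}$ or $\widehat{D_kD_Nd}$) as one of $K_1,\dots,K_{10}$ applied to $\Delta'd$ or $D_kD_Nd$, and verify the multiplier bound in the same manner; the two derivatives on $d$ account for the loss of regularity so that the estimate closes with $\|d\|_{W_p^2(\uhs)}$ on the right. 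Finally, the $W_p^3$-estimate for $h_{A,\infty}^d$ is obtained by noting that $D(A,B)/[A(B+A)L(A,B)]$ and $i\xi_k D(A,B)/[A^2(B+A)L(A,B)]$ give bounded multipliers on $\Gamma_5^{\pm}$, so each additional $D_x$ merely contributes an $i\xi_k$ or a $(-A)$ from $e^{-Ax_N}$ without destroying the multiplier class.

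The main obstacle is purely bookkeeping: there are many terms in \eqref{1011_3}--\eqref{1011_4}, and for every weighted norm on the left of the lemma one must identify which $K_n$ a term belongs to, which prefactor ($Ae^{-A(x_N+y_N)}$, $A^{2}e^{-Ax_N}\CM(y_N)$, $A|\lambda|^{1/2}e^{-Bx_N}\CM(y_N)$, etc.) has been extracted, and then compute the residual multiplier and verify $|D_{\xi'}^{\alpha'}\{\varphi_\infty k_n\}|\leq CA^{-|\alpha'|}$ using Lemma \ref{lem:symbol} and the bounds on $\CV^{\bullet\bullet}$, $\CP^{\bullet\bullet}$. No genuinely new analytic input beyond Lemma \ref{lem:symbol} and Proposition \ref{prop:S-S} is required; the reason the lemma follows "easily" is precisely that \eqref{1011_3}--\eqref{1011_4} were engineered so that the extracted prefactors match the $K_n$-templates exactly.
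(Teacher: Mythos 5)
Your proposal is exactly the paper's argument: the paper dispatches Lemma~\ref{lem:Gam5} in a single sentence by citing Lemma~\ref{lem:symbol}, \eqref{symbol}, \eqref{1011_3}, \eqref{1011_4}, and Proposition~\ref{prop:S-S}, which is precisely the reduction of each term to one of the operators $K_1,\dots,K_{10}$ followed by the multiplier check you describe. The bookkeeping you outline (including the observation that on $\mathrm{supp}\,\varphi_\infty$ one has $A\geq A_0/3$, so Lemma~\ref{lem:symbol} applies uniformly on $\Gamma_5^\pm\subset\Sigma_{\varepsilon_\infty,\lambda_0}$) is correct and fills in what the paper leaves implicit.
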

%
%
%Employing the similar argumentations to that
%in the proof of Lemma \ref{lem:Gam3}, we have 
%
%
%\begin{equation*}
%\int_{0}^{\infty}\frac{e^{({\rm Re}\lambda)t}}{|\lambda|^{1-s}}\,du\leq C
%\left\{\begin{aligned}
%&(|\log{t}|+1)e^{-\gamma_{\infty}t}&&(s=0), \\
%&t^{-s}e^{-\gamma_{\infty}t}&&(s>0)
%\end{aligned}\right.,
%\end{equation*}
%
%
%for $\lambda\in\Gamma_5^\pm$, so that
Applying Lemma \ref{lem:Gam5} to the terms in  (\ref{high:decomp}), we have for $t\geq1$
\begin{align}\label{est:Gam5_1}
&\|(\pa_t S_\infty^{f,5}(t)F,\nabla \Pi_\infty^{f,5}(t)F)\|_{L_p(\uhs)} \notag \\
&\quad+\|(S_\infty^{f,5}(t)F,\pa_t\CE(T_\infty^{5,f}(t)F),\nabla\CE(T_\infty^{5,f}(t)F))\|_{W_p^2(\uhs)}
\leq Ce^{-\ga_\infty t}\|f\|_{L_p(\uhs)}, \notag \\
&\|(\pa_t S_\infty^{d,5}(t)F,\nabla \Pi_\infty^{d,5}(t)F)\|_{L_p(\uhs)} \notag \\
&\quad+\|(S_\infty^{d,5}(t)F,\pa_t\CE(T_\infty^{5,d}(t)F),\nabla\CE(T_\infty^{5,d}(t)F))\|_{W_p^2(\uhs)}
\leq Ce^{-\ga_\infty t}\|d\|_{W_p^2(\uhs)}
\end{align}
with some positive constant $C$.

Summing up (\ref{est:Gam4}) and (\ref{est:Gam5_1}), 
we have obtained the estimate \eqref{140917_2} in Theorem \ref{theo:main} (2),
since
%
%
%the following theorem,
%noting that
%and setting by \eqref{140923_20} and \eqref{140923_21}
%
%
\begin{align*}
&S_\infty(t)F
=\sum_{g\in\{f,d\}}\sum_{\si=4}^5 S_\infty^{g,\si}(t)F,\quad
\Pi_\infty(t)F
=\sum_{g\in\{f,d\}}\sum_{\si=4}^5 \Pi_\infty^{g,\si}(t)F, \\
&T_\infty(t)F
=\sum_{g\in\{f,d\}}\sum_{\si=4}^5 T_\infty^{g,\si}(t)F.
\end{align*}
%
%
%
%
%\begin{theo}\label{thm:6.7}
%Let $1<p<\infty$ and $F=(f,d)\in L_p(\uhs)^N\times W_p^2(\uhs)$.
%Then there exist positive constants $\de$ and $C$ such that for any $t\geq 1$
%
%
%\begin{multline*}
%\|(\pa_t S_\infty(t)F,\nabla\Pi_\infty(t)F)\|_{L_p(\uhs)} \\
%+\|(S_\infty(t)F,\pa_t \CE(T_\infty(t)F),\nabla \CE(T_\infty)(t)F)\|_{W_p^2(\uhs)}
%\leq Ce^{-\de t}\|(f,d)\|_{L_p(\uhs)\times W_p^2(\uhs)}.
%\end{multline*}
%\end{theo}
%
%
%By Theorem \ref{thm:6.7}, we have completed the proof of \eqref{140917_2} in Theorem \ref{theo:main} (2). 

%%% Please do NOT use ``\bysame'' command in your bibliography list.
%%%%%%%%%%%%%%%%%%%%%%%%%%%%%%%%%%%%%%%%%%%%%%%%%%%%%%%%%%%%%%%%%%%%%%

\end{document}